\newcommand{\old}[1]{{\color{cyan}{}}} 
\theoremstyle{plain}
\newtheorem{thm}{Theorem}[section]
\newtheorem{lem}[thm]{Lemma}
\newtheorem{prop}[thm]{Proposition}
\newtheorem{conj}[thm]{Conjecture}
\def\@rst #1 #2other{#1}
\newcommand\MR[1]{\relax\ifhmode\unskip\spacefactor3000 \space\fi
  \MRhref{\expandafter\@rst #1 other}{#1}}
\newcommand{\MRhref}[2]{\href{http://www.ams.org/mathscinet-getitem?mr=#1}{MR#2}}
\theoremstyle{definition}
\newtheorem{defn}[thm]{Definition}
\newtheorem{remark}[thm]{Remark}
\newtheorem{ques}[thm]{Question}
\numberwithin{equation}{section}
\newcommand{\dsb}{\begin{adjustwidth}{2.5em}{0pt}
\begin{footnotesize}}
\newcommand{\dse}{\end{footnotesize}
\end{adjustwidth}}
\newcommand{\ssb}{\begin{adjustwidth}{2.5em}{0pt}}
\newcommand{\sse}{\end{adjustwidth}}
\newcommand{\aryb}{\begin{eqnarray*}}
\newcommand{\arye}{\end{eqnarray*}}
\def\alb#1\ale{\begin{align*}#1\end{align*}}
\def\allb#1\alle{\begin{align}#1\end{align}}
\newcommand{\eqb}{\begin{equation}}
\newcommand{\eqe}{\end{equation}}
\newcommand{\eqbn}{\begin{equation*}}
\newcommand{\eqen}{\end{equation*}}
\newcommand{\BB}{\mathbbm}
\newcommand{\ol}{\overline}
\newcommand{\ul}{\underline}
\newcommand{\op}{\operatorname}
\newcommand{\frk}{\mathfrak}
\newcommand{\eqD}{\overset{d}{=}}
\newcommand{\ep}{\epsilon}
\newcommand{\rta}{\rightarrow}
\newcommand{\wt}{\widetilde}
\newcommand{\wh}{\widehat} 
\newcommand{\mcl}{\mathcal}
\newcommand{\bdy}{\partial}
\newcommand{\ccL}{{\mathbf{c}_{\mathrm L}}}
\newcommand{\ccM}{{\mathbf{c}_{\mathrm M}}}
\newcommand{\tr}{{\mathrm{tr}}}
\newcommand{\LFPP}{{\textnormal{\tiny{\textsc{LFPP}}}}}
\let\originalleft\left
\let\originalright\right
\renewcommand{\left}{\mathopen{}\mathclose\bgroup\originalleft}
\renewcommand{\right}{\aftergroup\egroup\originalright}
\title{Liouville quantum gravity with matter central charge in $(1,25)$: a probabilistic approach}
\date{  }
\author{
\begin{tabular}{c} Ewain Gwynne\\[-4pt]\small Cambridge \end{tabular} 
\begin{tabular}{c} Nina Holden\\[-4pt]\small ETH Z\"urich \end{tabular}
\begin{tabular}{c} Joshua Pfeffer\\[-4pt]\small MIT \end{tabular}
\begin{tabular}{c} Guillaume Remy \\[-4pt]\small Columbia \end{tabular}
}
\begin{document}

\maketitle
  
\begin{abstract} 
There is a substantial literature concerning Liouville quantum gravity (LQG) in two dimensions with conformal matter field of central charge ${\mathbf{c}}_{\mathrm M} \in (-\infty,1]$. 
Via the DDK ansatz, LQG can equivalently be described as the random geometry obtained by exponentiating $\gamma$ times a variant of the planar Gaussian free field, where $\gamma \in (0,2]$ satisfies $\mathbf c_{\mathrm M} = 25 - 6(2/\gamma + \gamma/2)^2$. 
Physics considerations suggest that LQG should also make sense in the regime when $\mathbf c_{\mathrm M} > 1$.
However, the behavior in this regime is rather mysterious in part because the corresponding value of $\gamma$ is complex, so analytic continuations of various formulas give complex answers which are difficult to interpret in a probabilistic setting. 

We introduce and study a discretization of LQG which makes sense for all values of $\mathbf c_{\mathrm M} \in (-\infty,25)$. Our discretization consists of a random planar map, defined as the adjacency graph of a tiling of the plane by dyadic squares which all have approximately the same ``LQG size" with respect to the Gaussian free field. We prove that several formulas for dimension-related quantities are still valid for $\mathbf c_{\mathrm M} \in (1,25)$, with the caveat that the dimension is infinite when the formulas give a complex answer. In particular, we prove an extension of the (geometric) KPZ formula for $\mathbf c_{\mathrm M}  \in (1,25)$, which gives a finite quantum dimension if and only if the Euclidean dimension is at most $(25-\mathbf c_{\mathrm M} )/12$. We also show that the graph distance between typical points with respect to our discrete model grows polynomially whereas the cardinality of a graph distance ball of radius $r$ grows faster than any power of $r$ (which suggests that the Hausdorff dimension of LQG in the case when $\mathbf c_{\mathrm M}  \in(1,25)$ is infinite). 

We include a substantial list of open problems. 
\end{abstract}

\tableofcontents

\section{Introduction}
\label{sec-intro}

\subsection{Overview}
\label{sec-overview}

Liouville quantum gravity (LQG) is a one-parameter family of random surfaces which describe two-dimensional quantum gravity coupled with conformal matter fields. It was first introduced in physics by Polyakov~\cite{polyakov-qg1} in the context of bosonic string theory in order to define a ``sum over Riemannian metrics" in two dimensions. 
To define LQG, consider a parameter $\ccM \in (-\infty,1]$ which we call the \emph{matter central charge}, corresponding in physics to the central charge of the conformal field theory (CFT) given by the matter fields. 
For a compact surface $\mcl D$ and a Riemannian metric $g$ on $\mcl D$, let $\Delta_g$ be the corresponding Laplace-Beltrami operator. 
Heuristically speaking, an LQG surface with the $\mcl D $ topology and matter central charge $\ccM$ is a random surface sampled from the measure on Riemannian metric tensors $g$ on $\mcl D $ whose probability density with respect to the ``Lebesgue measure on the space of metrics $g$ on $\mcl D $" is proportional to $(\det \Delta_g)^{-\ccM/2}$. The determinant  $(\det  \Delta_g)^{-\ccM/2}$ can be thought of as the partition function of a statistical mechanics model whose scaling limit is described by a CFT with central charge $\ccM$.

It was argued by David~\cite{david-conformal-gauge} and Distler-Kawai~\cite{dk-qg} via the so-called \emph{DDK ansatz} that the partition function of LQG --- which in principle governs the law of the random metric $g$ on $\mcl D $ --- can be constructed by integrating the product of the partition function of the matter fields (which should behave like $(\det \Delta_g)^{-\ccM/2}$) times the one of Liouville conformal field theory (LCFT) over the moduli space of $\mcl D $. See also \cite{dp-multiloop} for more explanations. In this context physicists define LCFT by the path integral formalism using the Liouville action, see Section \ref{sec-action} for more detail. LCFT is also parametrized by one real parameter which we denote by $\ccL$, the central charge of LCFT.  In order for LQG to be conformally invariant the DDK ansatz further imposes the relation 
\eqb
\ccM + \ccL - 26 =0.
\label{eq-cM-cL-sum}
\eqe

In this article we will be primarily interested in the local geometry of LQG surfaces so we restrict our attention to the case of simply connected surfaces,\footnote{
See~\cite{drv-torus,remy-annulus,grv-higher-genus} for works concerning LQG with non-simply connected topologies.
} in which case the moduli space of $\mcl D $ is trivial and the Riemannian metric tensor of the LQG surface is simply given by
\eqb \label{eqn-lqg-metric-tensor}
e^{\gamma h} (dx^2 + dy^2), 
\eqe
where $h$ is a variant of Gaussian free field (GFF), $dx^2+dy^2$ is the standard Euclidean metric tensor on $\mcl D $, and $\gamma$ is the unique solution in $(0,2]$ of the equation\footnote{In the physics literature it is common to use $b = \frac{\gamma}{2}$ instead of $\gamma$.}
\eqb \label{eqn-Q-c-gamma}
\ccM = 25 -6Q^2 \quad \text{where} \quad Q = \frac{2}{\gamma}  +\frac{\gamma}{2} . 
\eqe 

Since \eqref{eqn-Q-c-gamma} gives a bijection between $\ccM \in(-\infty,1]$ and $\gamma\in(0,2]$, we can parametrize LQG using $\gamma$ instead of $\ccM$ and talk about $\gamma$-LQG. 
The parameter $Q  \geq  2$ is called the \emph{background charge} and will play a fundamental role in this article. 
We also record the formula $\ccL = 1 + 6 Q^2 $, which is obtained by combining \eqref{eq-cM-cL-sum} and \eqref{eqn-Q-c-gamma}. It was shown rigorously in~\cite{dkrv-lqg-sphere} that the partition function of LCFT transforms under conformal rescaling as the partition function of a CFT with central charge $\ccL$ for $\ccL \geq 25$, which corresponds to $\ccM \in(-\infty,1]$, in the case of the sphere topology (see~\cite{hrv-disk,drv-torus,grv-higher-genus} for other topologies). From the point of view of constructive quantum field theory, this means that LCFT is a CFT with central charge $\ccL$. Figure~\ref{fig-c-Q-gamma-table} is a table of the relationships between $\ccL,\ccM,Q,$ and $\gamma$.

\begin{figure}[t!]
 \begin{center}
\includegraphics[scale=1.1]{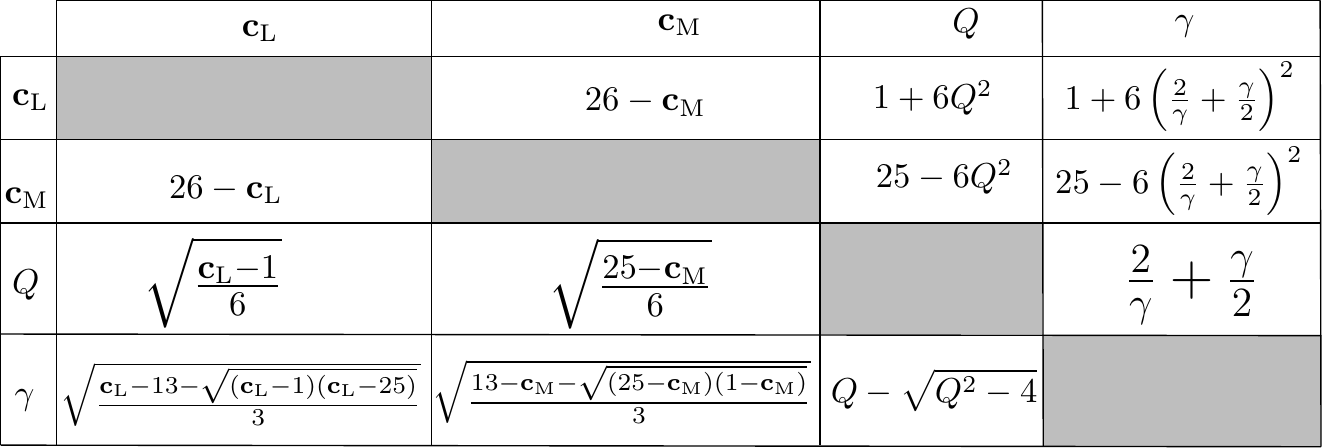}
\vspace{-0.01\textheight}
\caption{Table of relationships between the values of $\ccL$, $\ccM$, $Q$, and $\gamma$. Note that $\gamma$ is complex when $\ccM \in (1,25]$, but $Q$ is real for $\ccM \in (-\infty,25]$.  
}\label{fig-c-Q-gamma-table}
\end{center}
\vspace{-1em}
\end{figure} 

The Riemann metric tensor definition~\eqref{eqn-lqg-metric-tensor} does not make literal sense since the GFF $h$ is only a generalized function, not a true function, so it does not have well-defined pointwise values and cannot be exponentiated pointwise. 
Nevertheless, it is possible to make rigorous sense of this object via various regularization procedures. 
For example, one can construct for each $\gamma \in (0,2]$ a random measure $\mu_h^\gamma$ on $\mcl D$ which is the limit of regularized versions of $e^{\gamma h(z)} \, d^2 z$, where $ d^2 z$ denotes Lebesgue measure. The construction of this measure is a special case of a general theory of regularized random measures called \emph{Gaussian multiplicative chaos} (GMC), which was initiated by Kahane~\cite{kahane}; see~\cite{rhodes-vargas-review,berestycki-gmt-elementary,aru-gmc-survey} for reviews of this theory.
The particular construction of the measure which is most relevant to this paper (using circle averages of the GFF) was introduced in~\cite{shef-kpz}. 
See~\cite{shef-deriv-mart,shef-renormalization} for the construction of the measure in the critical case $\gamma=2$. 

It was shown very recently in~\cite{dddf-lfpp,gm-uniqueness} that for $\gamma \in (0,2)$, an LQG surface also gives rise to a random distance function\footnote{Here we mean distance function in the sense of a metric which gives the distance between any pair of points. We use the phrase ``distance function'' rather than ``metric'' to avoid confusion with the Riemannian metric tensor $e^{\gamma h}(dx^2+dy^2)$. } $D_h^\gamma$ on $\mcl D$ which is the limit of regularized versions of the Riemannian distance function associated with~\eqref{eqn-lqg-metric-tensor}. 
The distance function in the special case when $\gamma=\sqrt{8/3}$ ($\ccM  =0$) was constructed in earlier work~\cite{lqg-tbm1,lqg-tbm2,lqg-tbm3} using a completely different method, in which case the resulting metric space (for a certain a special choice of $h$) is isometric to the Brownian map~\cite{legall-uniqueness,miermont-brownian-map}.

Another possible approach to LQG, based on the Laplacian determinant definition, is to look at a discretized version of the random surface known as a random planar map. 
Recall that a planar map (with the sphere topology) is a graph embedded into the Riemann sphere, viewed modulo orientation-preserving homeomorphisms. 
See e.g.\ \cite{miermont-st-flour,legall-sphere-survey,curien-peeling-notes} for an introduction to random planar maps.  

For each $\ccM \in(-\infty,1)$, there are statistical mechanics models on planar maps whose partition functions are known or expected to behave like the $-\ccM/2$-power of the discrete Laplacian determinant when the number of edges of the planar map is large. 
Examples include the uniform spanning tree ($\ccM = -2$) and the critical Ising model ($\ccM = 1/2$). 
Suppose $M^n$ is a planar map with $n$ edges sampled with probability proportional to one of these partition functions. 
We can think of $M^n$ as a discrete random surface by endowing each face with the surface structure of a polygon.
From this perspective, it is natural to believe that when $n$ is large, $M^n$ is a good approximation of LQG with the spherical topology and matter central charge $\ccM$. 
Indeed, $M^n$ and variants thereof are used throughout the physics literature as a heuristic approximation of LQG with matter central charge $\ccM$; see, e.g.,~\cite{djkp-critical-exponents,adf-critical-dimensions,bkkm-analytical-study,bd-triangulated-surfaces,adjt-c-ge1}. 
Informally, we say that a random planar map model belongs to the \emph{matter central charge $\ccM$ universality class} if it is expected to approximate LQG with matter central charge $\ccM$. 

So far, the random planar map approach has met with much success in the special case when $\ccM =0$ (i.e., $\gamma=\sqrt{8/3}$), in which case we are dealing with uniform random planar maps. Indeed, it is known that uniform random planar maps converge to $\sqrt{8/3}$-LQG surfaces both in the Gromov-Hausdorff sense~\cite{lqg-tbm1,lqg-tbm2,lqg-tbm3,legall-uniqueness,miermont-brownian-map} and under certain embeddings into the plane~\cite{hs-cardy-embedding}. 
There are also some weaker convergence results for general $\ccM \in (-\infty,1)$ based on the paper~\cite{wedges}; see~\cite{ghs-mating-survey} for a recent survey.

There is a substantial mathematical literature on LQG with matter central charge $\ccM \leq 1$, concerning topics such as the connection between LQG surfaces and random planar maps (see references above), the relationships between LQG surfaces and Schramm-Loewner evolutions~\cite{shef-zipper,wedges}, exact formulas for various quantities related to LQG surfaces~\cite{krv-dozz}, LQG surfaces with arbitrary genus~\cite{grv-higher-genus}, and the stochastic Ricci flow~\cite{ds-ricci-flow} (which is related to stochastic quantization). We will not attempt a comprehensive survey of this literature here, but the reader may consult the above-cited works and the references therein.
 
There is also significant physical interest in LQG with matter central charge $\ccM \in (1,25)$, or equivalently with background charge $Q \in (0,2) $. 
 By~\eqref{eqn-Q-c-gamma}, $\ccM \in (1,25)$ corresponds to a complex value of $\gamma$, with modulus 2. 
The probabilistic and geometric aspects of LQG in this phase are much less well-understood than in the case when $\ccM \leq 1$, even from a physics perspective. 
Nevertheless, it is reasonable to believe that LQG with matter central charge $\ccM \in (1,25)$ can still be realized as some sort of random geometry connected to the Gaussian free field. 
Moreover, a number of works have successfully analyzed LQG with $\ccM \in (1,25)$~\cite{bh-c-ge1-matrix,david-c>1-barrier,zam05} and LCFT with $\ccL \in (1,25)$~\cite{suzuki1997note,fkv-c>1-I,fk-c>1-II,teschner04,ribault-cft,rs15,ijs16,ribault2018minimal}. 

In this paper, we will introduce and study a discrete geometric/probabilistic model of LQG in the phase when $\ccM \in (1,25)$. A key idea behind our approach is the observation that the definition of an LQG surface as an equivalence class of surfaces transforming as dictated by the parameter $Q$ (a well-known fact in physics, see for instance~\cite{zz-dozz}, that was mathematically implemented in~\cite{shef-kpz,shef-zipper,wedges}), extends immediately to the case where $Q\in(0,2)$; see Section~\ref{sec-lqg-surface}. Our model takes the form of a one-parameter family of random planar maps, indexed by $\ccM \in (-\infty,25)$, which are defined as the adjacency graphs of a family of dyadic tilings of the plane constructed from the Gaussian free field. See Section~\ref{sec-tiling-def} for a precise definition. When $\ccM \in (-\infty,1]$, our dyadic tiling is closely related to LQG with matter central charge $\ccM \in (-\infty,1]$ as studied elsewhere in the literature. 
When $\ccM \in (1,25)$, we expect that the tiling should converge, in a certain sense, to a continuum model of LQG with matter central charge $\ccM \in (1,25)$. 

One of the major difficulties in the study of LQG with matter central charge $\ccM \notin (-\infty,1]$ is that a number of formulas for quantities related to LQG surfaces --- such as the (geometric) KPZ formula~\cite{kpz-scaling,shef-kpz}, the DOZZ formula~\cite{do-dozz,zz-dozz,krv-dozz}, and predictions for the Hausdorff dimension of LQG~\cite{watabiki-lqg,dg-lqg-dim} --- yield complex answers when $\ccM > 1$, which are difficult to interpret in a probabilistic framework. 
We will show that for our model for $\ccM \in (1,25)$, several notions of ``dimension" (such as the quantum dimension in the KPZ formula and the ball volume exponent for certain random planar maps) are given by analytic continuation of the formulas in the case when $\ccM \leq 1$, with the caveat that whenever the formulas give a non-real answer, the corresponding dimension is infinite. 
This means that the phase transition at $\ccM = 1$ is in some ways discontinuous, in the sense that the limits of certain dimensions as $\ccM \rta 1^-$ are finite, whereas the dimensions for $\ccM  \in (1,25)$ are infinite.

We refer to Section~\ref{sec-discussion} for additional discussion about the motivation for our model and its connection to other results in the literature.

\begin{figure}[t!]
 \begin{center}
\includegraphics[scale=1.1]{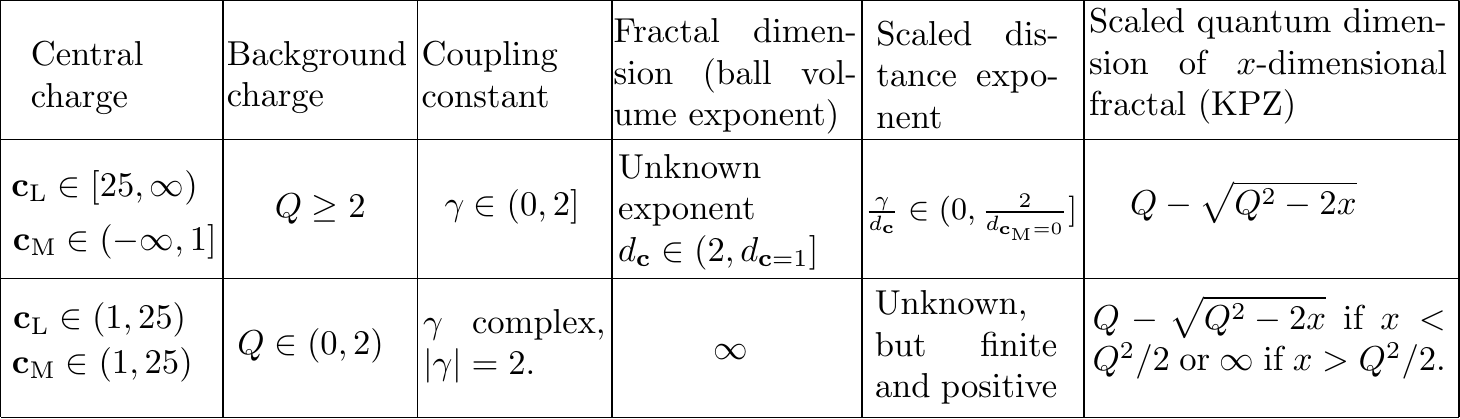}
\vspace{-0.01\textheight}
\caption{Table listing various features of the two phases of LQG relevant to this paper. The case when $\ccM \leq 1$ is the one considered in most previous mathematical works on LQG. The phase $\ccM \in (1,25)$ is the main focus of the present paper. The last three columns correspond to Theorem~\ref{thm-ball-infty}, Proposition~\ref{prop-ptwise-distance}, and Theorem~\ref{thm-kpz}, respectively.  
}\label{fig-c-phases-table}
\end{center}
\vspace{-1em}
\end{figure}

\subsection{Definition of the model} 
\label{sec-tiling-def}

For a domain $\mcl D\subset \BB C$, the \emph{zero-boundary GFF} on $\mcl D$ is the centered Gaussian random distribution $h^{\mcl D}$ on $\mcl D$ with covariances
\eqb \label{eqn-zero-bdy}
\op{Cov}\left( h^{\mcl D}(z) , h^{\mcl D}(w) \right) = \op{Gr}_{\mcl D}(z,w) ,
\eqe
where $\op{Gr}_{\mcl D}$ is the Green's function on $\mcl D$ with zero boundary conditions. 
The \emph{whole-plane GFF} (normalized so that its average over $\bdy\BB D$ is equal to zero) is the centered Gaussian random distribution $h = h^{\BB C}$ on $\BB C$ with covariances
\eqb \label{eqn-whole-plane}
\op{Cov}\left( h (z) , h (w) \right) = \log \frac{|z|_+ |w|_+}{|z-w|}   \quad \text{where} \quad |z|_+ := \max(|z|,1) .
\eqe
We say that a random distribution $h$ on a domain $\mcl D \subset\BB C$ is a \emph{GFF plus a continuous function} if there is a coupling of $h$ with a whole-plane or zero-boundary GFF $\wt h$ on $\mcl D$ such that $h-\wt h$  is a.s.\ equal to a continuous function. 
We refer to~\cite{shef-gff} and the introductory sections of~\cite{ss-contour,shef-zipper,ig1,ig4} for more background on the GFF.
We are mainly interested in the local geometry of LQG surfaces, so it will not matter for us which particular choice of $h$ we consider.
For convenience we will most often consider the whole-plane GFF.

Fix $Q > 0$ and let $\ccM = 25-6Q^2 \in (1,\infty)$ be the corresponding matter central charge.
Also let $h$ be a GFF on $\mcl D$ plus a continuous function, as above. 
We will define a dyadic tiling associated with $h$ which we will think of as a graph approximation to an LQG surface with matter central charge $\ccM$.

\begin{defn}[Notation for squares] \label{def-square-notation}
For a square $S\subset \BB C$, we write $|S|$ for its side length and $v_S$ for its center. 
We also write $\BB S = [0,1]^2$ for the unit square. 
We say that a closed square with sides parallel to the coordinate axes is \emph{dyadic} if for some $n\in\BB Z$, $|S| = 2^{-n}$ and the corners of $S$ lie in $2^{-n}\BB Z^2$. 
\end{defn}

We will only ever consider closed squares with sides parallel to the coordinate axes, i.e., sets of the form  $[a,a+s]\times[b,b+s]$ for $a,b\in\BB R$ and $s>0$.

For a square $S \subset \mcl D $, we define
\eqb \label{eqn-mass-def}
M_h(S) := e^{h_{|S|/2}(v_S)} |S|^Q ,
\eqe 
where here $h_r(z)$ for $z\in\BB C$ and $r>0$ is the average of $h$ over the circle $\bdy B_r(z)$ (see~\cite[Section 3.1]{shef-kpz} for basic properties of the circle average).
In the case when $Q  =2/\gamma + \gamma/2 > 2$, the quantity $(M_h(S))^\gamma$ is a good approximation for the $\gamma$-LQG area of $S$ (in a sense which is made precise in~\cite{shef-kpz}, see e.g.\ \cite[Lemmas 4.6 and 4.7]{shef-kpz}).
In general, we think of $M_h(S)$ as representing the ``LQG size" of $S$.

For a set $U\subset\mcl D$ and $\ep  > 0$, let
\allb \label{eqn-dyadic-tiling-def}
\mcl S_h^\ep(U) &:= \big\{ \text{dyadic squares $S\subset U$ with $M_h(S) \leq \ep$ and $M_h(S') > \ep$,} \notag \\
&\qquad \qquad \qquad \text{$\forall$ dyadic ancestors $S' \subset U$ of $S$} \big\}.
\alle
We abbreviate $\mcl S^\ep_h(\BB C) =: \mcl S_h^\ep$. See Figure~\ref{fig-singularity} for an illustration. 
We view $\mcl S_h^\ep(U)$ as a planar map with two squares in $\mcl S_h^\ep(U)$ considered to be adjacent if they intersect along a non-trivial connected line segment (so intersections at a single corner do not count). Then $\mcl S_h^\ep$ for $\ccM \leq 1$ ($Q \geq 2$) consists of dyadic squares with LQG mass approximately $\ep^\gamma$, so should (at least heuristically) behave like a planar map in the matter central charge $\ccM$ universality class; see Remark~\ref{remark-c<1-ball-growth} for further discussion of this point.

For $z,w\in U$, we let $D_h^\ep(z,w ; U)$ be the minimal $\mcl S_h^\ep(U)$-graph distance from a square which contains $z$ to a square which contains $w$ (by convention, this infimum is $\infty$ if either $z$ or $w$ is not contained in a square of $\mcl S^\ep_h(U)$). In other words,
$$
D_h^\ep(z,w ; U) = \inf_{P : z\rta w} \# P ,
$$
where we take the infimum over all paths $P=(S_0,\dots,S_{\#P})$ with $S_j\in\mcl S^\ep_h(U)$, $S_j$ is adjacent to $S_{j-1}$ for $j=1,\dots,\#P$, $z\in S_0$, and $w\in S_{\#P}$. 
Note that this is only a pseudometric, not a true metric (i.e., distance function), since the $D_h^\ep$-distance between two points in the same square is zero.
For $A,B\subset U$, we write
\eqb \label{eqn-dist-set}
D_h^\ep(A,B ; U ) = \inf_{z \in A} \inf_{w\in B} D_h^\ep(z,w; U) .
\eqe 
In the case when $\mcl D = U = \BB C$, we abbreviate $D_h^\ep(\cdot,\cdot ;\BB C) =: D_h^\ep(\cdot,\cdot)$. 

We make some trivial observations about the above definitions. If $U\subset V$, then for each $z\in U$ the square of $\mcl S_h^\ep(U)$ containing $z$ is contained in the square of $\mcl S_h^\ep(V)$ containing $z$, with equality if and only if the latter square is contained in $U$. This implies in particular that $D_h^\ep(z,w ; U) \geq D_h^\ep(z,w ; V)$ for all $z,w\in U$. Furthermore, if $\ep' \in (0,\ep)$, then each square in $\mcl S_h^{\ep'}(U)$ is contained in a square of $\mcl S_h^\ep(U)$, so $D_h^\ep(z,w ; U) \leq D_h^{\ep'}(z,w ; U)$ for all $z,w\in U$. The distance $D_h^\ep$ also satisfy a scaling property: if $C>0$ is dyadic and $f : U\rta \BB R$, then 
\eqb \label{eqn-dist-scaling}
D_{h+f}^\ep(z,w ; U) \leq D_{h(\cdot/C)}^{T \ep}(Cz , Cw ; C U) \quad \text{for} \quad T := C^Q \exp\left( - \sup_{z\in U} f(z) \right) ,\quad\forall z,w\in U. 
\eqe

The key difference between the behavior of $\mcl S_h^\ep$ in the regime $Q \geq 2$ and the regime $Q \in (0,2)$ is that it is locally finite in the former regime but not in the latter.

\begin{defn} \label{def-singularity}
We say that $z\in U$ is a \emph{singularity} of $\mcl S_h^\ep(U)$ if $z$ is in the interior of $U$ and is not contained in any square of $\mcl S_h^\ep(U)$. 
\end{defn}

\begin{figure}[t!]
 \begin{center}
\includegraphics[scale=1]{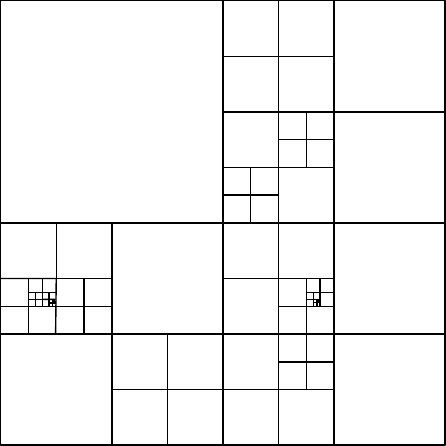}
\vspace{-0.01\textheight}
\caption{Possible realization of the set $\mcl S_h^\ep(\BB S)$ for $\ccM \in (1,25)$ consisting of maximal dyadic squares $S\subset\BB S$ with $M_h(S) \leq \ep$. Singularities are points $z$ which are not contained in any square $S\subset\BB S$ for which $M_h(S) \leq \ep$, i.e., places where we subdivide forever. Such points exist for $\ccM \in (1,25)$ with probability converging to 1 as $\ep\rta 0$ but not for $\ccM  < 1$.  
}\label{fig-singularity}
\end{center}
\vspace{-1em}
\end{figure} 

We observe that if $z\in U$ is fixed, then a.s.\ $z$ is not a singularity of $\mcl S_h^\ep(U)$. 
Indeed, if $h$ is a whole-plane GFF normalized so that $h_1(0) =0$, then since each $h_{|S|/2}(v_S)$ is Gaussian with variance $\log(2/|S|) + O(1)$, the desired statement is easily seen from the Gaussian tail bound and a union bound over the dyadic squares contained in $U$ which contain $z$. 
The corresponding statement for other variants of the GFF follows by local absolute continuity. 
From this, it is easily seen that a.s.\ every singularity is an accumulation point of arbitrarily small squares of $\mcl S_h^\ep(U)$.

Singularities are closely related to $\alpha$-thick points of $h$ for $\alpha >  Q$.

\begin{defn}[Thick points] \label{def-thick-pt}
For $\alpha \in \BB R$, a point $z \in \BB C$ is called an \emph{$\alpha$-thick point of $h$} if the following convergence holds:
\begin{equation}
\lim_{\delta \rightarrow 0} \frac{h_{\delta}(z)}{\log \delta^{-1}} = \alpha.
\label{thick_points_def_eq}
\end{equation}
\end{defn}

It is shown in~\cite{hmp-thick-pts} that a.s.\ the Hausdorff dimension of the set of $\alpha$-thick points is $2-\alpha^2/2$ if $\alpha \in [-2,2]$ and a.s.\ the set of thick points is empty if $|\alpha| > 2$.  
Using this, it is easy to see that if $Q <2$, then with probability tending to 1 as $\ep\rta 0$ there are uncountably many singularities of $\mcl S_h^\ep(U)$ and if $Q\geq 2$, then a.s.\ there are no singularities. Roughly speaking, the reason for this is as follows (a much more general version of this statement is contained in Theorem~\ref{thm-kpz} below). We say that a random point $z \in \BB C$ is a \emph{typical $\alpha$-thick point} if the conditional law of $z$ given $h$ is absolutely continuous with respect to the measure $\mu_h^\alpha$.
Near a typical $\alpha$-thick point $z$, the field behaves like $h^\alpha := h - \alpha\log|\cdot  -z|$, where $h$ is a GFF (this observation dates back to Kahane~\cite{kahane}; see, e.g.,~\cite[Section 3.3]{shef-kpz} for the precise statement we are using here). 
If we let $\{S_n\}_{n\in\BB N}$ be the sequence of dyadic squares of side length less than 1 containing $z$, enumerated so that $|S_n| = 2^{-n}$, then $h_{|S_n|/2}^\alpha(z)$ is Gaussian with variance of order $ \log 2^n$ and mean of order $\alpha \log 2^n$. 
Hence $\log M_{h^\alpha}(S_n)$ behaves like a re-parametrized random walk with drift $ (\alpha -Q) t  $, so a.s.\ drifts to $ \infty$ if $\alpha > Q$ and a.s.\ drifts to $-\infty$ if $\alpha <Q$. 
Thus, if $Q<2$ then $\alpha$-thick points for $\alpha \in (Q,2)$ should give rise to singularities of $\mcl S_h^\ep(U)$ for sufficiently small $\ep$. 

A major motivation for considering $\mcl S_h^\ep$ is the following conjecture, which is motivated by the fact that for $\ccM < 1$, $\mcl S_h^\ep$ behaves like a random planar map in the matter central charge $\ccM$ universality class. See Section~\ref{sec-discussion} and Remark \ref{remark-c<1-ball-growth} for additional context and motivation. 

\begin{conj}[Distance function scaling limit] \label{conj-metric-lim}
For $\ccM  \in (-\infty,25)$, it holds as $\ep\rta 0$ that the graphs $\mcl S_h^\ep$, equipped with their graph distance, converge a.s.\ in the scaling limit to a non-trivial random metric space $(X , \frk d_h)$  with respect to the local Gromov-Hausdorff topology.
\end{conj}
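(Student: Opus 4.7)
The plan is to split the conjecture into the subcritical regime $\ccM \in (-\infty,1]$ and the supercritical regime $\ccM \in (1,25)$, which present very different difficulties, and then to pursue an axiomatic "tightness + uniqueness" strategy in each.

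In the subcritical regime, the tiling $\mcl S_h^\ep$ is locally finite and $D_h^\ep$ is morally a discretized Liouville first-passage percolation with parameter $\xi = \gamma/Q$. First I would establish tightness of a rescaled graph distance $a_\ep^{-1} D_h^\ep$ for an appropriate normalization constant $a_\ep$. The standard route is: (a) prove up-to-constants comparisons between $D_h^\ep$ and the LFPP metric obtained from the circle-averaged field at scale comparable to the typical side length $\ep^{1/Q} e^{-h/Q}$, using sharp Gaussian concentration for circle averages together with a covering argument based on~(\ref{eqn-dist-scaling}); (b) transfer the Russo-Seymour-Welsh estimates and percolation-style path concatenation that are already available for LFPP to obtain moment bounds on crossing distances of Euclidean squares. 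Then I would identify any subsequential limit as a weak LQG metric by verifying the axiomatic list of Gwynne-Miller (length space, locality, Weyl scaling under adding a continuous function, and the scaling relation obtained by taking $\ep\to 0$ in~(\ref{eqn-dist-scaling})), and finally invoke the uniqueness theorem for weak LQG metrics to conclude convergence of the full family.

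The supercritical regime $\ccM \in (1,25)$ is the main obstacle, and I do not expect existing machinery to suffice. Because singularities exist with probability tending to $1$ and the graph fails to be locally finite (cf.\ Theorem~\ref{thm-ball-infty}), local Gromov-Hausdorff convergence must be interpreted relative to a base point that is almost surely not a singularity. A natural strategy is: (i) show that Lebesgue-typical points are non-singular, so the normalized metric measure space has most of its mass on ordinary points; (ii) use the GFF Markov property, restriction to dyadic cells, and the scaling relation~(\ref{eqn-dist-scaling}) to set up a self-similarity equation relating the distance between typical points at scales $\ep$ and $C\ep$, and extract tightness of $a_\ep^{-1}D_h^\ep$ from this; (iii) study what happens near singularities by means of the analysis of thick points of the GFF and an extension of the KPZ relation (Theorem~\ref{thm-kpz}), showing that singularities appear in the metric completion as points around which every ball has infinite cardinality, consistent with Proposition~\ref{prop-ptwise-distance} and the picture in Figure~\ref{fig-c-phases-table}.

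The hard step in both regimes is uniqueness of the subsequential limit, but the difficulty is qualitatively different. In the subcritical case one can hope to import the existing axiomatic characterization essentially as a black box. In the supercritical case no such axiomatization is known, and simply formulating the correct axioms — given that the candidate limit is a non-locally-compact random metric space with a dense set of "infinite-degree" points — is itself a serious task. A plausible path forward is to exploit the explicit dyadic multiplicative structure of the tiling (rather than to mimic the continuum LFPP construction), deriving a cascade-type recursion in the style of Mandelbrot-Kahane for the rescaled distance, and then proving uniqueness of the fixed point of this recursion among laws satisfying the Weyl scaling and coordinate change properties inherited from the GFF.
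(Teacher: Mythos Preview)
The statement you are attempting to prove is labeled as a \emph{Conjecture} in the paper, not a theorem; the paper does not prove it. In fact, the paper explicitly states just after Conjecture~\ref{conj-metric-lim} that ``Conjecture~\ref{conj-metric-lim} has not been proven even in [the case $\ccM<1$],'' and the conjecture reappears in the open problems section. So there is no ``paper's own proof'' to compare against: what you have written is not a proof but a research program outline for an open problem.

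As a research plan, your subcritical strategy is broadly the one the community has pursued for related approximations (tightness via RSW/percolation arguments, then identification via the Gwynne--Miller axioms), and the paper itself points to this literature. But note that the comparison step (a) you sketch --- relating $D_h^\ep$ to LFPP at a field-dependent scale ``$\ep^{1/Q}e^{-h/Q}$'' --- is not a routine Gaussian concentration argument: the typical side length of the square containing $z$ depends on the full circle-average process at $z$, not just a single value, and this is precisely where the existing proofs for LFPP do not transfer automatically to the square-subdivision model. In the supercritical case, your proposal is honest about the obstacles but remains at the level of heuristics; steps (ii) and (iii) in particular presuppose that one already has a well-defined rescaling $a_\ep$ and a meaningful notion of ``metric completion'' of the limit, neither of which currently exists. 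The cascade-recursion idea is interesting but speculative: no uniqueness-of-fixed-point theorem of the required type is known for non-locally-compact random metric spaces. In short, your outline is a reasonable description of what a proof might eventually look like, but it should be understood as a sketch of open directions rather than a proof attempt with identifiable gaps to fill.
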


A proof of Conjecture~\ref{conj-metric-lim} would in particular allow us to define LQG with matter central charge $\ccM \in (1,25)$ as a metric space. 
A distance function (metric) on LQG with $\ccM < 1$ has previously been constructed in~\cite{dddf-lfpp,gm-uniqueness}, which should be the Gromov-Hausdorff scaling limit of $\mcl S_h^\ep$ for $\ccM < 1$. Conjecture~\ref{conj-metric-lim} has not been proven even in this case (the construction of the distance function uses a different approximation scheme), but see~\cite{ding-dunlap-lgd} for a proof of tightness for an approximation of LQG distances which is closely related to the graph distance on $\mcl S_h^\ep$.  

For $\ccM \in (1,25)$, we expect that the limiting metric space in Conjecture~\ref{conj-metric-lim} has infinite diameter and infinitely many ends (corresponding to the singularities of $\mcl S_h^\ep$). 
The point-to-point distance in $\mcl S_h^\ep$ grows polynomially (Proposition~\ref{prop-ptwise-distance}) which suggests that the scaling factor for distances should be a power of $\ep$, possibly with a slowly varying correction. 

\subsection{Main results}
\label{sec-results}

To be concrete, throughout this subsection, we assume that $h$ is a whole-plane GFF normalized so that its circle average over $\bdy\BB D$ is zero. 
Our results can be transferred to other variants of the GFF using local absolute continuity considerations and~\eqref{eqn-dist-scaling} (to deal with distributions which differ from the GFF by a continuous function).  

One of the most important features of Liouville quantum gravity is the KPZ formula~\cite{kpz-scaling}, which relates the Euclidean and ``quantum" dimensions of a fractal $X\subset \BB C$.\footnote{The original KPZ formula in~\cite{kpz-scaling} described what the primary fields of the matter field CFT become when they are coupled to quantum gravity. This question seems to be mathematically out of reach so far but mathematicians have proved a weaker formulation~\cite{shef-kpz,rhodes-vargas-log-kpz} which relates the fractal dimension of a set sampled independently from the GFF as measured with the Euclidean metric to the fractal dimension of the same set as measured by the random distance function corresponding to \eqref{eqn-lqg-metric-tensor}. This weaker formulation also goes under the name KPZ formula and is sometimes called the geometric KPZ formula. In this paper we use the terms KPZ formula and geometric KPZ formula interchangeably and we are only concerned with the formula that relates the two notions of dimension for random fractals.} The first rigorous versions of this formula were proven by Duplantier and Sheffield~\cite{shef-kpz} and Rhodes and Vargas~\cite{rhodes-vargas-log-kpz}. Several other versions of the KPZ formula for LQG in the case when $\ccM \leq 1$ are obtained in~\cite{benjamini-schramm-cascades,bjrv-gmt-duality,shef-renormalization,aru-kpz,ghm-kpz,grv-kpz,gwynne-miller-char,wedges,gp-kpz}. Our first main result is an extension of the KPZ formula to the case when $\ccM \in (1,25)$.

\begin{thm}[KPZ formula] \label{thm-kpz}
Let $Q > 0$, equivalently $\ccM \in (-\infty,25)$.  
Let $X\subset \BB C$ be a deterministic or random set which is independent from $h$ and which is a.s.\ contained in some deterministic compact subset of $\BB C$. 
For $\ep \in (0,1)$, let $  N_h^\ep(X)$ be the number of squares of $\mcl S_h^\ep$ which intersect $X$.  
\begin{itemize}
\item (Upper bound) Let $N_0^\delta(X)$ denote the number of dyadic squares of side length $2^{-\lfloor \log_2 \delta^{-1} \rfloor}$ which intersect $X$. 
If $x  < Q^2/2$ and
\eqb \label{eqn-mink-dim}
\limsup_{\delta \rta 0} \frac{\log \BB E\left[ N_0^\delta(X) \right]}{\log \delta^{-1}} \leq x,
\eqe  
then 
\allb \label{eqn-quantum-dim}
&\limsup_{\ep\rta 0} \frac{\log \BB E \left[ N_h^\ep(X) \right]}{\log \ep^{-1}} \leq Q - \sqrt{Q^2 - 2 x} 
\quad \text{and} \quad \notag\\
&\limsup_{\ep\rta 0} \frac{\log  N_h^\ep(X) }{\log \ep^{-1}} \leq Q - \sqrt{Q^2 - 2 x} , \quad a.s.
\alle
\item (Lower bound) If the Hausdorff dimension of $X$ is a.s.\ at least $x \in [0,2]$, then if $x < Q^2/2$, 
\eqb \label{eqn-quantum-dim-as}
\liminf_{\ep\rta 0} \frac{\log   N_h^\ep(X) }{\log \ep^{-1}} \geq Q - \sqrt{Q^2 - 2 x}  ,\quad a.s.\ 
\eqe
and if $x > Q^2/2$, then a.s.\ for each sufficiently small $\ep > 0$, $X$ intersects infinitely many singularities of $\mcl S_h^\ep$ and $N_h^\ep(X) = \infty$. 
\end{itemize}
In particular, if the Minkowski dimension and the Hausdorff dimension of $X$ are each a.s.\ equal to $x\in [0,2]$, then a.s.\ $N_h^\ep(X) = \ep^{- (Q - \sqrt{Q^2-2x}) + o_\ep(1)}$ if $x < Q^2/2$ and a.s.\ $N_h^\ep(X) = \infty$ for each sufficiently small $\ep >0$ if $x > Q^2/2$. 
\end{thm}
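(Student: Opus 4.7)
The strategy has two parts---an upper bound via stratification over scales with Gaussian tails, and a lower bound via thick points of $h$---both driven by the identity $\log M_h(S)=h_{|S|/2}(v_S)+Q\log|S|$ and the fact that the circle average $h_{|S|/2}(v_S)$ is a centered Gaussian of variance $\log|S|^{-1}+O(1)$.

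\textbf{Upper bound.} Write $N_h^\ep(X)=\sum_{n\geq 0}N_h^{\ep,n}(X)$, where $N_h^{\ep,n}(X)$ counts squares of $\mcl S_h^\ep$ of side $2^{-n}$ meeting $X$. Using independence of $X$ from $h$ and the necessary condition $M_h(S)\leq\ep$ for $S\in\mcl S_h^\ep$,
$$\BB E[N_h^{\ep,n}(X)]\ \leq\ \BB E[N_0^{2^{-n}}(X)]\cdot \max_{|S|=2^{-n}}\BB P\bigl[h_{|S|/2}(v_S)\leq \log\ep+Qn\log 2\bigr].$$
For $n\log 2<\log\ep^{-1}/Q$ the Gaussian tail gives an upper bound $\exp(-(\log\ep^{-1}-Qn\log 2)^2/(2n\log 2+O(1)))$, while the hypothesis contributes a factor $\BB E[N_0^{2^{-n}}(X)]\leq 2^{(x+o(1))n}$. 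Summing and optimizing produces a peak at $n\log 2\approx \log\ep^{-1}/\sqrt{Q^2-2x}$ (admissible precisely because $x<Q^2/2$) of height $\ep^{-(Q-\sqrt{Q^2-2x})+o(1)}$, which is the expectation bound. The almost sure bound follows from Markov's inequality along $\ep_k=2^{-k}$ with a small exponent slack, Borel--Cantelli, and the monotonicity $\ep'\leq\ep\Rightarrow N_h^{\ep'}(X)\geq N_h^\ep(X)$ to interpolate between dyadic $\ep$.

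\textbf{Lower bound, $x<Q^2/2$.} For $\alpha\in(0,\sqrt{2x})$ let $T_\alpha$ denote the set of $\alpha$-thick points of $h$, of Hausdorff dimension $2-\alpha^2/2$ by~\cite{hmp-thick-pts}. Since $X$ is independent of $h$, conditioning on $X$ and applying a standard intersection-with-an-independent-thick-set estimate gives $\dim_{\mathcal H}(X\cap T_\alpha)\geq x-\alpha^2/2$ almost surely. Choose a Frostman measure $\nu$ on $X\cap T_\alpha$ with $\nu(B_r(z))\leq r^{y'}$ for some $y'<x-\alpha^2/2$. Because $h_{|S|/2}(v_S)\sim\alpha\log|S|^{-1}$ at every $z\in T_\alpha$, the condition $M_h(S)\leq \ep$ forces $|S|\leq \ep^{1/(Q-\alpha)+o(1)}$ for the unique $\mcl S_h^\ep$-square $S\ni z$; hence $\nu(S)\leq \ep^{y'/(Q-\alpha)+o(1)}$, and
$$1\ =\ \nu(X\cap T_\alpha)\ \leq\ \sum_{S\in\mcl S_h^\ep,\,S\cap X\neq\emptyset}\nu(S)\ \leq\ N_h^\ep(X)\cdot \max_S \nu(S)$$
gives $N_h^\ep(X)\geq \ep^{-y'/(Q-\alpha)+o(1)}$. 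Sending $y'\uparrow x-\alpha^2/2$ and then maximizing $(x-\alpha^2/2)/(Q-\alpha)$ in $\alpha$ via the first-order condition $\alpha^2/2-Q\alpha+x=0$ selects $\alpha_*=Q-\sqrt{Q^2-2x}\in(0,\sqrt{2x})$, and a direct computation shows that the optimal value is $\alpha_*$ itself, matching the claimed KPZ exponent.

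\textbf{Singularities for $x>Q^2/2$, and main obstacle.} When $\sqrt{2x}>Q$ one may pick $\alpha\in(Q,\sqrt{2x})$, and the same intersection argument produces (a.s.\ uncountably many) $\alpha$-thick points of $h$ inside $X$. The heuristic preceding Definition~\ref{def-thick-pt} then gives $\log M_h(S_n)\sim(\alpha-Q)n\log 2\to+\infty$ along any dyadic tower at such a point, so for all sufficiently small $\ep$ no dyadic square containing it has $M_h$-mass $\leq\ep$; the point is a singularity, and $N_h^\ep(X)=\infty$. The delicate step throughout is upgrading the pointwise thick-point asymptotics into a bound uniform on a $\nu$-full subset of $X\cap T_\alpha$: the inequality $|S|\leq \ep^{1/(Q-\alpha)+o(1)}$ really holds only up to fluctuations of the drifted walk $n\mapsto h_{2^{-n-1}}(v_{S_n})-\alpha n\log 2$, and one must argue, via a union bound over dyadic scales with Gaussian tails applied to the parent condition, that the set of ``bad'' thick points where this walk deviates too far has dimension strictly less than $x-\alpha^2/2$. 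This, together with justifying the independent-set intersection dimension bound in the required generality (random $X$ meets random $T_\alpha$), is the main technical burden.
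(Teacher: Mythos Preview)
Your upper bound has a genuine gap. You bound $\BB P[S\in\mcl S_h^\ep]$ only via the condition $M_h(S)\leq\ep$, which gives a useful Gaussian tail only when $Qn\log 2<\log\ep^{-1}$, i.e.\ for $n<m/Q$ (writing $m=\log_2\ep^{-1}$). But the peak you then identify, $n^*\approx m/\sqrt{Q^2-2x}$, satisfies $n^*>m/Q$ since $\sqrt{Q^2-2x}<Q$, so it lies outside the range where your bound says anything: at scale $n^*$ the event $\{h_{|S|/2}(v_S)\leq (Qn^*-m)\log 2\}$ asks a centered Gaussian to be at most a \emph{positive} number, which has probability $\geq 1/2$. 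The paper fixes this by using the other half of the membership condition---the dyadic \emph{parent} $\wt S$ must satisfy $M_h(\wt S)>\ep$---which is restrictive exactly in the range $n>m/Q$ and yields the bound $\exp(-(Qn-m)^2\log 2/(2n+O(1)))$ there. Combining both halves (trivial bound for $n<m/Q$, parent bound for $n\geq m/Q$) and then optimizing is what actually produces the exponent $Q-\sqrt{Q^2-2x}$.

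Your lower bound is on the right track and close to the paper's argument, and you correctly flag the main technical burden: the thick-point asymptotic $h_\delta(z)/\log\delta^{-1}\to\alpha$ is only pointwise, whereas you need it uniformly on a set of the correct dimension. The paper resolves this by invoking two results from~\cite{ghm-kpz}: their Theorem~4.1 (the intersection dimension $\dim_{\mathcal H}(X\cap\mcl T_\alpha)=x-\alpha^2/2$ for independent $X$) and their Lemma~4.3 (existence of a ``uniformly thick'' subset $X^{\alpha,\zeta}\subset X$ on which $h_\delta(z)/\log\delta^{-1}\geq\alpha-\zeta$ for all small $\delta$, with $\dim_{\mathcal H}X^{\alpha,\zeta}\geq x-\alpha^2/2-\zeta$). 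With these in hand no Frostman measure is needed: every $\mcl S_h^\ep$-square meeting $X^{\alpha,\zeta}$ has side at most $\ep^{1/(Q-\alpha)+o_\zeta(1)}$, and counting via Hausdorff dimension finishes. You are also missing the continuity step that transfers $h_\delta(z)$ to $h_{|S|/2}(v_S)$ (the paper uses a standard modulus-of-continuity lemma for circle averages). Finally, for $x>Q^2/2$, having infinitely many singularities in $X$ does not immediately give $N_h^\ep(X)=\infty$, since the small squares accumulating at a singularity need not meet $X$; the paper supplies a short extra argument (using independence and that a fixed point is a.s.\ not a singularity) to show that near each such singularity there are non-singular points of $X$, hence infinitely many $\mcl S_h^\ep$-squares meeting $X$.
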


Theorem~\ref{thm-kpz} is proven in Section~\ref{sec-kpz} via a short argument based on an analysis of the circle average process of $h$. 
The upper bound for $N_h^\ep(X)$ in Theorem~\ref{thm-kpz} is in terms of the Minkowski expectation dimension of $X$, whereas the lower bound is in terms of the Hausdorff dimension of $X$. We note that if the Hausdorff dimension of $X$ is a.s.\ at least $x$, then $ \liminf_{\delta\rta 0} \log  \BB E[ N_0^\delta(X)]   / \log \delta^{-1} \geq x$.  
Theorem~\ref{thm-kpz} is not true with the Hausdorff dimension of $X$ used in the upper bound and/or the Minkowski expectation dimension of $X$ used in the lower bound; see Remark~\ref{remark-not-true}. 
However, for many interesting fractals, including SLE curves~\cite{beffara-dim,lawler-rezai-nat}, the Hausdorff and Minkowski dimensions are known to be equal.

Note that we do not treat the case when $x = Q^2/2$. This case is more delicate and we expect that even if the Minkowski dimension and Hausdorff dimension of $X$ are each equal to $Q^2/2$, $\lim_{\ep\rta 0} N_h^\ep(X) / \log\ep^{-1}$ can be either finite or infinite depending on the rate of convergence in~\eqref{eqn-mink-dim} and the minimal ``gauge function" with respect to which $X$ has finite Hausdorff content.

Let us now explain how Theorem~\ref{thm-kpz} can be viewed as an extension of the KPZ formula for $\ccM   \leq 1$.
Recall that for $\ccM = 25- 6(2/\gamma + \gamma/2)^2  > 25$ and a dyadic square $S$, the quantity $(M_h(S))^\gamma$ is a good approximation for the $\gamma$-LQG mass of $S$. 
In other words, squares of $\mcl S_h^\ep$ typically have $\gamma$-LQG mass approximately $\ep^{\gamma}$. 
Consequently, the KPZ formula (e.g., in the form of~\cite[Corollary~1.7]{shef-kpz}) shows that for $Q >2$, a fractal $X$ satisfying~\eqref{eqn-mink-dim} should also satisfy
\eqb \label{eqn-shef-kpz}
\lim_{\ep\rta 0} \frac{\log N_h^\ep(X)}{\log \ep^{-1}}  = \gamma (1- \Delta) \quad \text{where $\Delta$ solves} \quad 1 - \frac{x}{2} = \frac{\gamma^2}{4} \Delta^2  + \left(1 - \frac{\gamma^2}{4} \right) \Delta . 
\eqe 
Note that our $x$ corresponds to $2-2x$ in the notation of~\cite{shef-kpz}. Expressing~\eqref{eqn-shef-kpz} in terms of $Q$ gives~\eqref{eqn-quantum-dim} for $Q > 2$. 
On the other hand, for $Q \in (0,2)$ the quantity $Q^2 - \sqrt{Q^2 -2x}$ is real if and only if $x \leq Q^2/2$. Hence Theorem~\ref{thm-kpz} says that the ``quantum dimension" of $X$ is given by analytic continuation of the KPZ formula whenever this analytic continuation gives a real answer, and is infinite otherwise. 

As mentioned above, it has recently been proved that an LQG surface with $\ccM  < 1$ has a distance function (i.e., a metric). The Hausdorff dimension $d_{\ccM}$ of the resulting metric space is unknown except in the case $\ccM = 0$, where it is equal to 4. There are several exponents defined in terms of various approximations of LQG or in terms of the continuum LQG  distance function itself which can be expressed in terms of $d_{\ccM}$~\cite{ghs-map-dist,dzz-heat-kernel,dg-lqg-dim,lqg-metric-estimates,gp-kpz,gwynne-ball-bdy}. See~\cite{dg-lqg-dim,ghs-map-dist,ang-discrete-lfpp} for upper and lower bounds for $d_{\ccM}$.

For our purposes, the most relevant of these exponents is the ball volume exponent for a random planar map $M$ in the matter central charge $\ccM$ universality class: for many such planar maps $M$, it is shown in~\cite[Theorem 1.6]{dg-lqg-dim} that the graph distance ball of radius $r \in \BB N$ centered at the root vertex of $M$ typically contains order $r^{d_{\ccM}}$ vertices.\footnote{One reason why it is natural for this ball volume exponent to coincide with the dimension of LQG is that the Minkowski dimension $d$ of a metric space can be defined by the condition that the number of metric balls of radius $\delta >0$ needed to cover a metric ball of radius 1 is of order $\delta^{-d}$. For $M$, the number of graph distance balls of radius 1 (i.e., singleton sets of vertices) needed to cover the ball of radius $r$ is its cardinality.}
We will show that for our model, the ball growth exponent is infinite when $\ccM \in (1,25)$, which suggests that the LQG distance function in this regime (if it can be shown to exist) should have infinite Hausdorff dimension. 

In the statement of the next theorem and in what follows, for $r\in\BB N$ we let $\mcl B_r^{\mcl S^1_h}(0)$ be the graph-distance ball in the dyadic tiling $\mcl S^1_h$ centered at 0, i.e., the set of all squares of $\mcl S_h^1$ lying at $D_{ h}^1$-distance at most $r$ from an origin-containing square. 
We write $ \#\mcl B_r^{\mcl S^1_h}(0)$ for the cardinality of this ball. 

\begin{thm}[Superpolynomial ball volume growth] \label{thm-ball-infty}
Let $\ccM \in (1,25)$, equivalently $Q\in (0,2)$.  
Almost surely,
\eqb \label{eqn-ball-infty}
\lim_{r\rta\infty} \frac{\log \#\mcl B_r^{\mcl S^1_h}(0)}{ \log r } = \infty. 
\eqe 
\end{thm}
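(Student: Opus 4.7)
\emph{Overall approach.} For $Q\in(0,2)$ the tiling $\mcl S_h^1$ has infinitely many squares at arbitrarily small scales, clustered near singularities. The plan is to (i) prove a lower bound, exponential in $n$, for the number of squares of $\mcl S_h^1\cap\BB D$ of side length at most $2^{-n}$, and (ii) show that each such square lies at $D_h^1$-distance at most subexponential in $n$ from an origin-containing square; these two steps together yield superpolynomial growth of $\#\mcl B_r^{\mcl S^1_h}(0)$.

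\emph{Step (i): counting small squares.} For a dyadic square $S$ of side length $2^{-n}$ in $\BB D$, the circle average $h_{|S|/2}(v_S)$ is centered Gaussian with variance $n\log 2 + O(1)$, so the Gaussian tail estimate yields
\eqbn
\BB P\left[M_h(S) > 1\right] = \BB P\left[h_{2^{-n-1}}(v_S) > Q n \log 2\right] \asymp 2^{-n Q^2/2}.
\eqen
Summing over the $\asymp 4^n$ such squares in $\BB D$, the expected number of ``subdivided'' squares (those with $M_h > 1$) of side length $2^{-n}$ is of order $2^{n(2-Q^2/2)}$, which grows exponentially for $Q < 2$. I would upgrade this to an almost sure lower bound $2^{n(2-Q^2/2 - o(1))}$ by a Paley--Zygmund/second moment argument (the $L^2$ bound comes from standard multiplicative chaos estimates on the circle average process), combined with a Borel--Cantelli step over a sparse sequence of scales if needed. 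Since a.s.\ the singular set of $\mcl S_h^1$ has Hausdorff dimension at most $2 - Q^2/2 < 2$ (singularities are ``at-least-$Q$-thick'' points; see~\cite{hmp-thick-pts}), no subdivided dyadic square can be entirely covered by singularities, so each such square contains at least one leaf of $\mcl S_h^1$ of strictly smaller side length. Because leaves in distinct subdivided squares of the same depth are disjoint, this yields $\#\{S\in\mcl S_h^1\cap\BB D:|S|\leq 2^{-n-1}\}\geq 2^{cn}$ almost surely for some $c = c(Q) > 0$ and all $n$ large.

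\emph{Step (ii): graph distance, and conclusion.} The remaining task is to show that every square $S\in\mcl S_h^1\cap\BB D$ of side length $2^{-n}$ lies at graph distance at most $F(n)$ from an origin-containing square, for some subexponential $F$ (polynomial would more than suffice). The strategy is to construct an explicit path by hierarchical descent: starting at an origin-containing square, navigate in $O(1)$ steps to a macroscopic neighbor of the dyadic ancestor of $S$ at scale 1, then descend through the ancestors of $S$ at scales $2^{-k}$, $k=1,\dots,n$, at each step moving in the planar map through a short chain of adjacent squares of comparable or smaller size. This is the crux of the proof: near thick points the combinatorial structure of the tiling can be intricate, and one must rule out pathological configurations where the descent gets stuck. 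Once this is in place, writing $r = F(n)$ gives $\#\mcl B_r^{\mcl S^1_h}(0)\geq 2^{cn}$, whence
\eqbn
\frac{\log \#\mcl B_r^{\mcl S^1_h}(0)}{\log r} \geq \frac{c n \log 2}{\log F(n)} \longrightarrow \infty \quad \text{as } n \to \infty,
\eqen
since $F$ is subexponential, establishing~\eqref{eqn-ball-infty}.
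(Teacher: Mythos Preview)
Your Step (ii) contains a genuine gap, and the claim as stated is false. It is \emph{not} true that every square $S\in\mcl S_h^1\cap\BB D$ of side length $2^{-n}$ lies at graph distance $F(n)$ from an origin-containing square with $F$ subexponential. To see why, consider a square $S$ near an $\alpha$-thick point $z$ with $\alpha$ close to $2$. In the annulus at Euclidean distance $\sim 2^{-k}$ from $z$, the field looks like $h+\alpha\log|\cdot-z|^{-1}$, so the squares of $\mcl S_h^1$ there have side length $\sim 2^{-\alpha k/Q}$; crossing that annulus therefore costs $\sim 2^{k(\alpha/Q-1)}$ squares. A leaf of side $2^{-n}$ sits at Euclidean distance $\sim 2^{-Qn/\alpha}$ from $z$, so summing over $k\le Qn/\alpha$ gives graph distance $\sim 2^{n(1-Q/\alpha)}$, which is genuinely exponential in $n$ whenever $\alpha>Q$. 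Your ``hierarchical descent through ancestors'' cannot avoid this: the ancestors of $S$ are not in $\mcl S_h^1$, and any path in $\mcl S_h^1$ from a macroscopic square to $S$ must cross all these annuli.

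The paper's proof confronts exactly this obstacle and resolves it by \emph{not} trying to reach every small square. Instead it fixes $K>1$, takes $\alpha=Q+1/K$, and shows (Proposition~\ref{prop-singularity-dist}) that near a typical $\alpha$-thick point the graph distance from a macroscopic square to a square of side $\le\ep^K$ is at most $\ep^{-g(\alpha,K)}$ with $g(Q+1/K,K)$ bounded \emph{uniformly in $K$}; the point is that the exponential blowup $2^{n(1-Q/\alpha)}$ becomes $\ep^{-(\alpha-Q)\xi K}=\ep^{-\xi}$ when $\alpha-Q=1/K$. It then uses the $\alpha$-LQG measure (Proposition~\ref{prop-small-square-count}) to show there are $\ge\ep^{-(2-\alpha)^2 K/2+o(1)}$ such reachable small squares, and sends $K\to\infty$. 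Your Step (i) counts \emph{all} small squares, including those near $2$-thick points, but those are precisely the ones for which no subexponential distance bound is available; if you plug in the worst-case distance $2^{n(1-Q/2)}$ against your count $2^{n(2-Q^2/2)}$ you get only $\log\#\mcl B_r/\log r\ge 2+Q$, which is finite. The missing idea is this coupling of the thickness parameter to the target depth so that the distance exponent stays bounded while the count diverges.
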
 

The proof of Theorem~\ref{thm-ball-infty}, which is carried out in Section~\ref{sec-ball-infty}, is the most involved part of the paper.
In order to establish the theorem, we will need to prove a number of estimates for distances in $\mcl S_h^\ep$ which are of independent interest.
See the beginning of Section~\ref{sec-ball-infty} for an outline of the proof and the estimates involved. 

Theorem~\ref{thm-ball-infty} is consistent with the general principle that whenever we analytically continue a formula for $\ccM < 1$ and get a complex answer, the corresponding quantity for $\ccM \in (1,25)$ should be degenerate. Indeed, even though we do not know the dimension $d_{\ccM}$ for $\ccM  < 1$, there are several predictions which appear to have some degree of validity. For example, Watabiki~\cite{watabiki-lqg} predicted that for $\ccM < 1$, 
\eqb \label{eqn-watabiki}
d_{\ccM} = 2\frac{\sqrt{49-\ccM}  + \sqrt{1-\ccM}}{\sqrt{25-\ccM} + \sqrt{1-\ccM}}  .
\eqe
This formula is known to be false when $\ccM$ is very negative~\cite{ding-goswami-watabiki}, but it appears to match up reasonably well with numerical simulations~\cite{ambjorn-budd-lqg-geodesic}. Alternatively,~\cite[Equation (1.16)]{dg-lqg-dim} proposes an alternative formula for $d_{\ccM}$ when $\ccM  < 1$, namely
\eqb \label{eqn-dg-guess}
d_\ccM = \gamma Q + \frac{\gamma}{\sqrt 6} =  \frac16 \left(25-\ccM  -  \sqrt{26 + 2\sqrt{( 25-\ccM ) (1-\ccM)} - 2\ccM} + \sqrt{( 25-\ccM ) (1-\ccM)}  \right) .
\eqe
Recent numerical simulations~\cite{bb-lqg-dim} fit much better with~\eqref{eqn-dg-guess} than with~\eqref{eqn-watabiki}; and unlike~\eqref{eqn-watabiki} the formula~\eqref{eqn-dg-guess} is consistent with all rigorously known bounds. However, there is currently no theoretical justification for~\eqref{eqn-dg-guess} (even at a heuristic level). 
If we take $\ccM \in (1,25)$ in either~\eqref{eqn-watabiki},~\eqref{eqn-dg-guess}, or in the upper or lower bounds from~\cite{dg-lqg-dim}, we get a complex value for $d_{\ccM}$, which is consistent with the infinite ball growth exponent in Theorem~\ref{thm-ball-infty}. 

\begin{remark} \label{remark-c<1-ball-growth}
When $\ccM < 1$, it is not hard to show, using similar arguments to those in~\cite[Section 3.1]{dzz-heat-kernel} and~\cite[Section 3.2]{dg-lqg-dim}, that a.s.\ $\#\mcl B_r^{\mcl S_h^1}(0) = r^{d_{\ccM} + o_r(1)}$. We will not carry this out here since our main interest is in the case when $\ccM \in (1,25)$ but we remark that this provides one indication that the planar map $\mcl S_h^\ep$ behaves like a random planar map in the matter central charge $\ccM$ universality class. 
	
Another indication that $\mcl S_h^\ep$ is for $\ccM < 1$ is related to LQG is provided by \cite{gms-tutte}. This work studies a discretization of an LQG surface with matter central charge $\ccM < 1$ which is closely related to $\mcl S_h^\ep$, except that instead of subdividing into squares the authors divide into sets of the form $\eta([x-\ep,x])$ for $x\in\ep\BB Z$, where $\eta$ a space-filling SLE curve parametrized so that it traverses one unit of LQG mass in one unit of time. Using a general theorem from~\cite{gms-random-walk}, it is proven that the counting measure on the vertices of the random planar map defined by this subdivision converges to the area measure on the original LQG surface under the so-called Tutte embedding (a.k.a.\ the harmonic embedding or the barycentric embedding). Again, we expect that similar arguments to those in~\cite{gms-tutte} can be used to show that the counting measure on vertices of $\mcl S_h^\ep$ under the Tutte embedding converges to the LQG area measure when $\ccM < 1$, but we do not carry this out here.
\end{remark}

In contrast to Theorem~\ref{thm-ball-infty}, the $D_h^\ep$-distance between two typical points of $\BB C$ a.s.\ grows polynomially in $\ep$. 

\begin{prop}[Point-to-point distances grow polynomially] \label{prop-ptwise-distance}
Let $\ccM \in (-\infty,25)$, equivalently $Q > 0$. There are finite positive numbers $\ul\xi  ,\ol \xi  > 0$, depending on $\ccM$, such that for each fixed distinct $z,w\in\BB C$, a.s.\
\eqb \label{eqn-ptwise-distance}
\ep^{-\ul\xi + o_\ep(1)} \leq  D_h^\ep(z,w)   \leq  \ep^{-\ol\xi -  o_\ep(1)} \quad \text{as $\ep\rta 0$}  .
\eqe   
\end{prop}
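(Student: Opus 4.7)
The plan is to establish the two bounds separately; both rely on first-passage estimates for the drift-corrected circle-average process $t \mapsto h_{e^{-t}}(v) - Q t$, whose first hitting of $\log \ep$ corresponds to the scale at which the mass process $\log M_h$ along the dyadic ancestors of $v$ first drops below $\log \ep$, and hence to the side length of the square of $\mcl S_h^\ep$ containing $v$. For the lower bound I would start from the trivial observation that any path $(S_0, \ldots, S_k)$ in $\mcl S_h^\ep$ from a square containing $z$ to one containing $w$ has $\sum_i \mathrm{diam}(S_i) \geq |z - w|$, so $k \geq |z - w|/D_{\max}$ where $D_{\max}$ is the maximum diameter of a square on the path. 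I would then prove that for any $\alpha \in (0, 1/(Q+2))$, with probability $1 - O(\ep^{c_\alpha})$ every square of $\mcl S_h^\ep$ intersecting $B_{2|z - w|}(z)$ has side length at most $\ep^\alpha$: the pointwise bound $\BB P[|S_v| > \ep^\alpha] \lesssim \ep^{(1 - Q\alpha)^2 / (2\alpha)}$ follows from the classical first-passage density for Brownian motion with drift $-Q$, and a union bound over an $\ep^\alpha$-grid of $O(\ep^{-2\alpha})$ centers decays polynomially precisely when $\alpha < 1/(Q+2)$. Borel--Cantelli along $\ep_n = 2^{-n}$ combined with monotonicity of $D_h^\ep$ in $\ep$ upgrades this to an a.s.\ bound; paths that exit $B_{2|z - w|}(z)$ must return, so they are only longer. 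This yields any $\ul\xi < 1/(Q+2)$.

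For the upper bound I would first apply the scaling identity~\eqref{eqn-dist-scaling} to reduce to $|z - w| \asymp 1$, replacing $\ep$ by $\ep / |z - w|^Q$. When $Q > \sqrt{2}$ the line segment $[z, w]$ has Minkowski dimension $1 < Q^2/2$, so Theorem~\ref{thm-kpz} gives $N_h^\ep([z, w]) \leq \ep^{-(Q - \sqrt{Q^2 - 2}) - o_\ep(1)}$ almost surely; the singular set has Hausdorff dimension $2 - Q^2/2 < 1$ by~\cite{hmp-thick-pts} and the discussion following Definition~\ref{def-thick-pt}, so $[z, w]$ a.s.\ avoids singularities and consecutive squares meeting it are graph-adjacent, giving $D_h^\ep(z, w) \leq N_h^\ep([z, w])$. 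When $Q \in (0, \sqrt{2}]$ the straight-segment approach breaks down: squares of $\mcl S_h^\ep$ accumulate near singularities on $[z, w]$ with positive probability and $\BB E[N_h^\ep([z, w])] = \infty$. In this regime my plan is a hierarchical decomposition: recursively subdivide a starting dyadic square $S^*$ containing $z$ and $w$, so that each sub-square either already lies in $\mcl S_h^\ep$ (and contributes one step) or is split into its four quadrants, whose recursive paths are then concatenated by $O(1)$-length bridges through adjacent squares of $\mcl S_h^\ep$. On a good event controlled by the same first-passage estimates as in the lower bound, the recursion terminates at depth $\lesssim \log_2(1/\ep)/Q$ with total length bounded by $\ep^{-\ol\xi}$ for some $\ol\xi = \ol\xi(Q) > 0$.

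The main obstacle is the bridging step in the hierarchical construction when $Q \leq \sqrt{2}$: the interface between two adjacent quadrants may itself pass through singularities, forcing the bridge to detour through neighboring squares of potentially very different sizes. I would handle this by bounding each bridge cost in terms of a distance on a slightly coarser tiling $\mcl S_h^{\ep'}$ with $\ep' > \ep$ and iterating the estimate across scales; alternatively, one could try to import tightness inputs from the recent works~\cite{dddf-lfpp,gm-uniqueness} on the closely related Liouville first-passage percolation distance. The lower bound and the $Q > \sqrt{2}$ case of the upper bound follow from Theorem~\ref{thm-kpz} together with elementary Brownian first-passage estimates, whereas the $Q \leq \sqrt{2}$ upper bound is where essentially all of the technical work is concentrated.
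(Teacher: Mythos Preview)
Your lower bound argument and your $Q > \sqrt 2$ upper bound via Theorem~\ref{thm-kpz} applied to the segment $[z,w]$ are correct and coincide with what the paper does (the paper invokes Lemma~\ref{lem-max-square-diam} for the square-size bound, which is exactly your Gaussian-tail-plus-union-bound computation, and explicitly notes the straight-line KPZ argument for $Q>\sqrt 2$).

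The gap is precisely where you locate it: the hierarchical decomposition for $Q \in (0,\sqrt 2]$. The bridging step is not a technical nuisance but the entire problem. Connecting two adjacent quadrants across their shared edge is itself a rectangle-crossing problem in $\mcl S_h^\ep$, and since that edge has dimension $1 > Q^2/2$, it a.s.\ contains singularities of $\mcl S_h^\ep$; so the recursion does not terminate at uniformly bounded depth and the ``$O(1)$-length bridge'' is simply not available. Your proposed workarounds do not close the gap: passing to a coarser $\mcl S_h^{\ep'}$ just defers the same crossing problem to scale $\ep'$, and the LFPP tightness results of~\cite{dddf-lfpp,gm-uniqueness} are established only for $\ccM < 1$ ($Q > 2$), so they give no input in the regime $Q \leq \sqrt 2$.

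The paper's resolution is a different idea: rather than crossing along a deterministic curve (which fails because it hits singularities), one crosses along a \emph{random} curve adapted to $h$ on which the field values stay bounded. Concretely, for a rectangle $R$ one takes an SLE$_4$ level line $\eta$ of (a close variant of) $h$ between the two short sides. Conditionally on $\eta$, the field near $\eta$ is a zero-boundary GFF in the complementary domain plus a bounded constant, so circle averages at distance $\asymp 2^{-n}$ from $\eta$ have bounded conditional variance. Hence dyadic squares of side $2^{-n} \asymp \ep^{1/Q}$ that hug $\eta$ satisfy $M_h(S) \leq \ep$ with high probability, and since $\eta$ has Minkowski dimension $3/2$ there are $\ep^{-3/(2Q) + o(1)}$ of them. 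This yields a polynomial-length crossing of $R$ for every $Q > 0$ (Lemmas~\ref{lem-dist-along-line} and~\ref{lem-gff-perc}), and a percolation/grid argument (Proposition~\ref{prop-macro-square-dist}) then upgrades this to the point-to-point bound.
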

 
The lower bound in Proposition~\ref{prop-ptwise-distance} (with $\ul \xi = 1/(2+Q)$) is essentially obvious --- it follows since a basic Gaussian tail estimate shows that the maximal side length of the squares of $\mcl S_h^\ep$ which intersect any fixed compact set is at most $\ep^{ 1/(2+Q) + o_\ep(1)}$. 
The upper bound in the case when $\ccM < 13$ ($Q > \sqrt 2$) with $\ol \xi = Q - \sqrt{Q^2-2 }$ is an immediate consequence of Theorem~\ref{thm-kpz} applied with $X$ equal to a straight line from $z$ to $w$. 
The upper bound in the case when $\ccM \in [13,25)$ takes a little more thought.
Indeed, in this case the number of squares of $\mcl S_h^\ep$ which intersect any deterministic continuous path is a.s.\ infinite for small enough $\ep$, so we need to consider random paths. We will look at a path of squares which follows a ``level line" of the GFF in the sense of~\cite{ss-contour}, which is an SLE$_4$ curve coupled with the field in such a way that the field values along the curve are of constant order. See Lemma~\ref{lem-dist-along-line}. 
We do not emphasize the particular values of $\ul\xi$ and $\ol \xi$ in Proposition~\ref{prop-ptwise-distance} since we expect that these bounds are far from optimal.

We expect that one can show that in fact there exists an exponent $\xi  = \xi(\ccM) >0$ such that for any fixed $z,w\in\BB C$, a.s.\ $D_h^\ep(z,w) = \ep^{\xi + o_\ep(1)}$, using similar techniques to those in~\cite{dzz-heat-kernel} (which proves the existence of such an exponent for several similar models in the case when $\ccM < 1$). We will not carry this out here, however.

We will now explain how Proposition~\ref{prop-ptwise-distance} is related to analytic continuation. 
In the case when $\ccM < 1$, it follows from results in~\cite{dzz-heat-kernel,dg-lqg-dim} that for fixed distinct points $z,w\in\BB C$, a.s.\ 
\eqb \label{eqn-c<1-dist}
D_h^\ep(z,w) = \ep^{-\gamma/d_{\ccM} + o_\ep(1)} , 
\eqe 
where $d_{\ccM}$ is the Hausdorff dimension LQG with matter central charge $\ccM$, as above, and $\gamma  = \gamma(\ccM) \in (0,2)$ is the coupling constant.\footnote{To be more precise,~\cite[Section 5]{dzz-heat-kernel} shows that~\eqref{eqn-c<1-dist} holds for the variant of $D_h^\ep$ where we replace the circle average by the truncated white-noise decomposition of the field (here we note that $\chi = 2/d_{\ccM}$ in the notation of~\cite{dzz-heat-kernel}, see~\cite{dg-lqg-dim}; and that the parameter $\ep$ in~\cite{dzz-heat-kernel} corresponds to $\ep^{2/\gamma}$ in our setting). 
One can compare this variant of $D_h^\ep$ to $D_h^\ep$ itself using Lemma~\ref{lem-tr-compare-square} below. }
If we plug in a reasonable guess for $d_{\ccM}$ (e.g.,~\eqref{eqn-watabiki} or~\eqref{eqn-dg-guess}) then $d_{\ccM}$ and $\gamma$ are both complex for $\ccM \in (1,25)$, but the ratio $\gamma/d_{\ccM}$ is real. It is natural to guess that the (still unknown) formula for $\gamma/d_{\ccM}$ analytically continues to the case when $\ccM \in (1,25)$ and the relation~\eqref{eqn-c<1-dist} remains valid in this regime (but we would not go so far as to make this a conjecture). This is consistent with the polynomial growth of point-to-point distances observed in Proposition~\ref{prop-ptwise-distance}.

\subsection{Basic notation}
\label{sec-basic}

\noindent
We write $\BB N = \{1,2,3,\dots\}$ and $\BB N_0 = \BB N \cup \{0\}$. 
\medskip

\noindent
For $a < b$, we define the discrete interval $[a,b]_{\BB Z}:= [a,b]\cap\BB Z$. 
\medskip

\noindent
If $f  :(0,\infty) \rta \BB R$ and $g : (0,\infty) \rta (0,\infty)$, we say that $f(\ep) = O_\ep(g(\ep))$ (resp.\ $f(\ep) = o_\ep(g(\ep))$) as $\ep\rta 0$ if $f(\ep)/g(\ep)$ remains bounded (resp.\ tends to zero) as $\ep\rta 0$. We similarly define $O(\cdot)$ and $o(\cdot)$ errors as a parameter goes to infinity. 
\medskip

\noindent
If $f,g : (0,\infty) \rta [0,\infty)$, we say that $f(\ep) \preceq g(\ep)$ if there is a constant $C>0$ (independent from $\ep$ and possibly from other parameters of interest) such that $f(\ep) \leq  C g(\ep)$. We write $f(\ep) \asymp g(\ep)$ if $f(\ep) \preceq g(\ep)$ and $g(\ep) \preceq f(\ep)$. 
\medskip

\noindent
Let $\{E^\ep\}_{\ep>0}$ be a one-parameter family of events. We say that $E^\ep$ occurs with
\begin{itemize}
\item \emph{polynomially high probability} as $\ep\rta 0$ if there is a $p > 0$ (independent from $\ep$ and possibly from other parameters of interest) such that  $\BB P[E^\ep] \geq 1 - O_\ep(\ep^p)$. 
\item \emph{superpolynomially high probability} as $\ep\rta 0$ if $\BB P[E^\ep] \geq 1 - O_\ep(\ep^p)$ for every $p>0$. 
\item \emph{exponentially high probability} as $\ep\rta 0$ if there exists $\lambda >0$ (independent from $\ep$ and possibly from other parameters of interest) $\BB P[E^\ep] \geq 1 - O_\ep(e^{-\lambda/\ep})$. 
\end{itemize}
We similarly define events which occur with polynomially, superpolynomially, and exponentially high probability as a parameter tends to $\infty$. 
\medskip

\noindent
We will often specify any requirements on the dependencies on rates of convergence in $O(\cdot)$ and $o(\cdot)$ errors, implicit constants in $\preceq$, etc., in the statements of lemmas/propositions/theorems, in which case we implicitly require that errors, implicit constants, etc., appearing in the proof satisfy the same dependencies.

\subsubsection*{Acknowledgments}

We are grateful to several individuals for helpful discussions, including Timothy Budd, Jian Ding, Bertrand Duplantier, Antti Kupiainen, Greg Lawler, Eveliina Peltola, R\'emi Rhodes, Scott Sheffield, Xin Sun, and Vincent Vargas.  
We thank Scott Sheffield for suggesting the idea of using square subdivisions to approximate LQG for $\ccM   \in (1,25)$.
We also thank the anonymous referee for numerous helpful suggestions and comments.
E.G.\ was partially supported by a Herchel Smith fellowship and a Trinity College junior research fellowship.
N.H.\ was supported by Dr.\ Max R\"ossler, the Walter Haefner Foundation, and the ETH Z\"urich Foundation.
G.R.\ was partially supported by a National Science Foundation mathematical sciences postdoctoral research fellowship.
J.P.\ was partially supported
by the National Science Foundation Graduate Research Fellowship under Grant No.\ 1122374.

\section{Further discussion concerning LQG with matter central charge $\ccM \in (1,25)$}
\label{sec-discussion}

This section includes additional discussion concerning the meaning of ``LQG with matter central charge $\ccM \in (1,25)$", and the relationship between this work and other parts of the mathematics and physics literature. Nothing in this section is needed to understand the proofs of our main results.

\subsection{Liouville quantum gravity surfaces for $\ccM < 25$}
\label{sec-lqg-surface}
Since $Q > 0$ for $\ccM < 25$, one may extend the formal definition of a Liouville quantum gravity surface for $\ccM \leq 1$ from~\cite{shef-kpz,shef-zipper,wedges} verbatim to the case when $\ccM \in (1,25)$. 

\begin{defn} \label{def-lqg-surface}
A \emph{Liouville quantum gravity surface} with matter central charge $\ccM  < 25$ is an equivalence class of pairs $(\mcl D , h)$ where $\mcl D\subset \BB C$ is an open domain, $h$ is a distribution (generalized function) on $\mcl D$ (which we will always take to be some variant of the GFF), and two such pairs $(\mcl D , h)$ and $(\wt{\mcl D} , \wt h)$ are declared to be equivalent if there is a conformal map $f : \wt{\mcl D} \rta \mcl D$ such that $\wt h = h\circ f + Q\log |f'| $. 
\end{defn}

We think of two equivalent pairs as being two different parametrizations of the same LQG surface. In the case when $\ccM \leq 1$, one of the major motivations for the above definition of LQG surfaces is that the $\gamma$-LQG area measure is preserved under coordinate changes, i.e., a.s.\ $\mu_{h\circ f +Q\log |f'|}^\gamma(f^{-1}(A)) = \mu_h^\gamma(A)$ for each $A\subset \mcl D$~\cite[Proposition 2.1]{shef-kpz} (see~\cite[Theorem 13]{shef-renormalization} for the case $\ccM = 1$).  
It is shown in~\cite{gm-coord-change} that for $\ccM < 1$, the LQG distance function is conformally covariant in a similar sense.

See Section~\ref{sec-action} for a discussion about the relationship between the coordinate change relation of Definition~\ref{def-lqg-surface} and the Liouville action. We emphasize that, unlike in the case when $\ccM \leq 1$, we do not have a definitive heuristic derivation of the coordinate change formula from the Liouville action in the case when $\ccM > 1$. Nevertheless, as we discuss below, the coordinate change formula for $\ccM > 1$ is consistent with our proposed discrete model of LQG. 

\subsubsection{Relationship to the square subdivision model}
\label{sec-subdivision-coord}

The discretization considered in Section~\ref{sec-tiling-def} is not exactly preserved under coordinate changes of the form considered in Definition~\ref{def-lqg-surface} (i.e., we do not have $D_{\wt h}^\ep(f^{-1}(z) , f^{-1}(w)) = D_h^\ep(z,w)$), but it is approximately preserved in the following sense. With $M_h(S)$ as in~\eqref{eqn-mass-def} and $C>0$, one has $M_h(S) = C^Q M_{h(\cdot/C)}(C S)$. Therefore, if $C$ is dyadic (so that $C  S$ is a dyadic square whenever $S$ is a dyadic square), one has $D_{h(\cdot/C) + Q\log(1/C)}^\ep(C z , C w) = D_h^\ep(z,w)$ for each $z,w \in \mcl D$ and each $\ep > 0$.   
This suggests that subsequential limits of $D_h^\ep$ (if they can be shown to exist) should be covariant with respect to spatial scaling (at least by dyadic scaling factors).

Although the law of $\mcl S_h^\ep$ itself is not rotationally invariant (except by angles which are integer multiples of $\pi/2$), we expect that geodesics of subsequential limits of $D_h^\ep$ should have no local notion of direction. This should allow one to show that the limiting distance function is also rotationally invariant.
Indeed, it is shown in~\cite[Corollary 1.3]{gm-uniqueness} (see also~\cite[Remark 1.6]{gm-uniqueness}) that a distance function associated with $\gamma$-LQG for $\gamma \in (0,2)$ must be rotationally invariant once it is known to be scale invariant and to satisfy a few other natural axioms.
We expect a similar situation for LQG with central charge in $(1,25)$. 
Once one has scale and rotational covariance, one can deduce conformal covariance using that general conformal maps locally look like complex affine maps.
See~\cite{gm-coord-change} for a proof that the $\gamma$-LQG distance function for $\gamma\in (0,2)$ is conformally covariant starting from the fact that it is covariant with respect to scaling, translation, and rotation.

\subsubsection{LQG measures on lower-dimensional fractals}
\label{sec-low-dim-fractal}

Our model suggests that for $\ccM \in (1,25)$, there is not a natural locally finite measure on $\BB C$ corresponding to LQG with matter central charge $\ccM$ (see Section~\ref{sec-open-problems} for some related open problems). Indeed, this is because for any fixed open set $V\subset\BB C$, the set $V$ will a.s.\ contain a singularity of $\mcl S_h^\ep$  --- and hence will contain infinitely many squares of $\mcl S_h^\ep$ --- for small enough $\ep > 0$ since the set of $\alpha$-thick points with $\alpha\in(Q,2)$ is dense. 
However, one can define LQG measures on fractals of dimension less than 2 which are invariant under coordinate changes, as we now explain. 

Let $X\subset\BB C$ and suppose that for some $x \in(0,2]$ the $x$-dimensional Minkowski content of $X$ is well-defined and defines a locally finite and non-trivial measure $\frk m$ that has finite $x'$-dimensional energy for any $x'\in(0,x)$, i.e.,  
\eqbn
\int_X \int_X  |z-w|^{-x'} \frk m(dz)\frk m(dw)<\infty.
\eqen
An example of a fractal satisfying this property is an SLE$_\kappa$ curve for $\kappa \in (0,8)$; see~\cite{lawler-rezai-nat}.  

For any such fractal $X$ which is either deterministic or independent of the field $h$ and any $\gamma'<\sqrt{2 x}$ 
one may define the Gaussian multiplicative chaos measure $\mu_{h,\frk m}^{\gamma'}=e^{\gamma' h}\,d\frk m$  
by a similar regularization procedure as for the area measure, see e.g.\ \cite{rhodes-vargas-review,berestycki-gmt-elementary}. The following proposition says that the resulting LQG measure is invariant under coordinate changes with 
\eqb
	Q=x/\gamma'+\gamma'/2. 
\label{eq:Q-d-gamma}
\eqe
Therefore, one can think of $\mu_{h,\frk m}^{\gamma'}$ as the LQG measure on $X$ corresponding to LQG with background charge $Q$.

\begin{prop}
	Let $\ccM  < 25$. Let $X$, $\frk m$, and $x\in(0, Q^2/2 )\cap(0,2]$ be as above, and choose $\gamma'$ such that \eqref{eq:Q-d-gamma} is satisfied. Let $\mcl D\subset\BB C$, $\wt{\mcl D}\subset\BB C$, $f:\wt{\mcl D}\to\mcl D$, and $\wt h=h\circ f +Q\log |f'|$ be as in Definition \ref{def-lqg-surface}, and let $\wt{\frk m}$ denote the Minkowski content of $f^{-1}(X)$. Almost surely, for every Borel set $A\subset \mcl D$,
	\eqb
	\mu_{\wt h,\wt{\frk m}}^{\gamma'}(f^{-1}(A)) = \mu_{h,\frk m}^{\gamma'}(A).
	\label{eq:coordinate-change}
	\eqe
	\label{prop:coordinatechange}
\end{prop}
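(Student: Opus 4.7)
The plan is to approximate both sides of~\eqref{eq:coordinate-change} by their circle-average regularizations, change variables via $z = f^{-1}(w)$, and verify that the factors of $|f'|$ arising from (i) the rescaling of the GMC regularization, (ii) the $Q \log|f'|$ correction to $\wt h$, and (iii) the scaling of $x$-dimensional Minkowski content under $f^{-1}$ cancel exactly when $Q = x/\gamma' + \gamma'/2$. This is the direct generalization to fractal carriers of the coordinate-change argument for the $\gamma$-LQG area measure in~\cite[Proposition~2.1]{shef-kpz}, with Lebesgue measure replaced by $x$-dimensional Minkowski content.

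First I would reduce to the case where $A$ is precompact in $\mcl D$ by writing $X$ as a countable union of compact sets. For $\ep > 0$, define the regularized measures
\[
\mu^{\gamma'}_{h,\frk m, \ep}(dw) := \ep^{(\gamma')^2/2} e^{\gamma' h_\ep(w)} \, \frk m(dw), \qquad \mu^{\gamma'}_{\wt h, \wt{\frk m}, \ep}(dz) := \ep^{(\gamma')^2/2} e^{\gamma' \wt h_\ep(z)} \, \wt{\frk m}(dz).
\]
Since $x < Q^2/2$, the subcritical root $\gamma' = Q - \sqrt{Q^2 - 2x}$ of \eqref{eq:Q-d-gamma} satisfies $(\gamma')^2 < 2x$; combined with the finite $x'$-energy hypothesis this is Kahane's subcriticality condition for GMC against $\frk m$, and the classical theory (see, e.g.,~\cite{rhodes-vargas-review, berestycki-gmt-elementary}) yields a.s.\ convergence of the two regularizations to $\mu^{\gamma'}_{h,\frk m}$ and $\mu^{\gamma'}_{\wt h, \wt{\frk m}}$, respectively, as $\ep \to 0$.

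Next I would combine two ingredients. First, the approximate conformal covariance of circle averages: uniformly on compact subsets of $\wt{\mcl D}$,
\[
\wt h_\ep(z) = h_{|f'(z)| \ep}(f(z)) + Q \log |f'(z)| + o_\ep(1),
\]
as used in~\cite[Proposition~2.1]{shef-kpz}. Second, the scaling of $x$-dimensional Minkowski content under the conformal map $f^{-1}$: since $|(f^{-1})'(w)| = |f'(f^{-1}(w))|^{-1}$, for any bounded Borel $\phi$,
\[
\int \phi(z)\, \wt{\frk m}(dz) = \int \phi(f^{-1}(w)) \, |f'(f^{-1}(w))|^{-x} \, \frk m(dw).
\]
Writing $C(w) := |f'(f^{-1}(w))|$ and applying the change of variables $z = f^{-1}(w)$ to $\mu^{\gamma'}_{\wt h, \wt{\frk m}, \ep}(f^{-1}(A))$, the two displays combine to give
\[
\mu^{\gamma'}_{\wt h, \wt{\frk m}, \ep}(f^{-1}(A)) = (1 + o_\ep(1)) \int_A (C(w)\ep)^{(\gamma')^2/2} e^{\gamma' h_{C(w)\ep}(w)} \, C(w)^{\gamma' Q - x - (\gamma')^2/2} \, \frk m(dw).
\]
The exponent $\gamma' Q - x - (\gamma')^2/2$ vanishes precisely under the hypothesis~\eqref{eq:Q-d-gamma}, so the $C(w)^{\cdots}$ factor disappears and the integrand becomes a circle-average GMC regularization of $h$ against $\frk m$ at the spatially varying scale $C(w)\ep$.

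The main obstacle is passing the limit $\ep \to 0$ through this spatially-varying-scale regularization, since standard GMC convergence results are formulated for a fixed scale. Two natural approaches are: (a) partition $A$ into finitely many pieces on which $C$ is close to constant (possible since $C$ is continuous and bounded above and below on compact subsets of $A$), apply fixed-scale GMC convergence on each piece, and let the partition mesh go to zero; or (b) replace circle-average regularization by a smooth-mollifier regularization that transforms exactly conformally under $f$, and invoke the universality of the GMC limit (Kahane) to identify the result with $\mu^{\gamma'}_{h, \frk m}$. Both routes mirror the strategy of~\cite[Proposition~2.1]{shef-kpz} in the $x=2$ case. Once~\eqref{eq:coordinate-change} holds a.s.\ for each fixed Borel $A$, the a.s.\ equality of the two measures on all Borel subsets follows by restricting to a countable generating family.
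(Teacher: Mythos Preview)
Your approach is essentially correct and follows the Duplantier--Sheffield route from \cite[Proposition~2.1]{shef-kpz}, but the paper takes a genuinely different and slicker path that sidesteps the spatially-varying-scale obstacle you identify. Instead of circle-average regularizations, the paper expands $h$ in a Dirichlet-orthonormal basis $(\phi_i)$, sets $h^n = \sum_{i\le n}\alpha_i\phi_i$, and defines the approximating measures with the conformal-radius normalization
\[
\mu^{\gamma'}_{h^n,\frk m}(A)=\int_A \exp\Bigl(\gamma' h^n(z)-\tfrac{(\gamma')^2}{2}\BB E[h^n(z)^2]+\tfrac{(\gamma')^2}{2}\log R(z,\mcl D)\Bigr)\,\frk m(dz).
\]
Berestycki's uniqueness argument \cite{berestycki-gmt-elementary} gives $\mu^{\gamma'}_{h^n,\frk m}(A)=\BB E[\mu^{\gamma'}_{h,\frk m}(A)\mid\mcl F_n]$, and similarly on the $\wt{\mcl D}$ side with $\wt h^n := h^n\circ f + Q\log|f'|$. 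The point is that for each fixed $n$ the identity $\mu^{\gamma'}_{\wt h^n,\wt{\frk m}}(f^{-1}(A))=\mu^{\gamma'}_{h^n,\frk m}(A)$ is an \emph{exact} pointwise computation (using $\log R(f(z),\mcl D)-\log R(z,\wt{\mcl D})=\log|f'(z)|$ and the Minkowski-content scaling), with no $o_\ep(1)$ error and no spatially varying scale. Passing to the limit is then just martingale convergence.

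What each approach buys: yours is the natural extension of the original Duplantier--Sheffield argument and makes the geometric cancellation $\gamma' Q - x - (\gamma')^2/2 = 0$ transparent, but you have to work to justify GMC convergence at a spatially varying radius $C(w)\ep$ (your routes (a) and (b) are both viable, but neither is entirely free). The paper's approach trades this for the conformal-radius normalization and the martingale/orthonormal-basis machinery, which makes the finite-$n$ identity exact and the limit automatic.
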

The proposition illustrates that our definition of an LQG surface with matter central charge $\ccM  \in (1,25)$ (Definition \ref{def-lqg-surface}) is a natural extension of LQG for central charge $\ccM \leq 1$ since it can be equipped with natural coordinate-invariant measures.
Note that given $Q\in(0,\infty)$, there exists $\gamma'\in(0,\sqrt{2x})$ satisfying \eqref{eq:Q-d-gamma} if and only if $x\in (0,Q^2/2)$ and $x\leq 2$. See \cite[Proposition 2.1]{shef-kpz} for a proof of the proposition in the case where $\frk m$ is Lebesgue area measure.
For $Q\in (0,2)$ (equivalently, $\ccM \in (1,25)$), there does not seem to be a ``canonical" choice of the random fractal $X$ so unlike in the case when $\ccM \leq 1$ it is not clear how to use the measures $\mu_{h,\frk m}^{\gamma'} $ to make rigorous sense of the partition function for Liouville CFT.

\begin{proof}[Proof of Proposition \ref{prop:coordinatechange}]
	First observe that, by the monotone class theorem, it is sufficient to prove \eqref{eq:coordinate-change} for a fixed set $A$. When applying the monotone class theorem we use that the Borel $\sigma$-algebra on $\mcl D$ is generated by the collection of dyadic rectangles contained in $\mcl D$ (i.e., rectangles with dyadic vertices whose sides are parallel to the coordinate axes) and that the collection of sets $A$ for which \eqref{eq:coordinate-change} holds is closed under monotone intersections and unions by continuity of the measures.
	
	We will prove the result for $h$ an instance of the Gaussian free field in $\mcl D$ since the result for other fields follow by local absolute continuity. For $\phi_1,\phi_2,\dots$ a smooth orthonormal basis for the Dirichlet inner product, we can find a coupling of $h$ and i.i.d.\ standard normal random variables $\alpha_1,\alpha_2,\dots$ such that $h=\alpha_1 \phi_1+\alpha_2\phi_2+\dots$. Define $h^n=\alpha_1 \phi_1+\dots+\alpha_n\phi_n$, and let $\mcl F_n$ be the $\sigma$-algebra generated by $\alpha_1,\dots,\alpha_n$. Recall that for $z\in\mcl D$, the conformal radius of $\mcl D$ viewed from $z$ is defined by $R(z,\mcl D)=|g'(z)|^{-1}$, where $g:\mcl D\to\BB D$ is a conformal map to the unit disk with $g(z)=0$. Define the measure $\mu_{h^n,\frk m}^{\gamma'}$ as follows for $A\subset\mcl D$ 
	$$
	\mu_{h^n,\frk m}^{\gamma'}(A)
	= 
	\int_A e^{ \gamma' h^n(z)-\frac{(\gamma')^2}{2}\BB E[h^n(z)^2] +\frac{(\gamma')^2}{2}\log R(z,\mcl D)}\, \frk m(dz).
	$$
	In the proof of uniqueness of \cite[Theorem 1]{berestycki-gmt-elementary} it is proved that for any fixed $A\subset\mcl D$ we have $\mu_{h^n,\frk m}^{\gamma'}(A)=\BB E[\mu_{h,\frk m}^{\gamma'}(A)\,|\,\mcl F_n ]$. Defining $\wt h^n$ by $\wt h^n=h^n\circ f+Q\log|f'|$, the same argument gives that $\mu_{\wt h^n,\wt{\frk m}}^{\gamma'}(f^{-1}(A))=\BB E\big[\mu_{\wt h,\wt{\frk m}}^{\gamma'}(f^{-1}(A))\,|\,\mcl F_n \big]$. Therefore, to conclude the proof of the proposition it is sufficient to show the following for any $n\in\BB N$
	\eqbn
	\mu_{\wt h^n,\wt{\frk m}}^{\gamma'}(f^{-1}(A))
	=
	\mu_{h^n,\frk m}^{\gamma'}(A).
	\eqen
	This identity is immediate by using the definition of $\mu_{h^n,\frk m}^{\gamma'}(A)$ and $\mu_{\wt h^n,\wt{\frk m}}^{\gamma'}(f^{-1}(A))$, that $\log(f(z),{\mcl D})-\log R(z,\wt{\mcl D})=\log|f'(z)|$, and that for any $U\subset A$ we have $\frk m(U)=\int_{f(U)} |f'(z)|^x\,\wt{\frk m}(dz)$ (by the definition of Minkowski content). In particular, the term $\frac{\gamma'}{2}\log|f'(z)|$ in $Q\log|f'(z)|$ accounts for the change in conformal radius, and the term $\frac{x}{\gamma}\log|f'(z)|$ accounts for the rescaling of the measure $\frk m$.
\end{proof}

For $\ccM \leq 1$ we get a good approximation to the LQG area measure of a set by counting the number of squares in the dyadic tiling $\mcl S_h^\ep$ which intersect the set, since each square in $\mcl S_h^\ep$ has LQG area approximately $\ep^{\gamma}$. We expect that we can use a similar method to approximate the LQG measure of other sets $X\subset\BB C$. In this setting the LQG measure of $X$ should be approximately given by 
\eqb
N_h^\ep(X) \ep^{Q-\sqrt{Q^2-2x}}, 
\label{eq:measure-approx}
\eqe
where $N_h^\ep(X)$ is the number of squares of $\mcl S_h^\ep$ that intersect $X$ and $x$ is the Euclidean dimension of $X$.\footnote{Due to lattice effects of the dyadic tiling we do not expect the approximation \eqref{eq:measure-approx} to converge exactly to the LQG measure of $X$ as defined earlier in this subsection for all fixed choices of $X$. For example, the line segment $[0,1] \times \{0\}$ typically intersects approximately twice as many squares as the line segment $[0,1]\times \{r\}$ for $r\in \BB R\setminus \BB Q$ close to 1, but these two line segments should have approximately the same LQG measure. However, we believe that certain variants of \eqref{eq:measure-approx} do converge to the LQG measure of $X$, e.g.\ we can consider versions of the square subdivision where the set of possible boxes are translated by $z\in [0,1]^2$ and average over $z$.} We do not work this out carefully here, however. Note that the exponent $Q - \sqrt{Q^2-2x}$ comes from Theorem~\ref{thm-kpz}. 
 
\subsubsection{Liouville action}
\label{sec-action}

Recall that $Q = \sqrt{\frac{25-\ccM}{6}}$. 
One has the following formula for the Liouville action, where $\mcl D $ is a compact, boundaryless, surface, $g$ is a Riemannian metric on $\mcl D $ with associated curvature $R_g$, gradient $\partial^g $,  volume form $d \lambda_g$, and $\mu \geq 0$:
\eqbn
S_{\mcl D}(\varphi) := \frac{1}{4\pi} \int_{\mcl D}  \left( |\partial^g \varphi |^2 + QR_g\varphi  + 4 \pi \mu e^{\gamma \varphi} \right)\, d \lambda_g.
\eqen
The definition of the exponential term $ \mu e^{\gamma \varphi} $ poses problem in the regime $\ccM \in (1, 25)$ since $\gamma$ is complex (c.f.\ the discussion in the next section regarding the complex Gaussian multiplicative chaos). This problem is also related to the fact that the term $ \mu e^{\gamma \varphi} $ should be seen in a certain sense as conditioning the volume to be finite, which we are not doing since the number of squares of $\mathcal{S}_h^{\epsilon}$ which intersect a fixed bounded open set is a.s.\ infinite for small enough $\ep$. Therefore constructing LCFT using $S_{\mcl D}(\varphi)$ as performed in \cite{dkrv-lqg-sphere} seems to require some novel ideas.

In the regime $\ccM<1$, where $\gamma \in (0,2)$, there is a well-known argument in the physics literature (see for instance~\cite{zz-dozz}) to justify the coordinate change formula. The idea is the following. Let $\mcl D, \wt{\mcl D} \subset\BB C$ be two domains and $f:\mcl D\to\wt{\mcl D}$ a conformal map. Take on $\wt{\mcl D}$ the metric $\wt{g}$ defined as the push-forward of $g$ by $f$. Let $\wt{\varphi} = \varphi \circ f + Q \log |f'|$. Then one can check by performing an integration by parts that
\eqbn
S_{\wt{\mcl D}}(\wt{\varphi}) = S_{\mcl D}(\varphi)  +  c(\mcl D, g,f),
\eqen
where $c(\mcl D, g,f)$ is an unimportant constant independent of $\varphi$.\footnote{Here at the classical level the exponential term $e^{\gamma \varphi}$ transforms correctly under the coordinate change provided that $Q = \frac{2}{\gamma}$, but in the quantum case when $\varphi$ is a GFF type distribution the correct value of $Q$ is $ \frac{\gamma}{2} + \frac{2}{\gamma}$. See the coordinate change formula \eqref{eq:coordinate-change}.} Heuristically, let $\wt h$ be the distribution on $\wt{\mcl D}$ sampled from  $e^{-S(\wt h) }\,d\wt h$, where $d\wt h$ is ``uniform measure on the space of all functions'', and similarly for $h$ on $D$ using $e^{-S( h) }\,d h$; note that $h$ and $\wt h$ can only be made sense of as GFF-like distributions rather than true functions, and that we need to (for example) fix a global constant for the field in order to get a probability measure. Also notice that the term $c(\mcl D, g,f) $ obtained above will disappear in the normalization of the law of our fields. Therefore requiring that  $e^{-S(\wt h) }\,d\wt h$ and $e^{-S( h) }\,d h$ define the same field is equivalent to imposing the coordinate change formula $\wt{h} = h \circ f + Q \log |f'|$.

The above heuristic derivation of the coordinate change formula cannot be straightforwardly applied to the $\ccM >1$ case because the meaning of $e^{\gamma \varphi}$ becomes unclear. One option to extend to $\ccM >1$ is simply to set $\mu =0$ in the argument of the previous paragraph (as performed in \cite[Section 2]{shef-kpz}) in which case one still lands on the same coordinate change formula. The downside is one no longer sees the relation $Q = \frac{\gamma}{2} + \frac{2}{\gamma}$ coming from the presence of $e^{\gamma \varphi}$. On the other hand, in the $\ccM <1$ case, the field obtained for $\mu>0$ behaves locally like the GFF~\cite{wedges,dkrv-lqg-sphere}. It is not clear whether this absolute continuity should extend to the $\ccM \in (1,25)$ regime, but if we assume it is the case, then, for the purposes of the theorems proved in this paper --- which require $h$ to behave locally like a GFF but do not require any precise information about the global properties of $h$ --- it would not matter whether we work with $\mu =0$ or $\mu>0$.

\subsection{Laplacian determinant and branched polymers}
\label{sec-laplacian}  
 
As alluded to in Section~\ref{sec-overview}, it has been proven that a uniform random planar map or a uniform $q$-angulation  (for $q=3$ or $q \geq 4$ even) with a fixed number of edges converges in law in the Gromov-Hausdorff sense, with the appropriate rescaling, to an LQG surface with matter central charge $\ccM = 0$ as the number of edges tends to infinity~\cite{legall-uniqueness, miermont-brownian-map,bjm-uniform,lqg-tbm1,lqg-tbm2,lqg-tbm3}. Moreover, uniform triangulations also converge to LQG with matter central charge 0 when embedded into the plane via the discrete conformal embedding called the Cardy embedding; see~\cite{hs-cardy-embedding}.
Due to the ``Laplacian determinant" definition of LQG it is natural to conjecture that random planar maps sampled with probability proportional to $(\det \Delta)^{-\ccM/2}$, where $\Delta$ is the discrete Laplacian determinant, converge in some sense to LQG surfaces with matter central charge $\ccM$.  
Note that when $\ccM$ is a positive integer, weighting by $(\det \Delta)^{-\ccM/2}$ is equivalent to coupling the random planar map with $\ccM$ independent discrete Gaussian free fields or ``massless free bosons''.  

Often using random planar maps weighted by $(\det \Delta)^{-\ccM/2}$ as a heuristic approximation of LQG,
many works~\cite{cates-branched-polymer,david-c>1-barrier,adjt-c-ge1,ckr-c-ge1,bh-c-ge1-matrix,dfj-critical-behavior,bj-potts-sim,adf-critical-dimensions} have suggested that an LQG surface for $\ccM > 1$ (or at least for $\ccM  \geq 12$) should behave like a so-called branched polymer. 
 Mathematically, this means that such surfaces should be similar to the continuum random tree (CRT)~\cite{aldous-crt1,aldous-crt2,aldous-crt3}. This prediction is supported by numerical simulations and heuristics (see the above references) which suggest that the scaling limit of  random planar maps  weighted by  $(\det \Delta)^{-\ccM/2}$ should be the CRT.  Thus, the heuristic of approximating LQG with matter central charge $\ccM > 1$ by random planar maps weighted by  $(\det \Delta)^{-\ccM/2}$ yields predictions for the behavior of LQG in the $\ccM>1$ regime that are very different from the behavior of our proposed discrete model $\mcl S^{\ep}_h$.   

Describing the behavior of LQG for $\ccM>1$ heuristically in terms of random planar maps weighted by  $(\det \Delta)^{-\ccM/2}$ does not just conflict with the behavior of our model; it is also rather unsatisfying. Indeed, it would suggest that the large-scale geometry for LQG with matter central charge $\ccM  >1$ is tree-like and does not depend on $\ccM$, and therefore apparently unrelated to conformal field theory models (since trees have no conformal structure). As a result, a number of works in the physics literature have looked for ways to associate a non-trivial geometry with LQG for $\ccM  > 1$ despite the apparent obstacles; see~\cite{ambjorn-remarks} for a survey of some of these works. 

In this subsection, we will discuss, on a heuristic level, the relationship in the $\ccM > 1$ phase between random planar maps with a large number of vertices weighted by  $(\det \Delta)^{-\ccM/2}$ and   $\mcl S^{\ep}_h$.  We will argue heuristically that, the former model corresponds, in some sense, to conditioning the law of the latter model on the unlikely event that the number of squares is large but finite, in which case all of the  nontrivial geometry that the latter model possesses ``disappears''.  Thus, if the latter model that we have proposed indeed converges to a continuum LQG surface with matter central charge $\ccM > 1$, then finite random planar maps weighted by  $(\det \Delta)^{-\ccM/2}$ should not be a good heuristic approximation for LQG in this phase.

Our explanation is partially based on forthcoming work by Ang, Park, Pfeffer, and Sheffield that establishes the first rigorous version of the heuristic relating LQG to weighting random planar maps by the power of a Laplacian determinant. 

Before we explain the connection between their work and the $\ccM > 1$ setting, let us first outline the result of Ang, Park, Pfeffer, and Sheffield in the case $\ccM \leq 1$.  For each $\ccM \leq 1$, they define a random planar map, which we will denote here by $\mcl{R}^{\ep}_{h, \ccM}$, using a square subdivision procedure identical to the subdivision $\mcl S^{\ep}_h$ considered in this paper, except with circle averages replaced by averages of the field over dyadic squares.  Like $\mcl S^{\ep}_h$, the map $\mcl{R}^{\ep}_{h, \ccM}$ should be a good discrete  approximation of an LQG surface with matter central charge $\ccM$ when $\ep$ is small. The map $\mcl{R}^{\ep}_{h, \ccM}$ is constructed in such a way that it is naturally coupled to a smooth approximation of the (heuristic) LQG metric tensor.  What they are able to show is that, if we weight the law of $\mcl{R}^{\ep}_{h, \ccM}$ conditioned on $\{\# \mcl{R}^{\ep}_{h, \ccM} = n\}$ by the $(-\ccM'/2)$-th power of the determinant of the Laplacian of the approximating metric, then the law of the resulting random planar map is that of $\mcl{R}^{\ep}_{h, \ccM+\ccM'}$ conditioned on $\{\# \mcl{R}^{\ep}_{h, \ccM+\ccM'} = n\}$. Thus, we have an explicit family of random planar map approximations of LQG surfaces for $\ccM \leq 1$ such that weighting one member of this family by the appropriate power of the determinant of the Laplacian (defined with respect to an appropriate approximating smooth metric) yields the law of another member of the family. 

Like $\mcl S^{\ep}_h$, the random planar map $\mcl{R}^{\ep}_{h, \ccM}$ can be defined for $\ccM \in (1,25)$ as well. In this case, the same result of Ang, Park, Pfeffer, and Sheffield still holds. It follows that the law of $\mcl{R}^{\ep}_{h, \ccM}$ conditioned on $\{\# \mcl{R}^{\ep}_{h, \ccM} = n\}$ should be in the same universality class (for $n$ sufficiently large) as that of random planar maps with $n$ vertices weighted by  $(\det(-\Delta))^{-\ccM/2}$.  And, just as numerical simulations and heuristics in the physics literature suggest that the latter model converges to a CRT, we expect the law of $\mcl{R}^{\ep}_{h, \ccM}$ conditioned on $\{\# \mcl{R}^{\ep}_{h, \ccM} = n\}$ to converge to that of a CRT as well.  (See Question~\ref{ques-crt} below, where this conjecture is formulated in terms of $\mcl S_h^\ep$.)  The key difference between the $\ccM \in (1,25)$ and $\ccM \leq 1$ regimes is that, when $\ccM \in (1,25)$,  the law of $\mcl{R}^{\ep}_{h, \ccM}$ conditioned on $\{\# \mcl{R}^{\ep}_{h, \ccM} = n\}$ has very different geometric properties from $\mcl{R}^{\ep}_{h, \ccM}$ without this conditioning.  When $\ccM \in (1,25)$, the random planar map $\mcl{R}^{\ep}_{h, \ccM}$ will be infinite with positive probability; indeed, when $\ep  $ is small, $\{\# \mcl{R}^{\ep}_{h, \ccM} = n\}$ is a very unlikely event (i.e., $\mcl{R}^{\ep}_{h, \ccM}$ typically have at least one singularity).  Thus, if we believe that $\mcl{R}^{\ep}_{h, \ccM}$ converges in a metric sense as $\ep \rta 0$ to a continuum model of LQG with matter central charge $\ccM$, then we would expect the geometry of this continuum LQG object to be very different from that of a CRT; e.g., we would expect the limiting metric space to have infinite diameter and infinitely many ends.

This situation is somewhat analogous to the situation for supercritical Galton-Watson trees. If we let $T$ be such a tree, with a geometric offspring distribution, and we condition on $\{\# T = n\}$ then $T$ is uniform over all trees with $n$ vertices and hence behaves like a CRT when $n$ is large. On the other hand, if we do not condition on $\{\# T = n\}$ then $T$ typically looks very different from a CRT, e.g., in the sense that it has exponential ball volume growth and infinitely many ends with positive probability. The law of a random planar map weighted by $\det(-\Delta)^{-\ccM/2}$, with a fixed number of vertices, still depends on $\ccM$, but it is expected that the macroscopic structure does not depend on $\ccM$ when $\ccM > 1$. 

Though it certainly is interesting to have a continuum theory of LQG for $\ccM > 1$ that is nontrivial and depends on $\ccM$, we would ideally also want such an object to arise as the limit of natural random planar map models (and not just the limit of discrete models like $\mcl S^{\ep}_h$ defined in terms of the continuum GFF).  See Question~\ref{ques-discrete}.

\subsection{Related (and not-so-related) models}
\label{sec-related}

\noindent\textbf{Complex Gaussian multiplicative chaos.} 
A natural approach to constructing an ``LQG measure" for matter central charge $\ccM \in (1,25)$ is to consider the corresponding value of $\gamma$, which is complex with $|\gamma|  =2$, and try to make sense of $\exp\left( \gamma h(z) - \frac{\gamma}{2} \op{Var} h(z) \right) d^2 z$  using Gaussian multiplicative chaos (GMC) theory;\footnote{This object may only make sense as a distribution, not a (complex) measure; see~\cite{jsw-imaginary-gmc} for the case of purely imaginary $\gamma$.} see Question~\ref{ques-complex-measure}. 
Complex GMC was first studied in the case where the real and imaginary parts of the field are independent in~\cite{rhodes-vargas-complex-gmt}. The authors obtain a region of $\mathbb{C}$ where the standard renormalization procedure leads to a non-trivial limit. GMC with a purely imaginary value of $\gamma$ was then further studied in~\cite{jsw-imaginary-gmc}. Very recently in~\cite{jsw-decompositions}, a novel decomposition of log-correlated fields allowed the authors to handle the case where real and imaginary parts come from the same field. Unfortunately we don't expect any of these works to be directly related to LQG with matter central charge $\ccM \in (1,25)$ because the case $|\gamma|  =2$ lies outside the feasible region described in~\cite{rhodes-vargas-complex-gmt, jsw-decompositions}. On the other hand in recent work on the sine-Gordon model~\cite{lrv-sine-gordon}, in the case of purely imaginary $\gamma$, a method of renormalization was developed to go beyond the previous region up to a threshold that seems to correspond to $Q=0$ ($\ccM = 25$). However, they do not treat the case of a general complex $\gamma$ with $|\gamma|=2$ and they only consider the one dimensional model so novel ideas are required to handle our case of $\ccM \in (1,25)$ and two dimensions. 
\medskip

\noindent\textbf{Complex weights.} Huang~\cite{huang-complex-insertion} studies the correlation functions of LCFT with real coupling constant $\gamma \in (0,2)$ but with complex weights (equivalently, complex log singularities), building on~\cite{dkrv-lqg-sphere,krv-local,krv-dozz}. This gives an analytic continuation of LCFT in a different direction than the one considered here, but does not deal with the case when $\ccL < 25$ (which corresponds to LQG with $\ccM > 1$).  
\medskip

\noindent\textbf{Atomic LQG measures.} In addition to LQG with matter central charge $\ccM > 1$, there is another natural extension of LQG beyond the $\gamma \in (0,2]$ phase: the purely atomic $\gamma$-LQG measures for $\gamma > 2$ considered, e.g., in~\cite{dup-dual-lqg,bjrv-gmt-duality,rhodes-vargas-review,wedges}.
Such atomic measures are closely related to trees of LQG surfaces with the dual parameter $\gamma' = 4/\gamma \in (0,2)$, where adjacent surfaces touch at a single point, as considered in~\cite{klebanov-touching}. See~\cite[Section 10]{wedges} for further discussion of this point. 
The matter central charge corresponding to $\gamma  >2$ is the same as the matter central charge corresponding to the dual parameter $\gamma' = 4/\gamma$. 
In particular, these atomic measures do \emph{not} correspond to $\ccM > 1$ and should not be confused with the extension of LQG beyond $\gamma\in(0,2]$ considered in this paper.
\medskip
 
\noindent \textbf{Special LQG surfaces and stochastic quantization.} In the case when $\ccM < 1$, there are certain special LQG surfaces (corresponding to special variants of the GFF $h$) which are in some sense canonical. 
These GFF variants for these special LQG surfaces are the focus of Liouville conformal field theory. Examples of such surfaces include the quantum sphere, quantum disk, quantum cone, etc.; see, e.g.,~\cite{dkrv-lqg-sphere,hrv-disk,drv-torus,remy-annulus,grv-higher-genus,wedges} for the definitions of some of these special LQG surfaces.
One way in which these special LQG surfaces are canonical is that they are the ones which arise (or are conjectured to arise) as the scaling limits of natural random planar map models; see the above references for further discussion. 

The definitions of the infinite-volume special LQG surfaces from~\cite[Section 4]{wedges}, namely the quantum cone and quantum wedge, extend immediately to the case when $\ccM  \in (1,25)$ (equivalently, $Q \in (0,2)$). However, the definitions of finite-volume LQG surfaces, such as quantum sphere, disks, and torii, do not have obvious analogs for $\ccM \in (1,25)$ since we do not have a canonical finite volume measure in this case.
 
One possible, but highly speculative, way to construct $\ccM \in (1,25)$-analogs of these surfaces is via \emph{stochastic quantization}. This would involve constructing a canonical Markov chain on the space of random distributions on a given Riemann surface which has a unique stationary distribution.
This stationary distribution should then be the law of the field associated with the canonical LQG surface with the given conformal structure. 
Recent progress on stochastic quantization in the $\ccM \leq 1$ case has been made in works by Garban~\cite{garban-dynamical} and by Dub\'edat and Shen~\cite{ds-ricci-flow}, who studied, respectively, a natural dynamics for the LQG measure and the so-called \emph{stochastic Ricci flow}. 
\medskip
 
\noindent\textbf{Liouville first passage percolation.} Let $h$ be a GFF-type distribution on $\BB C$, let $\{h_\delta : z\in\BB C , \delta>0\}$ be its circle average process, and for $\xi > 0$, $\delta >0$, define a distance function on $\BB C$ by 
\eqbn
D_{h,\LFPP}^{\xi,\delta}(z,w) = \inf_{P : z\rta w} \int_0^1 e^{\xi h_\delta(P(t))} |P'(t)| \,dt , 
\eqen
where the infimum is over all piecewise continuously differentable paths $P$ from $z$ to $w$. 
We call this the \emph{$\delta$-Liouville first passage percolation (LFPP) metric (distance function)} with parameter $\xi$. 
For $\ccM  < 1 $ and corresponding coupling constant $\gamma \in (0,2)$, if we set $\xi = \gamma/d_\ccM$ (where $d_\ccM$ is the Hausdorff dimension of LQG with matter central charge $\ccM$, as above) then it is shown\footnote{For technical reasons,~\cite{dddf-lfpp,gm-uniqueness} define LFPP using the convolution of $h$ with the heat kernel rather than the circle average approximation but the scaling limit of both versions of LFPP should be the same.} in~\cite{dddf-lfpp,gm-uniqueness} that $D_{h,\LFPP}^{\xi,\delta}$, re-scaled appropriately, converges as $\delta\rta 0$ to the LQG distance function for matter central charge $\ccM$. 

We expect that LFPP with $\xi > 2 / d_{\ccM=1}$ should be related to the square subdivision model of the present paper in a manner which is directly analogous to the relationship between LFPP for $\xi \leq 2 / d_{\ccM=1}$ and Liouville graph distance, as established in~\cite[Theorem 1.5]{dg-lqg-dim}. 
In particular, if $Q >0$ and $\xi = \xi(Q) > 0$ is chosen so that, in the notation of Section~\ref{sec-tiling-def}, typically $D_h^\ep(z,w)  =\ep^{-\xi + o_\ep(1)}$ for any two fixed distinct points $z,w\in\BB C$, then it should be the case that typically $D_{h,\LFPP}^{\xi,\delta}(z,w)  = \delta^{1-\xi Q + o_\delta(1)}$ for any two fixed distinct points $z,w\in\BB C$. 
See~\cite[Section 2.3]{dg-lqg-dim} for a heuristic argument supporting this relationship.  
Note that we have not yet shown that such an exponent $\xi(Q)$ exists for $Q \in (0,2)$, but we expect that this can be done using similar techniques to those in~\cite{dzz-heat-kernel}.   
 
We conjecture that if $\xi$ and $Q$ are related as above, then LFPP with parameter $\xi$ converges (under appropriate scaling) to the same limiting distance function as the square subdivision distance $D_h^\ep$ (see Conjecture~\ref{conj-metric-lim}). It may be possible to prove the convergence of LFPP for $\xi > 2/d_{\ccM=1}$ by improving on the techniques of~\cite{dddf-lfpp,gm-uniqueness}, but substantial new ideas would be necessary since the LQG distance function for matter central charge $\ccM \in (1,25)$ is not expected to induce the same topology as the Euclidean metric.

See~\cite{gp-lfpp-bounds} for more on LFPP with $\xi  >2/d_{\ccM=1}$. 
\medskip

\section{The KPZ formula}
\label{sec-kpz}

In this section we will prove Theorem \ref{thm-kpz}. 
The proof differs from the proof of the KPZ formula in~\cite{shef-kpz} since, in the case $Q< 2$, we have no Liouville measure $e^{\gamma h}\, d^2 z $ to work with. 
We first establish the  upper bound in Theorem \ref{thm-kpz}.

\begin{proof}[Proof of Theorem~\ref{thm-kpz}, upper bound.]
We fix a bounded open set $U\subset\BB C$ with $X\subset U$ a.s.  
For $n\in\BB N$, let $\mcl D_n(U)$ be the set of dyadic squares with side length $2^{-n}$ which intersect $U$. 
The probability that $U$ intersects a square of $\mcl S_h^\ep$ with side length larger than $1/2$ tends to zero as $\ep\rta 0$. 
Since the squares of $\mcl S_h^\ep$ intersect only along their boundaries, the number of such squares that $U$ intersects is at most a constant depending only on $U$. 
In particular, the expected number of squares in $\mcl S_h^\ep$ of side length larger than $1/2$ which intersect $U$ tends to zero as $\ep\rta 0$. 
Since $X$ and $h$ are independent, we therefore have
\eqb \label{eqn-upperbound-split} 
\mathbb{E}\left[ N_h^\ep(X)  \right] 
\leq \sum_{n = 1}^{\infty} \sum_{S \in \mcl D_n(U) } \BB P \left[ S\cap X \not=\emptyset \right] \BB P\left[ S \in\mathcal{S}^{\epsilon}_h  \right] + o_\ep(1) ,
\eqe 
where the $o_\ep(1)$ term comes from the squares in $\mcl S_h^\ep$ of side length larger than $1/2$ which intersect $U$. 

If $S \in  \mathcal{S}^{\epsilon}_h $, then by the definition~\eqref{eqn-dyadic-tiling-def} of $\mcl S_h^\ep $, the dyadic parent $\wt{S}$ of $S$ with $|\wt{S}| = 2^{-n+1}$ must satisfy $M_h(\wt{S}) = e^{h_{2^{-n}}(v_{\wt{S}})} (2^{-n+1})^Q > \epsilon $. 
To bound the probability that this is the case, let $m = m(\ep) \in\BB N$ be chosen so that $2^{-m} \leq \ep \leq 2^{-m+1}$ and let $n_\ep$ be the smallest integer such that $Q(n_\ep-1) - m > 0$. 
Since $ h_{2^{-n}}(v_{\wt{S}})$ is a centered Gaussian with variance $\log 2^n  + O_n(1)$ (with $O_n(1)$ depending only on $U$), the Gaussian tail bound gives
\eqb \label{eqn-gaussian-tail}
\BB P\left[ S\in \mcl S_h^\ep \right] 
\leq 2^{ -  \tfrac{(Q n - m + O_n(1))^2 }{2n + O_n(1) }      }    ,\quad\forall n \geq n_\ep ,\quad\forall S\in \mcl D_n(U) .
\eqe
For $n \leq n_\ep-1$, we use the trivial bound $\BB P\left[ S\in \mcl S_h^\ep \right] \leq 1$.  
By plugging these estimates into~\eqref{eqn-upperbound-split}, we get
\allb \label{eqn-upperbound-split1}
\mathbb{E}\left[  N_h^{\epsilon}(X) \right] 
&\leq \sum_{n=1}^{n_\ep-1} \sum_{S \in \mcl D_n(U) }  \BB P\left[ S \cap X \not=\emptyset \right]
 + \sum_{n = n_\ep}^\infty 2^{ -  \tfrac{(Q n - m + O_n(1))^2 }{2n + O_n(1) } } \sum_{S \in \mcl D_n(U) } \BB P\left[ S \cap X \not=\emptyset \right]   + o_\ep(1)    \notag \\
&= \sum_{n=1}^{n_\ep-1}  \BB E\left[N_0^{2^{-n}}(X) \right]
 + \sum_{n = n_\ep}^\infty 2^{ -  \tfrac{(Q n - m + O_n(1))^2 }{2n + O_n(1) } }  \BB E\left[ N_0^{2^{-n}}(X) \right] + o_\ep(1)   .
\alle
By hypothesis, $ \BB E\left[ N_0^{2^{-n}}(X) \right] = 2^{n(x + o_n(1))}$. Since $n_\ep \rta\infty$ as $\ep\rta 0$ (equivalently, as $m\rta\infty$), we can re-write~\eqref{eqn-upperbound-split1} as
\eqb \label{eqn-upperbound-split2}
\mathbb{E}\left[ N_h^\ep(X) \right] 
\leq 2^{n_\ep(x  + o_m(1))}
 + \sum_{n = n_\ep}^\infty 2^{ n x  -  \tfrac{(Q n - m )^2 }{2n  } + n o_m(1) } , 
\eqe 
where the $o_m(1)$ tends to zero as $m \rta\infty$ (equivalently, as $\ep\rta 0$), at a rate which does not depend on $n$. The first term on the right side of~\eqref{eqn-upperbound-split2} is at most $\ep^{-x/Q + o_\ep(1)}$ by our choice of $n_\ep$. To bound the second term, we first note that it is a series whose summand, viewed as a function of $n$, extends to a function on the positive real line with a unique maximum and no minimum.  This implies that\footnote{ Here we are using the fact that, if $f: \BB{R}^+ \rightarrow \BB{R}$ is a continuous function that is increasing on $(0,a)$ and decreasing on $(a,\infty)$, then the sum $\sum_{n=1}^{\infty} f(n)$ is a lower Riemann sum for the integral of the function $f^*$ that equals $f$ on $(0,a)$, the constant $f(a)$ on $(a,a+1)$, and $f(x-1)$ on $(a+1,\infty)$. Hence, $\sum_{n=1}^{\infty} f(n) \leq \int_0^{\infty} f^*(x) dx = \int_0^{\infty} f(x) dx + f(a)$.}
\allb
\sum_{n = n_\ep}^\infty 2^{ n x  -  \tfrac{(Q n - m )^2 }{2n  } + n o_m(1) } & \nonumber \leq \sum_{n = 1}^\infty 2^{ n x  -  \tfrac{(Q n - m )^2 }{2n  } + n o_m(1) } \\&\leq \int_0^\infty 2^{ t x  -  \tfrac{(Q t - m )^2 }{2n  } + t o_m(1) } dt + \max_{t \in \BB{R}^+} 2^{ t x  -  \tfrac{(Q t - m )^2 }{2n  } + t o_m(1) }.
\label{riemann_approx}
\alle
Moreover, the integrand $2^{ t x  -  \tfrac{(Q t - m )^2 }{2n  } + t o_m(1) }$ is the exponential of a continuously differentiable function with a unique maximum, where it equals $(Qm-m\sqrt{Q^2-2x}) \log{2}  + o_m(m)$. Therefore, by Laplace's method,
\eqb
\int_0^\infty 2^{ t x  -  \tfrac{(Q t - m )^2 }{2n  } + t o_m(1) } dt \leq 2^{Qm-m\sqrt{Q^2-2x}  + o_m(m)}.
\label{laplace}
\eqe
Combining (\ref{riemann_approx}) and (\ref{laplace}) and replacing $m$ by $\ep$, we get that the second term of~\eqref{eqn-upperbound-split2} is at most
\[ \ep^{ - ( Q - \sqrt{ Q^2 - 2x} ) + o_\ep(1)}.\]
Since $2x < Q^2$ implies that $Q - \sqrt{ Q^2 - 2x}  > x/Q$, plugging in our bounds for the two terms on the right side of~\eqref{eqn-upperbound-split2} shows that $\mathbb{E}\left[ N_h^\ep(X) \right]  \leq \ep^{ - ( Q - \sqrt{ Q^2 - 2x} ) + o_\ep(1)} $. This gives expectation bound in~\eqref{eqn-quantum-dim}. The a.s.\ bound then follows immediately from the Chebyshev inequality and the Borel-Cantelli lemma applied at dyadic values of $\ep$ (recall that $N_h^\ep(X)$ is increasing in $\ep$). 
\end{proof}

We now prove the lower bound in Theorem~\ref{thm-kpz}.

\begin{proof}[Proof of Theorem~\ref{thm-kpz}, lower bound.]
Intuitively, to obtain a lower bound, we want to consider a set of points of $X$ where the field $h$ is large, since squares in $\mathcal{S}_h^{\epsilon}$ that intersect this set will necessarily have small Euclidean size, giving a lower bound for the number of such squares in terms of the dimension of the set.  Thus, we want to consider points where the field $h$ is thick.  For $\alpha\in [-2,2]$, we write $\mcl T_\alpha$ for the set of $\alpha$-thick points of $h$, as defined in Definition~\ref{def-thick-pt}.  The dimension of $\alpha$-thick points in $X$ is given by~\cite[Theorem 4.1]{ghm-kpz}, which extends the main result of~\cite{hmp-thick-pts}.

\begin{thm}[\cite{ghm-kpz}] \label{th_GHM}
Let $X$ be a random fractal independent of $h$ such that a.s.\ $\dim_{\mcl H}(X) = x \in [0,2]$. 
For $\alpha >0$ such that $\frac{\alpha^2}{2} \leq x $, a.s.\ 
\begin{equation}
\dim_{\mathcal{H}}(\mathcal{T}_{\alpha} \cap X) =  \dim_{\mathcal{H}}( X) - \frac{\alpha^2}{2}  = x - \frac{\alpha^2}{2}.
\end{equation}
Furthermore if $\frac{\alpha^2}{2} > x $, then a.s. $\mathcal{T}_{\alpha} \cap X = \emptyset$.
\end{thm}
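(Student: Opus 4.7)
The statement generalizes the Hu--Miller--Peres thick-point theorem \cite{hmp-thick-pts} from the case $X=\BB C$ to an arbitrary random fractal of Hausdorff dimension $x$ independent of $h$. I would prove the upper bound on $\dim_{\mcl H}(\mcl T_\alpha\cap X)$ by a first-moment covering argument and the lower bound by constructing a subcritical Gaussian multiplicative chaos (GMC) measure on $X$ supported on $\alpha$-thick points. Throughout, one works conditionally on $X$ and uses that $X$ and $h$ are independent.

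For the \emph{upper bound} (and emptiness when $\alpha^2/2 > x$), fix $\eta>0$. Since $\dim_{\mcl H}(X)=x$ a.s., for infinitely many $n$ one can cover $X$ by at most $2^{n(x+\eta)}$ dyadic squares of side $2^{-n}$, with the cover depending only on $X$. For each such square $S$, a Gaussian tail bound combined with a standard modulus-of-continuity estimate for the circle average map $(z,r)\mapsto h_r(z)$ on dyadic scales (cf.\ \cite[Proposition 2.1]{hmp-thick-pts}) gives that the probability that $S$ contains a point $z$ with $h_r(z)\geq(\alpha-\eta)\log r^{-1}$ for some $r\in[2^{-n-1},2^{-n}]$ is at most $2^{-n(\alpha-\eta)^2/2+o(n)}$. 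By independence of $X$ and $h$, the expected number of side-$2^{-n}$ dyadic squares that meet both $X$ and a point ``quasi-$\alpha$-thick at scale $n$'' is at most $2^{n(x+\eta-(\alpha-\eta)^2/2)+o(n)}$. Borel--Cantelli then produces a.s.\ covers of $\mcl T_\alpha\cap X$ of total $s$-content tending to $0$ for any $s>x+2\eta-(\alpha-\eta)^2/2$, and sending $\eta\to 0$ yields $\dim_{\mcl H}(\mcl T_\alpha\cap X)\leq x-\alpha^2/2$. When $\alpha^2/2>x$, the same exponent is already negative for small $\eta$, so the expected number of such squares is summable in $n$ and $\mcl T_\alpha\cap X=\emptyset$ a.s.

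For the \emph{lower bound}, I would apply Frostman's lemma conditionally on $X$: for each $x'\in(\alpha^2/2,x)$, there a.s.\ exists a random probability measure $\nu$ supported on $X$, measurable with respect to $X$ (hence independent of $h$), with $\int\int|z-w|^{-x'}\nu(dz)\nu(dw)<\infty$. Define the circle-average GMC with base measure $\nu$ and parameter $\alpha$,
\[
\mu_\alpha \; := \; \lim_{n\to\infty} e^{\alpha h_{2^{-n}}(z) - \tfrac{\alpha^2}{2}\op{Var} h_{2^{-n}}(z)}\,\nu(dz),
\]
the limit being taken as a nonnegative martingale in $n$ (conditional on $X$). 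Kahane's theory (see e.g.\ \cite{berestycki-gmt-elementary,rhodes-vargas-review}), applied to the base measure $\nu$, shows the limit exists and is a.s.\ nondegenerate under the Seiberg-type bound $\alpha^2/2<x'$. A Girsanov/rooted-measure argument shows that under the size-biased law associated with $\mu_\alpha$ the field acquires an extra $\alpha\log|\cdot-z|^{-1}$ singularity at a typical $\mu_\alpha$-point, so $h_r(z)/\log r^{-1}\to\alpha$ and hence $\mu_\alpha$ is a.s.\ supported on $\mcl T_\alpha\cap X$. A standard multifractal moment computation then gives the local scaling $\mu_\alpha(B(z,r))\leq r^{x'-\alpha^2/2 - o(1)}$ uniformly in $z\in\op{supp}\mu_\alpha$; the mass distribution principle yields $\dim_{\mcl H}(\op{supp}\mu_\alpha)\geq x'-\alpha^2/2$ a.s.\ on the positive-probability event $\{\mu_\alpha(X)>0\}$. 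A zero-one law applied to the tail $\sigma$-algebra of the white-noise decomposition of $h$ upgrades ``positive probability'' to ``probability one'', and sending $x'\nearrow x$ completes the lower bound.

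The main obstacle is the lower bound, specifically the need to extend Kahane's nondegeneracy theorem and the multifractal scaling estimate for $\mu_\alpha(B(z,r))$ from the classical setting where the base measure is Lebesgue (or Ahlfors-regular of a fixed dimension) to the setting of a random, rough base measure $\nu$ on $X$ with only finite $x'$-energy. This extension, which requires a martingale decomposition of $h$ compatible with the $\nu$-geometry of $X$ rather than with Euclidean geometry, is the core technical content of \cite[Section 4]{ghm-kpz}; Theorem~\ref{th_GHM} is essentially a restatement of their main result in the form needed here.
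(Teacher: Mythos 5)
This theorem is not proved in the paper at all: it is imported verbatim from \cite{ghm-kpz} (cited there as Theorem 4.1, extending \cite{hmp-thick-pts}), so there is no in-paper argument to compare against. Your sketch is, in spirit, a reconstruction of the strategy of that reference: a first-moment covering bound for the upper bound and the emptiness statement, and for the lower bound a Frostman measure $\nu$ on $X$ (measurable with respect to $X$, hence independent of $h$) together with a Gaussian multiplicative chaos built over $\nu$, a Girsanov/rooted-measure argument to show the chaos lives on $\mcl T_\alpha\cap X$, and a mass-distribution estimate. You also correctly identify the real technical heart, namely nondegeneracy and local scaling of the chaos over a rough base measure with only finite $x'$-energy: note in particular that the naive second-moment computation produces $\iint |z-w|^{-\alpha^2}\nu(dz)\nu(dw)$, which requires $\alpha^2<x'$, so reaching the full range $\alpha^2/2<x'$ forces a truncation on ``good'' (not-too-thick) points; this is exactly what is done in \cite{ghm-kpz}, and flagging it rather than proving it is acceptable for a sketch.

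There is, however, one step in your upper bound that is wrong as written: from $\dim_{\mcl H}(X)=x$ you cannot conclude that ``for infinitely many $n$ one can cover $X$ by at most $2^{n(x+\eta)}$ dyadic squares of side $2^{-n}$.'' Single-scale covers of that cardinality are a \emph{Minkowski}-dimension statement, and Hausdorff dimension does not control Minkowski dimension; the paper's own Remark \ref{remark-not-true} uses $X=\BB Q^2\cap\BB D$ (Hausdorff dimension $0$, but meeting $\ep^{-(Q-\sqrt{Q^2-4})+o_\ep(1)}$ squares) precisely to illustrate this failure. The fix is standard but changes the bookkeeping: use covers at mixed scales. Since the $(x+\eta)$-dimensional Hausdorff measure of $X$ vanishes a.s., choose an $X$-measurable cover $\{S_i\}$ by dyadic squares with $\sum_i |S_i|^{x+\eta}<\delta$; decompose $\mcl T_\alpha=\bigcup_k \mcl T_\alpha^k$, where $\mcl T_\alpha^k$ consists of points with $h_r(z)\geq(\alpha-\eta)\log r^{-1}$ for all $r\leq 2^{-k}$; for a square $S_i$ of side $2^{-n}\leq 2^{-k}$, independence of $X$ and $h$ plus the Gaussian tail/continuity estimate give $\BB P[S_i\cap \mcl T_\alpha^k\neq\emptyset\,|\,X]\leq |S_i|^{(\alpha-\eta)^2/2-o(1)}$, so the conditional expected $s$-Hausdorff content of $\mcl T_\alpha^k\cap X$ is at most $\sum_i |S_i|^{s+(\alpha-\eta)^2/2-o(1)}\leq\delta$ once $s\geq x+\eta-(\alpha-\eta)^2/2+o(1)$. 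Letting $\delta\rta 0$, then $k\rta\infty$, then $\eta\rta 0$ gives $\dim_{\mcl H}(\mcl T_\alpha\cap X)\leq x-\alpha^2/2$, and the same computation with $s=0$ shows the expected number of covering squares meeting $\mcl T_\alpha^k\cap X$ can be made arbitrarily small when $\alpha^2/2>x$, yielding a.s.\ emptiness. With that repair (and the truncated-chaos input you already flagged for the lower bound), your outline matches the argument of \cite{ghm-kpz}.
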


For our proof to work, we want to consider a set of points which are not just thick, but ``uniformly thick''.  More precisely, for $\zeta>0$ arbitrarily small, we consider a subset of $(\alpha - \zeta)$-thick points for which~\eqref{thick_points_def_eq} converges uniformly.  The following lemma, which is~\cite[Lemma 4.3]{ghm-kpz}, asserts that we can find such a set whose dimension differs from the dimension of $\alpha$-thick points in $X$ by at most $\zeta$.

\begin{lem}[\cite{ghm-kpz}]\label{uni}
Let $\alpha \in (0,2]$ and $\zeta  > 0$. Almost surely, there exists a random $ \overline{\delta} >0$ depending on $\alpha $ and $\zeta$ such that the following is true. If we set
\begin{equation}\label{lower_b2}
X^{\alpha, \zeta} := \left\{ z \in X: \frac{h_{\delta}(z)}{\log \delta^{-1}} \geq \alpha - \zeta, \forall \delta \in (0, \overline{\delta}] \right\}
\end{equation}
then $\dim_{\mathcal{H}} (X^{\alpha, \zeta}) \geq \dim_{\mathcal{H}}(\mathcal{T}_{\alpha} \cap X) - \zeta$.
\end{lem}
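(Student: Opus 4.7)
The plan is to exhibit $\mathcal{T}_\alpha \cap X$ as an increasing countable union of sets on which the thick-point convergence in~\eqref{thick_points_def_eq} holds uniformly in $\delta$, and then to extract a single member of this union whose Hausdorff dimension almost realizes the supremum. Concretely, for each $k \in \BB N$, I would define
\begin{equation*}
X_k := \left\{ z \in X : \frac{h_\delta(z)}{\log \delta^{-1}} \geq \alpha - \zeta, \ \forall \delta \in (0, 1/k] \right\},
\end{equation*}
so that each $X_k$ is automatically a candidate for $X^{\alpha,\zeta}$ with the deterministic choice $\overline{\delta} = 1/k$. It then suffices to find a (random) $k^\ast$ for which $\dim_{\mathcal{H}}(X_{k^\ast}) \geq \dim_{\mathcal{H}}(\mathcal{T}_\alpha \cap X) - \zeta$, and to set $\overline{\delta} := 1/k^\ast$.

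First I would verify $\bigcup_k X_k \supseteq \mathcal{T}_\alpha \cap X$. If $z \in \mathcal{T}_\alpha \cap X$, then by Definition~\ref{def-thick-pt} there exists $\delta_z > 0$ (depending on $z$ and $h$) with $h_\delta(z)/\log \delta^{-1} \geq \alpha - \zeta$ for every $\delta \in (0, \delta_z]$; any integer $k$ with $1/k \leq \delta_z$ then places $z$ in $X_k$. The sets are also nested, $X_k \subseteq X_{k+1}$. Next I would invoke countable stability of Hausdorff dimension to obtain
\begin{equation*}
\sup_{k \in \BB N} \dim_{\mathcal{H}}(X_k) \;=\; \dim_{\mathcal{H}}\Big( \bigcup_{k \in \BB N} X_k \Big) \;\geq\; \dim_{\mathcal{H}}(\mathcal{T}_\alpha \cap X).
\end{equation*}
Since this supremum is bounded by $2$, one may choose a random index $k^\ast = k^\ast(\alpha, \zeta, h, X) \in \BB N$ with $\dim_{\mathcal{H}}(X_{k^\ast}) \geq \sup_k \dim_{\mathcal{H}}(X_k) - \zeta$; the corresponding $\overline{\delta} := 1/k^\ast$ then satisfies $X^{\alpha, \zeta} \supseteq X_{k^\ast}$, yielding the claimed dimension bound.

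The argument is essentially soft and I do not see any serious analytic obstacle; the bulk of the probabilistic content is not in this lemma but rather in Theorem~\ref{th_GHM} (which is applied immediately afterward to identify $\dim_{\mathcal{H}}(\mathcal{T}_\alpha \cap X)$ with $\dim_{\mathcal{H}}(X) - \alpha^2/2$). The one point that deserves care is measurability of $k^\ast$ (equivalently of $\overline{\delta}$) as a function of $(h, X)$, which follows from the facts that each $X_k$ is Borel measurable jointly in $(h, X)$ via the measurability of the circle-average process, and that $\dim_{\mathcal{H}}$ is a Borel-measurable functional. A minor bookkeeping issue is the degenerate case $\mathcal{T}_\alpha \cap X = \emptyset$ (which occurs when $\alpha^2/2 > x$), where the desired inequality is vacuous with the convention $\dim_{\mathcal{H}}(\emptyset) = -\infty$, so no separate treatment is needed.
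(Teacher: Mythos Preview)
Your argument is correct and is exactly the standard soft proof: exhaust $\mathcal{T}_\alpha \cap X$ by the nested sets $X_k$, apply countable stability of Hausdorff dimension, and pick an index within $\zeta$ of the supremum. The paper does not supply its own proof of this lemma but simply quotes it as \cite[Lemma~4.3]{ghm-kpz}; the proof there is essentially the same exhaustion argument you wrote down.
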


Together, Theorem~\ref{th_GHM} and Lemma~\ref{uni} imply that, for a fixed $\alpha \in (0,2]$ and  $\zeta  > 0$,
\eqb
\dim_{\mathcal{H}}(X^{\alpha, \zeta}) \geq \dim_{\mathcal{H}}(\mathcal{T}_{\alpha}\cap X) -\zeta = x - \frac{\alpha^2}{2} -\zeta.
\label{dim_alpha_zeta_lower}
\eqe
To prove the lower bound, we consider the set of squares in $\mathcal{S}_h^{\epsilon}$ that intersect $X^{\alpha, \zeta}$ for fixed $\alpha \in (0,2]$ and $\zeta > 0$.
For this purpose we will need the following elementary continuity estimate for the circle average process, see, e.g.,~\cite[Lemma 3.15]{ghm-kpz}. 

\begin{lem}\label{lem_con2}
For each fixed $\zeta \in (0, 1/2)$ and $R>0$, it holds with probability tending to 1 as $\ep \rta 0$ that
\begin{equation}
\label{lem_con2_bound}
| h_{\delta}(w) - h_{\delta^{1-\zeta}}(z) | \leq 3 \sqrt{10r} \log \delta^{-1}, \quad \forall \delta \in (0,\ep], \quad \forall z,w\in B_R(0) \:\text{with} \: |z-w| \leq 2\delta .
\end{equation}   
\end{lem}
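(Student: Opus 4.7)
The plan is to reduce to a union bound over a discrete set of points and scales, using the Brownian motion structure of the circle average process along with its Hölder continuity in space. By local absolute continuity (and by harmonically extending a continuous perturbation), I first reduce to the case where $h$ is a whole-plane GFF normalized as in~\eqref{eqn-whole-plane}.

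Recall from~\cite{shef-kpz} (Proposition 2.1 and the circle-average estimates of Section~3.1) two facts: first, for each fixed $z\in\BB C$, the process $t \mapsto h_{e^{-t}}(z) - h_1(z)$ has the same law as a standard one-sided Brownian motion in $t \geq 0$; second, the process $(z,\delta)\mapsto h_\delta(z)$ is Hölder continuous in the parameters, in the sense that for any $\alpha\in(0,1/2)$ and any compact $K\subset\BB C$, there exists a random constant $C=C(\alpha,K)<\infty$ with
\eqbn
|h_\delta(z)-h_{\delta'}(z')| \leq C\,\delta^{-\alpha}\bigl(|z-z'|^{\alpha}+|\delta-\delta'|^{\alpha}\bigr)
\eqen
for all $z,z'\in K$ and $\delta,\delta'\in (0,1]$ with $\delta,\delta' \geq \tfrac{1}{2}\max(\delta,\delta')$. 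The first fact immediately gives that for any fixed $z$, the increment $h_\delta(z)-h_{\delta^{1-\zeta}}(z)$ is centered Gaussian with variance $\zeta\log\delta^{-1}+O(1)$; hence by the Gaussian tail bound,
\eqbn
\BB P\Bigl[\,|h_\delta(z)-h_{\delta^{1-\zeta}}(z)|\geq c\sqrt{\zeta}\,\log\delta^{-1}\Bigr] \leq 2\exp\!\left(-\tfrac{c^{2}}{2}\log\delta^{-1}+O(1)\right),
\eqen
which is a positive power of $\delta$ as soon as $c^2/2>0$.

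Next I would set up the union bound. For each dyadic scale $\delta_n := 2^{-n}$, let $\mcl N_n$ be a $\delta_n$-net of $B_R(0)$; one has $\#\mcl N_n \preceq \delta_n^{-2}$. Applying the above Gaussian tail bound at each $z\in\mcl N_n$ with an appropriate $c=c(\zeta)$ and taking a union bound yields that, with probability at least $1-O(\delta_n^{p})$ for some $p=p(\zeta)>0$, the inequality $|h_{\delta_n}(z)-h_{\delta_n^{1-\zeta}}(z)|\leq 2\sqrt{10\zeta}\,\log\delta_n^{-1}$ holds simultaneously for every $z\in\mcl N_n$. By Borel--Cantelli this holds for all $n\geq n_\ep$ with probability tending to $1$ as $\ep\to 0$. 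To upgrade from a net to arbitrary $w \in B_R(0)$ with $|w-z|\leq 2\delta$ and from dyadic scales to all $\delta\in(0,\ep]$, I would invoke the Hölder-continuity estimate above with $\alpha$ small: the spatial oscillation of $h_\delta$ over a ball of radius $2\delta$ is at most $C\,\delta^{-\alpha}\cdot \delta^{\alpha}=C$, and the same estimate controls the change in scale by a factor of $2$, which contributes a bounded additive term that is absorbed into the slack between $2\sqrt{10\zeta}$ and $3\sqrt{10\zeta}$ once $\delta$ is small enough (guaranteed by choosing $\ep$ small).

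The main technical point is calibrating the constants so that the prefactor comes out to $3\sqrt{10\zeta}$: the Gaussian exponent yields a bound of order $c\sqrt{\zeta}\log\delta^{-1}$ for the radial part, the union bound over $\mcl N_n$ of size $\delta_n^{-2}$ costs $O(\log\delta^{-1})$ in the exponent (hence a constant factor in the radial threshold), and the spatial-oscillation correction contributes $O(1)$, which is negligible compared to $\sqrt{\zeta}\log\delta^{-1}$ as $\delta\to 0$. The hard part, as in the analogous argument in~\cite{ghm-kpz}, is keeping all these contributions simultaneously under $3\sqrt{10\zeta}\log\delta^{-1}$ uniformly in $\delta\in(0,\ep]$; the slack between $2\sqrt{10\zeta}$ in the net estimate and $3\sqrt{10\zeta}$ in the stated bound is precisely what lets the chaining/continuity step go through.
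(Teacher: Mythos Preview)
The paper does not prove this lemma; it simply cites it as \cite[Lemma~3.15]{ghm-kpz}. Your approach is the standard one and is essentially correct: bound the radial increment $h_{\delta}(\cdot)-h_{\delta^{1-\zeta}}(\cdot)$ at points of a $\delta_n$-net via the Brownian-motion description of circle averages plus a Gaussian tail bound, take a union bound over the $O(\delta_n^{-2})$ net points and over dyadic scales, and then upgrade to arbitrary $z,w,\delta$ using the H\"older modulus of continuity for $(z,\delta)\mapsto h_\delta(z)$. (You also correctly read the typo in the statement: the $r$ inside the square root should be $\zeta$, as confirmed by how the lemma is applied in the proof of Theorem~\ref{thm-kpz}.)

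Two small points worth tightening if you write this out in full. First, the H\"older estimate you quote only compares $h_\delta$ and $h_{\delta'}$ when $\delta$ and $\delta'$ are within a bounded ratio, so it cannot directly control $|h_\delta(w)-h_{\delta^{1-\zeta}}(z)|$; you are implicitly using it only for the $O(1)$ corrections (moving from $w$ to a net point at scale $\delta$, from $z$ to a net point at scale $\delta^{1-\zeta}$, and from $\delta$ to the nearest dyadic), with the large radial jump handled by the Gaussian bound---make that decomposition explicit. Second, check the constant: you need $c^2/2>2$ to beat the $\delta_n^{-2}$ union bound, and $c=2\sqrt{10}$ indeed satisfies this with room to spare, so the slack between $2\sqrt{10\zeta}$ and $3\sqrt{10\zeta}$ absorbs the random H\"older constant $C$ once $\delta\leq\ep$ is small enough.
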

 
Since $X$ is bounded, a.s.\ the maximum Euclidean size of the squares of $\mcl S_h^\ep$ which intersect $X$ tends to zero as $\ep\rta 0$. 
Hence, it is a.s.\ the case that for $\epsilon$ less than some random threshold, for each $z\in X^{\alpha,\zeta}$ and each $S \in \mathcal{S}_h^{\epsilon}$ with $z\in S$, the Euclidean size of $S$ is small enough that 
\begin{enumerate}[label=(\alph*)]
\item
 the definition \eqref{lower_b2} gives
\[
 e^{h_{(| S|/2)^{1-\zeta}}(z )}  \geq  \left(\frac{|S|}{2}\right)^{-(1-\zeta)(\alpha -\zeta)},
\]
and 
\item
 we can then apply Lemma~\ref{lem_con2} to recenter the circle average $h_{(| S|/2)^{1-\zeta}}(z)$ at the center $v_S$ of $S$ and get
\[
 e^{h_{| S|/2}(v_S)} \geq  \left(\frac{|S|}{2}\right)^{-\alpha + o_{\zeta}(1)},
\]
where the $o_\zeta(1)$ tends to zero as $\zeta\rta0$ at a rate depending only on $\alpha$.  
\end{enumerate} 
Thus, a.s.\ for small enough $\ep$, 
\eqb
\epsilon \geq  e^{h_{|S|/2}(v_S)} |S|^Q  \geq |S|^{Q-\alpha + o_\zeta(1)}
\label{S_upper_bound} 
\eqe
for all $S \in \mathcal{S}_h^{\epsilon}$ intersecting $X^{\alpha, \zeta}$.
We will now separately treat the case when $x <Q^2/2$ and the case when $x > Q^2/2$. 

If $x < Q^2/2$, then $X^{\alpha, \zeta}$ has nonzero dimension only for $\alpha < Q$. For such $\alpha$, the inequality~\eqref{S_upper_bound} gives (for $\zeta$ sufficiently small) an upper bound for the Euclidean size of a square in $\mathcal{S}_h^{\epsilon}$ that intersect $X^{\alpha, \zeta}$.  Combining this bound with the lower bound~\eqref{dim_alpha_zeta_lower} on the dimension of $X^{\alpha, \zeta}$, we get a lower bound for the number of squares in $\mathcal{S}_h^{\epsilon}$ that intersect $X^{\alpha, \zeta}$; namely, we get that a.s.\ 
\[
N^{\epsilon}_h(X^{\alpha,\zeta}) \geq N_0^{\epsilon^{\frac{1}{Q-\alpha+o_{\zeta}(1)}}}(X^{\alpha,\zeta}) 
\geq \epsilon^{  - \frac{ x - \alpha^2/2}{ Q-\alpha  } + o_\zeta(1)  +o_\epsilon(1) }.
\]
Sending $\zeta \rightarrow 0$, we deduce that, a.s.\
\eqb
N_h^{\epsilon}(X)  \geq  \epsilon^{  - \frac{ x - \alpha^2/2}{ Q-\alpha  }   +o_\epsilon(1) }.
\eqe 
The exponent on the right is minimized at $\alpha = Q - \sqrt{Q^2-2x}$, where it equals $Q - \sqrt{Q^2-2x}$. Making this choice of $\alpha$ gives the desired lower bound for $N_h^\ep(X)$.  
 
If $x > Q^2/2$,  then by~\eqref{dim_alpha_zeta_lower}, the set  $X^{\alpha, \zeta}$ has nonzero dimension for some $\alpha > Q$ and $\zeta$ sufficiently small.  For these values of $\alpha$ and $\zeta$, the exponent on the right side of \eqref{S_upper_bound} is negative. Therefore, \eqref{S_upper_bound} will always be false for $\epsilon$ sufficiently small, so a.s.\ for small enough $\ep$ none of the squares in $\mathcal{S}_h^{\epsilon}$ intersects the set $X^{\alpha, \zeta}$. In other words, every point of $X^{\alpha,\zeta}$ is a singularity for $\mcl S_h^\ep$. 
Therefore, a.s.\ $X$ intersects infinitely many singularities of $\mcl S_h^\ep$ for each small enough $\ep > 0$.  

We will now deduce from this that a.s.\ $N_h^\ep(X) = \infty$ for small enough $\ep$. 
Indeed, since $X$ is independent from $h$ and the probability that any fixed point of $\BB C$ is a singularity is zero, we infer that for any fixed $w\in \BB C$ and $r >0$, 
\eqb
\BB P\left[ \text{$X\cap B_r(w)\not=\emptyset$ and every point of $X\cap B_r(w)$ is a singularity}\right] = 0 .
\eqe
Applying this for $w\in\BB Q^2$ and $r\in \BB Q\cap (0,\infty)$ shows that a.s.\ the following is true. For every singularity $z$ of $\mcl S_h^\ep$ which intersects $X$ and every $r > 0$, there is a point of $X\cap B_r(z)$ which is \emph{not} a singularity for $\mcl S_h^\ep$. Hence each of the infinitely many singularities $z\in X\cap \mcl S_h^\ep$ is the limit of a sequence of points of $  X$ which are \emph{not} singularities for $\mcl S_h^\ep$, and so are contained in squares of $\mcl S_h^\ep$ which intersect $X$. 
The maximal size of the squares of $\mcl S_h^\ep$ which intersect $B_r(z)$ tends to zero as $r\rta 0$ (otherwise $z$ would be contained in a square of $\mcl S_h^\ep$).
Hence a.s.\ each of the singularities of $X\cap \mcl S_h^\ep$ is an accumulation point of arbitrarily small squares of $\mcl S_h^\ep$ which intersect $X$. 
Therefore, a.s.\ $N_h^\ep(X) = \infty$ for each small enough $\ep > 0$.  
\end{proof}

\begin{remark} \label{remark-not-true}  
Theorem~\ref{thm-kpz} is not true with the Hausdorff dimension of $X$ used in the upper bound and/or the Minkowski expectation dimension of $X$ used in the lower bound. 
For a counterexample in the case of the upper bound, take $X = \BB Q^2 \cap \BB D$.
Then the Hausdorff dimension of $X$ is 0 but $N_h^\ep(X) =  \ep^{- (Q - \sqrt{Q^2-4}) + o_\ep(1)}$ if $Q > 2$ or $N_h^\ep(X) = \infty$ for small enough $\ep$ if $Q < 2$. 
In the case of the lower bound, consider $X =\{0\} \cup \{1/n : n\in\BB N\}$, viewed as a subset of $\BB C$. 
Then the Minkowski dimension of $X$ is $1/2$, so if the lower bound in Theorem~\ref{thm-kpz} were true with Minkowski dimension in place of Hausdorff dimension, we would have $N_h^\ep(X) = \infty$ for small enough $\ep$ whenever $Q^2/2 < 1/2$. We claim, however, that a.s.\ $N_h^\ep(X) \leq \ep^{-1/Q + o_\ep(1)}$. 
Indeed, it is easy to see using basic Gaussian estimates that a.s.\ $|S| = \ep^{ 1/Q + o_\ep(1)}$ for each of the (at most 4) squares of $\mcl S_h^\ep$ which contain zero. 
This means that $X$ can be covered by one of these squares plus at most $\ep^{-1/Q + o_\ep(1)}$ additional squares, one for each of the points $1/n \in X$ with $1/n \geq \ep^{1/Q + o_\ep(1)}$. 
\end{remark}

\begin{remark} \label{remark-hausdorff}
One possible way of defining the ``$\ccM$-quantum Hausdorff dimension" $\dim_{\mcl H}^{\ccM} X$ of a set $X$ for a general choice of $\ccM  < 25$ is as follows. For $\Delta > 0$, we define the ``$\ccM$-quantum Hausdorff content" of $X$ by
\eqb
\inf\left\{ \sum_{i=1}^\infty M_h(S_i)^\Delta : \text{$\{S_i\}_{i\in\BB N}$ is a cover of $X$ by squares} \right\}
\eqe
where $M_h(S_i)$ is as in~\eqref{eqn-mass-def}. 
We define $\dim_{\mcl H}^{\ccM} X$ to be the infimum of the set of $\Delta  > 0$ for which this content is zero. 
We expect that if $X$ a deterministic set of a random set sampled independently from $h$, then a.s.\ $\dim_{\mcl H}^{\ccM} X $ is related to the Euclidean Hausdorff dimension of $X$ via the KPZ formula as in Theorem~\ref{thm-kpz}. We do not prove this here, however.
See~\cite[Theorem 5.4]{rhodes-vargas-log-kpz} for a version of the KPZ formula for a class of GMC measures (including $\gamma$-LQG measures for $\gamma \in (0,2)$) using a similar notion of Hausdorff dimension. 
\end{remark}

\section{White noise approximations of the GFF}
\label{sec-prelim}

The rest of the paper is devoted to studying graph distances in $\mcl S_h^\ep$, which will eventually lead to proofs of Theorem~\ref{thm-ball-infty} and Proposition~\ref{prop-ptwise-distance}. Before proceeding with the core arguments in Section~\ref{sec-ball-infty}, in this brief section we make some preliminary definitions and record a few basic estimates.

\subsection{White noise approximation}
\label{sec-wn-decomp}
 
In this subsection we will introduce various white-noise approximations of the Gaussian free field which are often more convenient to work with than the GFF itself. Our exposition closely follows that of~\cite{dg-lqg-dim}. 

Let $W$ be a space-time white noise on $\BB C\times [0,\infty)$, i.e., $\{(W,f) : f\in L^2(\BB C\times [0,\infty))\}$ is a centered Gaussian process with covariances $\BB E[(W,f) (W,g) ]  = \int_\BB C\int_0^\infty f(z,s) g(z,s) \, dz\,ds$. For $f\in L^2(\BB C\times [0,\infty))$ and Borel measurable sets $A\subset\BB C$ and $I\subset [0,\infty)$, 
we slightly abuse notation by writing 
\eqbn
\int_A\int_I f(z,s) \, W(dz,ds) := (W , f \BB 1_{A\times I} ) .
\eqen

Approximations of the GFF can be obtained by integrating the transition density of Brownian motion against $W$. 
For an open set $U \subset \BB C$, let $p_U(s ; z,w)$ be the transition density of Brownian motion stopped at the first time it exits $U$, so that for $s\geq 0$, $z\in \BB C$, and $A\subset \ol U$, the probability that a standard planar Brownian motion $\mcl B$ started from $z$ satisfies $\mcl B([0,s]) \subset U$ and $\mcl B_s \in A$ is $\int_A p_U(s;z,w) \,dw$. We abbreviate
\eqbn
p(s;z,w) := p_{\BB C}(s;z,w) =  \frac{1}{2\pi s} \exp\left( - \frac{|z-w|^2}{2s} \right) .
\eqen  

Following~\cite{rhodes-vargas-log-kpz}, we define the centered Gaussian process
\eqb \label{eqn-wn-decomp}
\wh h_t (z) := \sqrt\pi \int_{\BB C} \int_{t^2}^1 p (s/2 ;z,w) \, W(dw,ds)  ,\quad \forall t\in [0,1] , \quad \forall z\in \BB C .
\eqe 
We abbreviate $\wh h := \wh h_0$. 
By~\cite[Lemma 3.1]{ding-goswami-watabiki} and Kolmogorov's criterion, each $\wh h_t$ for $t > 0$ admits a continuous modification. Henceforth whenever we work with $\wh h_t$ we will assume that it is been replaced by such a modification (we will only ever need to consider at most countably many values of $t $ at a time). 
The process $\wh h$ does not admit a continuous modification, but can be viewed as a random distribution. 
We note that
\eqb \label{eqn-wn-var}
\BB E\left[ \left(\wh h_{\wt t}(z) -  \wh h_t(z)\right)^2 \right] = \log (\wt t/t),\quad\forall z \in \BB C, \quad\forall 0 < t <\wt t < 1 .
\eqe 

The process $\wh h_t$ is a good approximation for the circle-average process of a GFF over circles of radius $t$~\cite[Proposition 3.2]{ding-goswami-watabiki}.  
The former process is in some ways more convenient to work with than the GFF thanks to the following symmetries, which are immediate from the definition. 
\begin{itemize}
\item \textit{Rotation/translation/reflection invariance.} The law of $\{\wh h_t(z) : t\in [0,1] , z\in \BB C\}$ is invariant with respect to rotation, translation, and reflection of the plane.
\item \textit{Scale invariance.} For $\delta \in (0,1]$, one has $\{ (\wh h_{\delta t} - \wh h_{\delta})(\delta z) : t \in [0,1] , z\in \BB C\} \eqD \{\wh h_t(  z) : t\in [0,1] , z\in \BB C\}$. 
\item \textit{Independent increments.} If $0 \leq t_1<t_2 \leq t_3 < t_4$, then $\wh h_{t_2} - \wh h_{t_1}$  and $\wh h_{t_4} - \wh h_{t_3}$ are independent. 
\end{itemize}

For $Q >0$ and $U\subset\BB C$, we define a collection of non-overlapping dyadic squares $\mcl S_{\wh h}^\ep(U)$ and a pseudometric $D_{\wh h}^\ep$ on $\BB C$ in exactly the same manner as in~\eqref{eqn-dyadic-tiling-def} but with $\wh h_{|S|/2}$ used in place of $h_{|S|/2}$. That is, for a square $S\subset \BB C$ we set 
\eqb \label{eqn-mass-def-wn}
M_{\wh h}(S) := e^{\wh h_{|S|/2}(v_S)} |S|^Q  
\eqe  
and for $U\subset \BB C$ we define
\allb \label{eqn-dyadic-tiling-def-wn}
\mcl S_{\wh h}^\ep(U) &:= \big\{ \text{dyadic squares $S\subset U$ with $M_{\wh h}(S) \leq \ep$ and $M_{\wh h}(S') > \ep$, } \notag \\
&\qquad \qquad \qquad \text{$\forall$ dyadic ancestors $S' \subset U$ of $S$} \big\}.
\alle
We view $\mcl S_{\wh h}^\ep(U)$ as a graph with squares considered to be adjacent if their boundaries intersect and for $z,w\in U$ we define $D_{\wh h}^\ep(z,w; U)$ to be the minimal graph distance in $\mcl S_{\wh h }^\ep(U)$ from a square containing $z$ to a square containing $w$, or $\infty$ if either $z$ or $w$ is not contained in a square of $\mcl S_{\wh h}^\ep(U)$. We drop the $U$ from the notation when $U=\BB C$ and we define the $D_{\wh h}^\ep$-distance between sets as in~\eqref{eqn-dist-set}.  
We note that the obvious analogue of~\eqref{eqn-dist-scaling} holds for $D_{\wh h}^\ep$. 

It is sometimes convenient to work with a truncated variant of $\wh h $ where we only integrate over a ball of finite radius. As in~\cite{ding-goswami-watabiki,dzz-heat-kernel,dg-lqg-dim}, we define a truncated version of the white noise approximation of the GFF by
\allb \label{eqn-wn-truncate}
 \wh h_t^{\tr}(z) := \sqrt\pi \int_{\BB C} \int_{t^2}^\infty p_{B_{1/10}(z)}(s/2 ;z,w) \, W(dw,ds)  ,\quad \forall 0 \leq t \leq 1 ,  \quad \forall z\in \BB C .
\alle  
As above, we write $\wh h^\tr := \wh h^\tr_0$. Each $\wh h_t^\tr$ a.s.\ admits a continuous modification, but $\wh h^\tr$ does not, and is instead viewed as a random distribution. 
The process $\wh h^\tr$ lacks the scale invariance property enjoyed by $\wh h$. However, it does possess an exact local independence property, which is immediate from the analogous property of the white noise: if $A,B \subset \BB C$ with $\op{dist}(A,B) \geq 1/5$, then $\{\wh h^\tr_t|_A\}_{t\in [0,1]}$ and $\{\wh h^\tr_t|_B\}_{t\in [0,1]}$ are independent. 

We define $M_{\wh h^\tr}(S)$ for squares $S\subset \BB C$ as well as $\mcl S_{\wh h^\tr}^\ep(U)$ and $D_{\wh h^\tr}^\ep(\cdot,\cdot ;U)$ exactly as above but with $\wh h^\tr_{|S|/2}$ in place of $\wh h_{|S|/2}$.

The following lemma will allow us to use $\wh h^\tr$ or $\wh h$ in place of the GFF in many of our arguments. 
It can be proven using basic estimates for the Brownian transition density which allow one to check Kolmogorov's continuity criterion; see~\cite[Lemma 3.1]{dg-lqg-dim}. 

\begin{lem} \label{lem-gff-compare} 
Suppose $U\subset \BB C$ is a bounded Jordan domain and let $K$ be the set of points in $U$ which lie at Euclidean distance at least $1/10$ from $\bdy U$. 
There is a coupling $(h , h^U,\wh h , \wh h^\tr)$ of a whole-plane GFF normalized so that $h_1(0) = 0$, a zero-boundary GFF on $U$, and the fields from~\eqref{eqn-wn-decomp} and~\eqref{eqn-wn-truncate} such that the following is true. For any $h^1,h^2 \in \{h , h^U,\wh h , \wh h^\tr\}$, the distribution $(h^1-h^2)|_K$ a.s.\ admits a continuous modification and there are constants $c_0,c_1 > 0$ depending only on $U$ such that for $A>1$, 
\eqb \label{eqn-gff-compare}
\BB P\left[\max_{z\in K} |(h^1-h^2)(z)| \leq A \right] \geq 1 - c_0 e^{-c_1 A^2} .
\eqe
In fact, in this coupling one can arrange so that $\wh h$ and $\wh h^\tr$ are defined using the same white noise and $h - h^U$ is harmonic on $U$. 
\end{lem}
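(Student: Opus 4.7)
The plan is to construct all four fields on a single probability space from the same underlying space-time white noise $W$, and then bound the six pairwise differences using the fact that each difference is a centered Gaussian field whose covariance kernel is continuous on the compact set $K$. The key inputs are (i) the Markov decomposition of the GFF on $U$, (ii) an explicit covariance comparison between the circle-average process of the GFF and the white-noise fields $\wh h$ and $\wh h^\tr$, and (iii) Fernique's inequality applied to continuous centered Gaussian fields. Since pairwise comparisons pass through the triangle inequality, it suffices to compare each of $h$, $h^U$, and $\wh h^\tr$ to the single reference field $\wh h$.

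First I would set up the coupling. Starting from a space-time white noise $W$ on $\BB C \times [0,\infty)$, define $\wh h$ and $\wh h^\tr$ by the formulas \eqref{eqn-wn-decomp} and \eqref{eqn-wn-truncate} using the same $W$, as already mentioned in the statement. Define a whole-plane GFF $h$ as a measurable function of $W$ (possibly together with an independent auxiliary randomness) using the standard white-noise / heat kernel representation so that the covariance of $h_t$ and $\wh h_t$ differ by a Hölder-continuous kernel on compacts --- this is exactly the content of \cite[Proposition~3.2]{ding-goswami-watabiki}, which shows that the difference of circle-average processes of $h$ and of $\wh h$ admits a jointly continuous modification with subgaussian tails on compact sets. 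Finally, define $h^U$ from $h$ via the Markov decomposition $h = h^U + \frk h^U$, where $\frk h^U$ is the harmonic extension of $h|_{\bdy U}$ (made rigorous in the usual distributional sense, see, e.g., \cite{shef-gff}), so that $h - h^U = \frk h^U$ is a.s.\ harmonic, hence smooth, on $U$.

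Next I would verify the tail bound \eqref{eqn-gff-compare} for each of the three reference comparisons. The field $\frk h^U|_K$ is a centered continuous Gaussian field with covariance given by the difference of the whole-plane and zero-boundary Green's functions, which is smooth on $\ol U$; Fernique's inequality (or Borell--TIS) applied to $\frk h^U$ on $K$ yields $\BB P[\max_K |\frk h^U| > A] \leq c_0 e^{-c_1 A^2}$. For the pair $(\wh h, \wh h^\tr)$, the difference is the white noise integral of the kernel $p(s/2;z,w) \mathbbm 1_{s \in [0,1]} - p_{B_{1/10}(z)}(s/2;z,w)\mathbbm 1_{s\geq 0}$ against $W$. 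This kernel is singular only near $s=0$ and $w=z$; but because $K$ lies at Euclidean distance at least $1/10$ from $\bdy U$ and the truncation is exactly at radius $1/10$, the singularities of the two pieces cancel, and a direct Brownian transition-density estimate (the same kind as in the proof of \cite[Lemma~3.1]{dg-lqg-dim}) shows that the resulting covariance kernel extends continuously to $K \times K$, so Fernique applies again. The pair $(h, \wh h)$ is handled by \cite[Proposition~3.2]{ding-goswami-watabiki}.

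Finally, the tail bound for an arbitrary pair $(h^1, h^2) \in \{h, h^U, \wh h, \wh h^\tr\}^2$ follows by writing $h^1 - h^2 = (h^1 - \wh h) - (h^2 - \wh h)$, applying the union bound to the event $\{\max_K|h^i - \wh h| \leq A/2\}$ for $i=1,2$, and absorbing the factor of two into the constants $c_0, c_1$. The main obstacle in executing this plan is the quantitative covariance comparison between $h$ and $\wh h$ --- that is, justifying \cite[Proposition~3.2]{ding-goswami-watabiki} in our setting and checking that the tail constants can be taken uniform in $K$ --- together with verifying Kolmogorov's continuity criterion for the white-noise difference kernel near the singularity at $s=0$; all other steps are essentially routine once the coupling has been specified.
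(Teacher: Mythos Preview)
Your proposal is correct and follows essentially the same approach as the paper, which does not give a self-contained proof but simply refers to \cite[Lemma~3.1]{dg-lqg-dim} and the Brownian transition density estimates needed to verify Kolmogorov's criterion. Your sketch fleshes out exactly those ingredients: the Markov decomposition for $h-h^U$, the cancellation of short-time singularities in $\wh h - \wh h^\tr$, the comparison of $h$ and $\wh h$ via \cite[Proposition~3.2]{ding-goswami-watabiki}, and Fernique/Borell--TIS for the Gaussian tail, with the remaining pairs handled by the triangle inequality.
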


We will need the following variant of Lemma~\ref{lem-gff-compare}.

\begin{lem} \label{lem-tr-compare-square}
Let $U$ and $K$ be as in Lemma~\ref{lem-gff-compare}. There is a coupling $(h , h^U,\wh h , \wh h^\tr)$ such that for any $h^1,h^2 \in \{h , h^U , \wh h , \wh h^\tr\}$ and each $C>2$,
\eqb \label{eqn-tr-compare-set}
\BB P\left[ \text{Each square in $\mcl S_{h_1}^{ \ep}(K)$ is contained in a square of $\mcl S_{h_2}^{C\ep}(K)$} \right] \geq 1 - a_0 e^{-a_1 (\log C)^2}
\eqe 
and
\eqb \label{eqn-tr-compare-square}
\BB P\left[ D_{h_1}^{ \ep}\left( z , w ; \BB S \right) \leq   D_{h_2}^{C\ep}\left( z , w ; K \right)    ,\: \forall z,w\in K \right] \geq 1 - a_0 e^{-a_1(\log C)^2} . 
\eqe 
\end{lem}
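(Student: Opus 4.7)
The plan is to deduce both statements from a single high-probability event controlled by the coupling of Lemma~\ref{lem-gff-compare}. Applying~\eqref{eqn-gff-compare} with $A = \log C$, I work on the event $E := \{\max_{z \in K}|(h^1-h^2)(z)| \leq \log C\}$, using the a.s.\ continuous modification of $(h^1-h^2)|_K$ guaranteed by Lemma~\ref{lem-gff-compare}; then $\BB P[E^c] \leq c_0 e^{-c_1 (\log C)^2}$, which is where the Gaussian tail on the right-hand sides of~\eqref{eqn-tr-compare-set}--\eqref{eqn-tr-compare-square} comes from. On $E$, every dyadic square $S \subset K$ has its inscribed circle of radius $|S|/2$ about $v_S$ contained in $K$, and averaging the pointwise bound on $h^1-h^2$ over this circle gives $|h^1_{|S|/2}(v_S) - h^2_{|S|/2}(v_S)| \leq \log C$. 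Multiplying by $|S|^Q$ and exponentiating yields the deterministic mass comparison
\[
C^{-1} M_{h^2}(S) \leq M_{h^1}(S) \leq C M_{h^2}(S) \quad \text{for every dyadic } S \subset K.
\]

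First I would deduce~\eqref{eqn-tr-compare-set} from this. If $S \in \mcl S_{h^1}^\ep(K)$, then $S \subset K$ and $M_{h^1}(S) \leq \ep$, so the mass comparison gives $M_{h^2}(S) \leq C\ep$. Walking up the dyadic tree until either the ancestor leaves $K$ or its $M_{h^2}$-mass exceeds $C\ep$, one identifies the largest dyadic ancestor $S^* \supseteq S$ with $S^* \subset K$ and $M_{h^2}(S^*) \leq C\ep$; by maximality $S^* \in \mcl S_{h^2}^{C\ep}(K)$, which gives the containment asserted by~\eqref{eqn-tr-compare-set}.

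Next I would upgrade this tiling refinement to a path-length comparison to obtain~\eqref{eqn-tr-compare-square}. A minimising sequence $(S_0, \ldots, S_n)$ of adjacent squares in the finer of the two tilings lifts via the previous step to a sequence $(S_0^*, \ldots, S_n^*)$ of enclosing tiles in the coarser one, and consecutive terms $S_j^*, S_{j+1}^*$ are either equal or adjacent: indeed, the non-trivial line segment $\partial S_j \cap \partial S_{j+1}$ is either contained in a single tile of the partitioning of $K$ (forcing $S_j^* = S_{j+1}^*$) or lies on the boundary between two neighbouring tiles (forcing adjacency). Contracting repeated entries yields a genuine path of length at most $n$ in the coarser tiling, which gives the desired distance comparison in the inner domain $K$; the form stated in~\eqref{eqn-tr-compare-square} is then obtained by invoking the elementary monotonicity of $D_h^\ep$ in the domain, recorded just after its definition.

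The main obstacle---which is really a matter of bookkeeping rather than a substantive difficulty---is handling dyadic squares whose ancestors leave $K$. The $1/10$-collar built into the definition of $K$ in Lemma~\ref{lem-gff-compare} guarantees that the mass comparison above applies to every dyadic square that enters either tiling restricted to $K$, since their inscribed circles automatically lie in $K$. No probabilistic input beyond~\eqref{eqn-gff-compare} is used; the remainder of the argument is the combinatorics of dyadic partitions together with the monotonicities of $D_h^\ep$ in $\ep$ and in the domain.
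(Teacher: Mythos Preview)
Your argument has a genuine gap in the step where you pass from the pointwise bound on $(h^1-h^2)|_K$ to the bound $|h^1_{|S|/2}(v_S) - h^2_{|S|/2}(v_S)| \leq \log C$ by ``averaging over the inscribed circle.'' This reasoning is correct when $h^1,h^2 \in \{h,h^U\}$, since for these fields the subscript $|S|/2$ really does denote the circle average of the distribution, and the circle average of a bounded continuous function is bounded by the same constant. But for $\wh h$ and $\wh h^\tr$ the quantity $\wh h_{|S|/2}(v_S)$ is \emph{not} the circle average of the distribution $\wh h = \wh h_0$; it is the value of the white-noise process~\eqref{eqn-wn-decomp} at the spatial point $v_S$ and ``time'' parameter $t = |S|/2$. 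Lemma~\ref{lem-gff-compare} only controls the distribution $(h^1-h^2)|_K$ at the level $t=0$, and this does not give uniform control of $\wh h_t(z) - \wh h^\tr_t(z)$ (or of $h_t(z) - \wh h_t(z)$) over all dyadic scales $t = 2^{-j}$, which is what you need to compare $M_{h^1}(S)$ and $M_{h^2}(S)$ for all dyadic $S\subset K$.

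This is precisely why the paper's proof splits into cases: the pair $(h,h^U)$ is handled exactly as you describe via~\eqref{eqn-gff-compare} and~\eqref{eqn-dist-scaling}, but the pairs involving $\wh h$ or $\wh h^\tr$ require separate input---namely the argument of~\cite[Lemma~2.7]{dzz-heat-kernel} for $(\wh h,\wh h^\tr)$ and~\cite[Proposition~3.3]{ding-goswami-watabiki} for $(\wh h,h^U)$---each of which gives the needed uniform-in-$j$ bound on $|\wh h_{2^{-j}}(z) - \wh h^\tr_{2^{-j}}(z)|$ or its analogue. Once you have that scale-uniform comparison, your deterministic combinatorial argument (squares of the finer tiling lift to the coarser one, and adjacency is preserved) goes through unchanged; it is only the probabilistic input that needs to be strengthened beyond a single application of~\eqref{eqn-gff-compare}.
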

\begin{proof}
The lemma in the case when $(h^1,h^2) = (h ,h^U)$ is immediate from Lemma~\ref{lem-gff-compare} applied with $A =  \log C $ and~\eqref{eqn-dist-scaling}. 
The case when $(h^1 , h^2) = (\wh h, \wh h^\tr)$ follows since the same argument\footnote{The truncated white noise decomposition is called $\eta$ in~\cite{dzz-heat-kernel} and has a slightly more complicated definition than $\wh h^\tr$, but the same (in fact, a slightly easier) argument works in the case of $\wh h^\tr$.}  used to prove~\cite[Lemma 2.7]{dzz-heat-kernel} shows that if $\wh h$ and $\wh h^\tr$ are defined using the same white noise, then for appropriate constants $a_0,a_1 > 0$ as in the lemma statement,  
\eqbn
\BB P\left[ \max_{z\in K } \max_{j \in \BB N_0} | \wh h_{  2^{-j} }(z) -  \wh h^\tr_{  2^{-j} }(z)| \leq \log C \right] \geq 1 -  a_0  e^{- a_1 (\log C)^2} ,\quad\forall C >2. 
\eqen
Finally, in the case when $(h^1,h^2) = (\wh h , h^U)$, we obtain the needed comparison between $\wh h_t$ and the circle-average process for $h^U$ from~\cite[Proposition 3.3]{ding-goswami-watabiki} and a union bound over dyadic radii. 
\end{proof}

\subsection{Estimates for various fields}
\label{sec-gff-estimates}

Let us now record some basic estimates for the squares of $\mcl S_h^\ep$. 
We first show that none of the squares of $\mcl S_h^\ep$ are macroscopic. 
 
\begin{lem} \label{lem-max-square-diam}
Let $h$ be any of the four fields from Lemma~\ref{lem-tr-compare-square} with $U = \BB S(1) = (-1,2)^2$. For each $\zeta\in (0,1)$, it holds with polynomially high probability as $\ep \rta 0$ that
\eqbn
\max\left\{ |S| : S\in \mcl S_{ h}^\ep(\BB S) \right\} \leq  \ep^{\frac{1}{2+Q } - \zeta} .
\eqen  
\end{lem}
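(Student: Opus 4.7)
The plan is a union bound over dyadic scales. Set $m := \log_2 \ep^{-1}$ and $n_\ep := \lceil m(\frac{1}{2+Q} - \zeta)\rceil$. A square $S \in \mcl S_h^\ep(\BB S)$ with $|S| > \ep^{\frac{1}{2+Q}-\zeta}$ necessarily has $|S| = 2^{-n}$ for some integer $n < n_\ep$ and satisfies $M_h(S) \leq \ep$. The latter is equivalent to the circle (or white-noise) average $h_{|S|/2}(v_S)$ lying below $-t_n$, where $t_n := (m - Qn)\log 2$. By the choice of $n_\ep$, one has $t_n \geq (\frac{2}{2+Q} + Q\zeta) m \log 2 > 0$, so we are asking for a Gaussian lower-tail event.

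First I would record that for each of the four fields from Lemma~\ref{lem-tr-compare-square}, the random variable $h_{|S|/2}(v_S)$ is a centered Gaussian with variance $\sigma_n^2 = n \log 2 + O(1)$, uniformly over dyadic $S \subset \BB S$. For $\wh h$ and $\wh h^\tr$ this is immediate from the explicit identity $\operatorname{Var}(\wh h_t(z)) = \log t^{-1}$; for the whole-plane and zero-boundary GFFs it is the standard circle-average variance estimate, valid because $\bdy B_{|S|/2}(v_S) \subset \BB S(1)$ at distance at least $1/2$ from $\bdy\BB S(1)$. The standard Gaussian tail bound then yields $\BB P[M_h(S) \leq \ep] \leq \exp(-t_n^2/(2\sigma_n^2))$. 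Multiplying by the $O(4^n)$ dyadic squares of side $2^{-n}$ in $\BB S$ and summing, the probability of the bad event is bounded by
\[
\sum_{n=0}^{n_\ep - 1} O(4^n) \exp\!\left(-\frac{t_n^2}{2\sigma_n^2}\right) \preceq \sum_{n=0}^{n_\ep} 2^{f(n) + O(1)}, \qquad f(n) := 2n - \frac{(m-Qn)^2}{2n}.
\]

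The key computation is then to locate the maximum of $f$ on $\{0,\ldots,n_\ep\}$. A direct differentiation gives $f'(n) = (m^2 + (4-Q^2)n^2)/(2n^2)$, which is positive throughout $(0, n_\ep]$: trivially when $Q \leq 2$, and when $Q > 2$ because $n_\ep < m/(2+Q) < m/\sqrt{Q^2-4}$, the latter inequality reducing to the elementary $(2+Q)^2 > Q^2-4$. Hence the max is attained at $n_\ep$. A first-order Taylor expansion in $\zeta$ around $\zeta = 0$, using that $f(m/(2+Q)) = 0$, gives the negative leading coefficient
\[
f(n_\ep) = -(Q+4)\, m\, \zeta + O(m\zeta^2) + O(1).
\]
Plugging back in, the full probability is bounded by $O(m)\cdot \ep^{(Q+4)\zeta + O(\zeta^2)}$, which is polynomially small in $\ep$ for all sufficiently small $\zeta > 0$; the case of larger $\zeta$ follows a fortiori from the bound at some small value.

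The only substantive step is the Taylor expansion confirming the negative coefficient $-(Q+4)$, which quantifies the expected fact that $|S| \sim \ep^{1/(2+Q)}$ is the critical scale (at which the expected number of candidate squares is of order $1$). Everything else is Gaussian concentration and bookkeeping, and the argument proceeds identically for all four fields since it only uses the Gaussian law and variance of $h_{|S|/2}(v_S)$.
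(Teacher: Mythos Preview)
Your proof is correct and follows essentially the same approach as the paper: a Gaussian tail bound for $M_h(S) \leq \ep$ combined with a union bound over all dyadic squares at each scale $2^{-n}$, summed over $n$. Two small remarks. First, the paper reduces to the single field $\wh h$ via Lemma~\ref{lem-tr-compare-square} and then sums, whereas you treat all four fields directly by asserting $\op{Var}(h_{|S|/2}(v_S)) = n\log 2 + O(1)$; this is true, but your justification for $\wh h^\tr$ is slightly imprecise (its variance is $\log t^{-1} + O(1)$, not exactly $\log t^{-1}$, since the killed heat kernel differs from the free one for large $s$). Second, your Taylor coefficient is off by a factor: $f'(m/(2+Q)) = 2 + (2+2Q) = 2Q+4$, not $Q+4$, so the leading term is $-(2Q+4)m\zeta$. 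This does not affect the conclusion. Otherwise your analysis of the sum is more explicit than the paper's, which simply asserts that the sum is polynomially small.
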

\begin{proof}
By Lemma~\ref{lem-tr-compare-square} applied with $K = \ol{\BB S}$, it suffices to prove the lemma in the case when $h = \wh h$ (note that $\BB S$ lies at distance at least $1/10$ from $\bdy \BB S(1)$).  
For $n\in\BB N$, let $\mcl D_n$ be the set of dyadic squares $S\subset\BB S$ which have side length $|S| = 2^{-n}$. 
Recall that $\wh h_{2^{-n-1}}(v_S)$ is centered Gaussian with variance $\log 2^{n+1}$ for each $S\in \mcl D_n$. By the Gaussian tail bound, if $n\in\BB N$ with $2^{ Q n} \ep  < 1$ and $S\in \mcl D_n$,
\allb \label{eqn-square-in-set-prob}
\BB P\left[ S \in \mcl S_{\wh h}^\ep(\BB S) \right] 
\leq \BB P\left[ e^{\wh h_{|S|/2}(v_S)} |S|^Q  < \ep \right] 
\leq \BB P\left[ \wh h_{2^{-n-1}}(v_S) < \log(2^{Q n} \ep ) \right] 
\leq \exp\left( - \frac{(\log (2^{Q n}\ep))^2}{2 \log 2^{ n + 1}} \right) .
\alle 
By a union bound over $2^{2n}$ elements of $\mcl D_n$,
\eqbn
\BB P\left[ \mcl D_n\cap \mcl S_{\wh h}^\ep(\BB S) \not=\emptyset \right] \leq 2^{2n} \exp\left( - \frac{(\log (2^{Q n}\ep))^2}{2 \log 2^{ n + 1}} \right).
\eqen 
We now conclude by summing this estimate over all $n\in\BB N$ with $2^{-n} \geq \ep^{1/(2+Q ) -\zeta}$. 
\end{proof}

\begin{lem} \label{lem-unit-square-good}
Suppose that $h$ is either a whole-plane GFF, the field $\wh h$ of~\eqref{eqn-wn-decomp}, or the field $\wh h^\tr$ of~\eqref{eqn-wn-truncate}. It holds with polynomially high probability as $\ep\rta 0$ that each square of $\mcl S_h^\ep(\BB S)$ is also a square of $\mcl S_h^\ep$.
\end{lem}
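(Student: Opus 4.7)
The plan is to reduce the lemma to a clean probabilistic event about the large-scale dyadic ancestors of $\BB S$ in $\BB C$ and then to verify this event with a Gaussian tail bound. Since $\BB S = [0,1]^2$ has its lower-left corner at the origin, the only dyadic squares in $\BB C$ that strictly contain $\BB S$ are $S^{(k)} := [0,2^k]^2$ for $k \geq 1$. Consequently, for every $S \in \mcl S_h^\ep(\BB S)$, the dyadic ancestors of $S$ in $\BB C$ that are not already contained in $\BB S$ are exactly the $S^{(k)}$, $k \geq 1$. Since the definition of $\mcl S_h^\ep(\BB S)$ already guarantees $M_h(S') > \ep$ for every dyadic ancestor $S' \subset \BB S$, it follows that every $S \in \mcl S_h^\ep(\BB S)$ lies in $\mcl S_h^\ep$ iff $M_h(S^{(k)}) > \ep$ for all $k \geq 1$. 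It thus suffices to bound $\BB P[\exists\, k \geq 1 :\, M_h(S^{(k)}) \leq \ep]$ by a polynomial in $\ep$, which I will do by a union bound over $k$.

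For $h$ the whole-plane GFF, apply the Gaussian tail bound to the circle average $h_{2^{k-1}}(v_k)$, where $v_k = (2^{k-1}, 2^{k-1})$ is the center of $S^{(k)}$. A direct computation from~\eqref{eqn-whole-plane}, together with the fact that $|v_k + 2^{k-1} e^{i\theta}|$ ranges over $[(\sqrt 2 - 1)2^{k-1},\,(\sqrt 2 + 1)2^{k-1}]$, shows that $V_k := \op{Var}(h_{2^{k-1}}(v_k)) = k\log 2 + O(1) \leq C_0 k$ for some absolute constant $C_0$. Setting $a_k := \log \ep^{-1} + kQ\log 2 > 0$, the Gaussian tail bound gives
\[
\BB P\bigl[M_h(S^{(k)}) \leq \ep\bigr]
\;=\; \BB P\bigl[h_{2^{k-1}}(v_k) \leq -a_k\bigr]
\;\leq\; \exp\!\left(-\frac{a_k^2}{2 V_k}\right)
\;\leq\; \ep^{\alpha}\, e^{-\beta k}
\]
for positive constants $\alpha,\beta$ depending only on $Q$, obtained by expanding $a_k^2$ and separating the $\log \ep^{-1}$-linear cross term (which contributes the $\ep^{\alpha}$ factor) from the $k$-linear piece (which contributes $e^{-\beta k}$). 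Summing over $k \geq 1$ gives a total of $O(\ep^{\alpha})$, polynomially small in $\ep$.

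For $h = \wh h$ the argument is essentially trivial: the integration range $[t^2, 1]$ in~\eqref{eqn-wn-decomp} is empty whenever $t \geq 1$, so $\wh h_{2^{k-1}}(v_k) \equiv 0$ for every $k \geq 1$ and $M_{\wh h}(S^{(k)}) = 2^{kQ} > \ep$ as soon as $\ep < 2^Q$. For $h = \wh h^\tr$, a Brownian motion started at $z$ exits $B_{1/10}(z)$ in time $O(1)$, so $\op{Var}(\wh h^\tr_t(z))$ decays super-exponentially in $t$ for $t \geq 1$; a union bound via the Gaussian tail then gives $|\wh h^\tr_{2^{k-1}}(v_k)| \leq \tfrac12 \log \ep^{-1}$ for every $k \geq 1$ with superpolynomially high probability, and on this event $M_{\wh h^\tr}(S^{(k)}) \geq \ep^{1/2} \cdot 2^{kQ} > \ep$ for all $k$ once $\ep$ is small. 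The only mildly technical point is the variance estimate $V_k = O(k)$ for the whole-plane GFF, which is routine once one observes that $(\sqrt 2 - 1)2^{k-1}$ is the minimum distance from $\bdy B_{2^{k-1}}(v_k)$ to the origin; all remaining steps are standard Gaussian tail bounds and summation.
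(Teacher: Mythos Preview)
Your proof is correct and follows essentially the same approach as the paper's own proof: reduce to showing $M_h(S^{(k)}) > \ep$ for all dyadic ancestors $S^{(k)}$ of $\BB S$, then apply the Gaussian tail bound and a union bound over $k$. The paper's proof is terser and treats all three fields uniformly via the variance estimate $\op{Var}(h_{|S_j|/2}(v_{S_j})) = \log 2^j + O(1)$; your case-by-case treatment, observing that $\wh h_t \equiv 0$ for $t \geq 1$ and that $\op{Var}(\wh h^\tr_t)$ decays super-exponentially in $t$, is a valid and slightly more explicit alternative.
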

\begin{proof}
It suffices to show that with polynomially high probability as $\ep\rta 0$, neither the square $\BB S$ nor any of its dyadic ancestors belongs to $\mcl S_h^\ep(\BB S)$. 
Equivalently, if we let $\BB S = S_0 \subset S_1\subset\dots$ be the sequence of dyadic ancestors of $\BB S$, it suffices to show that with polynomially high probability as $\ep\rta 0$, it holds that $e^{h_{|S_j|/2}(v_{S_j})} |S_j|^Q  > \ep$ for each $j \in \BB N$. This follows from the Gaussian tail bound and a union bound over all $j\in\BB N_0$ since $|S_j|  =2^j$ and $h_{|S_j|/2}(v_{S_j})$ is Gaussian with variance $\log 2^j + O (1)$. 
\end{proof}

\section{Ball volume growth is superpolynomial}
\label{sec-ball-infty}

In this section we prove Theorem~\ref{thm-ball-infty}. Along the way, we will also obtain Proposition~\ref{prop-ptwise-distance} (see the beginning of Section~\ref{sec-macro-square-dist}). 
To prove Theorem~\ref{thm-ball-infty}, we will first prove a lower bound for the cardinality of the graph-distance ball centered at the origin-containing square in $\mcl S_h^\ep(\BB S)$, then transfer to the statement of Theorem~\ref{thm-ball-infty} using the scaling properties of the GFF. The proof of the estimate for balls in $\mcl S_h^\ep(\BB S)$ consists of four main steps, which are carried out in Sections~\ref{sec-dist-along-line},~\ref{sec-macro-square-dist},~\ref{sec-singularity-dist}, and~\ref{sec-small-square-count}, respectively. 
\begin{enumerate}
\item \textbf{Distance between two sides of a rectangle.} We show that the $D_h^\ep$-distance between two opposite sides of a fixed rectangle in $\BB R^2$ typically grows at most polynomially in $\ep$. \label{item-across-square}
\item \textbf{Maximal distance between large squares.} We show (Proposition~\ref{prop-macro-square-dist}) that for any fixed $\beta > 0$, it holds with high probability as $\ep\rta 0$ that any two squares in $\mcl S_h^\ep(\BB S)$ with side length at least $\ep^\beta$ lie at $\mcl S_h^\ep(\BB S)$-graph distance at most $\ep^{-f(\beta)  - o_\ep(1)}$ from one another, for an exponent $f(\beta)$ depending only on $Q$ and $\beta$. \label{item-macro-square}
\item \textbf{Distance from a small square to a large square.} We show that when $K >1$ is large, a typical ``small" square of $\mcl S_h^\ep(\BB S)$ with side length of order $\ep^K$ is close to some ``large" square of $\mcl S_h^\ep(\BB S)$ with side length at least $\ep^{1/Q + o_\ep(1)}$, in the sense that the distance between the two squares is at most $\ep^s$ for an exponent $s   > 0$ which can be taken to be independent of $K$ if certain parameters are chosen appropriately. \label{item-singularity-dist}
\item \textbf{Lower bound for the number of small squares.} We show that with high probability, the number of squares of $\mcl S_h^\ep(\BB S)$ with side length at most $\ep^K$ which satisfy the condition of step~\ref{item-singularity-dist} grows at least as fast as some power of $\ep^{-K}$ (the power depends only on $Q$).  \label{item-small-square-count}
\end{enumerate}
Combining the last three statements shows that with high probability, the number of squares of side length smaller than $\ep^K$ which belong to the graph distance ball of radius $\ep^{- ( s \wedge f(1/Q) )}$ in $\mcl S_h^\ep(\BB S)$ centered at a fixed point is typically at least a $Q$-dependent power of $\ep^{-K}$.
Given $r\in\BB N$, we will then choose $\ep$ so that $\ep^{-(s\wedge f(1/Q))} = r$, take $K$ to be arbitrarily large, and use the scaling properties of the GFF to get Theorem~\ref{thm-ball-infty}. 

Let us now give slightly more detail as to how each of the above steps is carried out. Step~\ref{item-across-square} is essentially obvious for $Q\in [\sqrt 2 , 2)$, and for $Q < \sqrt 2$ is obtained by considering a path of squares which follows an SLE$_4$ level line of the GFF between the two opposite sides of the rectangle. 

For step~\ref{item-macro-square}, we will use step~\ref{item-across-square} and a percolation argument to build a ``grid" consisting of a polynomial number of paths of squares in $\mcl S_h^\ep(\BB S)$, each of which has at most polynomial length, which intersect every dyadic square in $\BB S$ with side length at least $\ep^\beta$. 
This leads to an upper bound on the maximal distance between two such squares.

To carry out step~\ref{item-singularity-dist}, we first need to specify what we mean by a ``typical" small square of $\mcl S_h^\ep(\BB S)$. 
Roughly speaking, the accumulation points of arbitrarily small squares in $\mcl S_h^\ep(\BB S)$ correspond to the $\alpha$-thick points of $h$ for $\alpha \in (Q,2)$ (Definition~\ref{def-thick-pt}).
The set of $\alpha$-thick points is larger (e.g., in the sense of Hausdorff dimension~\cite{hmp-thick-pts}) for smaller values of $\alpha$, so we expect that most small squares of $\mcl S_h^\ep(\BB S)$ arise from $\alpha$-thick points with $\alpha$ close to $Q$.
This leads us to fix $z\in \BB S$ and $\alpha \in (Q,2)$ and analyze the field $h^\alpha := h - \alpha\log|\cdot - z|$, which essentially amounts to conditioning $z$ to be a thick point of $h$. 
We will prove that for any large exponent $K > 1$, there is a square $S$ of $\mcl S_{h^\alpha}^\ep(\BB S)$ near $z$ with side length at most $\ep^K$ and a square $S'$ of $\mcl S_{h^\alpha}^\ep(\BB S)$ with side length at least $\ep^{1/Q + o_\ep(1)}$ such that $D_{h^\alpha}^\ep(S,S' ; \BB S)$ grows at most polynomially in $\ep$, at a rate which does not depend on $K$, provided we take $\alpha$ sufficiently close to $Q$, depending on $K$ ($\alpha = Q + 1/K$ will suffice).

For step~\ref{item-small-square-count}, we will use the fact that for $\alpha \in (Q,2)$, a point sampled uniformly from the $\alpha$-LQG measure $\mu_h^\alpha$ is a.s.\ $\alpha$-thick~\cite{kahane} (see also~\cite[Section 3.3]{shef-kpz}). We will combine this fact with the analysis of $h^\alpha$ described above and estimates for the $\alpha$-LQG measure (which show that its mass is sufficiently ``spread out" over $\BB S$) to show that there are a large number of small squares in $\mcl S_h^\ep(\BB S)$ which are close to large squares in the sense described above. 

Throughout this section we will use the following notation. 

\begin{defn} \label{def-rectangle}
For a rectangle $R = [a,b] \times [c,d] \subset \BB C$ parallel to the coordinate axes, we write $\bdy_{\op{L}} R$, $\bdy_{\op{R}} R$, $\bdy_{\op{T}} R$, and $\bdy_{\op{B}} R$, respectively, for its left, right, top, and bottom boundaries.  
\end{defn}

\subsection{Distance between two sides of a rectangle grows at most polynomially}
\label{sec-dist-along-line}

Let $h$ be a whole-plane GFF normalized so that $h_1(0) = 0$. In this subsection we will establish that the $D_h^\ep$ distance between two sides of a rectangle grows at most polynomially in $\ep$.

\begin{lem} \label{lem-dist-along-line}
For each $Q \in (0,2)$ and with $\xi = \xi(Q):= \frac{3}{2Q} >  0$ the following is true. For each $\zeta\in (0,1)$ and each $2\times 1$ or $1\times 2$ rectangle $R\subset \BB C$ with sides parallel to the coordinate axes and corners in $\BB Z^2$, it holds that
\eqb \label{eqn-dist-along-line}
\lim_{\ep\rta 0} \BB P\left[ D_h^\ep\left( \bdy_{\op{L}} R, \bdy_{\op{R}} R ; R \right) \leq  \ep^{-\xi - \zeta } \right]  =1 .
\eqe  
The same is true with the field $\wh h$ of~\eqref{eqn-wn-decomp} in place of $h$. 
\end{lem}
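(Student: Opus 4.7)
The plan is to build a short path of squares in $\mcl S_h^\ep(R)$ traversing $R$ from $\bdy_{\op{L}} R$ to $\bdy_{\op{R}} R$ by following an SLE$_4$ level line of $h$, and to bound its length by counting the squares of $\mcl S_h^\ep$ that the level line intersects. The exponent $\xi=3/(2Q)$ arises because SLE$_4$ has Hausdorff dimension $3/2$, while near a ``typical'' (non-thick) point of the field, the Gaussian concentration argument behind Lemma~\ref{lem-max-square-diam} shows the square of $\mcl S_h^\ep$ containing that point has Euclidean side $\ep^{1/Q+o_\ep(1)}$. So a dimension $3/2$ curve through typical points should meet roughly $(\ep^{-1/Q})^{3/2}=\ep^{-3/(2Q)}$ squares of $\mcl S_h^\ep$.

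By local absolute continuity (Lemmas~\ref{lem-gff-compare} and~\ref{lem-tr-compare-square}) we may assume $h$ is a zero-boundary GFF on a bounded domain $R^+\supset R$ plus a bounded deterministic harmonic function, with boundary values on $\bdy R^+$ chosen so that the level line construction of~\cite{ss-contour} yields an SLE$_4$-type curve $\eta$ in $R^+$ whose trace in $R$ crosses from $\bdy_{\op{L}} R$ to $\bdy_{\op{R}} R$. Conditional on $\eta$, the restriction $h|_{R^+\setminus\eta}$ decomposes as $h_\eta+h_\perp$, where $h_\eta$ is a bounded harmonic function with $\pm\lambda$ boundary values across $\eta$ and $h_\perp$ is an independent zero-boundary GFF on $R^+\setminus\eta$. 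Let $\mcl D_n(\eta)$ denote the set of dyadic squares of side $2^{-n}$ contained in $R$ that intersect $\eta$; by Beffara's theorem~\cite{beffara-dim} together with the quantitative SLE$_4$ Minkowski content bounds underlying~\cite{lawler-rezai-nat}, $\BB E[\#\mcl D_n(\eta)]\preceq 2^{(3/2+\zeta')n}$ for every $\zeta'>0$. For $S\in\mcl D_n(\eta)$ with center $v_S$ at Euclidean distance at least $|S|$ from $\eta$, the conditional law of the circle average $h_{|S|/2}(v_S)$ given $\eta$ is Gaussian with variance $\log(1/|S|)+O(1)$ and $O(1)$ mean (by the maximum principle for $h_\eta$), so with $m:=\log_2\ep^{-1}$ the same Gaussian tail estimate used in the proof of Theorem~\ref{thm-kpz} gives
\[
\BB P[S\in\mcl S_h^\ep\,|\,\eta] \;\preceq\; \exp\!\left(-\frac{(nQ-m)^2\log 2}{2n}\right).
\]
Summing $\#\mcl D_n(\eta)\cdot\sup_S\BB P[S\in\mcl S_h^\ep\,|\,\eta]$ over $n$ and applying Laplace's method exactly as in the proof of Theorem~\ref{thm-kpz}, the dominant contribution sits at $n\approx m/Q$ and yields $\BB E[\#\{S\in\mcl S_h^\ep(R):S\cap\eta\neq\emptyset\}]\leq\ep^{-3/(2Q)-\zeta''}$ for any $\zeta''>0$. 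Markov's inequality then gives~\eqref{eqn-dist-along-line}, since the squares of $\mcl S_h^\ep(R)$ visited in order by $\eta$ form a connected path in the adjacency graph joining a square touching $\bdy_{\op{L}} R$ to one touching $\bdy_{\op{R}} R$. The version for $\wh h$ follows by the same argument combined with Lemma~\ref{lem-tr-compare-square}.

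The hard part will be the ``near-boundary'' squares $S$ with $v_S$ within distance $|S|$ of $\eta$: there the circle $\bdy B_{|S|/2}(v_S)$ crosses $\eta$, the clean decomposition of $h_{|S|/2}(v_S)$ as a bounded drift plus a Gaussian breaks down, and in addition $h_\eta$ has logarithmic singularities at the endpoints of $\eta$. The plan for handling these squares is to decompose them by dyadic distance shells $\{z:\op{dist}(z,\eta)\in[2^{-n-k-1},2^{-n-k}]\}$ for $k\geq 0$: at shell $k$ the number of relevant squares of side $2^{-n}$ is smaller by a factor $\preceq 2^{-k/2}$ (from the SLE Minkowski content at the smaller scale), while the Gaussian mean shift coming from $h_\eta$ grows at most like $O(k)$, and these two effects combine so that the sum over $k\geq 0$ converges and preserves the $3/(2Q)$ exponent. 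A secondary difficulty is upgrading Beffara's qualitative Hausdorff dimension bound to the quantitative expected-count estimate $\BB E[\#\mcl D_n(\eta)]\preceq 2^{(3/2+\zeta')n}$ with the tail control needed for the moment calculation; this should follow from the SLE$_4$ one- and two-point function estimates underlying~\cite{lawler-rezai-nat}.
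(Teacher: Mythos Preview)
Your high-level idea of following an SLE$_4$ level line $\eta$ is exactly the paper's, but your execution contains a fundamental error in the conditional variance that makes the argument both internally inconsistent and fail for $Q<\sqrt 3$.

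You claim that for $S\in\mcl D_n(\eta)$ with $v_S$ at distance $\gtrsim |S|$ from $\eta$, the conditional law of $h_{|S|/2}(v_S)$ given $\eta$ has variance $\log(1/|S|)+O(1)$. This is the \emph{unconditional} variance; the conditional one is $\op{Var}(h_{|S|/2}(v_S)\mid\eta)=\log(2/|S|)+\log\op{CR}(v_S,D)$, and since $\op{CR}(v_S,D)\asymp\op{dist}(v_S,\eta)\asymp|S|$ this is $O(1)$, not $n\log 2$. With your claimed variance $n\log 2$, the KPZ-style Laplace sum $\sum_n 2^{(3/2)n}\exp(-(Qn-m)^2\log 2/(2n))$ is maximized at $n\approx m/\sqrt{Q^2-3}$ (not $m/Q$) with value $\ep^{-(Q-\sqrt{Q^2-3})}$ (not $\ep^{-3/(2Q)}$), and for $Q<\sqrt 3$ it diverges. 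So the exponent you write down does not follow from the calculation you describe, and for small $Q$ there is no bound at all. Separately, your ``easy case'' is vacuous: if $S$ \emph{intersects} $\eta$ then $\op{dist}(v_S,\eta)\le |S|\sqrt 2/2<|S|$, so every square is in your ``hard'' near-boundary case, for which you only sketch an argument whose ingredients (the $2^{-k/2}$ count and the $O(k)$ mean shift) are not justified.

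The paper avoids all of this by \emph{not} summing over $n$. It fixes the single scale $n$ with $2^{-(Q-\zeta)n}\approx\ep$, considers lattice points in the shell $B_{2^{-n+10}}(\eta)\setminus B_{2^{-n}}(\eta)$ (so the relevant circle averages lie entirely on one side of $\eta$ and have conditional variance $O(1)$), and uses a union bound to show $|h_{2^{-n-1}}|\le\zeta\log 2^n$ there. This immediately gives $M_h(S)\le 2^{-(Q-\zeta)n}\le\ep$ for the corresponding dyadic squares, hence a path in $\mcl S_h^\ep(R)$ of length $\preceq 2^{(3/2+\zeta)n}=\ep^{-3/(2Q)-o_\zeta(1)}$. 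The correct $O(1)$ conditional variance is precisely what makes the argument work for all $Q\in(0,2)$ and what explains why the answer is $3/(2Q)$ rather than the KPZ exponent for a $3/2$-dimensional set \emph{independent} of $h$.
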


The exponent $\xi$ in Lemma~\ref{lem-dist-along-line} is far from optimal; the main point is just to get a polynomial upper bound. In the case when $Q \in (\sqrt 2, 2)$, one can take $\xi = Q - \sqrt{Q^2-2}  $ instead by considering the set of squares of $\mcl S_h^\ep(R)$ which intersect a fixed line segment (Theorem~\ref{thm-kpz}). 
Lemma~\ref{lem-dist-along-line} will be a consequence of the following estimate, which gives a path between two sides of a rectangle on which the values of $h$ are of approximately constant order.

\begin{lem} \label{lem-gff-perc}
Let $h$ be a whole-plane GFF and fix $\zeta\in (0,1)$. 
For each fixed rectangle $R\subset \BB C$ with sides parallel to the coordinate axes and corners in $\BB Z^2$, it holds with probability tending to 1 as $n\rta\infty$ that the following is true. 
There is a simple path $P$ in $(2^{-n}\BB Z^2)\cap  R$ (equipped with its usual nearest-neighbor graph structure) from $\bdy_{\op{L}} R$ to $\bdy_{\op{R}} R$ such that
\eqb \label{eqn-gff-perc}
\# P \leq 2^{(3/2 + \zeta)n } \quad \text{and} \quad |h_{2^{-n-1}}(z)| \vee |h_{2^{-n}}(z)| \leq \zeta \log 2^n ,\quad\forall z\in P .
\eqe
The same is also true if we instead consider paths in the dual lattice $(2^{-n}( \BB Z^2 + (1/2,1/2) )) \cap R$ between the $2^{-n-1}$-neighborhoods of $\bdy_{\op{L}} R$ and $\bdy_{\op{R}} R$. 
\end{lem}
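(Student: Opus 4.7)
My plan is to construct the desired path via a percolation-type argument that handles the logarithmic correlations of GFF circle averages through a scale decomposition. Call a vertex $z\in(2^{-n}\BB Z^2)\cap R$ \emph{good} if both $|h_{2^{-n-1}}(z)|$ and $|h_{2^{-n}}(z)|$ are at most $\zeta\log 2^n$, and \emph{bad} otherwise. Each circle average is a centered Gaussian with variance $\log 2^n + O_R(1)$, so the Gaussian tail bound gives $\BB P[z \text{ bad}]\leq 4\cdot 2^{-\zeta^2 n/2}$, a polynomially small quantity. The main difficulty is that bad events are not independent, since $h_{2^{-n}}(z)$ and $h_{2^{-n}}(w)$ have covariance of order $\log|z-w|^{-1}$.

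To decouple these correlations, I would first use Lemma~\ref{lem-tr-compare-square} to replace $h$ by the white noise field $\wh h$, and then, for an intermediate scale $m = \lceil \zeta^2 n/4\rceil$, decompose
\[
\wh h_{2^{-n}}(z) = \wh h_{2^{-m}}(z) + \bigl(\wh h_{2^{-n}}(z) - \wh h_{2^{-m}}(z)\bigr),
\]
with the two summands independent by the independent-increments property from Section~\ref{sec-wn-decomp}. The fine part has approximate spatial range of dependence $O(2^{-m})$ because the heat kernel $p(s/2;z,\cdot)$ for $s\in[2^{-2n},2^{-2m}]$ has Gaussian decay beyond distance $\sim 2^{-m}$. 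An application of Borell's inequality to the centered Gaussian process $z\mapsto\wh h_{2^{-m}}(z)$ on $R$ shows that, with probability $1-o_n(1)$, the coarse part is uniformly bounded by $\tfrac{\zeta}{2}\log 2^n$ throughout $R$, so that $z$ is automatically good whenever its fine-part averages are each at most $\tfrac{\zeta}{2}\log 2^n$ in absolute value.

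Conditioned on the coarse part being small, fine-bad indicators at vertices separated by more than $C 2^{-m}$ are essentially independent with marginal probability $\leq 2^{-cn}$. A first-moment bound on dual top-to-bottom crossings of fine-bad vertices --- counting dual paths of length $\ell$ and using the approximate independence along each such path, reduced to a genuine Bernoulli percolation via a coarse-graining into boxes of side $C 2^{-m}$ and Slepian/FKG-type comparisons --- then shows that with high probability every bad cluster has diameter much smaller than $2^{-m}$. Hence the good vertices form a left-to-right crossing of $R$ whose shortest representative follows a nearly straight path with only short detours around isolated bad clusters, giving length at most polynomial in $2^n$ and comfortably below $2^{(3/2+\zeta)n}$. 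The dual-lattice version is obtained by applying the same argument to the dual lattice.

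The main obstacle is the percolation step: the fine field is only \emph{approximately} finite-range (the heat kernel decays like a Gaussian rather than with compact support), so reducing to a true Bernoulli percolation model requires either coarse-graining with monotone comparisons or a direct moment calculation exploiting the covariance decay. A secondary technical point is that Borell's inequality requires modulus-of-continuity estimates for $\wh h_{2^{-m}}$, which are standard (e.g., \cite[Lemma 3.15]{ghm-kpz} or~\cite[Proposition 3.3]{ding-goswami-watabiki}).
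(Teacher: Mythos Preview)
Your approach via percolation is genuinely different from the paper's: the paper constructs the path by following an SLE$_4$ level line of (a close relative of) $h$. Conditionally on the level line $\eta$, the field in the domain below $\eta$ is a zero-boundary GFF plus a bounded constant, so circle averages at lattice points within distance $\asymp 2^{-n}$ of $\eta$ have uniformly bounded conditional variance; a Gaussian tail bound and union bound then give the second condition in~\eqref{eqn-gff-perc}, and the length bound $2^{(3/2+\zeta)n}$ is exactly the $3/2$-dimensional Minkowski content of SLE$_4$ from~\cite{lawler-rezai-nat}. The paper in fact remarks, just before the proof, that a percolation argument in the spirit of~\cite{ding-li-chem-dist} should yield the stronger exponent $1$ in place of $3/2$, but chooses the level-line route because it is short and self-contained.

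Your sketch, however, has a real gap at the coarse-graining step. The fine increment $\wh h_{2^{-n}}-\wh h_{2^{-m}}$ is \emph{not} well-behaved inside a box of side $2^{-m}$: its covariance at points $z,w$ with $2^{-n}\ll|z-w|\ll 2^{-m}$ is still $\log|z-w|^{-1}-m\log 2+O(1)$, so within each such box it is a genuinely log-correlated field whose maximum is $(2+o(1))\log 2^{n-m}\approx 2\log 2^n$, far exceeding $\tfrac{\zeta}{2}\log 2^n$. Consequently, if you declare a box ``bad'' when it contains a fine-bad vertex, essentially every box is bad, and your reduction to subcritical Bernoulli percolation collapses. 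The assertion that ``every bad cluster has diameter much smaller than $2^{-m}$'' is thus unsupported: the bad set here is essentially the level-$\zeta$ thick-point set, a fractal of dimension $2-\zeta^2/2$ close to $2$, and it is a nontrivial fact (this is roughly the content of~\cite{ding-li-chem-dist}) that it does not disconnect the rectangle. A correct percolation proof requires a genuine multiscale analysis rather than a single two-scale split; the paper's SLE$_4$ argument sidesteps this entirely at the cost of the weaker (but sufficient) exponent $3/2$.
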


To prove Lemma~\ref{lem-dist-along-line}, we will apply Lemma~\ref{lem-gff-perc} with $2^{-n} \approx \ep^{ 1/Q}$. 
A similar argument to the proof of~\cite[Theorem 1]{ding-li-chem-dist} should show that Lemma~\ref{lem-gff-perc} is true with the circle average process replaced by a zero-boundary discrete GFF on $2^{-n}\BB Z^2$, with the exponent $3/2$ in~\eqref{eqn-gff-perc} replaced by 1. This together with the comparison of the zero-boundary GFF and the circle average process established in~\cite{ang-discrete-lfpp} should lead to a proof of Lemma~\ref{lem-gff-perc} with 1 in place of $3/2$. We will not give such a proof here, however, since~\cite[Theorem 1]{ding-li-chem-dist} is stated for crossings of annuli instead of crossings of rectangles so one would either need to repeat the proof from~\cite{ding-li-chem-dist} or carry out a non-trivial RSW-type argument to get the result as stated in Lemma~\ref{lem-gff-perc}. 

\begin{proof}[Proof of Lemma~\ref{lem-gff-perc}]
	We will prove the statement for $2^{-n}\BB Z^2$; the proof for the dual lattice $ 2^{-n}( \BB Z^2 + (1/2,1/2) ) $ follows from exactly the same argument. 
	The basic idea of the proof is to consider a path which stays close to an SLE$_4$ level line of $h$, in the sense of~\cite{ss-contour}. 
	Since~\cite{ss-contour} only considers level lines of a GFF on a proper simply connected domain with piecewise linear boundary data and since we want our level line to stay in $R$, we will need to consider such a GFF on a domain slightly larger than $R$. 
	
	For $r\geq 0$, let $R_r$ be the rectangle with the same center as $R$ and whose side lengths are $1+r$ times the side lengths of $R$ and let $x_r$ and $y_r$, respectively, be the midpoints of $\bdy_{\op{L}} R_r$ and $\bdy_{\op{R}} R_r$. 
	Let $\lambda$ be the constant from~\cite{ss-contour} ($\lambda= \pi/2$ with our choice of normalization).
	Let $h'$ be a GFF on $R_r$ such that $h'$ has boundary data $\lambda$ (resp.\ $-\lambda$) on the counterclockwise (resp.\ clockwise) boundary arc from $x_r$ to $y_r$, i.e., $h'$ is a zero-boundary GFF on $R_r$ plus the harmonic function on $R_r$ with this boundary data. 
	
	Fix $s\in (0,1)$. We will show that~\eqref{eqn-gff-perc} holds with probability at least $1-s$ if $n$ is sufficiently small. 
	Recall that $h|_{R_r}$ can be written in the form $h|_{R_r}=h^{R_r} + \frk h$, where $h^{R_r}$ is a zero-boundary GFF in $R_r$ and $\frk h$ is a Gaussian harmonic function in $R_r$. Since $\frk h$ is continuous on $R$ and since the boundary data for the field $h'$ above is bounded, we can find a constant $C_0>0$ (depending on $r$ and $s$) and a coupling of $h$ and $h'$ such that a.s.\ $h-h'$ is a continuous function and 
	\eqb \label{eqn-sle-gff-compare}
	\BB P\left[ \sup_{z\in R}|h(z)-h'(z)|  \leq  C_0 \right]  \geq 1 - \frac{s}{10}.
	\eqe
	
	\begin{figure}[t!]
		\begin{center}
			\includegraphics[scale=1]{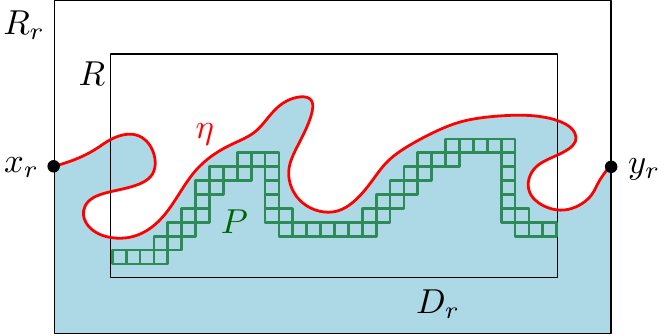}
			\caption{Illustration of the proof of Lemma \ref{lem-gff-perc}. By \cite{ss-contour}, for a particular Dirichlet GFF $h'$ in $R_r$ the level line $\eta$ has the law of an  SLE$_4$. Since $h'$ is smaller near $\eta$ than at a ``typical'' interior point, with high probability we can find a path $P$ in $2^{-n}\BB Z^2$ which connects $\partial_{\op{L}} R$ and $\partial_{\op{R}} R$ along which $|h_{2^{-n-1}}(z)| \vee |h_{2^{-n}}(z)|$ is small. We need $2^{n(3/2 + o_n(1))}$ squares since the Minkowski dimension of $\eta$ is $3/2$. By coupling $h$ and $h'$ so they are close in $R$ we obtain the statement of the lemma. }
			\label{fig-sle-path}
		\end{center} 
	\end{figure} 

	By the main result of~\cite{ss-contour} we may define the level line $\eta = \eta_r$ of $h'$ from $x_r$ to $y_r$ as a chordal SLE$_4$ curve from $x_r$ to $y_r$ in $R_r$ which is determined by $h'$. See Figure~\ref{fig-sle-path}.
	 
Let $\wh R_r$ be the rectangle with the same center point as $R$ (equivalently, $R_r$) whose height is $1-r$ times the height of $R$ and whose width is the same as the width of $R_r$. Note that $R\subset \wh R_r \subset R_r$ and the distance from $\wh R_r$ to each of the top and bottom boundaries of $R$ is bounded above and below by $R$-dependent constants times $r$.
We first claim that for small enough $r > 0$,   
\eqb  \label{eqn-sle-contained}
\BB P\left[ \eta \subset \wh R_r \right] \geq 1 - \frac{s}{10} .
\eqe
To see this $\phi:R_r\to\BB H$ be the conformal map such that $\phi(x_r)=0$, $\phi(y_r)=\infty$, and the two corners of $R_r$ which are closest to $x_r$ are mapped to $\pm 1$. Note that there is a constant $C_1>0$ depending only on $R$ such that $|\phi'|\leq C_1$ on $R_r\setminus\wh R_r$ for all $r<1$. Therefore there is a $C_2>0$ such that $\phi(R_r\setminus \wh R_r )\subset\{z\in\BB H\,:\,C_2^{-1}<|\op{Re}(z)|<C_2,\op{Im}(z)< C_2  r  \}$. By transience of SLE$_4$, and since an SLE$_4$ a.s.\ does not visit the domain boundaries except at the initial and terminal points, the probability that an SLE$_4$ from 0 to $\infty$ in $\BB H$ visits this last set converge to 0 as $r\rta 0$. This gives \eqref{eqn-sle-contained} by the conformal invariance of SLE$_4$. We henceforth fix $r$ so that~\eqref{eqn-sle-contained} holds.

	By the main result of~\cite{lawler-rezai-nat}, $\eta$ has finite $3/2$-dimensional Minkowski content a.s. 
	This implies that we can find a constant $C_1 > 0$ depending only on $s$ and $R$ such that 
	\eqb \label{eqn-sle-dim}
	\BB P\left[ \op{area} B_{2^{-n+10}}(\eta) \leq C_1 2^{-n(1-\zeta)/2} \right] \geq 1 -\frac{s}{10} ,
	\eqe
	where $B_{2^{-n+10}}(\eta)$ denotes the Euclidean $2^{-n+10}$-neighborhood of $\eta$ and $\op{area}$ denotes Lebesgue measure. 
	
	Let $D_r\subset R_r$ denote the simply connected domain whose boundary is the union of $\partial_{\op{B}} R_r$, $\eta$, and the segments of $\partial_{\op{L}} R_r$ and $\partial_{\op{R}} R_r$ which lie below $x_r$ and $y_r$, respectively. 
	The curve $\eta$ is a local set for $h'$ in the sense of~\cite[Lemma 3.9]{ss-contour}. In particular, if we condition on $\eta$, then the field $h'|_{D_r}$ has the law of a zero-boundary GFF on $D_r$ plus the constant function $\lambda$. For any $z\in D_r$ let $\op{CR}(z,D_r)$ denote the conformal radius of $z$ in $D_r$. By \cite[Proposition 3.2]{shef-kpz}, under the conditional law given $\eta$, for each $\delta>0$ such that $B_\delta(z)\subset D_r$, the random variable $h'_\delta(z)$ is Gaussian with mean $\lambda$ and variance $\op{Var}(h'_\delta(z) | \eta)=-\log\delta+\log \op{CR}(z,D_r)$. By the Koebe quarter theorem, $\frac14 \op{dist}(z,\bdy D_r)\leq \op{CR}(z;D_r) \leq 4 \op{dist}(z,\bdy D_r)$. Consequently, 
	\eqbn
	\op{Var}\left( h'_{2^{-n}}(z) | \eta \right) \vee \op{Var}\left( h'_{2^{-n-1}}(z) | \eta \right) \preceq 1 ,\quad\forall z\in \left( B_{2^{-n+10}}(\eta) \setminus B_{2^{-n}}(\eta)  \right) \cap D_r ,
	\eqen
	with a universal implicit constant. 
	If~\eqref{eqn-sle-dim} holds, then the number of points in $(2^{-n}\BB Z^2)\cap B_{2^{-n+10}}(\eta)$ is at most $C_1 2^{n(3+\zeta)/2 + 2}$. By the Gaussian tail bound and a union bound over all such points, we see that if $n$ is sufficiently large, then whenever the event in~\eqref{eqn-sle-dim} occurs, it a.s.\ holds with conditional probability at least $1-s/10$ given $ \eta$ that
	\eqb \label{eqn-max-sle-nbd}
	\max\left\{  |h'_{2^{-n-1}}(z)| \vee |h'_{2^{-n}}(z)|  :   z \in \left(2^{-n}\BB Z^2\right)\cap  \left( B_{2^{-n+10}}(\eta) \setminus B_{2^{-n}}(\eta)  \right) \cap D_r \right\}   \leq  \frac{\zeta}{2} \log 2^n   .
	\eqe 
	
	Now assume that the events in~\eqref{eqn-sle-gff-compare}, \eqref{eqn-sle-contained}, \eqref{eqn-sle-dim}, and~\eqref{eqn-max-sle-nbd} all occur, which happens with probability at least $1-s$. 
	The event in~\eqref{eqn-sle-contained} implies that if $n$ is sufficiently small (depending on $r$) then $\eta$ lies at Euclidean distance at least $2^{-n+10}$ from $\bdy_{\op{B}} R$, so there is a simple path $P$ in $\left(2^{-n}\BB Z^2\right)\cap  \left( B_{2^{-n+10}}(\eta) \setminus B_{2^{-n}}(\eta)  \right)$ from $\bdy_{\op{L}} R $ to $\bdy_{\op{R}} R $ which is contained in $R\cap D_r$. 
	By the occurrence of the event in~\eqref{eqn-sle-dim}, we have $\# P\leq C_1 2^{n(3+\zeta)/2}$. 
	The occurrence of the events in~\eqref{eqn-sle-gff-compare} and~\eqref{eqn-max-sle-nbd} then show that~\eqref{eqn-gff-perc} holds for large enough $n$. 
\end{proof}

\old{
\begin{proof}[Proof of Lemma~\ref{lem-gff-perc}]
We will prove the statement for $2^{-n}\BB Z^2$; the proof for the dual lattice $ 2^{-n}( \BB Z^2 + (1/2,1/2) ) $ follows from exactly the same argument. 
The basic idea of the proof is to consider a path which stays close to an SLE$_4$ level line of $h$, in the sense of~\cite{ss-contour}. 
Since~\cite{ss-contour} only considers level lines of a GFF on a proper simply connected domain with piecewise linear boundary data and since we want our level line to stay in $R$, we will need to consider such a GFF on a domain slightly larger than $R$. 
 
For $r\geq 0$, let $R_r$ be the rectangle with the same center as $R$ and whose side lengths are $1+r$ times the side lengths of $R$ and let $x_r$ and $y_r$, respectively, be the midpoints of $\bdy_{\op{L}} R_r$ and $\bdy_{\op{R}} R_r$. 
Let $\lambda$ be the constant from~\cite{ss-contour} ($\lambda= \pi/2$ with our choice of normalization).
Let $h'$ be a GFF on $R_r$ such that $h'$ has boundary data $\lambda$ (resp.\ $-\lambda$) on the counterclockwise (resp.\ clockwise) boundary arc from $x_r$ to $y_r$, i.e., $h'$ is a zero-boundary GFF on $R_r$ plus the harmonic function on $R_r$ with this boundary data. 
 
Fix $s\in (0,1)$. We will show that~\eqref{eqn-gff-perc} holds with probability at least $1-s$ if $n$ is sufficiently small. 
Recall that $h|_{R_r}$ can be written in the form $h|_{R_r}=h^{R_r} + \frk h$, where $h^{R_r}$ is a zero-boundary GFF in $R_r$, $\frk h$ is a Gaussian harmonic function in $R_r$, and $h^{R_r}$ and $\frk h$ are independent. Since $\frk h$ is continuous on $R$ and since the boundary data for the field $h'$ above is bounded, we can find a constant $C_0>0$ (depending on $r$ and $s$) and a coupling of $h$ and $h'$ such that a.s.\ $h-h'$ is a continuous function and 
\eqb \label{eqn-sle-gff-compare}
\BB P\left[ \sup_{z\in R}|h(z)-h'(z)|  \leq  C_0 \right]  \geq 1 - \frac{s}{10} .
\eqe

\begin{figure}[t!]
	\begin{center}
		\includegraphics[scale=1]{fig-sle-path.pdf}
		\caption{Illustration of the proof of Lemma \ref{lem-gff-perc}. By \cite{ss-contour}, for a particular Dirichlet GFF $h'$ in $R_r$ the level line $\eta$ has the law of an  SLE$_4$. Since $h'$ is smaller near $\eta$ than at a ``typical'' interior point, with high probability we can find a path $P$ in $2^{-n}\BB Z^2$ which connects $\partial_{\op{L}} R$ and $\partial_{\op{R}} R$ along which $|h_{2^{-n-1}}(z)| \vee |h_{2^{-n}}(z)|$ is small. We need $2^{n(3/2 + o_n(1))}$ squares since the Minkowski dimension of $\eta$ is $3/2$. By coupling $h$ and $h'$ so they are close in $R$ we obtain the statement of the lemma. }
		\label{fig-sle-path}
	\end{center} 
\end{figure} 

By the main result of~\cite{ss-contour} we may define the level line $\eta$ of $h'$ from $x_r$ to $y_r$ as a chordal SLE$_4$ curve from $x_r$ to $y_r$ in $R_r$ which is determined by $h'$. See Figure~\ref{fig-sle-path}.
We first claim that for small enough $r > 0$,  
\eqb \label{eqn-sle-contained}
\BB P\left[ \eta \subset R\cup B_{\sqrt r}(x_r) \cup B_{\sqrt r}(y_r) \right] \geq 1 - \frac{s}{10} .
\eqe
Indeed, this follows since $\eta$ a.s.\ does not touch $\partial R_r$ except at $x_r$ and $y_r$ and since the law of $\eta$ is the same for all $r$ modulo scaling. 
Henceforth fix $r$ so that~\eqref{eqn-sle-contained} holds. 

By the main result of~\cite{lawler-rezai-nat}, $\eta$ has finite $3/2$-dimensional Minkowski content a.s. 
This implies that we can find a constant $C_1 > 0$ depending only on $s$ and $R$ such that 
\eqb \label{eqn-sle-dim}
\BB P\left[ \op{area} B_{2^{-n+10}}(\eta) \leq C_1 2^{-n(1-\zeta)/2} \right] \geq 1 -\frac{s}{10} ,
\eqe
where $B_{2^{-n+10}}(\eta)$ denotes the Euclidean $2^{-n+10}$-neighborhood of $\eta$ and $\op{area}$ denotes Lebesgue measure. 
  
Let $D_r\subset R_r$ denote the simply connected domain whose boundary is the union of $\partial_{\op{B}} R_r$, $\eta$, and the segments of $\partial_{\op{L}} R_r$ and $\partial_{\op{R}} R_r$ which lie below $x_r$ and $y_r$, respectively. 
The curve $\eta$ is a local set for $h'$ in the sense of~\cite[Lemma 3.9]{ss-contour}. In particular, if we condition on $\eta$, then the field $h'|_{D_r}$ has the law of a zero-boundary GFF on $D_r$ plus the constant function $\lambda$. For any $z\in D_r$ let $\op{CR}(z,D_r)$ denote the conformal radius of $z$ in $D_r$. By \cite[Proposition 3.2]{shef-kpz}, under the conditional law given $\eta$, for each $\delta>0$ such that $B_\delta(z)\subset D_r$, the random variable $h'_\delta(z)$ is Gaussian with mean $\lambda$ and variance $\op{Var}(h'_\delta(z) | \eta)=-\log\delta+\log \op{CR}(z,D_r)$. By the Koebe quarter theorem, $\frac14 \op{dist}(z,\bdy D_r)\leq \op{CR}(z;D_r) \leq 4 \op{dist}(z,\bdy D_r)$. Consequently, 
\eqbn
\op{Var}\left( h'_{2^{-n}}(z) | \eta \right) \vee \op{Var}\left( h'_{2^{-n-1}}(z) | \eta \right) \preceq 1 ,\quad\forall z\in \left( B_{2^{-n+10}}(\eta) \setminus B_{2^{-n}}(\eta)  \right) \cap D_r ,
\eqen
with a universal implicit constant. 
If~\eqref{eqn-sle-dim} holds, then the number of points in $(2^{-n}\BB Z^2)\cap B_{2^{-n+10}}(\eta)$ is at most $C_1 2^{n(3+\zeta)/2 + 2}$. By the Gaussian tail bound and a union bound over all such points, we see that if $n$ is sufficiently large, then whenever the event in~\eqref{eqn-sle-dim} occurs, it a.s.\ holds with conditional probability at least $1-s/10$ given $ \eta$ that
\eqb \label{eqn-max-sle-nbd}
  \max\left\{  |h'_{2^{-n-1}}(z)| \vee |h'_{2^{-n}}(z)|  :   z \in \left(2^{-n}\BB Z^2\right)\cap  \left( B_{2^{-n+10}}(\eta) \setminus B_{2^{-n}}(\eta)  \right) \cap D_r \right\}   \leq  \frac{\zeta}{2} \log 2^n   .
\eqe 

Now assume that the events in~\eqref{eqn-sle-gff-compare}, \eqref{eqn-sle-contained}, \eqref{eqn-sle-dim}, and~\eqref{eqn-max-sle-nbd} all occur, which happens with probability at least $1-s$. 
The event in~\eqref{eqn-sle-contained} implies that if $n$ is sufficiently small then $\eta$ lies at Euclidean distance at least $2^{-n+10}$ from $\bdy_{\op{B}} R$, so there is a simple path $P$ in $\left(2^{-n}\BB Z^2\right)\cap  \left( B_{2^{-n+10}}(\eta) \setminus B_{2^{-n}}(\eta)  \right)$ from $\bdy_{\op{L}} R $ to $\bdy_{\op{R}} R $ which is contained in $R\cap D_r$. 
By the occurrence of the event in~\eqref{eqn-sle-dim}, we have $\# P\leq C_1 2^{n(3+\zeta)/2}$. 
The occurrence of the events in~\eqref{eqn-sle-gff-compare} and~\eqref{eqn-max-sle-nbd} then show that~\eqref{eqn-gff-perc} holds for large enough $n$. 
\end{proof}}

\begin{proof}[Proof of Lemma~\ref{lem-dist-along-line}]
We will prove the statement for $h$; the statement for $\wh h$ is then immediate from Lemma~\ref{lem-gff-compare}. 
Given $\ep  >0$, choose $n\in\BB N$ so that $2^{-(Q - \zeta) n } \leq \ep \leq 2^{-(Q - \zeta) n + 1}$. 
Lemma~\ref{lem-gff-perc} shows that with probability tending to 1 as $\ep \rta 0$, there is a path $P$ of dyadic squares contained in $R$ with side length $2^{-n}$ such that
\eqb
\# P \leq 2^{(3/2 + \zeta)n } \leq \ep^{-(\xi - o_\zeta(1))}  \quad \text{and} \quad 
|h_{2^{-n-1}}(v_S)|  \leq \zeta \log 2^n   ,\quad\forall S \in P ,
\eqe 
where the $o_\zeta(1)$ tends to 0 as $\zeta\rta 0$ at a rate depending only on $Q$. 
For $S\in P$, the condition that $|h_{2^{-n-1}}(v_S)| \leq \zeta \log 2^n $ implies that
\eqbn
M_h(S)  = e^{h_{2^{-n-1}}(v_S)} 2^{-  Q n} \leq 2^{-(Q-\zeta) n} \leq \ep .
\eqen
Hence, either $S$ or one of its dyadic ancestors is contained in $\mcl S_h^\ep(R)$. This shows that~\eqref{eqn-dist-along-line} holds. 
\end{proof}

\subsection{Maximal distance between large squares}
\label{sec-macro-square-dist}

Throughout the rest of this section, we let $\xi = \xi(Q)>  0$ be any exponent for which the conclusion of Lemma~\ref{lem-dist-along-line} is satisfied. 
Many of our estimates will depend on $\xi$. 

Let $h$ be a whole-plane GFF normalized so that its circle average over $\bdy \BB D$ is zero. 
The goal of this subsection is to show that with high probability, the $D_h^\ep$-distance between any two large squares (of at least polynomial size) in $\mcl S_h^\ep$ is at most a polynomial function of $\ep^{-1}$. 
We will also show that each such square lies at at most polynomial distance from $\bdy\BB S$. 

\begin{prop} \label{prop-macro-square-dist}
Fix $\zeta\in (0,1)$. 
For each $\beta  > 0$, it holds with polynomially high probability as $\ep\rta 0$ that 
\eqb \label{eqn-macro-square-dist}
  D_{h}^\ep(S,\wt S; \BB S) \vee D_{ h}^\ep(S , \bdy \BB S ; \BB S) \leq \ep^{ - f(\beta) - \zeta}   , \quad \forall  S,\wt S \in \mcl S_{ h}^\ep(\BB S)  \quad \text{with} \quad |S|  \wedge |\wt S| \geq \ep^\beta ,
\eqe 
where 
\eqb \label{eqn-macro-square-function}
f(\beta)  =f(\beta,Q) := \xi + (2-Q) \beta \xi + \beta .
\eqe 
Moreover, the same is true with the white noise field $\wh h$ of~\eqref{eqn-wn-decomp} in place of $h$. 
\end{prop}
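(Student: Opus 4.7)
The plan is to construct, with polynomially high probability, a ``grid'' of short crossings in $\mcl S_h^\ep(\BB S)$ at scale $r \asymp \ep^\beta$ using Lemma~\ref{lem-dist-along-line} applied to many rescaled rectangles, and then traverse this grid to bound distances. By Lemma~\ref{lem-tr-compare-square}, I will reduce to proving the statement for the white noise field $\wh h$ of~\eqref{eqn-wn-decomp}, which has exact scale--translation invariance. The focus will be on the regime $\beta Q < 1$, where the scaling argument yields nontrivial per-rectangle crossings; the remaining range will be handled by a more delicate argument at smaller scales.

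I will fix a dyadic $r \asymp \ep^\beta$ and tile $\BB S$ by $O(\ep^{-2\beta})$ dyadic rectangles of dimensions $2r \times r$ and $r \times 2r$. For each rectangle $R'$ with corner $v$, scale--translation invariance gives that $\wh h(r\cdot + v) - \wh h_r(v)$ has the law of $\wh h$, and combining this with the scaling relation~\eqref{eqn-dist-scaling} identifies the $\mcl S_{\wh h}^\ep(R')$-distance across $R'$ with the distance in $\mcl S_{\wh h}^{\wt\ep}$ across a unit rectangle, where $\wt\ep := \ep\, r^{-Q}\, e^{-\wh h_r(v)}$. A Gaussian tail bound combined with a union bound yields $|\wh h_r(v)| \leq \zeta_0 \log\ep^{-1}$ uniformly in $v$ with polynomially high probability for any $\zeta_0>0$, so $\wt\ep \leq \ep^{1-\beta Q - \zeta_0}$. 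Applying Lemma~\ref{lem-dist-along-line} to the rescaled field then yields, per single rectangle, a crossing of length at most $\wt\ep^{-\xi-\zeta'}$ with probability tending to 1.

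The hard part will be upgrading this per-rectangle event from probability $1 - o_\ep(1)$ (as given by Lemma~\ref{lem-dist-along-line}) to polynomially high probability, so that a union bound over $O(\ep^{-2\beta})$ rectangles is feasible. I plan to achieve this through a percolation/RSW-style argument that exploits the local independence of the truncated white noise field $\wh h^\tr$ (Lemma~\ref{lem-tr-compare-square}): crossings over well-separated rectangles become essentially independent, so a ``percolating'' subset of rectangles carries short crossings with polynomially high probability, while the remaining ``bad'' rectangles can be bypassed by detours. Routing these detours and gluing adjacent crossings together will inflate the per-crossing length by a factor $\ep^{-2\beta\xi}$, giving a per-crossing bound $\ep^{-[\xi + (2-Q)\beta\xi]-\zeta''}$; this inflation is precisely the source of the extra $(2-Q)\beta\xi$ appearing in $f(\beta)$ beyond the naive estimate $\xi + \beta(1-Q\xi)$. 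Given the grid, any dyadic square $S \subset \BB S$ with $|S| \geq \ep^\beta \geq 2r$ will be adjacent to at least one non-trivial grid crossing (namely, the crossing of any tiling rectangle that straddles $\bdy S$), and concatenating at most $O(\ep^{-\beta})$ such crossings (first along a row of the grid and then along a column) will connect any two large squares in $\mcl S_{\wh h}^\ep(\BB S)$ by a path of total length $O(\ep^{-\beta}) \cdot \ep^{-[\xi + (2-Q)\beta\xi]-\zeta''} = \ep^{-f(\beta) - \zeta}$, as required. The bound on $D_{\wh h}^\ep(S,\bdy\BB S;\BB S)$ will follow by terminating the grid path at a rectangle adjacent to $\bdy\BB S$.
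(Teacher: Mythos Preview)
Your grid-and-concatenate skeleton is the same as the paper's, and the final arithmetic (at most $O(\ep^{-\beta})$ crossings, each of length $\ep^{-\xi-(2-Q)\beta\xi-o(1)}$, giving $\ep^{-f(\beta)-\zeta}$) is correct. But two of the intermediate steps do not go through as written.

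\textbf{The Gaussian bound is too strong, and the ``detour inflation'' is a fiction.} With $r\asymp\ep^\beta$, each $\wh h_r(v)$ has variance $\log(1/r)\asymp\beta\log\ep^{-1}$, and you are taking a union over $\asymp\ep^{-2\beta}$ centers. The maximum of this many Gaussians of that variance is of order $(2+o(1))\beta\log\ep^{-1}$, not $\zeta_0\log\ep^{-1}$ for arbitrarily small $\zeta_0$. Plugging the correct bound into $\wt\ep = \ep\,r^{-Q}e^{-\wh h_r(v)}$ gives $\wt\ep \geq \ep^{1+(2-Q)\beta+o(1)}$, and hence a per-rectangle crossing of length $\wt\ep^{-\xi-o(1)} \leq \ep^{-\xi-(2-Q)\beta\xi-o(1)}$ \emph{directly}. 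That is where the $(2-Q)\beta\xi$ actually comes from (cf.\ Lemma~\ref{lem-rectangle-dist-max} in the paper); the ``inflation by $\ep^{-2\beta\xi}$ from routing around bad rectangles'' is not a real mechanism, and your two errors happened to cancel.

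\textbf{The percolation cannot be run at the small scale.} The truncated field $\wh h^\tr$ has independence only at macroscopic separation (distance $\geq 1/5$), so crossings of rectangles at scale $\ep^\beta$ are highly correlated unless they are $\asymp\ep^{-\beta}$ grid-steps apart; a Peierls/percolation argument on the $\ep^\beta$-grid therefore does not get off the ground. The paper avoids this by reversing the order of operations: it first runs the percolation at \emph{unit} scale inside a growing box $\mcl R_n=[0,2n]\times[0,n]$ (Lemma~\ref{lem-rectangle-perc}), where $\wh h^\tr$ genuinely has finite-range dependence, obtaining a crossing with probability $1-a_0e^{-a_1 n}$. It then rescales this entire box down to a single $2^{-m+1}\times 2^{-m}$ rectangle, choosing $n\asymp m^{3/2}$ so that $e^{-a_1 n}$ beats the $2^{2m}$-term union bound over small rectangles (Lemmas~\ref{lem-rectangle-dist} and~\ref{lem-rectangle-dist-max}). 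After this, the grid construction and the concatenation along horizontal/vertical segments proceed exactly as you describe. Note also that no separate treatment of $\beta Q\geq 1$ is needed: the rescaled parameter $T_R\ep$ in the paper's argument is always bounded below by a positive power of $\ep$ regardless of the sign of $1-\beta Q$, because the field maximum contributes the dominant $2\beta$ and not just $\zeta_0$.
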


The particular value of $f(\beta)$ in~\eqref{eqn-macro-square-function} is not important for our purposes. Before proceeding with the proof of Proposition~\ref{prop-macro-square-dist}, we note that it implies Proposition~\ref{prop-ptwise-distance} (which asserts that the $D_h^\ep$-distance between two points grows at most polynomially in $\ep$). 

\begin{proof}[Proof of Proposition~\ref{prop-ptwise-distance}]
By re-scaling space and using that $h_{|z+w|}((z+w)/2)$, say, is Gaussian with variance depending only on $|z+w|/2$, we can assume without loss of generality that each of $z$ and $w$ is contained in the interior of $\BB S$. 
 
To prove the lower bound for $D_h^\ep(z,w)$ with $\ul \xi = 1/(2+Q)$, we first use Lemma~\ref{lem-max-square-diam}, a union bound over dyadic values of $\ep$, and the Borel-Cantelli lemma to find that a.s.\ $\max\{|S| : S\in \mcl S_h^\ep(\BB S)\} \leq \ep^{1/(2+Q) + o_\ep(1)}$. 
Since the sum of the side lengths of the squares along any path in $\mcl S_h^\ep$ from $z$ to $\{w\} \cup \bdy \BB S$ must be at least the Euclidean distance from $z$ to $\{w\} \cup\bdy \BB S$, we get that a.s.\ $D_h^\ep(z,w) \geq \ep^{-1/(2+Q) + o_\ep(1)}$. 

For the upper bound, let $S$ be a dyadic square contained in $\BB S$ with side length in $[\ep^{1/Q+\zeta},2\ep^{1/Q+\zeta})$. Then $h_{|S |/2}(v_{S })$ is Gaussian with variance $\log \ep^{-1/Q -\zeta}  + O_\ep(1)$, so by the Gaussian tail bound $e^{h_{|S |/2}(v_{S })}|S |^Q \leq \ep$ with polynomially high probability as $\ep\rta 0$. 
By applying this to dyadic squares containing each of $z$ and $w$, we find that with polynomially high probability as $\ep\rta 0$, each of $z$ and $w$ is contained in a square of $\mcl S_h^\ep(\BB S)$ with side length at least $\ep^{1/Q +\zeta}$. Combining this with Proposition~\ref{prop-macro-square-dist} and possibly shrinking $\zeta$ shows that with polynomially high probability as $\ep\rta 0$, 
\eqb 
D_h^\ep(z,w ; \BB S) \leq \ep^{- \xi - (2-Q) \xi/Q - 1/Q - \zeta} .
\eqe 
A union bound over dyadic values of $\ep$ now concludes the proof for $h$. 
The statement for $\wh h$ follows from Lemma~\ref{lem-tr-compare-square}. 
\end{proof}

For most of the proof of Proposition~\ref{prop-macro-square-dist}, we will work with $\wh h$ instead of $h$. We will transfer from this field to $h$ at the very end of the proof using Lemma~\ref{lem-tr-compare-square}. 
As discussed in the outline above, we want to use Lemma~\ref{lem-dist-along-line} to build a ``grid" of paths in $\BB S$ which hit every square of side length at least $\ep^\beta$. To do this, we will need to apply the lemma to a large number of different rectangles simultaneously, so we will need a quantitative bound for the probability that the distance between two sides of a rectangle is larger than expected. We will obtain such a concentration bound using a percolation-style argument. The statements and proofs of the next three lemmas are roughly similar to arguments appearing in~\cite[Section 3.2]{dg-lqg-dim} but with some extra complications since a priori we only have an upper bound for the distance between two sides of the rectangle, not for the point-to-point distance.  

We first prove a bound for the distance between the two sides of a large square which holds with superpolynomially high probability.

\begin{lem} \label{lem-rectangle-perc}  
For $n\in\BB N$, let $\mcl R_n := [0,2n]\times [0,n]$. 
For each fixed $\zeta  \in (0,1)$, there exists $a_0,a_1  , A   > 0$ (depending only on $\zeta$ and $Q$) such that for $n\in\BB N$ and $\ep > 0$,  
\eqb \label{eqn-rectangle-perc}
\BB P\left[ D_{\wh h }^\ep\left(\bdy_{\op{L}} \mcl R_n , \bdy_{\op{R}} \mcl R_n ; \mcl R_n \right) \leq  n^2 \max\left\{ A ,  e^{n^{1/2}}  \ep^{ - \xi -\zeta } \right\} \right] \geq 1 -  a_0 e^{-a_1 n  }  .
\eqe 
\end{lem}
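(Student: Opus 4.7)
The plan is to decompose $\wh h$ into a globally smooth Gaussian piece and an approximately local small-scale piece, control the former via Gaussian concentration, and then run a percolation-style crossing argument on the latter using Lemma~\ref{lem-dist-along-line}. The case where $\ep$ is bounded away from $0$ is handled separately by a direct union bound applied to the unit tiling of $\mcl R_n$, giving the bound $n^2 A$ for some large constant $A$; so I would focus on the case of small $\ep$.

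First I would write $\wh h = \wh h_{1/2} + \wh h^{\circ}$ with $\wh h^\circ := \wh h - \wh h_{1/2}$. By the independent-increments property these two summands are independent; $\wh h_{1/2}$ is H\"older continuous with pointwise variance $O(1)$, while $\wh h^{\circ}$ depends on the white noise only at times $s \leq 1/4$ and is thus essentially local on scale $1/2$. Borell--TIS applied to $\wh h_{1/2}$ on $\mcl R_n$ (which has area of order $n^2$) gives $\sup_{z \in \mcl R_n}|\wh h_{1/2}(z)| \leq C_0 n^{1/2}$ with probability at least $1 - e^{-c_0 n}$, and on this event the scaling identity \eqref{eqn-dist-scaling} (with $C=1$, $f = \wh h_{1/2}$) reduces the problem to an upper bound on $D_{\wh h^{\circ}}^{\ep e^{-C_0 n^{1/2}}}$. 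I would then introduce a truncated analogue $\wh h^{\circ,\tr}$ (defined as $\wh h^\tr$ in~\eqref{eqn-wn-truncate} but with time range $[0,1/4]$), which satisfies the exact locality property that $\wh h^{\circ,\tr}|_A$ and $\wh h^{\circ,\tr}|_B$ are independent whenever $\op{dist}(A,B) \geq 1$, and use an analogue of Lemma~\ref{lem-tr-compare-square} to absorb the truncation error into a second factor of $e^{C_1 n^{1/2}}$; this reduces the problem to bounding a crossing distance for $\wh h^{\circ,\tr}$ at the modified threshold $\ep'' := \ep e^{-(C_0+C_1) n^{1/2}}$.

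The main step is then to tile $\mcl R_n$ by horizontal $2 \times 1$ rectangles $R_{i,k} := [2i, 2i+2] \times [k,k+1]$ and declare $R_{i,k}$ \emph{good} if
\eqbn
D_{\wh h^{\circ,\tr}}^{\ep''}(\bdy_{\op{L}} R_{i,k}, \bdy_{\op{R}} R_{i,k} ; R_{i,k}) \leq A (\ep'')^{-\xi - \zeta/2}
\eqen
for a constant $A$ to be chosen. By Lemma~\ref{lem-dist-along-line} (applied to $\wh h^{\circ,\tr}$ via a comparison argument similar to Lemma~\ref{lem-tr-compare-square}), for any fixed $\delta > 0$ one can take $A = A(\delta,\zeta)$ large enough that each $R_{i,k}$ is good with probability at least $1 - \delta$, uniformly for all small $\ep''$. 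Because the goodness events are $2$-dependent on the $n \times n$ grid of rectangles (by the exact locality of $\wh h^{\circ,\tr}$), the Liggett--Schonmann--Stacey stochastic domination theorem shows that for $\delta$ sufficiently small (equivalently, $A$ large) they dominate a supercritical independent site percolation on $\BB Z^2$. A standard supercritical-percolation estimate then gives a left-to-right path of adjacent good rectangles of length $O(n^2)$ with probability at least $1 - e^{-c_2 n}$; summing the per-rectangle distance bounds yields a total of order $O(n^2) e^{(C_0+C_1)(\xi+\zeta/2) n^{1/2}} \ep^{-\xi - \zeta/2}$, which can be rewritten as $n^2 e^{n^{1/2}} \ep^{-\xi - \zeta}$ by choosing $C_0, C_1$ sufficiently small in the two applications of Borell--TIS (at the cost of smaller but still positive $c_0, c_2$) and absorbing residual constants into the $\ep^{-\zeta/2}$ slack.

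The main obstacle I expect is the locality/percolation step: verifying the finite-range dependence of the goodness events carefully after truncation, transferring Lemma~\ref{lem-dist-along-line} to $\wh h^{\circ,\tr}$ \emph{uniformly} in $\ep''$ so that the constant $A$ does not degrade with $n$, and balancing the two scaling applications so that the single factor $e^{n^{1/2}}$ in the final bound absorbs both contributions $e^{C_0 n^{1/2}}$ and $e^{C_1 n^{1/2}}$ without spoiling the $e^{-c_1 n}$ tail on the failure event.
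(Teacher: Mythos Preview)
Your overall strategy---run a percolation argument on a spatially local version of the field and then transfer back to $\wh h$ at the cost of an $e^{n^{1/2}}$ factor---is exactly the paper's. But the concatenation step, as written, has a real gap. Your rectangles $R_{i,k} = [2i,2i+2]\times[k,k+1]$ tile $\mcl R_n$ without overlap, so two horizontally adjacent good rectangles share only the vertical segment $\{2i+2\}\times[k,k+1]$: the left-right crossing of one ends somewhere on that segment and the crossing of the other begins somewhere on it, but there is no reason these should meet in $\mcl S^\ep(\mcl R_n)$. Vertically adjacent rectangles are worse still, since two horizontal crossings of stacked $2\times 1$ rectangles can miss each other entirely. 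The paper avoids this by attaching the good event to unit \emph{squares} $S$, requiring crossings in all four $2\times 1$ and $1\times 2$ rectangles containing $S$ (Lemma~\ref{lem-local-dist}); any two edge-adjacent good squares then share a common rectangle, so their collections $P_S$ intersect by topology. You need either this overlapping scheme or both orientations of rectangles for the gluing to go through.

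Two smaller points. First, the initial decomposition $\wh h = \wh h_{1/2} + \wh h^\circ$ buys you nothing: $\wh h^\circ$ still lacks exact locality, so you truncate anyway, and the paper simply works with $\wh h^\tr$ from the start and does a single comparison $\wh h^\tr \leftrightarrow \wh h$ over $\mcl R_n$ at the end (this is where the $e^{n^{1/2}}$ enters). Second, your treatment of $\ep$ bounded away from $0$ by a ``direct union bound on the unit tiling'' will not give failure probability $e^{-a_1 n}$, since the chance that a single unit square has $M_{\wh h}(S) > \ep$ is bounded away from zero. The correct argument is monotonicity: once the bound holds for all $\ep \le \ep_*$, the case $\ep > \ep_*$ follows from $D_{\wh h}^\ep \le D_{\wh h}^{\ep_*}$, with $A \asymp \ep_*^{-\xi-\zeta}$.
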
 

When we apply Lemma~\ref{lem-rectangle-perc}, we will typically take $n$ to be of order $(\log \ep^{-1})^p$ for $1 < p < 2$, so that $e^{-a_1 n}$ is smaller than any power of $\ep$ and $n^2$ and $e^{n^{1/2}}$ are of order $\ep^{o_\ep(1)}$. 

Lemma~\ref{lem-rectangle-perc} will be proven using a percolation-style argument which requires exact independence for the values of the field in squares which lie at macroscopic distance from one another. So, we will need to work with the truncated white-noise field $\wh h^\tr$ instead of $\wh h$ itself. 
We will apply the following lemma to bound the length of a ``hard direction" crossing for $2\times 1$ or $1 \times 2$ rectangles contained in $\mcl R_n$, then concatenate such crossings to prove Lemma~\ref{lem-rectangle-perc}.

\begin{lem} \label{lem-local-dist}  
Let $\xi = \xi(Q)$ be as in Lemma~\ref{lem-dist-along-line} and fix $\zeta \in (0,1)$. For a $1\times 1$ square $S\subset \BB C$ with corners in $\BB Z^2$ and $\ep\in (0,1)$, let $E_S^\ep$ be the event that the following is true:  
For each of the four $2\times 1$ rectangles $R$ with corners in $\BB Z^2$ which contain $S$, one has
\eqbn
  D_{\wh h^\tr}^\ep\left( \bdy_{\op{L}} R, \bdy_{\op{R}} R ; R \right) \leq  \ep^{-\xi - \zeta }  ;
\eqen  
and the same holds for $1\times 2$ rectangles with $\bdy_{\op{B}} R$ and $\bdy_{\op{T}} R$ in place of $\bdy_{\op{L}} R$ and $\bdy_{\op{R}} R$.
Then $\BB P[E_S^\ep] \rta 1$ as $\ep\rta 0$, uniformly over all such squares $S$.  
\end{lem}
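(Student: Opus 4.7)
The plan is to reduce the lemma to Lemma~\ref{lem-dist-along-line} for $\wh h$ via two inputs: translation invariance of $\wh h^\tr$ (for uniformity in $S$) and a coupling of $\wh h$ and $\wh h^\tr$ via the same white noise (to transfer the distance bound). Because space-time white noise and the kernel $p_{B_{1/10}(z)}(s/2;z,\cdot)$ are each invariant under translations of $\BB C$ (the ball $B_{1/10}(z)$ moves with $z$), the law of $\wh h^\tr$ is translation invariant; by reflection invariance it also respects the symmetry between $2 \times 1$ and $1 \times 2$ rectangles. A union bound over the (finitely many) rectangles in the definition of $E_S^\ep$ therefore reduces the problem to showing that for the single fixed rectangle $R_0 := [0,2] \times [0,1]$,
$$\BB P\left[D_{\wh h^\tr}^\ep(\bdy_{\op{L}} R_0, \bdy_{\op{R}} R_0; R_0) \leq \ep^{-\xi - \zeta}\right] \to 1 \quad \text{as } \ep \to 0,$$
uniformly in $S$.

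Fix a small $\eta \in \bigl(0, \zeta/(2\xi + \zeta)\bigr)$ and put $C = C(\ep) := \ep^{-\eta}$, so that $C \to \infty$ and $C^{-1}\ep = \ep^{1+\eta} \to 0$. Applying Lemma~\ref{lem-dist-along-line} to $\wh h$ on $R_0$, with $\zeta/2$ in place of $\zeta$ and $C^{-1}\ep$ in place of $\ep$, gives with probability tending to $1$,
$$D_{\wh h}^{C^{-1}\ep}(\bdy_{\op{L}} R_0, \bdy_{\op{R}} R_0; R_0) \leq (C^{-1}\ep)^{-\xi - \zeta/2} = \ep^{-(1+\eta)(\xi + \zeta/2)} \leq \ep^{-\xi - \zeta},$$
where the last step uses the choice of $\eta$. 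To pass from $\wh h$ to $\wh h^\tr$, I would couple them via the same white noise; the argument sketched in the proof of Lemma~\ref{lem-tr-compare-square} (which rests on~\cite[Lemma 2.7]{dzz-heat-kernel}) yields
$$\BB P\left[\max_{z \in R_0} \max_{j \in \BB N_0} \bigl|\wh h_{2^{-j}}(z) - \wh h^\tr_{2^{-j}}(z)\bigr| \leq \log C\right] \geq 1 - a_0 e^{-a_1 (\log C)^2},$$
which tends to $1$ superpolynomially in $\ep^{-1}$ since $(\log C)^2 = \eta^2 (\log \ep^{-1})^2$. On this event $M_{\wh h^\tr}(S) \leq C \cdot M_{\wh h}(S)$ for every dyadic square $S \subset R_0$, so every $S \in \mcl S_{\wh h}^{C^{-1}\ep}(R_0)$ satisfies $M_{\wh h^\tr}(S) \leq \ep$ and is therefore contained in a square of $\mcl S_{\wh h^\tr}^\ep(R_0)$; by monotonicity this gives $D_{\wh h^\tr}^\ep(\cdot,\cdot;R_0) \leq D_{\wh h}^{C^{-1}\ep}(\cdot,\cdot;R_0)$, completing the argument.

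I do not foresee a substantive obstacle. The only ingredient not already stated verbatim in the excerpt is the uniform-in-$j$ field comparison on $R_0$, but this is precisely the bound invoked inside the proof of Lemma~\ref{lem-tr-compare-square}; the minor adaptation from the inner set $K$ of a Jordan domain to the rectangle $R_0$ is immediate once one works inside a slightly enlarged Jordan domain $R_0^* \supset R_0$ whose inner set contains $R_0$. The translation-invariance reduction is immediate from the definition of $\wh h^\tr$.
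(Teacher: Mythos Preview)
Your proposal is correct and follows essentially the same approach as the paper's own proof: reduce to a single fixed rectangle by translation (and rotational) invariance of the law of $\wh h^\tr$, apply Lemma~\ref{lem-dist-along-line} for $\wh h$, and transfer to $\wh h^\tr$ via the coupling underlying Lemma~\ref{lem-tr-compare-square}. You have simply unpacked the phrase ``combined with Lemma~\ref{lem-tr-compare-square}'' by making the choice $C=\ep^{-\eta}$ explicit, which is exactly what that lemma encodes.
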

\begin{proof}
In the case when $S = \BB S$ is the standard unit square, this follows from four applications of Lemma~\ref{lem-dist-along-line} combined with Lemma~\ref{lem-tr-compare-square}. For a general choice of $S$, the lemma follows from the case $S = \BB S$ and the fact that the law of $\wh h^\tr$ is invariant under spatial translations.
\end{proof}

\begin{proof}[Proof of Lemma~\ref{lem-rectangle-perc}]
We will show that there are constants $a_0,a_1 , A > 0$ as in the statement of the lemma such that for $n\in\BB N$ and $\ep > 0$, 
\eqb \label{eqn-rectangle-perc-truncated}
\BB P\left[ D_{\wh h^\tr}^\ep\left(\bdy_{\op{L}} \mcl R_n , \bdy_{\op{R}} \mcl R_n  ; \mcl R_n \right) \leq  n^2 \max\left\{ A ,    \ep^{-\xi -\zeta } \right\} \right] \geq 1 -  a_0 e^{-a_1 n}  .
\eqe 
Combining this with~\cite[Lemma 3.5]{dg-lqg-dim} (applied with $r = 4 n$ and $C = e^{c n^{1/2}}$ for an appropriate constant $c>0$) will yield~\eqref{eqn-rectangle-perc}. 

We will prove~\eqref{eqn-rectangle-perc-truncated} via a percolation argument. 
Let $p\in (0,1)$ be a small universal constant to be chosen later. 
We assume without loss of generality that $n\geq 3$ and let $\mcl S(\mcl R_n)$ be the set of unit side length squares\footnote{The reason for considering $[1,2n-1]\times [1,n-1]$ instead of $\mcl R_n $ is so that if $S\in \mcl S(\mcl R_n)$, then each  of the four $1\times 2$ or $2\times 1$ rectangles with corners in $\BB Z^2$ which contain $S$ are contained in $\mcl R_n $.} 
$S\subset [1,2 n-1] \times [1,n-1]$ with corners in $\BB Z^2$. 
We view $\mcl S(\mcl R_n)$ as a graph with two squares considered to be adjacent if they share an edge. We define the \emph{left boundary} of $\mcl S(\mcl R_n)$ to be the set of squares in $\mcl S(\mcl R_n)$ which intersect the left boundary of $[-1,2n+1]\times [0,n]$. We similarly define the right, top, and bottom boundaries of $\mcl S(\mcl R_n)$.  

For each square $S\in \mcl S(\mcl R_n)$ and $\ep \in (0,1)$, let $E_S^\ep$ be the event of Lemma~\ref{lem-local-dist}.  
By Lemma~\ref{lem-local-dist}, we can find $\ep_* = \ep_*(p,\zeta,Q) > 0$ such that  
\eqb \label{eqn-perc-prob}
\BB P[E_S^\ep ] \geq 1-p , \quad \forall S\in \mcl S(\mcl R_n) , \quad \forall \ep\in (0,\ep_*] .
\eqe 

We claim that if $p$ is chosen sufficiently small, then for appropriate constants $a_0,a_1 > 0$ as in the statement of the lemma, it holds for each $\ep \in (0,\ep_*]$ and $n\in\BB N$ that with probability at least $1- a_0 e^{-a_1 n}$, we can find a path $\mcl P$ in $\mcl S(\mcl R_n)$ from the left boundary of $\mcl S(\mcl R_n)$ to the right boundary of $\mcl S(\mcl R_n)$ consisting of squares for which $E_S^\ep$ occurs. 

Assume the claim for the moment. 
On the event that a path $\mcl P$ as in the claim exists, for each $S \in \mcl P$ and each of the four $2\times 1$ or $1\times 2$ rectangles $R \supset S$ from the definition of $E_S^\ep$, one can find a path of squares $\wt P_R$ in $\mcl S_{\wh h^\tr}^\ep(R)$ between the two shorter sides of $R$ with length at most $\ep^{-\xi-\zeta}$. Since each such rectangle $R$ is contained in $\mcl R_n$, the set of squares $P_R \in \mcl S_h^\ep(\mcl R_n)$ which contain the squares in $\wt P_R$ contains a path of squares in $\mcl S_h^\ep(\mcl R_n)$ between the two shorter sides of $R$ of length at most $\ep^{-\xi - \zeta}$. 
Let $P_S$ be the union of the sets of squares $P_R$ for the four rectangles $R$ which contain $S$. Then $P_S$ is connected.

If $S,\wt S \in \mcl S(\mcl R_n)$ are two squares which share a side, then topological considerations show that $P_S\cap P_{\wt S} \not=\emptyset$. Hence the union of the sets $P_S$ over all $S\in \mcl P$ contains a path of squares in $\mcl S_h^\ep(\mcl R_n)$ between the left and right boundaries of $\mcl R_n$ of length at most a universal constant times $ n^2 \ep^{-1-\xi}$ (note that there are $2n^2$ squares in $\mcl S(\mcl R_n)$).   
Since this happens with probability at least $1-a_0 e^{-a_1 n}$ whenever $\ep\in (0,\ep_*]$, we get that~\eqref{eqn-rectangle-perc-truncated} holds for $\ep \in (0,\ep_*]$. 
Since $  D_{ \wh h_1^{\tr}}^\ep\left(\bdy_{\op{L}} \mcl R_n , \bdy_{\op{R}} \mcl R_n  ; \mcl R_n \right) $ can only increase when $\ep$ decreases, it follows that~\eqref{eqn-rectangle-perc-truncated} is true for general $\ep > 0$ with $A \asymp \ep_*^{- \xi - \zeta} $. 

It remains only to prove the above claim. 
Let $\mcl S^*(\mcl R_n)$ be the graph whose squares are the same as the squares of $\mcl S(\mcl R_n)$, but with two squares considered to be adjacent if they share a corner or an edge, instead of only considering squares to be adjacent if they share an edge.
By planar duality, it suffices to show that if $p ,a_0, a_1$ are chosen appropriately, then for $\ep\in (0,\ep_*]$ it holds with probability at least $1-a_0 e^{-a_1 n}$ that there does \emph{not} exist a simple path in $\mcl S^*(\mcl R_n)$ from the top boundary to the bottom boundary of $\mcl S (\mcl R_n)$ consisting of squares for which $E_S^\ep$ does not occur. This will be proven by a standard argument for subcritical percolation.

By the definition~\eqref{eqn-wn-truncate} of $\wh h^{\tr}$, the event $E_S^\ep$ is a.s.\ determined by the restriction of the white noise $W$ to $B_2(S) \times \BB R$ (here $B_2(S)$ denotes the Euclidean neighborhood of radius 2). In particular, $E_S^\ep$ and $E_{\wt S}^\ep$ are independent whenever $B_2(S) \cap B_2(\wt S) = \emptyset$. 
For each fixed deterministic simple path $ \mcl P^*$ in $\mcl S^*(\mcl R_n)$, we can find a set of at least $|\mcl P^*|/100$ squares $S$ hit by $\mcl P^*$ for which the neighborhoods $B_2(S)$ are disjoint. 
By~\eqref{eqn-perc-prob}, applied once to each of these $|\mcl P^*|/100$ squares, if $\ep\in (0,\ep_*]$ then the probability that $E_S^\ep$ fails to occur for every square in $ \mcl P^*$ is at most $p^{| \mcl P^*|/100}$.

We now take a union bound over all paths $\mcl P^*$ in $\mcl S^*(\mcl R_n)$ connecting the top and bottom boundaries. For $k \in [n,2n^2]_{\BB Z}$, the number of such paths with $|\mcl P^*| = k$ is at most $ n 8^{k+1}$ since there are $2 n$ possible initial squares adjacent in the top boundary of $\mcl R_n$ and 8 choices for each step of the path. Combining this with the estimate in the preceding paragraph, we find that for $\ep \in (0,\ep_*]$ the probability of a top-bottom crossing of $\mcl S^*(\mcl R_n)$ consisting of squares for which $E_S^\ep$ does not occur is at most
\eqbn
  n \sum_{k=n}^{2 n^2} p^{k/100} 8^{k+1} , 
\eqen
which is bounded above by an exponential function of $n$ provided we take $p < 8^{-100}$.  
\end{proof}

Re-scaling space and applying Lemma~\ref{lem-rectangle-perc} in several different rectangles contained in $\BB S$ leads to the following. 
 
\begin{lem} \label{lem-rectangle-dist}
For each $\zeta\in (0,1)$, there exists $\lambda= \lambda(\zeta,Q) > 0$ such that the following is true.  
For $m\in\BB N$ and $\ep  > 0$, it holds with probability $1-O_m(e^{-\lambda m})$ as $m\rta\infty$, at a rate which is uniform in $\ep$, that for each $ 2^{-m+1} \times 2^{-m}$ rectangle $R\subset \BB S$ with corners in $2^{-m} \BB Z^2$, 
\eqb \label{eqn-rectangle-dist}
 D_{\wh h}^\ep\left( \bdy_{\op{L}} R , \bdy_{\op{R}} R ; R  \right)  
\leq \max\left\{ m^3 , \, \ep^{-\xi - \zeta }  2^{- (\xi Q-\zeta) m  }  \exp\left( \xi   \min_{z\in R } \wh h_{2^{-m} }(z)  \right) \right\}.
\eqe  
\end{lem}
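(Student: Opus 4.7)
The goal is to reduce the statement to Lemma \ref{lem-rectangle-perc} via the scale invariance of $\wh h$, applied to a rescaling which sends $R$ to a macroscopic $2n\times n$ rectangle with $n$ growing linearly in $m$, and then union bound over the $O(2^{2m})$ admissible rectangles $R$. Fix such an $R$, choose $n = 2^{\lceil \kappa m \rceil}$ with a constant $\kappa = \kappa(\zeta,Q) > 0$ to be determined, and set $\delta := 2^{-m}/n$. Define
\begin{equation*}
\wt h_t(z) := (\wh h_{\delta t} - \wh h_\delta)(\delta z), \qquad t \in [0,1], \, z \in \BB C,
\end{equation*}
so that $\wt h \eqD \wh h$ by scale invariance. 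The rectangle $R$ then rescales to a $2n \times n$ rectangle $R'' := R/\delta$ (up to translation) on which $\wt h$ is defined.

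For any dyadic square $S \subset R$ with $|S| \leq 2\delta$, the rescaled square $S'' := S/\delta \subset R''$ satisfies, by the definition of $\wt h$,
\begin{equation*}
M_{\wh h}(S) = \delta^Q e^{\wh h_\delta(v_S)} M_{\wt h}(S'').
\end{equation*}
Hence, setting $\ep' := \ep\, \delta^{-Q} \exp\bigl(-\max_{z \in R} \wh h_\delta(z)\bigr)$, the inequality $M_{\wt h}(S'') \leq \ep'$ forces $M_{\wh h}(S) \leq \ep$, so $S$ is contained in some square of $\mcl S_{\wh h}^\ep(R)$. Consequently, any left-to-right crossing path in $\mcl S_{\wt h}^{\ep'}(R'')$ whose squares all have side length $\leq 2$ lifts to a crossing of $R$ in $\mcl S_{\wh h}^\ep(R)$ of at most the same length. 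Inspection of the proof of Lemma \ref{lem-rectangle-perc} shows that the path it produces is of this form (it is obtained by concatenating crossings of unit-ish sub-rectangles, using squares of side length at most $2$). Applying Lemma \ref{lem-rectangle-perc} to $\wt h$ on $R''$ thus yields, with probability $\geq 1 - a_0 e^{-a_1 n}$,
\begin{equation*}
D_{\wh h}^\ep(\bdy_{\op L} R, \bdy_{\op R} R; R) \leq n^2 \max\bigl\{A, e^{\sqrt n} (\ep')^{-\xi-\zeta}\bigr\}.
\end{equation*}

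It remains to rewrite this bound in the form stated in the lemma. Expanding $(\ep')^{-\xi-\zeta} = \ep^{-\xi-\zeta} \delta^{Q(\xi+\zeta)} \exp\bigl((\xi+\zeta) \max_R \wh h_\delta\bigr)$ and using $\delta^{Q(\xi+\zeta)} = 2^{-mQ(\xi+\zeta)} n^{-Q(\xi+\zeta)}$, the ratio between our derived bound and the desired bound $\ep^{-\xi-\zeta} 2^{-(\xi Q - \zeta)m} \exp(\xi \min_R \wh h_{2^{-m}})$ equals $2^{-m\zeta(Q+1)} n^{-Q(\xi+\zeta)} n^2 e^{\sqrt n}$ times the exponential discrepancy $\exp\bigl((\xi+\zeta) \max_R \wh h_\delta - \xi \min_R \wh h_{2^{-m}}\bigr)$. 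Three standard Gaussian estimates control the latter with superpolynomially high probability in $m$: (i) the circle-average $\wh h_{2^{-m}}$ has oscillation $O(1)$ on $R$ (its diameter matches the averaging scale); (ii) the independent increment $\wh h_\delta - \wh h_{2^{-m}}$, which has variance $\log n$ and $\asymp n^2$ effectively independent points on $R$, has $\max_R | \cdot | = O(\log n)$; (iii) by a pointwise Gaussian tail bound, $|\min_R \wh h_{2^{-m}}| \leq C\sqrt{m \log m}$. These give an excess factor of at most $e^{O(\sqrt{m \log m})}$ in the exponential, which together with the prefactor $n^2 e^{\sqrt n} = 2^{O(\sqrt m)}$ is absorbed into the $2^{-m\zeta(Q+1)}$ slack for $m$ large. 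The $m^3$ term handles the case when the maximum in Lemma \ref{lem-rectangle-perc} is attained at $A$, giving $n^2 A = O(m^2)$. Finally, the union bound over $\leq 2^{2m+O(1)}$ rectangles $R$ forces $a_1 \kappa > \lambda + 2 \log 2$, which fixes $\kappa$.

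The main obstacle is the bookkeeping in the last paragraph: verifying that the three distinct sources of error (scale-mismatch between $\wh h_\delta$ and $\wh h_{2^{-m}}$, the max-min mismatch, and the coefficient mismatch between $\xi+\zeta$ and $\xi$) can all be absorbed into the explicit slack engineered into the $\zeta$-exponent of the $2^m$-factor, uniformly in $\ep$. A minor technical subtlety is checking that the path produced by the cited proof of Lemma \ref{lem-rectangle-perc} only uses squares small enough for the scale-invariance identity to apply.
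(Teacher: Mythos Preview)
Your overall strategy matches the paper's: rescale $R$ to a macroscopic $2n\times n$ rectangle via the scale invariance of $\wh h$, apply Lemma~\ref{lem-rectangle-perc}, and union bound over the $O(2^{2m})$ rectangles. Two issues, one minor and one substantive.

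First, the choice $n = 2^{\lceil \kappa m\rceil}$ is inconsistent with the rest of your argument: with that choice $n^2 e^{\sqrt n}$ is doubly exponential in $m$ and cannot be absorbed. Your later computations (``$n^2 e^{\sqrt n} = 2^{O(\sqrt m)}$'', ``$n^2 A = O(m^2)$'', ``$a_1\kappa > \lambda + 2\log 2$'') all presuppose $n$ linear in $m$, so presumably you meant $n = \lceil \kappa m\rceil$. The paper takes $n\asymp m^{3/2}$, but any choice with $n$ between linear and sub-quadratic in $m$ works.

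The real gap is your estimate (iii). You claim $|\min_R \wh h_{2^{-m}}| \le C\sqrt{m\log m}$ ``with superpolynomially high probability'', but for a single rectangle this is only a \emph{polynomial} tail bound (a centered Gaussian of variance $\asymp m$ exceeds $C\sqrt{m\log m}$ with probability $\asymp m^{-c}$), and after the $2^{2m}$ union bound it fails entirely. The bound that \emph{does} hold with exponentially high probability, uniformly over $\BB S$, is $\max_{z\in\BB S}|\wh h_{2^{-m}}(z)| \le (2+\wt\zeta)\,m\log 2$, which is linear in $m$. Plugging this in, your ``coefficient mismatch'' term becomes $\zeta\cdot (2+\wt\zeta)\,m\log 2$, and your engineered slack $2^{-m\zeta(Q+1)}$ absorbs it only when $Q\ge 1$, so the argument breaks for $Q\in(0,1)$. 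The fix --- and this is exactly what the paper does --- is to invoke Lemma~\ref{lem-rectangle-perc} with an auxiliary parameter $\wt\zeta$ much smaller than the target $\zeta$. Then the coefficient mismatch is $\wt\zeta$ rather than $\zeta$, the excess is $\wt\zeta\cdot O(m)$, and choosing $\wt\zeta$ small enough (depending only on $\zeta,Q$) lets you absorb it into the $2^{-\zeta m}$ slack for all $Q>0$.
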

\begin{proof}
Fix $\wt\zeta\in (0,1)$ to be chosen later, in a manner depending only on $\zeta$ and $Q$. 
Also set 
\eqbn
n_m := \lfloor \log_2   m^{3/2} \rfloor ,\quad \forall m \in \BB N  
\eqen
(in fact, $n_m = \lfloor \log_2   m^p \rfloor$ for any $1 < p < 2$ would suffice). 

By~\cite[Lemmas 3.5 and 3.6]{dg-lqg-dim} (the latter is applied with $\delta = 2^{-m}$ and $A = 2^{n_m} \asymp m^{3/2}$), it holds with exponentially high probability as $m\rta\infty$ that 
\eqb \label{eqn-use-mid-scale-compare}  
\max_{z\in \BB S} |\wh h_{2^{-m}}(z)| \leq (2+\wt\zeta) \log 2^m \quad \op{and} \quad
  \max_{z,w\in \BB S : |z-w| \leq   2^{-m+2}   } |\wh h_{   2^{-m - n_m  } }(z) - \wh h_{2^{-m}}(w)| \leq \wt\zeta \log 2^m .
\eqe
If $R$ is a $  2^{-m+1}  \times 2^{-m}$ rectangle and $u_R$ denotes its bottom left corner, then $   2^{m+n_m} (R-u_R) $ is the rectangle $\mcl R_{2^{n_m} }$ of Lemma~\ref{lem-rectangle-perc}. Moreover, the field $\{ \wh h_{  2^{-m - n_m} t,  2^{-m - n_m} }  ( 2^{-m - n_m} \cdot  + u_R)\}_{t \in [0,1]  }$ agrees in law with $\{ \wh h_t\}_{t \in [0,1]} $ and is independent from $\wh h_{2^{-m-n_m}   }$, which means that the associated graph distance $D_{\wh h_{0,2^{-m-n_m}}(2^{-m - n_m}\cdot + u_R)}^\ep$ agrees in law with $D_{\wh h}^\ep$ and is independent from $\wh h_{2^{-m-n_m}}$. 
Using~\eqref{eqn-dist-scaling} with $\wh h$ in place of $h$ and with $C =  2^{ m +  n_m}$ and $f = \wh h_{2^{-m-n_m}}$, we therefore get that the conditional law of $ D_{\wh h}^\ep\left( \bdy_{\op{L}} R , \bdy_{\op{R}} R ; R  \right)$ given $\wh h_{  2^{-m - n_m} }$ is stochastically dominated by the law of 
\eqbn
 D_{\wh h }^{T_R\ep}\left(\bdy_{\op{L}} \mcl R_{2^{n_m} } , \bdy_{\op{R}} \mcl R_{2^{n_m} } ; \mcl R_{2^{n_m} }  \right) 
\quad \text{for} \quad 
T_R := 2^{Q ( m+n_m) } \exp\left( -    \max_{z\in R }  \wh h_{ 2^{-m -  n_m} }(z)  \right) .
\eqen
If~\eqref{eqn-use-mid-scale-compare} holds, then 
\allb \label{eqn-rectangle-dist-T}
T_R  
&\geq 2^{ \left(Q + o_m(1) + o_{\wt\zeta}(1) \right) m  }  \exp\left( -  \min_{z\in R }   \wh h_{2^{-m} }(z) \right)   \notag \\
&\geq 2^{ \left(Q + o_m(1) + o_{\wt\zeta}(1) \right) m  }  \exp\left( - \frac{\xi}{\xi-\zeta}  \min_{z\in R }   \wh h_{2^{-m} }(z) \right)   .
\alle
where the $o_{\wt\zeta}(1)$ and $o_m(1)$ are each deterministic and independent of $\ep$ and the $o_{\wt\zeta}(1)$ error is also independent of $m$. Note that in the first inequality in~\eqref{eqn-rectangle-dist-T}, we switched from the maximum of $\wh h_{  2^{-m - n_m}}(z)$ to the minimum of $\wh h_{2^{-m}}(z)$ (which gives a stronger estimate than the maximum) using the second inequality in~\eqref{eqn-use-mid-scale-compare}. Also, in the second inequality in~\eqref{eqn-rectangle-dist-T}, we absorbed a small multiple of $\min_{z\in R }   \wh h_{2^{-m} }(z)$ into the $m o_{\wt\zeta}(1)$ error inside the first exponential using the first inequality in~\eqref{eqn-rectangle-dist-T}. 
 By Lemma~\ref{lem-rectangle-perc} (applied with $T_R \ep$ in place of $\ep$ and $2^{n_m}$ in place of $n$) and a union bound over $O_m(2^{2m})$ rectangles $R$, we obtain the statement of the lemma upon choosing $\wt\zeta$ sufficiently small, in a manner depending only on $\zeta$ and $Q$. 
\end{proof}

By replacing $\min_{z\in R } \wh h_{2^{-m} }(z)$ by the maximum possible value of $\wh h_{2^{-m}}(z)$ on $R$, we can eliminate the dependence on $\wh h_{2^{-m}}$ in Lemma~\ref{lem-rectangle-dist}.

\begin{lem} \label{lem-rectangle-dist-max}
Fix $\zeta \in (0,1)$. With polynomially high probability as $\ep\rta 0$, the following is true. For each $m \in \BB N$ with $2^{-m} \leq \ep^\zeta$ and each $2^{-m+1} \times 2^{-m }$ rectangle $R\subset \BB S$ with corners in $2^{-m} \BB Z^2$,  
\eqb \label{eqn-rectangle-dist-max}
 D_{\wh h}^\ep\left( \bdy_{\op{L}} R , \bdy_{\op{R}} R ; R  \right)  
\leq  \ep^{-\xi - \zeta }  2^{ \left( (2-Q)\xi   -\zeta \right) m  } .
\eqe  
Moreover, the same holds for $2^{-m} \times 2^{-m+1}$ rectangles with corners in $2^{-m}\BB Z^2$, but with $\bdy_{\op{T}} R$ and $ \bdy_{\op{B}} R$ in place of $\bdy_{\op{L}} R$ and $\bdy_{\op{R}} R$. 
\end{lem}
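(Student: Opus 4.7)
The plan is to deduce Lemma~\ref{lem-rectangle-dist-max} from Lemma~\ref{lem-rectangle-dist} by supplying a sufficiently strong uniform upper bound on $\min_{z\in R}\wh h_{2^{-m}}(z)$. Concretely, I would first apply Lemma~\ref{lem-rectangle-dist} with some small $\zeta'\in(0,\zeta)$ (to be chosen in terms of $\zeta$, $\xi$ and $Q$ at the end) in place of $\zeta$, obtaining that with probability $1-O_m(e^{-\lambda m})$, for every $2^{-m+1}\times 2^{-m}$ rectangle $R\subset \BB S$ with corners in $2^{-m}\BB Z^2$,
\[
 D_{\wh h}^\ep\left( \bdy_{\op{L}} R , \bdy_{\op{R}} R ; R  \right)
\leq \max\left\{ m^3 , \, \ep^{-\xi - \zeta' }  2^{- (\xi Q-\zeta') m  }  \exp\!\left( \xi \min_{z\in R} \wh h_{2^{-m}}(z)  \right) \right\}.
\]

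Next, to control the factor $\exp(\xi \min_{z\in R}\wh h_{2^{-m}}(z))$, I would use the trivial estimate $\min_{z\in R}\wh h_{2^{-m}}(z)\leq \wh h_{2^{-m}}(z_R)$, where $z_R$ is a distinguished point in $R$ (say its center). Since $\wh h_{2^{-m}}(z_R)$ is centered Gaussian with variance $m\log 2$, for any $\eta>0$ a standard Gaussian tail bound gives
\[
\BB P\!\left[\wh h_{2^{-m}}(z_R)>(2+\eta)\log 2^m\right]\leq e^{-(2+\eta)^2 m\log 2 /2}.
\]
A union bound over the $O(2^{2m})$ such rectangles in $\BB S$ at scale $m$ produces a failure probability of order $2^{-\eta' m}$ for an $\eta'=\eta'(\eta)>0$, provided $\eta$ is chosen slightly larger than a universal threshold so that the union bound wins against the factor $2^{2m}$. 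Summing this estimate and the failure estimate from Lemma~\ref{lem-rectangle-dist} over all $m\in\BB N$ with $2^{-m}\leq \ep^\zeta$ (i.e. $m\geq \zeta \log_2\ep^{-1}$) yields a total failure probability of order $\ep^c$ for an appropriate $c>0$, which is polynomially high probability in $\ep$.

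Combining the two bounds on the good event and writing $\exp(\xi\min_{z\in R}\wh h_{2^{-m}}(z))\leq 2^{\xi(2+\eta)m}$ produces, on the good event,
\[
D_{\wh h}^\ep(\bdy_{\op{L}} R,\bdy_{\op{R}} R;R)\leq \max\!\left\{m^3,\ep^{-\xi-\zeta'}2^{\left((2-Q)\xi + \zeta' + \xi\eta\right)m}\right\}.
\]
Choosing $\zeta'$ and $\eta$ small enough (in terms of $\zeta$, $\xi$, $Q$) absorbs the additive errors in the exponent into $\zeta$, and since $2^{-m}\leq \ep^\zeta$ we have $m$ large enough that $m^3$ is dominated by the other term (using that the coefficient on $m$ in the exponent is strictly positive whenever $\zeta$ is chosen smaller than $(2-Q)\xi$). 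This yields the desired bound with side length $2^{-m+1}\times 2^{-m}$; for the tall rectangles $2^{-m}\times 2^{-m+1}$, the same proof applies verbatim using the second half of Lemma~\ref{lem-dist-along-line} (which is invoked in Lemma~\ref{lem-rectangle-dist}) with the roles of horizontal and vertical swapped.

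The main conceptual point — and in some sense the only nontrivial step — is that the bound on $\min_{z\in R}\wh h_{2^{-m}}(z)$ must hold uniformly over all the $\asymp 2^{2m}$ rectangles at scale $m$, and uniformly over all admissible $m$. For each fixed $R$ the minimum is morally only of order $\sqrt{m}$, but the worst rectangles (those centered near a thick point) can push the minimum up to roughly $2\log 2^m$, so the correct uniform bound is no better than $(2+o(1))\log 2^m$. This is the reason one cannot improve the coefficient $(2-Q)\xi$ of $m$ in the exponent beyond a $+O(\zeta)$ error by this method, and is also why the bound $m^3$ in Lemma~\ref{lem-rectangle-dist} must be controlled using the lower bound $m\geq \zeta\log_2\ep^{-1}$ rather than by direct comparison with $\ep^{-\xi-\zeta}$ alone.
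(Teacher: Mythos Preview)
Your approach is correct and essentially the same as the paper's: both bound $\min_{z\in R}\wh h_{2^{-m}}(z)$ by $(2+o(1))\log 2^m$ uniformly over all rectangles and all relevant $m$, then substitute into Lemma~\ref{lem-rectangle-dist}. The only difference is that the paper cites an external supremum bound (\cite[Lemma~3.5]{dg-lqg-dim}) for $\max_{z\in\BB S}|\wh h_{2^{-m}}(z)|$, whereas you do a direct Gaussian tail bound at one point per rectangle and union over the $O(2^{2m})$ rectangles; your route is slightly more elementary and gives the same conclusion. One small correction: for the union bound to beat $2^{2m}$ you need $(2+\eta)^2/2>2$, which holds for \emph{every} $\eta>0$, so there is no ``universal threshold'' to exceed. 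Also note that both your computation and the paper's own proof actually produce $2^{((2-Q)\xi+O(\zeta'))m}$ rather than $2^{((2-Q)\xi-\zeta)m}$; the $-\zeta$ in the displayed statement appears to be a typo for $+\zeta$, and the $+\zeta$ version is all that is used downstream.
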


We note that the right side of~\eqref{eqn-rectangle-dist-max} tends to $\infty$ as $m\rta\infty$ ($\ep$ fixed) if $Q < 2$ and tends to 0 as $m\rta\infty$ if $Q >2$.  
This is related to the fact that there are arbitrarily small squares in $\mcl S_{\wh h}^\ep(\BB S)$ when $Q < 2$, hence there are arbitrarily small rectangles with large $D_{\wh h}^\ep$-diameter.

\begin{proof}[Proof of Lemma~\ref{lem-rectangle-dist-max}]
By~\cite[Lemma 3.5]{dg-lqg-dim} and a union bound over all $m\in\BB N$ with $2^{-m} \geq \ep^\zeta$, it holds with polynomially high probability as $\ep\rta 0$ that for each such $m$, $\max_{z\in\BB S} |\wh h_{2^{-m}}(z)| \leq (2+\zeta) \log 2^m$. By combining this with Lemma~\ref{lem-rectangle-dist} and possibly shrinking $\zeta$, we get that with polynomially high probability as $\ep\rta 0$, it holds for each $2^{-m} \leq \ep^\zeta$ and each rectangle $R$ as in the statement of the lemma that 
\eqb \label{eqn-rectangle-dist0}
 D_{\wh h}^\ep\left( \bdy_{\op{L}} R , \bdy_{\op{R}} R ; R  \right)  
\leq \max\left\{ m^3 , \, \ep^{-\xi - \zeta }  2^{ \left( (2-Q)\xi   -\zeta \right) m  }  \right\}.
\eqe 
Since $2^{-m} \leq \ep^\zeta$, the second term in the maximum in~\eqref{eqn-rectangle-dist0} is larger than the first for a small enough choice of $\zeta \in (0,1)$ and $\ep >0$. This gives~\eqref{eqn-rectangle-dist-max}. The statement for $2^{-m} \times 2^{-m+1}$ rectangles follows since the law of $\wh h$ is invariant under rotations by $\pi/2$.
\end{proof}

\begin{figure}[t!]
 \begin{center}
\includegraphics[scale=.85]{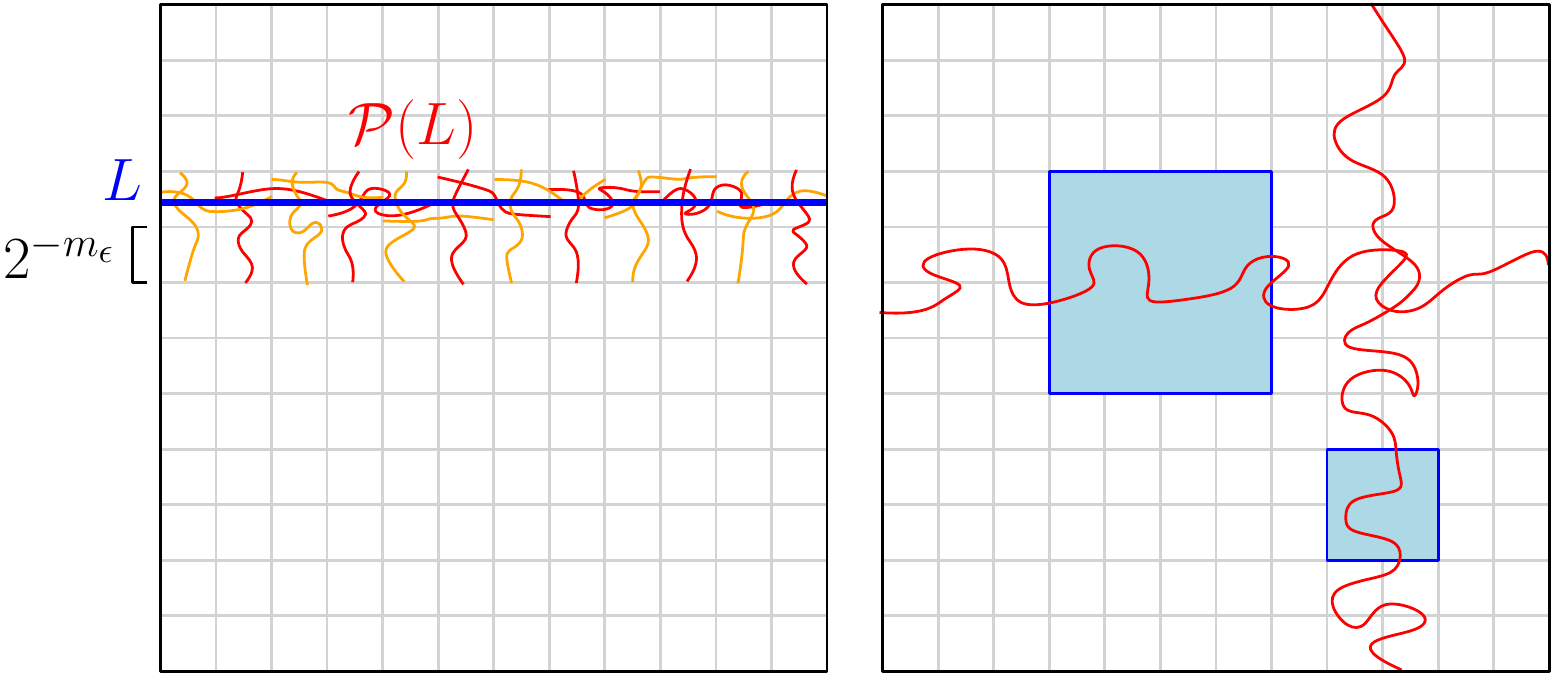}
\vspace{-0.01\textheight}
\caption{ \textbf{Left.} Illustration of the construction of a path of squares near the line segment $L$ in the proof of Proposition~\ref{prop-macro-square-dist}. The paths of squares $P_R$ in some of the $2^{-m_\ep+1} \times 2^{-m_\ep}$ and $2^{-m_\ep } \times 2^{-m_\ep  + 1}$ rectangles $R$ which intersect $L$ are shown in red and orange (the individual squares in the paths are not shown). The union $\mcl P(L)$ of these paths is connected and contains a path near $L$ from the left boundary to the right boundary of $\BB S$. 
\textbf{Right.} To get a path between two given squares of side length at least $\ep^\beta$, we consider the union of the paths (red) corresponding to two line segments $L$ and $L'$ which pass through the two given squares. This union contains a path of squares in $\mcl S_{\wh h}^\ep(\BB S)$. between the two given squares. 
}\label{fig-line-path}
\end{center}
\vspace{-1em}
\end{figure}

\begin{proof}[Proof of Proposition~\ref{prop-macro-square-dist}] 
We will prove the proposition statement with $\wh h$ in place of $h$. The statement for $h$ follows immediately from this and Lemma~\ref{lem-tr-compare-square}. See Figure~\ref{fig-line-path} for an illustration of the proof.

Fix $\beta  >0$ and for $\ep \in (0,1)$, let $m_\ep \in \BB N$ be chosen so that $2^{-m_\ep + 100} \leq \ep^\beta \leq 2^{-m_\ep + 101}$. By Lemma~\ref{lem-rectangle-dist-max}, it holds with polynomially high probability as $\ep\rta 0$ that for each $2^{-m_\ep +1} \times 2^{-m_\ep }$ rectangle $R\subset \BB S$ with corners in $2^{-m_\ep} \BB Z^2$, there is a path of squares $\wt P_R$ in $\mcl S_{\wh h}^\ep(R)$ joining the left and right boundaries of $R$ with 
\eqbn
|\wt P_R| \leq \ep^{- \xi - (2-Q) \beta \xi - \zeta } ;
\eqen
and the same holds for $2^{-m_\ep} \times 2^{-m_\ep+1}$ rectangles but with $\bdy_{\op{T}} R$ and $ \bdy_{\op{B}} R$ in place of $\bdy_{\op{L}} R$ and $\bdy_{\op{R}} R$. Henceforth assume that this is the case. 

For each rectangle $R$ as above, let $P_R$ be the set of squares of $\mcl S_{\wh h}^\ep(\BB S)$ which contain the squares in $\wt P_R$ (note that each square of $\mcl S_{\wh h}^\ep(R)$ is contained in a square of $\mcl S_{\wh h}^\ep(\BB S)$ since $R\subset \BB S$). 
For a horizontal or vertical line segment $L$ joining the left and right or top and bottom boundaries of $S$, let $\mcl P(L)$ be the union of the paths of squares $P_R$ over all $2^{-m_\ep +1} \times 2^{-m_\ep }$ or $2^{-m_\ep} \times 2^{-m_\ep +1 }$ rectangles $R$ contained in $\BB S$ with corners in $2^{-m_\ep}\BB Z^2$ which intersect $L$. Then $\mcl P(L)$ is connected and contains a path in $\mcl S_{\wh h}^\ep(\BB S)$ between the left and right boundaries of $\BB S$ consisting of squares which all intersect the Euclidean $\ep^{-\beta}/20$-neighborhood of $L$ (see Figure~\ref{fig-line-path}, left). Furthermore, the number of rectangles $R$ involved in the above union is at most $O_\ep(\ep^{-\beta})$, so the length of this path is at most $O_\ep(\ep^{-f(\beta)-\zeta})$. 
 
If $S,\wt S \in \mcl S_{\wh h}^\ep(\BB S)$ are squares with side length at least $\ep^\beta$, then we can find a horizontal line segment $L$ and a vertical line segment $L'$ as above whose union is connected and contains the centers of each of $S$ and $\wt S$. The union of the corresponding sets of squares $\mcl P(L)$ and $\mcl P(L')$ as above is connected in $\mcl S_{\wh h}^\ep(\BB S)$ and passes through (and therefore contains) each of $S$ and $\wt S$. Thus~\eqref{eqn-macro-square-dist} holds with $\wh h$ in place of $h$.
\end{proof}

\subsection{Distance from a small square to a large square}
\label{sec-singularity-dist}

Fix $\alpha \in (Q,2)$ and $z\in \BB S$ with $\op{dist}(z,\bdy\BB S) \geq 1/4$.  As in the discussion at the beginning of this section, we will eventually take $\alpha$ to be very close to $Q$. Let $h$ be a whole-plane GFF normalized so that $h_1(0) = 0$ and let $\wh h$ be the white-noise decomposition field as in~\eqref{eqn-wn-decomp}. Define the fields 
\eqb \label{eqn-singularity-fields}
h^\alpha := h - \alpha \log|\cdot - z| \quad \op{and} \quad \wh h_t^\alpha := \wh h_t - \alpha \log |\cdot  -z| ,\quad \forall t \in (0,1).
\eqe
We note that the condition that $\alpha  > Q$ ensures that $\mcl S_{h^\alpha}^\ep(\BB S)$ has a singularity at $z$ with high probability and the condition that $\alpha < 2$ ensures that $h$ possesses $\alpha$-thick points a.s.\ (see~\cite{hmp-thick-pts} and Section~\ref{sec-small-square-count} below). The goal of this section is to prove the following statement, which gives an upper bound for the $\mcl S_{h^\alpha}^\ep(\BB S)$-graph distance from a large square to a neighborhood of the singularity $z$.

\begin{prop} \label{prop-singularity-dist}
Fix $\zeta\in (0,1)$ and $K  > 1$. With polynomially high probability as $\ep\rta 0$, at a rate which is uniform in $z$, there exists a square $S\in \mcl S_{h^\alpha}^\ep(\BB S)$ with $|S| \geq \ep^{1/(Q-\zeta) }$ and
\eqb \label{eqn-singularity-dist}
D_{ h^\alpha}^\ep\left( S , \bdy B_{\ep^K}(z) ; \BB S     \right)   \leq   \ep^{ - g(\alpha ,  K ) - \zeta}    ,
\eqe
where
\eqb \label{eqn-singularity-dist-function}
g(\alpha,  K ) = g(\alpha,K,Q) := \max\left\{ (\alpha - Q  ) \xi  K  + \xi  ,    \xi +  \frac{2-Q}{Q}   \xi + \frac{1}{Q}      \right\} .
\eqe  
The same is true with $\wh h^\alpha$ in place of $h^\alpha$. 
\end{prop}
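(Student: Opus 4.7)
The plan is to build a path of polynomial length in $\mcl S_{\wh h^\alpha}^\ep(\BB S)$ from a square meeting $\bdy B_{\ep^K}(z)$ outward to a macroscopic square of side length at least $\ep^{1/(Q-\zeta)}$, by concatenating annular crossings at dyadic scales around $z$ (giving the first term of $g(\alpha,K)$) with one outer leg handled by Proposition~\ref{prop-macro-square-dist} (giving the second term $f(1/Q)$). By Lemma~\ref{lem-tr-compare-square}, it suffices to prove the analogous statement for $\wh h^\alpha$, absorbing the comparison factor into $\zeta$. Set $k_\ep := \lceil K \log_2 \ep^{-1} \rceil$ so $2^{-k_\ep} \asymp \ep^K$, and for $k \in [0, k_\ep]_{\BB Z}$ let $A_k := B_{2^{-k+1}}(z) \setminus B_{2^{-k}}(z)$.

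The core reduction is a scale-invariance argument at each dyadic scale. By the translation and scale invariance of $\wh h$, the rescaled field $\tilde h^{(k)}(w) := \wh h(2^{-k}w + z) - \wh h_{2^{-k}}(z)$ has the same law as $\wh h$. A direct computation from \eqref{eqn-mass-def-wn} shows that for a dyadic square $S \subset A_k$ with rescaled image $\tilde S := 2^k(S - z)$, one has $M_{\wh h^\alpha}(S) = M_{\tilde h^{(k)}}(\tilde S)\cdot e^{\wh h_{2^{-k}}(z)}\cdot 2^{(\alpha-Q)k}\cdot |v_{\tilde S}|^{-\alpha}$ with the last factor bounded in $[2^{-\alpha},1]$. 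Thus, with effective threshold $\ep_k := \ep\cdot 2^{(Q-\alpha)k}\cdot e^{-\wh h_{2^{-k}}(z)}$, the condition $S \in \mcl S_{\wh h^\alpha}^\ep(\BB S)$ is, up to a bounded multiplicative correction in $\ep_k$, the condition $\tilde S \in \mcl S_{\tilde h^{(k)}}^{\ep_k}$. Since $\wh h_{2^{-k}}(z)$ is centered Gaussian with variance $k\log 2 + O(1)$, a Gaussian tail bound and a union bound over $k$ give $|\wh h_{2^{-k}}(z)| \leq \zeta \log \ep^{-1}$ uniformly in $k \in [0,k_\ep]_{\BB Z}$ with polynomially high probability, so $\ep_k^{-\xi} \leq \ep^{-\xi(1 + (\alpha-Q)k(\log 2)/(\log\ep^{-1})) - O(\zeta)}$ uniformly in $k$.

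In each annulus $A_k$, I apply a mild adaptation of Proposition~\ref{prop-macro-square-dist} to $\tilde h^{(k)}$ with parameter $\ep_k$, yielding a loop of squares in $\mcl S_{\wh h^\alpha}^\ep(\BB S)$ contained in a slightly fattened annulus $A_k^+ \supset A_k$ and surrounding $z$, of combinatorial length at most $\ep_k^{-\xi - \zeta/10}$. Here I let $\beta \to 0^+$ in Proposition~\ref{prop-macro-square-dist} so that $f(\beta)\to \xi$, and I build the loop from squares of rescaled side length of order $1$, making the hypothesis $|S| \geq \ep_k^\beta$ automatic. Consecutive loops chain together through the overlap of the $A_k^+$. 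The dominant contribution comes from $k = k_\ep$, where $\ep_{k_\ep}^{-\xi} \leq \ep^{-\xi(1 + K(\alpha-Q)) - O(\zeta)}$, so after absorbing the $k_\ep$-many polylogarithmic summands the total chained length is at most $\ep^{-\xi - K(\alpha-Q)\xi - O(\zeta)}$, the first term of $g(\alpha,K)$.

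Finally, I connect the outermost loop, which lies in $A_0 \subset B_2(z)$, to a square $S_* \in \mcl S_{\wh h^\alpha}^\ep(\BB S)$ of side length at least $\ep^{1/(Q-\zeta)}$. A Gaussian tail bound exactly as in the upper-bound argument of Proposition~\ref{prop-ptwise-distance} provides such an $S_*$ at bounded Euclidean distance from $z$ with polynomially high probability, and Proposition~\ref{prop-macro-square-dist} with $\beta = 1/(Q-\zeta)$ then bounds the $\mcl S_{\wh h^\alpha}^\ep(\BB S)$-distance from the end of the outermost loop to $S_*$ by $\ep^{-f(1/(Q-\zeta))-\zeta}$, which for small $\zeta$ is $\ep^{-(\xi + (2-Q)\xi/Q + 1/Q) - O(\zeta)}$, the second term of $g(\alpha,K)$. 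Summing the two contributions and adjusting $\zeta$ gives the claimed bound $\ep^{-g(\alpha,K)-\zeta}$. The main obstacle is the annular adaptation of Proposition~\ref{prop-macro-square-dist}: its statement covers only side-to-side crossings of $\BB S$, whereas I need closed loops inside an annulus, so the grid construction from Section~\ref{sec-macro-square-dist} must be rerun on several overlapping copies of $\BB S$ covering the unit annulus, and I must verify that the loops thus produced, when pulled back by the rescaling, lie genuinely inside $\mcl S_{\wh h^\alpha}^\ep(\BB S)$ and not merely inside a tiling restricted to $A_k^+$.
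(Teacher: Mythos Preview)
Your overall architecture is the same as the paper's: work with $\wh h^\alpha$, rescale each dyadic annulus around $z$ to unit size so that the $-\alpha\log|\cdot-z|$ term becomes a bounded additive constant, invoke the rectangle-crossing estimates of Section~\ref{sec-macro-square-dist} at the effective threshold $\ep_k \asymp \ep\,2^{(Q-\alpha)k}$, sum over $k$ up to $k_\ep \asymp K\log_2\ep^{-1}$ to obtain the term $(\alpha-Q)\xi K+\xi$, and finally attach a macroscopic square via Proposition~\ref{prop-macro-square-dist} with $\beta$ near $1/Q$ to obtain the term $f(1/Q)=\xi+(2-Q)\xi/Q+1/Q$. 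The paper carries this out via the tailored Lemmas~\ref{lem-annulus-rect} and~\ref{lem-annulus-path} (built directly on Lemma~\ref{lem-rectangle-perc}) rather than by re-invoking Proposition~\ref{prop-macro-square-dist} at each scale, and it uses dyadic square annuli $\BB A_{m,n}$ rather than round annuli so that the tiling restricted to the annulus is compatible with $\mcl S_{\wh h^\alpha}^\ep(\BB S)$.

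There is one concrete gap. You only build a \emph{loop} in each $A_k^+$ and then assert that ``consecutive loops chain together through the overlap of the $A_k^+$.'' This is not true in general: a separating loop in $A_k^+$ can sit in the outer half of that annulus while the loop in $A_{k+1}^+$ sits in the inner half of its annulus, and the two need never meet even if the fattened annuli overlap. The paper's Lemma~\ref{lem-annulus-path} produces, at each scale, \emph{both} a separating loop $P_n^\circ$ and a \emph{radial crossing} $P_n^\perp$ through a doubled annulus $\BB A_{n,n-2}$; it is $P_n^\perp$ that topologically forces an intersection with $P_{n-1}^\circ$ and $P_n^\circ$, producing the connected chain in Figure~\ref{fig-annulus-path}. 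The fix is easy once noticed---the same rectangle-crossing input that gives the loops also gives the radial crossings with the same length bound---but the step as written does not go through.

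Two smaller technical points. First, the rescaled field with the exact law of $\wh h$ is obtained by subtracting the \emph{function} $\wh h_{2^{-k}}$, not the constant $\wh h_{2^{-k}}(z)$; this is how the paper proceeds (cf.\ the proof of Lemma~\ref{lem-rectangle-dist}), and it is what makes the fine-scale field independent of the coarse field so that you can legitimately apply the crossing estimate with the random threshold $\ep_k$. The discrepancy between the two subtractions is $\wh h_{2^{-k}}(2^{-k}w+z)-\wh h_{2^{-k}}(z)$, which is controllable (as in \cite[Lemma~3.6]{dg-lqg-dim}) but must be handled. Second, invoking Proposition~\ref{prop-macro-square-dist} ``with $\beta\to 0^+$'' at each scale is morally right but the wrong citation: that proposition connects \emph{squares} of $\mcl S^\ep$ with prescribed side-length lower bound, whereas what you actually need are hard-direction rectangle crossings in the unit annulus. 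The correct inputs are Lemma~\ref{lem-rectangle-perc} or Lemma~\ref{lem-rectangle-dist-max}, which is exactly what the paper packages into Lemma~\ref{lem-annulus-rect}.
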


We will eventually send $K \rta \infty$ and $\alpha \rta Q$ in order to get an arbitrarily large growth exponent for a graph-distance ball. The only important feature of the function $g(\alpha,K)$ of~\eqref{eqn-singularity-dist-function} for our argument is that when we set $\alpha = Q + 1/K$, it holds that $g(Q+1/K , K)$ is bounded above by a constant which depends only on $Q$. 

As in Section~\ref{sec-macro-square-dist}, for the proof of Proposition~\ref{prop-singularity-dist} we will mostly work with $\wh h^\alpha$ instead of $h^\alpha$. 
To prove the proposition, we will first establish for each $n\in\BB N$ an upper bound for the minimum length of a path in $\mcl S_{\wh h^\alpha}(\BB S)$ between the inner and outer boundaries of a square annulus consisting of points which lie at Euclidean distance of order $2^{-n}$ from $z$; and an upper bound for the minimum length of a path in $\mcl S_{\wh h^\alpha}(\BB S)$ which disconnects the inner and outer boundaries of such an annulus. This is done in Lemma~\ref{lem-annulus-path}, with Lemma~\ref{lem-annulus-rect} as an intermediate step. We will then concatenate paths of minimum length in a logarithmic number of such annuli to produce a path from a large square (contained in one of the outer annuli) to $B_{\ep^{K}}(z)$. See Figure~\ref{fig-annulus-path} for an illustration.

\begin{figure}[t!]
 \begin{center}
\includegraphics[scale=.85]{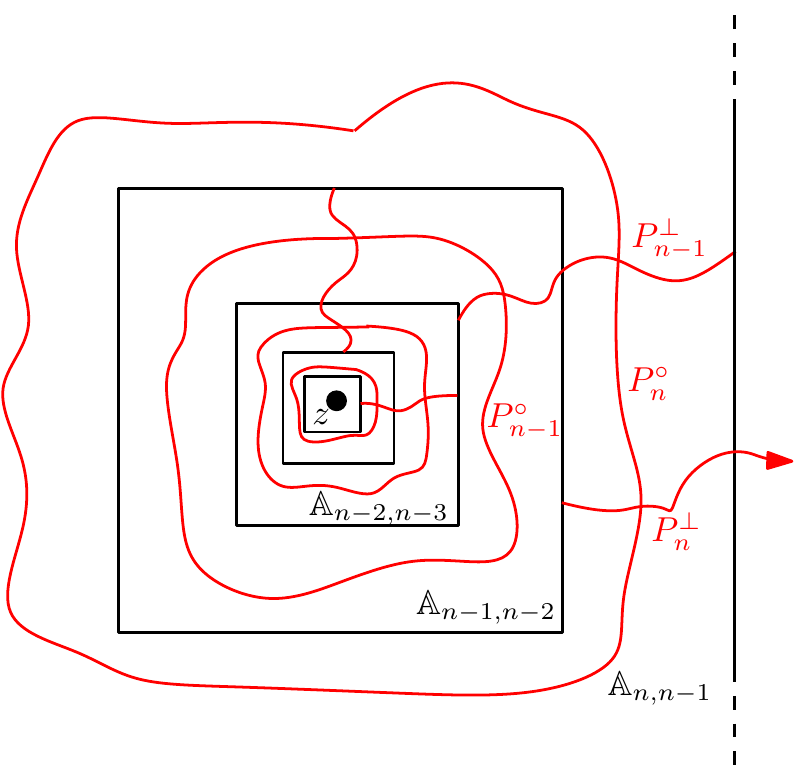}
\vspace{-0.01\textheight}
\caption{ The annuli $\BB A_{n,n-1}$ defined in~\eqref{eqn-annulus-def} and the paths of squares $P_n^\perp$ and $P_n^\circ$ of Lemma~\ref{lem-annulus-path} for several values of $n$ (individual squares along the paths are not shown). Note that the inner and outer boundaries of the annuli are not exactly concentric. The union of these paths over all $n\in [\log_2 \ep^{-\wt\zeta/(2\alpha)} , \log_2 \ep^{-K}]_{\BB Z}$ contains a path in $\mcl S_{\wh h^\alpha}^\ep$ from a nearly macroscopic annulus to $B_{\ep^K}(z)$. 
}\label{fig-annulus-path}
\end{center}
\vspace{-1em}
\end{figure}

Before proceeding with the proof, we define the annuli we will consider. We want to study nested square annuli surrounding $z$ with dyadic radii, but we want the corners of the squares involved to be dyadic since $\mcl S_{\wh h^\alpha}^\ep$ is defined using dyadic squares. To this end, let $\{\mathsf S_n\}_{n\in\BB Z}$ be the sequence of dyadic squares containing $z$, enumerated so that $|\mathsf S_n| = 2^{-n}$. 
Also let $\mathsf S_n(2)$ be the dyadic square with the same center as $\mathsf S_n$ and five times the side length as $\mathsf S_n $, equivalently $\mathsf S_n(2)$ is the 2-neighborhood of $\mathsf S_n$ with respect to the $L^\infty$ metric on $\BB C$. 
Note that $\mathsf S_m(2) $ is contained in the interior of $ \mathsf S_n(2)$ whenever $m\geq n+1$. 
For $n,m\in\BB Z$ with $m\geq n$, define the possibly non-concentric square annulus
\eqb \label{eqn-annulus-def}
\BB A_{m,n} := \mathsf S_m(2) \setminus \mathsf S_n(2) .
\eqe 

\begin{lem} \label{lem-annulus-rect} 
For each $\zeta \in (0,1)$, it holds with exponentially high probability as $n\rta\infty$, at a rate which is uniform over all choices of $z \in \BB S$ with $\op{dist}(z,\bdy \BB S) \geq 1/4$ and all choices of $\ep \in (0,1)$, that for each $2^{-n}\times 2^{-n-1}$ rectangle $R\subset \BB A_{n,n-2}$ with corners in $2^{-n-1}\BB Z^2$,
\eqb \label{eqn-annulus-rect}
D_{\wh h^\alpha}^\ep\left( \bdy_{\op{L}} R , \bdy_{\op{R}} R ; R \right) \leq   2^{(\alpha-Q+\zeta) \xi n} \ep^{-\xi -\zeta } . 
\eqe
Moreover, the same holds for $2^{-n} \times 2^{-n+1}$ rectangles with corners in $2^{-n}\BB Z^2$, but with $\bdy_{\op{T}} R$ and $ \bdy_{\op{B}} R$ in place of $\bdy_{\op{L}} R$ and $\bdy_{\op{R}} R$. 
\end{lem}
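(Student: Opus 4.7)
The proof will proceed by a rescaling argument that reduces the crossing estimate on the small annular rectangle $R$ to a unit-scale crossing estimate, to which Lemma~\ref{lem-rectangle-perc} can be applied. The key observation is that on the annulus $\BB A_{n,n-2}$, the logarithmic singularity $f := -\alpha\log|\cdot - z|$ is essentially constant: uniformly in $u \in \BB A_{n,n-2}$ one has $f(u) = \alpha n \log 2 + O(1)$, where the $O(1)$ error does not depend on $z$ since the inner and outer radii of $\BB A_{n,n-2}$ are both of order $2^{-n}$. Consequently $\wh h^\alpha = \wh h + f$ locally differs from $\wh h$ by an essentially constant upward shift of size $\alpha n \log 2 + O(1)$.

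First I set $n' := \lfloor \log_2 n \rfloor$ and $C := 2^{n+n'}$ (which is dyadic). Applying the $\wh h$-analogue of the scaling identity \eqref{eqn-dist-scaling} with $h = \wh h$, the above $f$, and $U = R$ yields
\eqbn
D_{\wh h^\alpha}^\ep(\bdy_{\op{L}} R,\bdy_{\op{R}} R;R) \leq D_{\wh h(\cdot/C)}^{T\ep}(C\bdy_{\op{L}} R,C\bdy_{\op{R}} R;CR)
\eqen
with $T := C^Q e^{-\sup_R f} \geq 2^{(Q-\alpha)n + Qn' - O(1)}$. The rescaled rectangle $CR$ has dimensions $2^{n'} \times 2^{n'-1}$, comparable to $\mcl R_{2^{n'}}$ in Lemma~\ref{lem-rectangle-perc}. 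Using translation and scale invariance of $\wh h$, together with the explicit identity $M_{\wh h(\cdot/C)}(S) = e^{\wh h_{|S|/(2C)}(v_S/C)} |S|^Q$, I express $D_{\wh h(\cdot/C)}^{T\ep}(\cdot,\cdot;CR)$ in law as $D_{\wh h'' + \phi}^{T\ep}(\cdot,\cdot;\wt R)$ on an appropriate translate $\wt R$ of $CR$ that is contained in $\mcl R_{2^{n'}}$ after translation, where $\wh h'' \eqD \wh h$ and the deterministic-given-$\wh h$ shift $\phi(v) := \wh h_{1/C}(v/C)$ takes values in the Gaussian family $\{\wh h_{2^{-n-n'}}(u) : u \in R\}$ after reparametrization.

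Next I control $\phi$ by Gaussian concentration. The centered Gaussian process $\{\wh h_{2^{-n-n'}}(u) : u \in R\}$ has pointwise variance $(n+n')\log 2 = O(n)$ and is H\"older-continuous on the compact set $R$, so the Borell--TIS inequality yields $\BB P[\max_{u\in R} |\wh h_{2^{-n-n'}}(u)| \leq C_*\sqrt n] \geq 1 - O(e^{-cn})$ for appropriate $C_*,c > 0$. On this event a further application of the constant-shift case of the scaling identity absorbs $\phi$ into the $\ep$ parameter, replacing $T\ep$ by $T\ep \cdot 2^{-O(\sqrt n)}$.

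Finally I apply Lemma~\ref{lem-rectangle-perc} with parameter $2^{n'}$ in place of $n$ and an auxiliary exponent $\zeta' > 0$ to be chosen later. The lemma yields, with probability at least $1 - a_0 e^{-a_1\cdot 2^{n'}} = 1 - a_0 e^{-a_1 n}$, a crossing bound of order $(2^{n'})^2 e^{(2^{n'})^{1/2}} (T\ep)^{-\xi-\zeta'} \cdot 2^{O(\sqrt n)}$. Plugging in $2^{n'} \asymp n$ and $T \geq 2^{(Q-\alpha)n + Qn' - O(1)}$ and absorbing the polynomial and subexponential factors, this simplifies to $\ep^{-\xi-\zeta'} 2^{(\alpha-Q)(\xi+\zeta')n + O(\sqrt n)}$. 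Choosing $\zeta'$ sufficiently small in terms of $\alpha$, $Q$, and $\zeta$ so that $(\alpha-Q)\zeta' \leq \zeta\xi/2$ gives the claimed bound $\ep^{-\xi-\zeta}\,2^{(\alpha-Q+\zeta)\xi n}$ for $n$ larger than some constant depending on $\alpha, Q, \zeta$. A union bound over the $O(1)$ rectangles of the prescribed type contained in $\BB A_{n,n-2}$ preserves the exponential-in-$n$ probability, and the statement for $2^{-n}\times 2^{-n+1}$ rectangles follows from the rotational invariance of $\wh h$. I expect the main technical obstacle to be the bookkeeping for the decomposition $\wh h(\cdot/C) = \wh h'' + \phi$ so that it is coherent with the entire family of circle averages entering the definition of $D^{T\ep}_{\wh h(\cdot/C)}$, rather than only a single scale, and verifying that the Borell--TIS bound on $\phi$ indeed combines cleanly with the exponential estimate of Lemma~\ref{lem-rectangle-perc} uniformly in $\ep$.
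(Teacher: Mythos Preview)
Your approach is essentially the same as the paper's: rescale $R$ by $C = 2^{n+n'}$ with $n' = \lfloor\log_2 n\rfloor$, use the scale-invariance and independent-increments properties of $\wh h$ to reduce to a unit-scale crossing controlled by Lemma~\ref{lem-rectangle-perc}, and absorb the additive shift $\phi = \wh h_{2^{-n-n'}}$ into the parameter $T$. The union bound over $O(1)$ rectangles and the rotational symmetry argument are also identical.

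There is, however, a genuine quantitative error in your Borell--TIS step. You claim
\[
\BB P\!\left[\max_{u\in R}\bigl|\wh h_{2^{-n-n'}}(u)\bigr|\le C_*\sqrt n\right]\ge 1-O(e^{-cn}).
\]
But the pointwise variance is $(n+n')\log 2=\Theta(n)$, so Borell--TIS gives a tail of order $\exp\bigl(-t^2/\Theta(n)\bigr)$; taking $t=C_*\sqrt n$ yields only a constant bound $e^{-\Theta(C_*^2)}$, not $e^{-cn}$. To get exponentially high probability in $n$ you need $t=\Theta(n)$, i.e.\ a bound of the form $\max_{u\in R}|\wh h_{2^{-n-n'}}(u)|\le \zeta' n\log 2$ for an auxiliary small $\zeta'>0$. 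This is precisely what the paper uses, invoking \cite[Lemma~3.6]{dg-lqg-dim} to get $\max_{z\in R}|\wh h_{2^{-n}n^{-1}}(z)|\le \zeta\log 2^n$ with exponentially high probability.

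Fortunately the error is not fatal: replacing your $2^{O(\sqrt n)}$ factor by $2^{\zeta'(\xi+\zeta')n}$ still gets absorbed into the $2^{\zeta\xi n}$ slack after choosing $\zeta'$ small enough, and the rest of your argument goes through unchanged. Regarding your stated concern about the decomposition $\wh h(\cdot/C)=\wh h''+\phi$ being coherent across all scales: this is exactly the content of the scale-invariance property of $\wh h$ listed in Section~\ref{sec-wn-decomp}, which gives stochastic domination (via \eqref{eqn-dist-scaling}) rather than equality in law, and domination suffices for an upper bound on the crossing distance.
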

\begin{proof}
Fix a $2^{-n}\times 2^{-n-1}$ rectangle $R\subset \BB A_{n,n-2}$ with corners in $2^{-n-1}\BB Z^2$. Since there are only a constant order number of such rectangles, it suffices to prove that~\eqref{eqn-annulus-rect} holds with exponentially high probability for this fixed choice of $R$ (we can then take a union bound over all of the possibilities for $R$, and use the rotational symmetry of the law of $\wh h$ to get the analogous statement for $2^{n } \times 2^{-n+1}$ rectangles).
We note that for $t \in (0,1)$ and $w \in \BB A_{n,n-2}$, we have 
\eqb \label{eqn-diff-on-annulus}
\wh h_t^\alpha(w) -  \wh h_t(w)  = \alpha \log(|w-z|^{-1}) \in \left[ \alpha \log 2^n  - 2\alpha \log 2 ,  \alpha \log 2^n      \right] .
\eqe 

The proof of the desired bound for $R$ is similar to the proof of Lemma~\ref{lem-rectangle-dist}. Let
\eqbn
s_n := \lfloor \log_2 n \rfloor \quad \text{so that} \quad 2^{-s_n} \leq n \leq 2^{-s_n+1}. 
\eqen
Let $u_R$ be the bottom left corner of $R$, so that $2^n n (R-u_R)$ is the rectangle $\mcl R_n$ of Lemma~\ref{lem-rectangle-perc}. By~\eqref{eqn-dist-scaling} (with $C = 2^{ n + s_n}$ and $f = \wh h_{2^{-n - s_n} }$) and the scale invariance properties of $\wh h$, applied exactly as in the proof of Lemma~\ref{lem-rectangle-dist}, together with~\eqref{eqn-diff-on-annulus}, the conditional law of $D_{\wh h^\alpha}^\ep(\bdy_{\op{L}} R , \bdy_{\op{R}} R ; R)$ given $\wh h_{2^{-n-s_n}}$ is stochastically dominated by the law of 
\eqbn
D_{\wh h}^{T_R\ep} \left( \bdy_{\op{L}} \mcl R_n , \bdy_{\op{R}} \mcl R_n ; \mcl R_n \right) \quad\op{for} \quad
T_R := 2^{(n+s_n) Q} 2^{- \alpha (n+2) } \exp\left( - \max_{z\in R} \wh h_{2^{-n - s_n}}(z) \right) .
\eqen
By~\cite[Lemma 3.6]{dg-lqg-dim} and since $\wh h_{2^{-n}}(u_R)$ is centered Gaussian with variance $\log 2^n$, it holds with exponentially high probability as $n\rta\infty$ that $\max_{z\in R} |\wh h_{2^{-n} n^{-1}}(z)| \leq \zeta \log 2^n$, in which case 
\eqbn
T_R \geq 2^{ - \left( \alpha - Q  +  \zeta - o_n(1) \right) n}  .
\eqen
After possibly shrinking $\zeta$, we can now infer from Lemma~\ref{lem-rectangle-perc} (applied with $T_R\ep$ in place of $\ep$ and $2^{s_n}$ in place of $n$) that with exponentially high probability as $n\rta\infty$,
\eqb \label{eqn-annulus-rect0}
D_{\wh h^\alpha}\left( \bdy_{\op{L}} R , \bdy_{\op{R}} R ; R \right) \leq n^2 \max\left\{ A ,  2^{(\alpha-Q+\zeta) \xi n}\ep^{-\xi -\zeta }\right\} , 
\eqe
where $A$ is the constant from that lemma. Since $\alpha > Q$, if we choose $\zeta$ sufficiently small and $n$ sufficiently large such that $2^{(\alpha-Q + \zeta) \xi n} \geq A$, then the second term inside the max in~\eqref{eqn-annulus-rect0} is larger than the first. Absorbing the factor of $n^2$ into a factor of $2^{\zeta n}$ and again shrinking $\zeta$ now shows that~\eqref{eqn-annulus-rect} holds with exponentially high probability as $n\rta\infty$. 
\end{proof}

In the next lemma, we write $\bdy_{\op{in}} A$ and $\bdy_{\op{out}} A$, respectively, for the inner and outer boundaries of a square annulus $A$. 

\begin{lem} \label{lem-annulus-path}
Fix $\zeta\in (0,1)$. With exponentially high probability as $n\rta\infty$, at a rate which is uniform over all $\ep \in (0,1)$, there exists a path of squares $P_{n}^\perp$ in $\mcl S_{\wh h^\alpha}^\ep\left(\BB A_{n,n-2} \right)$ from $\bdy_{\op{in}} \BB A_{n,n-2}$ to $\bdy_{\op{out}} \BB A_{n,n-2}$ and a path of squares $P_n^\circ$ in $\mcl S_{\wh h^\alpha}^\ep\left( \BB A_{n,n-1} \right)$ which separates $\bdy_{\op{in}} \BB A_{n,n-1}$ from $\bdy_{\op{out}} \BB A_{n,n-1}$, each of which has length at most $2^{(\alpha-Q+\zeta) \xi n}\ep^{-\xi -\zeta } $. 
\end{lem}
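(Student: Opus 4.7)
The plan is to construct both $P_n^\perp$ and $P_n^\circ$ by concatenating a bounded number of hard-direction crossings of small rectangles provided by Lemma~\ref{lem-annulus-rect}. Given $\zeta \in (0,1)$, I will apply Lemma~\ref{lem-annulus-rect} with $\zeta/2$ in place of $\zeta$, together with a union bound over the $O(1)$ many $2^{-n}\times 2^{-n-1}$ and $2^{-n}\times 2^{-n+1}$ rectangles with the required dyadic corners contained in $\BB A_{n,n-2}$. This yields, with exponentially high probability as $n\rta\infty$ and uniformly in $\ep$, a single good event on which every such rectangle $R$ admits a path of squares in $\mcl S_{\wh h^\alpha}^\ep(R)$ of length at most $2^{(\alpha-Q+\zeta/2)\xi n}\ep^{-\xi -\zeta/2}$ crossing its hard direction. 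The rest of the proof is deterministic on this event.

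For the radial path $P_n^\perp$, I will fix once and for all a chain $R_1,\dots,R_k$ of $k = O(1)$ rectangles of the above type contained in $\BB A_{n,n-2}$ such that (i) consecutive $R_i$ and $R_{i+1}$ share a common short side, (ii) $R_1$ has a short side lying on $\bdy_{\op{in}} \BB A_{n,n-2} = \bdy\mathsf S_n(2)$, and (iii) $R_k$ has a short side lying on $\bdy_{\op{out}} \BB A_{n,n-2} = \bdy\mathsf S_{n-2}(2)$. Concretely one can take $k = O(1)$ tall $2^{-n}\times 2^{-n+1}$ rectangles stacked vertically above $\mathsf S_n(2)$; the dyadic-corner constraint is satisfied because all relevant boundary lines of $\mathsf S_n(2)$ and $\mathsf S_{n-2}(2)$ lie in $2^{-n}\BB Z$. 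The hard-direction crossing of each $R_i$ is a path of squares in $\mcl S_{\wh h^\alpha}^\ep(R_i)$ whose first and last squares touch the two short sides of $R_i$. Lifting each such square to the (possibly larger) dyadic square of $\mcl S_{\wh h^\alpha}^\ep(\BB A_{n,n-2})$ containing it produces a path of no greater length in the annulus partition, and the terminal square of $R_i$'s crossing and the initial square of $R_{i+1}$'s crossing lift to the same square of $\mcl S_{\wh h^\alpha}^\ep(\BB A_{n,n-2})$ (the one containing any point of the shared short side), so the lifted paths concatenate into a connected path from $\bdy_{\op{in}} \BB A_{n,n-2}$ to $\bdy_{\op{out}} \BB A_{n,n-2}$. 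Its length is at most $k\cdot 2^{(\alpha-Q+\zeta/2)\xi n}\ep^{-\xi-\zeta/2}$, which for $n$ larger than a constant depending only on $k, \zeta, \xi, Q$ is bounded by $2^{(\alpha-Q+\zeta)\xi n}\ep^{-\xi-\zeta}$.

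For the separating loop $P_n^\circ$, I will analogously fix a cyclic chain $R'_1,\dots,R'_{k'}$ of $k' = O(1)$ small rectangles contained in $\BB A_{n,n-1}$ with consecutive $R'_i$ and $R'_{i+1 \bmod k'}$ sharing a short side, whose union is a topological ``square frame'' separating $\bdy_{\op{in}} \BB A_{n,n-1}$ from $\bdy_{\op{out}} \BB A_{n,n-1}$ — for instance, four wide rectangles placed along the sides of the annular strip together with four tall rectangles placed at the corners. Concatenating and lifting the hard-direction crossings of the $R'_i$ exactly as above yields a closed cyclic path in $\mcl S_{\wh h^\alpha}^\ep(\BB A_{n,n-1})$ contained in this frame, of total length bounded by $2^{(\alpha-Q+\zeta)\xi n}\ep^{-\xi-\zeta}$ for $n$ large. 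Since the frame topologically separates the two boundary components of $\BB A_{n,n-1}$, so does any cyclic path contained in it.

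The main obstacle is purely combinatorial and topological: writing down a concrete (cyclic) chain of admissible rectangles with the prescribed dyadic alignment that chains together correctly across the annulus, and verifying that the ``lift to ancestors in the annulus partition'' step preserves both connectedness (for $P_n^\perp$) and the separating property (for $P_n^\circ$). The probabilistic and quantitative content is entirely provided by Lemma~\ref{lem-annulus-rect}; the rest is bookkeeping.
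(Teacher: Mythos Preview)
Your overall strategy matches the paper's: invoke Lemma~\ref{lem-annulus-rect} on a bounded collection of small rectangles in $\BB A_{n,n-2}$, then assemble the resulting crossings into $P_n^\perp$ and $P_n^\circ$. The probabilistic content is indeed entirely in Lemma~\ref{lem-annulus-rect}, and the length bound follows as you say.

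There is, however, a genuine gap in your concatenation step. You take consecutive rectangles $R_i$, $R_{i+1}$ that \emph{share a short side}, and assert that the terminal square of the crossing of $R_i$ and the initial square of the crossing of $R_{i+1}$ ``lift to the same square of $\mcl S_{\wh h^\alpha}^\ep(\BB A_{n,n-2})$ (the one containing any point of the shared short side)''. This is false: the shared short side has length of order $2^{-n}$, but the squares of $\mcl S_{\wh h^\alpha}^\ep(\BB A_{n,n-2})$ meeting it can be arbitrarily small, so there is no single square containing that side. The two crossings each touch the common segment, possibly at points far apart, and after lifting they need not be adjacent at all. The same issue affects your cyclic construction for $P_n^\circ$.

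The paper's fix is to use \emph{overlapping} rectangles rather than edge-sharing ones: it simply takes the union $\wt P_n$ of the crossings of \emph{all} $2^{-n}\times 2^{-n-1}$ and $2^{-n-1}\times 2^{-n}$ rectangles with corners in $2^{-n-1}\BB Z^2$ contained in $\BB A_{n,n-2}$ (still $O(1)$ many). Because these rectangles overlap, a left--right crossing of one and a top--bottom crossing of a perpendicular overlapping rectangle must intersect by planar topology (exactly as in the proof of Proposition~\ref{prop-macro-square-dist}). Hence the lifted union is connected and contains both a radial path and a separating cycle. Your argument is easily repaired by making the same change: let consecutive $R_i$ overlap (e.g., alternate wide and tall rectangles so that each pair overlaps in a $2^{-n-1}\times 2^{-n-1}$ square), and replace the incorrect ``same lifted square'' claim by the topological fact that perpendicular crossings of overlapping rectangles must meet.
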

\begin{proof}
By Lemma~\ref{lem-annulus-rect}, it holds with exponentially high probability as $n\rta\infty$, as a rate which is uniform over all $\ep \in (0,1)$, that for each $2^{-n}\times 2^{-n-1}$ rectangle $R\subset \BB A_{n,n-2}$ with corners in $2^{-n-1}\BB Z^2$, there is a path of squares $P_R$ in $\mcl S_{\wt h^\alpha}^\ep(R)$ from $\bdy_{\op{L}} R$ to $\bdy_{\op{R}} R$ with length at most $2^{(\alpha-Q+\zeta) \xi n}\ep^{-\xi -\zeta }$, and the analogous statement holds for $2^{-n-1}\times 2^{-n }$ rectangles. Let $\wt P_n$ be the union of the paths $P_R$ over all of the $2^{-n}\times 2^{-n-1}$ or $2^{-n-1} \times 2^{-n}$ rectangles $R$ with corners in $2^{-n-1}\BB Z^2$ which are contained in $\BB A_{n,n-2}$. The number of such rectangles $R$ is at most a universal constant $c>0$ and each square of $\mcl S_{\wh h^\alpha}^\ep(R)$ for each such rectangle $R$ is contained in a square of $\mcl S_{\wh h^\alpha}^\ep(\BB A_{n,n-2})$. Consequently, $\wt P_n$ is contained in the union of at most $c 2^{(\alpha-Q+\zeta) \xi n}\ep^{-\xi -\zeta } $ squares of $\mcl S_{\wh h^\alpha}^\ep(\BB A_{n,n-2})$. Furthermore, since the rectangles $R$ above overlap, it is easily seen that this union contains paths $P_n^\perp$ and $P_n^\circ$ as in the statement of the lemma. This gives the statement of the lemma after slightly shrinking $\zeta$ to get rid of the factor of $c$.
\end{proof}

\begin{proof}[Proof of Proposition~\ref{prop-singularity-dist}]
We will prove the statement of the proposition for $\wh h^\alpha$. The statement for $h^\alpha$ is immediate from this and Lemma~\ref{lem-tr-compare-square}. 
Fix $\wt\zeta \in (0,\zeta)$ to be chosen later, in a manner depending only on $K$, $\zeta$, $\alpha$, and $Q$, and for $\ep\in (0,1)$ let $n_\ep \in \BB N$ be chosen so that \eqbn
2^{-n_\ep-1} \leq \ep^{\wt\zeta/(2\alpha)} \leq 2^{-n_\ep} . 
\eqen

\noindent\textit{Step 1: Finding a path to a nearly macroscopic annulus.}
By Lemma~\ref{lem-annulus-path} and a union bound over all $n \geq n_\ep$, it holds with polynomially high probability as $\ep\rta 0$ that for each $n\geq n_\ep$, there is a path of squares $P_{n}^\perp$ in $\mcl S_{\wh h^\alpha}^\ep\left(\BB A_{n,n-2} \right)$ from $\bdy_{\op{in}} \BB A_{n,n-2}$ to $\bdy_{\op{out}} \BB A_{n,n-2}$ and a path of squares $P_n^\circ$ in $\mcl S_{\wh h^\alpha}^\ep\left( \BB A_{n,n-1} \right)$ which separates $\bdy_{\op{in}} \BB A_{n,n-1}$ from $\bdy_{\op{out}} \BB A_{n,n-1}$, each of which has length at most $2^{(\alpha-Q+ \wt \zeta) \xi  n}\ep^{-\xi -\wt\zeta } $. Henceforth assume that this is the case.

Since $\op{dist}(z,\bdy\BB S)\geq 1/4$, if $\ep$ is at most some constant depending only on $\wt\zeta$ and $\alpha$, then each of the annuli $\BB A_{n,n-2}$ for $n\geq n_\ep$ is contained in $\BB S$. Therefore, each of the squares in each of the paths $P_n^\perp$ and $P_n^\circ$ for $n\geq n_\ep$ is contained in a square of $\mcl S_{\wh h^\alpha}^\ep(\BB S)$. Let $P$ be the set of squares of $\mcl S_{\wh h^\alpha}^\ep(\BB S)$ which contain the squares in 
\eqbn
 \bigcup_{n=n_\ep}^{\lceil \log_2 \ep^{-K} \rceil} \left( P_n^\perp \cup P_n^\circ \right). 
\eqen
Topological considerations show that $P_n^\perp$ intersects $P_{n-1}^\circ$ and $P_n^\circ$ for each $n\in \BB N$, so $P$ is a connected set of squares $S$ with $M_{\wh h^\alpha}(S) \leq \ep$ (see Figure~\ref{fig-annulus-path}). 
If $\ep$ is chosen sufficiently small so that $2^{-n_\ep} \leq 1/100$, then each of the annuli $\BB A_{n , n-2}$ for $n\geq n_\ep$, and hence also each of the squares in $P$, is contained in $\BB S$. 
Therefore, in this case $P$ is a connected set of squares in $\mcl S_{\wh h^\alpha}^\ep(\BB S)$. Moreover, $P$ includes a square which intersects $B_{\ep^K}(z)$ and $P$ disconnects $\bdy_{\op{in}} \BB A_{n_\ep,n_\ep-1}$ from $\bdy_{\op{out}} \BB A_{n_\ep ,n_\ep-1}$.
By summing the above bound for $|P_n^\perp|$ and $|P_n^\circ|$ over all $n \in [n_\ep , \lceil \log_2 \ep^{-K} \rceil]_{\BB Z}$, we find that
\eqb \label{eqn-singularity-path}
\# P \preceq \ep^{  -  (\alpha-Q+ \wt \zeta) \xi K -\xi -  \wt \zeta }  ,
\eqe 
with the implicit constant independent of $\ep$. 
We have therefore bounded the $D_{\wh h}^\ep$-distance from $B_{\ep^K}(z)$ to $\BB A_{n_\ep , n_\ep-1}$. 
\medskip

\noindent\textit{Step 2: Finding a path to a large square.}
We will now find a square in $S \in \mcl S_{\wh h^\alpha}^\ep(\BB S)$ with side length at least $\ep^{1/Q + \wt\zeta}$ which is contained in $\BB A_{n_\ep,n_\ep-1}$ and which is not too far (in the sense of $\mcl S_{\wh h^\alpha}^\ep$-graph distance) from $P$. 

Choose a deterministic dyadic square $S\subset \BB S \cap \BB A_{n_\ep+1,n_\ep}$ with side length $|S|  \in [ \ep^{1/Q + \wt\zeta  }  , 2  \ep^{1/Q + \wt\zeta } ]$.
Recalling that $2^{-n_\ep} \asymp \ep^{\wt\zeta/(2\alpha)}$, we find that $\wh h^\alpha_{|S|/2}(v_S)$ is Gaussian with mean at most $ \alpha \log(2\ep^{-\wt\zeta/(2\alpha)})$ and variance $ \log \ep^{ - 1/Q + \wt\zeta} + O(1)$. We compute
\allb  \label{eqn-one-macroscopic-square}
&\BB P\left[ \text{$S$ properly contains a square of $ \mcl S_{\wh h^\alpha}^\ep(\BB S)$} \right] 
\leq \BB P\left[ e^{\wh h_{|S|/2}^\alpha(v_S)} |S|^Q  >  \ep \right] \notag \\
&\qquad \qquad \qquad \leq \BB P\left[ \wh h_{2^{-n-1}}^\alpha(v_S) -  \alpha \log(2 \ep^{-\wt\zeta/(2\alpha)} )  > \log\left(  |S|^{-Q} \ep^{1+\wt\zeta/2} \right)   - \log 4 \right]   \notag \\
&\qquad \qquad \qquad \leq \BB P\left[ \wh h_{2^{-n-1}}^\alpha(v_S) -  \alpha \log(2 \ep^{-\wt\zeta/(2\alpha)} )  > \log\left( \ep^{-  \wt\zeta/2}  \right)   - \log 4 \right]  , 
\alle 
which decays polynomially in $\ep$ by the Gaussian tail bound. Consequently, with polynomially high probability either the square $S$ or one of its dyadic ancestors belongs to $\mcl S_{\wh h^\alpha}^\ep(\BB S)$. The same argument used to conclude the proof of the bound $D_{\wh h^\alpha}^\ep(S , \bdy \BB S ; \BB S) \leq \ep^{-f(\beta)- \zeta}$ in the proof of Proposition~\ref{prop-macro-square-dist} applies with the field $\wh h^\alpha$ in place of $\wh h$ and $\BB A_{n_\ep , n_\ep-1}$ in place of $\BB S$ to give that with polynomially high probability as $\ep\rta 0$, we have $D_{\wh h^\alpha}^\ep(S , \bdy_{\op{out}} \BB A_{n_\ep , n_\ep-1} ; \BB S) \leq \ep^{-f(1/Q-\wt\zeta)-100 \wt\zeta}$. Since $P$ disconnects $\bdy_{\op{in}} \BB A_{n_\ep  , n_\ep -1}$, and hence also $S$, from $\bdy_{\op{out}} \BB A_{n_\ep  , n_\ep -1}$ we get that with polynomially high probability as $\ep\rta 0$,
\eqb \label{eqn-one-square-dist}
D_{\wh h^\alpha}^\ep\left( S , P  ; \BB S\right) \leq \ep^{-f(1/Q-\wt\zeta)-100\wt\zeta} \quad \op{and} \quad |S| \geq \ep^{ 1/Q + \wt\zeta} ,
\eqe
where $f(\cdot)$ is as in~\eqref{eqn-macro-square-function}. 
Combining this with~\eqref{eqn-singularity-path} and choosing $\wt\zeta$ sufficiently small (in a manner depending only on $K$, $\zeta$, $\alpha$, and $Q$) and using the triangle inequality shows that $D_{\wh h^\alpha}^\ep(S,B_{\ep^K}(z) ; \BB S) \leq \ep^{-g(\alpha,K)-\zeta}$ with polynomially high probability as $\ep\rta0$, as required. 
\end{proof}

Proposition~\ref{prop-singularity-dist} is not quite sufficient for our purposes since we will eventually want an upper bound for the distance from a large square to a small \emph{square}, rather than a small ball. Such a bound follows from Proposition~\ref{prop-singularity-dist} together with the following lemma.

\begin{lem} \label{lem-small-squares}
Fix $\alpha \in (Q,2)$, $K > 1$, and $\zeta\in (0,1)$ and define $h^\alpha$ and $\wh h^\alpha$ as in~\eqref{eqn-singularity-fields}. With polynomially high probability as $\ep\rta 0$, at a rate which is uniform in $z$, each square in $\mcl S_{ h^\alpha}^\ep(\BB S)$ which intersects $B_{\ep^K}(z)$ has side length at most $\ep^{K }$. The same is true with $\wh h^\alpha$ in place of $h^\alpha$.
\end{lem}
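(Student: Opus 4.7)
The plan is to reduce the statement to a pointwise Gaussian tail estimate on the circle-average process, by showing that the logarithmic singularity at $z$ makes $M_{h^\alpha}(S)$ too large for any dyadic square $S$ with $|S| > \ep^K$ intersecting $B_{\ep^K}(z)$ to satisfy $M_{h^\alpha}(S) \leq \ep$. Since $S\in \mcl S_{h^\alpha}^\ep(\BB S)$ requires $M_{h^\alpha}(S)\leq \ep$, ruling this out with polynomially high probability immediately gives the lemma.

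The key computation is to bound from below the circle average of $-\alpha\log|\cdot - z|$ over $\bdy B_{|S|/2}(v_S)$. Because $w \mapsto \log|w-z|$ is harmonic away from $z$, a direct calculation gives that this circle average equals $-\alpha \log (|S|/2)$ when $|v_S - z| < |S|/2$ and equals $-\alpha \log |v_S - z|$ when $|v_S - z| \geq |S|/2$. Since $S$ is a dyadic square intersecting $B_{\ep^K}(z)$ with $|S| \geq \ep^K$, we have $|v_S - z| \leq |S|/\sqrt{2} + \ep^K \leq 2|S|$, so in either case the average is at least $-\alpha \log |S| - O(1)$. Therefore
\eqb \label{eqn-M-lower-bd}
M_{h^\alpha}(S) \;=\; e^{h^\alpha_{|S|/2}(v_S)} |S|^Q \;\geq\; C^{-1}\, e^{h_{|S|/2}(v_S)} |S|^{Q-\alpha},
\eqe
and the condition $M_{h^\alpha}(S) \leq \ep$ forces
$h_{|S|/2}(v_S) \leq \log \ep + (\alpha - Q)\log|S| + O(1)$.

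Since $\alpha > Q$ and $|S| \leq 1$, the right-hand side is at most $\log\ep + O(1) = -\log\ep^{-1} + O(1)$. On the other hand, $h_{|S|/2}(v_S)$ is Gaussian with bounded mean (uniform in $v_S$ in a compact set, and hence uniform in $z$) and variance $\log|S|^{-1} + O(1) \leq K \log \ep^{-1} + O(1)$ for all dyadic $S$ with $|S| \geq \ep^K$. The Gaussian tail bound gives
\eqb
\BB P\!\left[h_{|S|/2}(v_S) \leq -\log\ep^{-1} + O(1)\right] \leq \exp\!\left(-\frac{(\log\ep^{-1})^2}{2 K \log\ep^{-1}} + O(1)\right) \leq \ep^{1/(2K) - o_\ep(1)}.
\eqe
For each dyadic scale $2^{-k}$ with $\ep^K \leq 2^{-k} \leq 1$, only $O(1)$ dyadic squares of that side length intersect $B_{\ep^K}(z)$, and there are $O(\log\ep^{-1})$ such scales; a union bound over all of them yields that with polynomially high probability as $\ep \rta 0$ (at a rate uniform in $z$), no such bad $S$ exists, which is the claim for $h^\alpha$.

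For the variant with $\wh h^\alpha$, the identical argument applies verbatim: the bound \eqref{eqn-M-lower-bd} holds with $\wh h_{|S|/2}(v_S)$ in place of $h_{|S|/2}(v_S)$, and by~\eqref{eqn-wn-var} this random variable is Gaussian with bounded mean and variance $\log|S|^{-1} + O(1)$, so the same Gaussian tail bound and union bound go through. (Alternatively, one may deduce the $\wh h^\alpha$ version from the $h^\alpha$ version together with Lemma~\ref{lem-tr-compare-square}.) The whole argument is essentially a one-line estimate once \eqref{eqn-M-lower-bd} is in hand; the only point requiring any thought is the elementary harmonic-function computation of the log-circle-average, which is clean because we have arranged the radius $|S|/2$ to be comparable to $|v_S-z|$.
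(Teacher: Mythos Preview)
Your proof is correct and follows essentially the same strategy as the paper's: enumerate the $O(1)$ dyadic squares at each of the $O(\log\ep^{-1})$ scales $\geq \ep^K$ that intersect $B_{\ep^K}(z)$, use the log singularity to lower-bound $M_{h^\alpha}(S)$, and apply a Gaussian tail bound plus union bound. The only cosmetic differences are that the paper works with $\wh h^\alpha$ first (where $\wh h^\alpha_t(w) = \wh h_t(w) - \alpha\log|w-z|$ pointwise, so no circle-average computation is needed) and transfers to $h^\alpha$ via Lemma~\ref{lem-tr-compare-square}, whereas you work with $h^\alpha$ directly and compute the circle average of $-\alpha\log|\cdot - z|$; and the paper keeps the $(\alpha-Q)\log|S|$ term in the Gaussian tail estimate, whereas you drop it and use the worst-case variance bound $K\log\ep^{-1}$, which still yields a polynomial rate.
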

\begin{proof}
By Lemma~\ref{lem-tr-compare-square}, it suffices to prove the statement of the lemma with $\wh h^\alpha$ in place of $h^\alpha$.

Let $n_\ep \in\BB N$ be chosen so that $2^{-n_\ep -1 } \leq \ep^K \leq 2^{-n_\ep }$. Let $\mcl D_{\ep,0}$ be the set of 9 dyadic squares with side length $2^{-n_\ep }$ which either contain $z$ or which share a side or a corner with a square which contains $z$. Note that $B_{\ep^K}(z) \subset \bigcup_{S\in \mcl D_{\ep,0} } S$. 
For $j \in \BB N$, inductively let $\mcl D_{\ep,j}$ be the set of 9 dyadic squares of side length $2^{-n_\ep+j}$ which are the dyadic parents of the 9 squares in $\mcl D_{\ep,j-1}$. 
We will show that with polynomially high probability as $\ep\rta 0$, 
\eqb \label{eqn-no-square-contained}
e^{\wh h^\alpha_{|S|/2}(v_S)} |S|^Q >  \ep , \quad \forall S\in \mcl D_{\ep,j} ,\quad \forall j \in [0,n_\ep]_{\BB Z} .
\eqe
This shows that none of the squares in $\mcl D_{\ep,j}$ for $j\in [0,n_\ep]_{\BB Z}$ is contained in $\mcl S_{\wh h^\alpha}^\ep(\BB S)$, and hence that the squares of $\mcl S_{\wh h^\alpha}^\ep(\BB S)$ which intersect $B_{\ep^K}$ must have side length smaller than $2^{-n_\ep}$. 
  
For each $j\in\BB N$ and each $S\in \mcl D_{\ep,j}$, the center $v_S$ of $S$ lies at Euclidean distance at most $2^{-n_\ep + j + 2}$ from $z$. Therefore, $\wh h^\alpha_{|S|/2}(v_S)$ is Gaussian with mean at least $\alpha \log 2^{n_\ep - j - 2}$ and variance $\log 2^{n_\ep - j + 1}$. 
By the Gaussian tail bound,
\allb \label{eqn-square-in-set-alpha}
  \BB P\left[ e^{\wh h_{|S|/2}^\alpha(v_S)} |S|^Q  \leq \ep \right] 
&\leq \BB P\left[ \wh h_{|S|/2}^\alpha(v_S) -  \alpha \log 2^{n_\ep - j - 2}   < \log\left(2^{-(\alpha -Q) (n_\ep - j) } \ep \right) + \log 8 \right] \notag\\
&\leq \exp\left( - \frac{\left( \log\left(2^{-(\alpha -Q) (n_\ep - j) } \ep \right) \right)^2}{2 \log 2^{ n_\ep - j + 1}} \right)
\alle
We now conclude by means of a union bound over all $S\in \mcl D_{\ep,j}$ and all $j \in [0,n_\ep]_{\BB Z}$. 
\end{proof}

\subsection{Lower bound for the number of small squares}
\label{sec-small-square-count}

Throughout this subsection we let $h$ be a whole-plane GFF normalized so that $h_1(0) = 0$ (we no longer need to consider $\wh h$). 

\begin{prop} \label{prop-small-square-count}
For each $K\in\BB N$, $\zeta\in (0,1)$, and $\alpha \in (Q,2)$, it holds with polynomially high probability as $\ep\rta 0$ that the following is true. There is a collection $\mcl C^\ep \subset \mcl S^\ep_h(\BB S)$ of at least $\ep^{-(2-\alpha)^2 K/2  + 3\zeta}$ distinct squares in $\mcl S_h^\ep(\BB S)$ each of which has side length at most $\ep^K$, lies at Euclidean distance at least $1/8$ from $\bdy\BB S$, and lies at $\mcl S^\ep_h(\BB S)$-graph distance at most $\ep^{ - g(\alpha , K) - \zeta}$ from some square $S \in\mcl S_h^\ep(\BB S)$ with $|S| \geq \ep^{1/Q+\zeta}$, where $g(\alpha,K)$ is as in~\eqref{eqn-singularity-dist-function}.
\end{prop}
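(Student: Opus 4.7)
The plan is to count small squares near $\mu_h^\alpha$-typical points in $\BB S$, exploiting the fact (going back to Kahane; cf.~\cite[Section 3.3]{shef-kpz}) that a point sampled from the $\alpha$-Liouville quantum gravity measure $\mu_h^\alpha$ behaves locally like an $\alpha$-thick point of $h$, which makes Proposition~\ref{prop-singularity-dist} applicable. Set $U := \{z \in \BB S : \op{dist}(z, \partial \BB S) \geq 1/4\}$ and let $\mcl D_\ep^K$ be the collection of dyadic squares of side length $\ep^K$ contained in $U$. Using standard negative and positive moment bounds for GMC measures (cf.~\cite{rhodes-vargas-review}), together with Markov's inequality and a union bound over the $|\mcl D_\ep^K| = O(\ep^{-2K})$ squares, I would first show that with polynomially high probability as $\ep\rta 0$ one has both $\mu_h^\alpha(U) \geq c_0$ for a deterministic $c_0 > 0$ and
\[
\max_{S_0 \in \mcl D_\ep^K} \mu_h^\alpha(S_0) \leq \ep^{K (2-\alpha)^2/2 - \zeta}.
\]
The exponent $(2-\alpha)^2/2$ is the maximum of $(\xi(q)-2)/q$ over $q \in (1, 4/\alpha^2)$, where $\xi(q) = (2+\alpha^2/2)q - \alpha^2 q^2/2$ is the GMC scaling exponent; it is attained at $q = 2/\alpha$.

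For each $z \in U$, let $\mcl G_z(h)$ be the event that there exist squares $\wt S, S^\dagger \in \mcl S_h^\ep(\BB S)$ with $\wt S \cap B_{\ep^K}(z) \neq \emptyset$, $|\wt S| \leq \ep^K$, $|S^\dagger| \geq \ep^{1/Q + \zeta}$, and $D_h^\ep(\wt S, S^\dagger; \BB S) \leq \ep^{-g(\alpha,K) - \zeta}$. The Cameron--Martin formula for GMC yields
\[
\BB E\left[\int_U \BB 1_{\mcl G_z^c}(h) \, \mu_h^\alpha(dz)\right] = \int_U \BB P\left[\mcl G_z^c\big(h + \alpha G(\cdot, z)\big)\right] dz,
\]
where $G$ is the covariance kernel of $h$. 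Since $\alpha G(\cdot, z) + \alpha\log|\cdot - z|$ is uniformly bounded on $\BB S$ for $z \in U$, the reweighted field $h + \alpha G(\cdot, z)$ differs from $h^\alpha := h - \alpha\log|\cdot - z|$ by a bounded deterministic function. Combining \eqref{eqn-dist-scaling} with Proposition~\ref{prop-singularity-dist} and Lemma~\ref{lem-small-squares} applied to $h^\alpha$, I would deduce that $\BB P[\mcl G_z^c(h + \alpha G(\cdot, z))] \leq \ep^{p_1}$ for some $p_1 > 0$, uniformly in $z \in U$.

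By Markov's inequality, with polynomially high probability the $\mu_h^\alpha$-mass of $\{z \in U : \mcl G_z \text{ fails}\}$ is at most $c_0/2$, so the mass on good points is at least $c_0/2$. Dividing by the maximum-mass bound, at least $\ep^{-K(2-\alpha)^2/2 + 2\zeta}$ squares $S_0 \in \mcl D_\ep^K$ contain a good $z$. Selecting one associated $\wt S$ per such $S_0$, and observing that at most a universal constant number of $S_0$'s can share the same $\wt S$ (since $|\wt S| \leq \ep^K$ and $\wt S$ is within Euclidean distance $\ep^K$ of $v_{S_0}$), yields, after a final adjustment of $\zeta$, the collection $\mcl C^\ep$ of at least $\ep^{-K(2-\alpha)^2/2 + 3\zeta}$ distinct squares with the required properties; the condition $\op{dist}(\wt S, \partial\BB S) \geq 1/8$ is automatic since each $\wt S$ lies within $2\ep^K$ of $U$. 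The main obstacle is the Cameron--Martin step: verifying the uniform boundedness of $\alpha G(\cdot, z) + \alpha\log|\cdot - z|$ on $\BB S \times U$ for the whole-plane GFF normalization, and transferring the polynomial error from Proposition~\ref{prop-singularity-dist} (stated for $h^\alpha$) to the reweighted field, uniformly in $z$; this requires careful bookkeeping with the normalization of the whole-plane GFF and the deterministic bounded perturbation.
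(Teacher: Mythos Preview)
Your proposal is correct and follows essentially the same approach as the paper's proof: both sample a point from the $\alpha$-LQG measure, use Girsanov/Cameron--Martin to reduce to the field $h^\alpha$ so that Proposition~\ref{prop-singularity-dist} and Lemma~\ref{lem-small-squares} apply, and then convert a lower bound on the $\mu^\alpha$-mass of ``good'' points together with a multifractal upper bound on the mass of small balls/squares into a count of distinct small squares. The only minor implementation differences are that the paper works with the zero-boundary GFF $h^{\BB S'}$ on a slightly smaller square (which makes the Cameron--Martin step cleaner, avoiding exactly the bookkeeping issue you flag at the end), obtains only the weaker bound $\mu^\alpha(\text{good set}) \geq \ep^\zeta$ rather than a deterministic constant, and separates points via disjoint balls rather than dyadic squares; none of these changes the substance of the argument.
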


The only important features of the exponents appearing in Proposition~\ref{prop-small-square-count} for our purposes are that $(2-\alpha)^2 K/2$ tends to $\infty$ as $K\rta\infty$ provided $\alpha$ is bounded away from $2$ and that $g(Q+1/K , K)$ is bounded above independently of $K$, as noted just after Proposition~\ref{prop-singularity-dist}. 

To prove Proposition~\ref{prop-small-square-count}, we will use Proposition~\ref{prop-singularity-dist} and Lemma~\ref{lem-small-squares} to show that a certain regularity event, defined just below, holds with high probability at a typical point sampled from the $\alpha$-LQG measure associated with $h$ (Lemma~\ref{lem-uniform-pt}). We will then combine this with the fact that the mass of the $\alpha$-LQG measure is not too ``spread out" to deduce Proposition~\ref{prop-small-square-count}. 

Fix $K\in\BB N$, $\zeta \in (0,1)$, and $\alpha\in (Q,2)$ and for $z\in\BB S $, let $E^\ep(z) = E^\ep(z;K,\zeta,\alpha)$ be the event that the following is true. 
\begin{enumerate}
\item There is a square $S \in\mcl S_h^\ep(\BB S)$ with $|S| \geq \ep^{1/Q + \zeta}$ and $D_{h}^\ep\left( S , \bdy B_{\ep^K}(z) ; \BB S    \right)  \leq \ep^{ - g(\alpha,K) -\zeta}$. \label{item-pt-dist}
\item Each square in $\mcl S_{h}^\ep(\BB S)$ which intersects $B_{\ep^K}(z)$ has side length at most $\ep^{K }$. \label{item-pt-diam}
\end{enumerate}
Also let $\BB S' \subset \BB S$ be the closed square consisting of points lying at $L^\infty$ distance at least $1/4$ from $\bdy\BB S$ and let $h^{\BB S'}$ be the zero-boundary part of $h|_{\BB S'}$, so that $h^{\BB S'}$ is a zero-boundary GFF on $\BB S'$ and $(h-h^{\BB S'})|_{\BB S'}$ is a random harmonic function independent from $h^{\BB S'}$.

\begin{lem} \label{lem-uniform-pt}
Conditionally on $h$, let $\BB z$ be sampled uniformly from the $\alpha$-LQG measure $\mu_{h^{\BB S'}}^\alpha$, normalized to be a probability measure.
Then $E^\ep(\BB z)$ occurs with polynomially high probability as $\ep\rta 0$.
\end{lem}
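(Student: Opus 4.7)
The plan is to invoke Kahane's rooted measure identity (the Girsanov-type change of variables for Gaussian multiplicative chaos, see e.g.\ \cite[Section 3.3]{shef-kpz} or \cite{rhodes-vargas-review}), which identifies the conditional law of $h$ given $\BB z$ as a logarithmically shifted field plus a bounded continuous correction, thereby bringing us into a setting where the fixed-point estimates of Proposition~\ref{prop-singularity-dist} and Lemma~\ref{lem-small-squares} directly apply.

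Specifically, for any nonnegative Borel functional $F$ of $(h,z)$, one has
\begin{equation*}
\mathbb{E}\!\left[\int_{\BB S'} F(h, z)\, \mu_{h^{\BB S'}}^\alpha(dz)\right] = \int_{\BB S'} R(z,\BB S')^{\alpha^2/2}\, \mathbb{E}\!\left[F\bigl(h + \alpha\tilde G(\cdot, z),\, z\bigr)\right] dz,
\end{equation*}
where $\tilde G(\cdot, z) := G_{\BB S'}(\cdot, z)\BB 1_{\BB S'}$ extends continuously to $\BB S$ since $G_{\BB S'}(\cdot, z)$ vanishes on $\bdy \BB S'$, and the expectation on the right is under the original whole-plane law of $h$; the harmonic part $(h - h^{\BB S'})|_{\BB S'}$ is untouched by the shift since it is independent of $h^{\BB S'}$. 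Writing $G_{\BB S'}(w, z) = -\log|w - z| + \tilde g(w, z)$ with $\tilde g$ bounded continuous uniformly for $z$ in any compact sub-square of the interior of $\BB S'$, the shifted field coincides on $\BB S$ with $h^z + \psi_z$, where $h^z := h - \alpha\log|\cdot - z|$ and $\|\psi_z\|_\infty$ is bounded uniformly over such $z$.

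We then apply Proposition~\ref{prop-singularity-dist} and Lemma~\ref{lem-small-squares} at a fixed $z$ in a suitable sub-square of $\BB S'$, both of which yield polynomial probability bounds uniform in $z$. To transfer the resulting event for $h^z$ to the shifted field $h^z + \psi_z$, we invoke the scaling relation~\eqref{eqn-dist-scaling} with $C = 1$ and $f = \psi_z$, which gives $D_{h^z + \psi_z}^\ep(\cdot, \cdot;\BB S) \leq D_{h^z}^{e^{-\|\psi_z\|_\infty}\ep}(\cdot, \cdot;\BB S)$, together with an analogous constant-factor comparison for the side-length condition (since $M_{h^z + \psi_z}(S)$ and $M_{h^z}(S)$ differ by a bounded factor whenever $\psi_z$ is bounded). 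Because the exponents appearing in $E^\ep$ all carry $\zeta$-slack, these constant shifts in $\ep$ can be absorbed by slightly shrinking $\zeta$.

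Finally, to pass from the unnormalized rooted identity to sampling $\BB z$ from the normalized probability measure $\mu_{h^{\BB S'}}^\alpha/\mu_{h^{\BB S'}}^\alpha(\BB S')$, we split
\begin{equation*}
\mathbb{P}\bigl[E^\ep(\BB z)^c\bigr] \leq \mathbb{P}\bigl[\mu_{h^{\BB S'}}^\alpha(\BB S') \leq \ep^{p/2}\bigr] + \ep^{-p/2}\, \mathbb{E}\!\left[\int_{\BB S'} \BB 1_{E^\ep(z)^c}\, \mu_{h^{\BB S'}}^\alpha(dz)\right].
\end{equation*}
The first summand decays polynomially by standard negative-moment bounds for subcritical ($\alpha < 2$) GMC; the second, via the rooted identity and the previous step, is bounded by a constant times $\ep^{p/2}$. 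A small contribution from $\BB z$ in a collar near $\bdy \BB S'$, where $\tilde g$ is not uniformly bounded, is handled separately by running the same argument on a slightly larger reference square. The main obstacle is really just careful bookkeeping of the bounded continuous corrections arising from the Green's function and from the whole-plane--vs--zero-boundary decomposition; no probabilistic input beyond the rooted identity and standard GMC moment bounds is required.
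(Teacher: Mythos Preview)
Your proof is correct and follows essentially the same route as the paper: both invoke the rooted-measure (Girsanov) identity for GMC to identify the field under the $\mu_{h^{\BB S'}}^\alpha$-biased law with $h-\alpha\log|\cdot-z|$ plus a bounded continuous correction, then feed this into Proposition~\ref{prop-singularity-dist} and Lemma~\ref{lem-small-squares}. The only cosmetic difference is in the last ``un-biasing'' step: the paper passes through the size-biased measure $\wt{\BB P}$ and applies Cauchy--Schwarz together with a negative moment of $\mu_{h^{\BB S'}}^\alpha(\BB S')$, whereas you split on the event $\{\mu_{h^{\BB S'}}^\alpha(\BB S')\le \ep^{p/2}\}$; both are standard and yield the same polynomial rate.
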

\begin{proof}
Let $\wt{\BB P}$ be the law of $(h,\BB z)$ weighted by the total mass $\mu_{h^{\BB S'}}^\alpha(\BB S')$, so that under $\wt{\BB P}$, $h$ is sampled from its marginal law weighted by $\mu_{h^{\BB S'}}^\alpha(\BB S')$ and conditionally on $h$, $\BB z$ is sampled from $\mu_{h^{\BB S'}}^\alpha$, normalized to be a probability measure.
By a well-known property of the $\gamma$-LQG measure (see, e.g.,~\cite[Lemma A.10]{wedges}), a sample $(h,\BB z)$ from the law $\wt{\BB P}$ can be equivalently produced by first sampling $\wt h$ from the unweighted marginal law of $h$, then independently sampling $\BB z$ uniformly from Lebesgue measure on $\BB S'$ and setting $h = \wt h - \alpha \log|\cdot-\BB z| + \frk g_{\BB z}$, where $\frk g_{\BB z} : \BB C\rta\BB R$ is a continuous function whose absolute value is bounded on $\BB S$ by constants depending only on $\alpha$.
By this, Proposition~\ref{prop-singularity-dist}, and Lemma~\ref{lem-small-squares}, we find that $E^\ep(\BB z)$ occurs with polynomially high $\wt{\BB P}$-probability as $\ep\rta 0$. 

We now transfer from $\wt{\BB P}$ to $\BB P$. 
Since $\mu_{h^{\BB S'}}^\alpha(\BB S')$ has finite moments of all negative orders~\cite[Theorem 2.11]{rhodes-vargas-review}, we can apply the Cauchy-Schwarz inequality to get that for some constants $a,p>0$ depending only on $\alpha$ and $Q$,
\alb
\BB P\left[ E^\ep(\BB z)^c \right]
&= a\wt{\BB E}\left[ \frac{ E^\ep(\BB z)^c } { \mu_{h^{\BB S'}}^\alpha(\BB S') } \right] 
\leq a\wt{\BB P}\left[  E^\ep(\BB z)^c   \right]^{1/2} \wt{\BB E}\left[ \left(  \mu_{h^{\BB S'}}^\alpha(\BB S')  \right)^{-2} \right]^{1/2}\notag \\
&=  a\wt{\BB P}\left[  E^\ep(\BB z)^c   \right]^{1/2}  \BB E\left[ \left(  \mu_{h^{\BB S'}}^\alpha(\BB S')  \right)^{-1} \right]^{1/2} = O_\ep(\ep^p ) .
\ale  
\end{proof}

\begin{proof}[Proof of Proposition~\ref{prop-small-square-count}]
Define $E^\ep(z)$, $\BB S'$, and $h^{\BB S'}$ as above.  
Conditionally on $h$, let $\BB z$ be sampled uniformly from the $\alpha$-LQG measure $\mu_{h^{\BB S'}}^\alpha$, normalized to be a probability measure.
By Lemma~\ref{lem-uniform-pt}, we can find $s = s(\alpha, K , \zeta,Q) \in (0,\zeta/2]$ such that $\BB P[E^\ep(\BB z) ] \geq 1 - O_\ep(\ep^{ s})$. By Markov's inequality, 
\eqbn  
\BB P\left[  \BB P\left[ E^\ep(\BB z) \,|\, h \right]  \geq \ep^{\zeta/2} \right] \geq 1 - O_\ep(\ep^s) ,
\eqen
Since $\BB P\left[ E^\ep(\BB z) \,|\, h \right] = \mu_{h^{\BB S'}}^\alpha\left( z\in \BB S' : \text{$E^\ep(z)$ occurs} \right) /  \mu_{h^{\BB S'}}^\alpha(\BB S')$ and $ \mu_{h^{\BB S'}}^\alpha(\BB S') \geq \ep^{\zeta/2}$ with polynomially high probability as $\ep\rta 0$~\cite[Lemma 4.5]{shef-kpz}, we infer that with polynomially high probability as $\ep\rta 0$, 
\eqb \label{eqn-singularity-cond}
\mu_{h^{\BB S'}}^\alpha\left( z\in \BB S' : \text{$E^\ep(z)$ occurs} \right)  \geq \ep^\zeta .
\eqe
 
By standard estimates for the $\alpha$-LQG measure (see, e.g.,~\cite[Lemma 3.8]{dg-lqg-dim}), it holds with polynomially high probability as $\ep\rta 0$ that
\eqb \label{eqn-use-min-ball}
\max_{z\in\BB S'} \mu_{h^{\BB S'}}^\alpha\left( B_{4\ep^K}(z) \right) \leq \ep^{(2-\alpha)^2 K/2 - \zeta } . 
\eqe 
We claim that if~\eqref{eqn-singularity-cond} and~\eqref{eqn-use-min-ball} both occur (which happens with polynomially high probability), then we can find a finite collection $Z^\ep \subset \BB S'$ of at least $\ep^{-(2-\alpha)^2 K/2  + 3\zeta}$ points such that $E^\ep(z)$ occurs for each $z\in Z^\ep$ and the balls $B_{3\ep^K}(z)$ for $z\in Z^\ep$ are disjoint. To see this, we observe that the balls $B_{4\ep^K}$ for $w \in \left(  \ep^{-K}\BB Z^2 \right) \cap \BB S'$ cover $\BB S'$ and each of these balls has $\mu_{h^{\BB S'}}^\alpha$-mass at most $\ep^{(2-\alpha)^2 K/2 - \zeta }$ by~\eqref{eqn-use-min-ball}. Since $\mu_{h^{\BB S'}}^\alpha\left( z\in \BB S' : \text{$E^\ep(z)$ occurs} \right) \geq \ep^\zeta$ by~\eqref{eqn-singularity-cond}, it follows that at least $\ep^{-(2-\alpha)^2 K/2  + 2\zeta}$ of these balls must contain a point $z$ for which $E^\ep(z)$ occurs. 
Since each point in $\BB S'$ is contained in at most a constant order number of the balls $B_{7\ep^K}(w)$ for $w \in (\ep^{-K}\BB Z^2) \cap \BB S'$, our claim follows.

By the definition of $E^\ep(z)$, if $Z^\ep$ is as in the above claim then we can find for each $z\in Z^\ep$ a dyadic square $S^\ep(z) \in \mcl S_h^\ep(\BB S)$ which intersects $B_{\ep^K}(z)$, lies at $\mcl S_h^\ep(\BB S)$-graph distance at most $\ep^{ -g(\alpha,K)-\zeta}$ from some square $S \in\mcl S_h^\ep(\BB S)$ with $|S| \geq \ep^{1/Q+\zeta}$, and which has side length at most $\ep^K$. By our choice of $Z^\ep$, the squares $Z^\ep(z)$ corresponding to different points $z\in Z^\ep$ lie at positive distance from one another, so are distinct. Thus the proposition statement holds with $\mcl C^\ep = \{S^\ep(z) : z\in Z^\ep\}$. 
\end{proof}

\subsection{Proof of Theorem~\ref{thm-ball-infty}}
\label{sec-ball-infty-proof}

Throughout this section, we let $h$ be a whole-plane GFF normalized so that $h_1(0) = 0$ and we fix $Q\in (0,2)$. 
The following lemma collects the relevant information from the preceding subsections.

\begin{lem} \label{lem-ball-poly}
For each $K\in\BB N$, $\zeta\in (0,1)$, and $\alpha \in (Q,2)$, it holds with polynomially high probability as $\ep\rta 0$ that the following is true.
There are at least $\ep^{-(2-\alpha)^2 K/2  + 3\zeta}$ distinct squares of $\mcl S_h^\ep(\BB S)$ which each lie 
at $\mcl S_h^\ep(\BB S)$-graph distance at most $\ep^{ - g(\alpha,K)-\zeta}$ from a square of $\mcl S_h^\ep(\BB S)$ containing the center point $v_{\BB S}$, where $g(\alpha,K)$ is the exponent from~\eqref{eqn-singularity-dist-function}.
\end{lem}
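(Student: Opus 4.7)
The plan is to combine the small-square lower bound of Proposition~\ref{prop-small-square-count} with the macroscopic-distance bound of Proposition~\ref{prop-macro-square-dist}, using the triangle inequality. The only additional input needed is a routine Gaussian tail estimate which guarantees that the square of $\mcl S_h^\ep(\BB S)$ which contains $v_{\BB S}$ is itself ``large'' enough for Proposition~\ref{prop-macro-square-dist} to apply to it.

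First I would apply Proposition~\ref{prop-small-square-count}, which with polynomially high probability as $\ep\rta 0$ produces a collection $\mcl C^\ep\subset \mcl S_h^\ep(\BB S)$ of at least $\ep^{-(2-\alpha)^2 K/2 + 3\zeta}$ distinct squares, each of side length at most $\ep^K$, each at Euclidean distance at least $1/8$ from $\bdy\BB S$, and each at $\mcl S_h^\ep(\BB S)$-graph distance at most $\ep^{-g(\alpha,K)-\zeta}$ from some square $S_*\in\mcl S_h^\ep(\BB S)$ with $|S_*|\geq \ep^{1/Q+\zeta}$ (the square $S_*$ may depend on the element of $\mcl C^\ep$). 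Next I would show that, with polynomially high probability as $\ep\rta 0$, the square $S_0\in \mcl S_h^\ep(\BB S)$ which contains $v_{\BB S}$ satisfies $|S_0|\geq \ep^{1/Q+\zeta}$: for any dyadic square $S\ni v_{\BB S}$ with $|S|\in [\ep^{1/Q+\zeta},2\ep^{1/Q+\zeta})$, the circle average $h_{|S|/2}(v_S)$ is Gaussian with $O(1)$ mean and variance, so the Gaussian tail bound gives $M_h(S)=e^{h_{|S|/2}(v_S)}|S|^Q\leq \ep$ with polynomially high probability; consequently either $S$ or one of its dyadic ancestors (which is even larger) lies in $\mcl S_h^\ep(\BB S)$ and contains $v_{\BB S}$.

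To finish, I would apply Proposition~\ref{prop-macro-square-dist} with $\beta=1/Q+\zeta$ to deduce that, with polynomially high probability, every pair of squares in $\mcl S_h^\ep(\BB S)$ of side length at least $\ep^{1/Q+\zeta}$ lies at mutual $\mcl S_h^\ep(\BB S)$-graph distance at most $\ep^{-f(1/Q+\zeta)-\zeta}$, where $f$ is as in~\eqref{eqn-macro-square-function}. Since $f(1/Q)=\xi+(2-Q)\xi/Q+1/Q$ coincides with the second argument of the maximum in~\eqref{eqn-singularity-dist-function}, we have $f(1/Q+\zeta)\leq g(\alpha,K)+O(\zeta)$, so both $S_*$ and $S_0$ are within $\mcl S_h^\ep(\BB S)$-graph distance $\ep^{-g(\alpha,K)-O(\zeta)}$ of one another. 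Applying the triangle inequality to the chain (small square in $\mcl C^\ep$) $\to S_*\to S_0$ and then absorbing constants into $\zeta$ (by replacing $\zeta$ by $\zeta/C$ for a suitable $Q$-dependent constant $C$) yields the claim. The main potential obstacle here is just the bookkeeping to verify that the exponent $g(\alpha,K)$ dominates $f(1/Q+\zeta)$ uniformly in the parameters, but this is immediate from the definition of $g$ since $g(\alpha,K)$ is a maximum of two quantities one of which is precisely $f(1/Q)$.
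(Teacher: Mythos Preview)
Your proposal is correct and follows essentially the same approach as the paper's proof: Gaussian tail bound for the center square, Proposition~\ref{prop-macro-square-dist} to connect large squares, Proposition~\ref{prop-small-square-count} for the supply of small squares, and the triangle inequality, together with the observation that $g(\alpha,K)\geq f(1/Q)$. One small slip: for the dyadic square $S\ni v_{\BB S}$ with $|S|\asymp \ep^{1/Q+\zeta}$, the circle average $h_{|S|/2}(v_S)$ has variance $\log(2/|S|)+O(1)\asymp (1/Q+\zeta)\log\ep^{-1}$, not $O(1)$; the Gaussian tail bound still yields $M_h(S)\leq \ep$ with polynomially high probability, so the argument goes through unchanged once this is corrected.
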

\begin{proof}  
The Gaussian tail bound shows that with polynomially high probability as $\ep\rta 0$, $v_{\BB S}$ is contained in a square of $\mcl S_h^\ep(\BB S)$ with side length at least $\ep^{1/Q+\zeta}$. 
By Proposition~\ref{prop-macro-square-dist}, it holds with polynomially high probability as $\ep\rta 0$ that each square $S\in \mcl S_h^\ep(\BB S)$ with $|S| \geq \ep^{1/Q + \zeta}$ lies at $\mcl S_h^\ep(\BB S)$-graph distance at most $\ep^{-f(1/Q + \zeta) - \zeta}$ from a square of $\mcl S_h^\ep(\BB S)$ containing $v_{\BB S}$.
By combining this with Proposition~\ref{prop-small-square-count} and the triangle inequality, we get the statement of the lemma (here we note that $g(\alpha,K) \geq f(1/Q)$).
\end{proof}

To extract Theorem~\ref{thm-ball-infty} from Lemma~\ref{lem-ball-poly}, we will use a scaling argument for the GFF $h$. 
The law of the GFF is only scale invariant modulo additive constant, i.e., for $R > 0$ we have $h(R\cdot) - h_R(0) \eqD h$. 
The following elementary lemma will be used to control how much effect subtracting $h_R(0)$ has on the objects we are interested in. 

\begin{lem} \label{lem-circle-avg-rn}
Let $h$ be a whole-plane GFF normalized so that $h_1(0) = 0$ and fix $\rho \in (0,1)$. For $\delta \in (0,\rho)$ and $\frk a \in \BB R$, the conditional law of $h|_{\BB C\setminus B_\rho(0)}$ given $\{h_\delta(0) = \frk a\}$ is absolutely continuous with respect to the marginal law of $h|_{\BB C\setminus B_\rho(0)}$. 
The Radon-Nikodym derivative of the former law with respect to the latter law is given by
\eqb \label{eqn-circle-avg-rn} 
\sqrt{\frac{\log(1/\delta)}{\log(\rho/\delta)} } \exp\left(   \frac{\frk a^2 \log(\rho ) + (     2\frk a h_\rho(0)  - h_\rho(0)^2 )  \log(1/\delta)      }{2\log(1/\delta) \log(\rho/\delta)}   \right)  .
\eqe
\end{lem}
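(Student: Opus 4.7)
The plan is to compute the Radon--Nikodym derivative by Bayes' rule, using the Markov property of the GFF to identify the conditional law of $h_\delta(0)$ given $h|_{\BB C\setminus B_\rho(0)}$.

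First, I would invoke the Markov decomposition of the whole-plane GFF in the disk $B_\rho(0)$: we can write $h|_{B_\rho(0)} = h^{B_\rho(0)} + \frk h$, where $h^{B_\rho(0)}$ is a zero-boundary GFF on $B_\rho(0)$, independent of $h|_{\BB C\setminus B_\rho(0)}$, and $\frk h$ is the harmonic extension into $B_\rho(0)$ of the values of $h$ on $\bdy B_\rho(0)$. By the mean value property together with the characterization of $h_\rho(0)$ as the average of $h$ on $\bdy B_\rho(0)$, one has $\frk h(0) = h_\rho(0)$. Applying the circle-average construction linearly, this gives the identity $h_\delta(0) = X + h_\rho(0)$, where $X := h^{B_\rho(0)}_\delta(0)$.

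Next, I would identify the law of $X$. Since the circle-average process $s\mapsto h^{B_\rho(0)}_{e^{-s}}(0)$ of the zero-boundary GFF on $B_\rho(0)$ is a standard Brownian motion started at zero at time $s = \log(1/\rho)$ (see~\cite[Section 3.1]{shef-kpz}), $X$ is Gaussian with mean $0$ and variance $\log(\rho/\delta)$, and is independent of $h|_{\BB C\setminus B_\rho(0)}$. Consequently, the conditional density of $h_\delta(0)$ given $h|_{\BB C\setminus B_\rho(0)}$ is
\[
p(\frk a \mid h|_{\BB C\setminus B_\rho(0)}) = \tfrac{1}{\sqrt{2\pi \log(\rho/\delta)}} \exp\left( -\tfrac{(\frk a - h_\rho(0))^2}{2\log(\rho/\delta)} \right),
\]
while the marginal density of $h_\delta(0)$ (which is Gaussian of variance $\log(1/\delta)$ by our normalization) is $p(\frk a) = \tfrac{1}{\sqrt{2\pi\log(1/\delta)}} \exp(-\frk a^2/(2\log(1/\delta)))$.

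Finally, Bayes' rule gives that the Radon--Nikodym derivative equals $p(\frk a \mid h|_{\BB C\setminus B_\rho(0)}) / p(\frk a)$. Taking the ratio produces the prefactor $\sqrt{\log(1/\delta)/\log(\rho/\delta)}$ and an exponent equal to $\tfrac{\frk a^2}{2\log(1/\delta)} - \tfrac{(\frk a - h_\rho(0))^2}{2\log(\rho/\delta)}$. Putting both terms over the common denominator $2\log(1/\delta)\log(\rho/\delta)$, expanding the square, and using the identity $\log(\rho/\delta) - \log(1/\delta) = \log\rho$, the numerator simplifies to $\frk a^2 \log \rho + (2\frk a h_\rho(0) - h_\rho(0)^2)\log(1/\delta)$, which yields exactly~\eqref{eqn-circle-avg-rn}. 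There is no serious obstacle; the only care needed is in verifying the Markov decomposition for the whole-plane GFF and that $\frk h(0) = h_\rho(0)$, both of which are standard.
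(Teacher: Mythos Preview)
Your proof is correct and follows essentially the same route as the paper: both arguments reduce to Bayes' rule and the identification of the conditional law of $h_\delta(0)$ given $h|_{\BB C\setminus B_\rho(0)}$ as Gaussian with mean $h_\rho(0)$ and variance $\log(\rho/\delta)$. The only cosmetic difference is that the paper obtains this conditional law via the radial/angular decomposition of the whole-plane GFF (i.e., $B_t := h_{e^{-t}}(0)$ is a two-sided Brownian motion independent of $h - h_{|\cdot|}(0)$, so the problem reduces to computing the Radon--Nikodym derivative of $B|_{[0,\log(1/\rho)]}$ conditioned on $B_{\log(1/\delta)} = \frk a$), whereas you obtain it via the domain Markov property. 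Both are standard and lead to the identical Bayes' rule computation you carried out.
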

\begin{proof} 
By standard results for the GFF (see, e.g.,~\cite[Section 4.1.5]{wedges}, $B_t :=  h_{e^{-t}}(0)$ is a standard two-sided Brownian motion and $h - h_{|\cdot|}(0)$ is independent from $B$. Hence we get the absolute continuity in the statement of the lemma, and the desired Radon-Nikodym derivative is the same as the Radon-Nikodym derivative of the conditional law of $B|_{[0,\log (1/\rho) ]}$ given $\{B_{\log(1/\delta) }  = \frk a\}$ with respect to the marginal law of $B|_{[0,\log(1/\rho) ]}$. 
This Radon-Nikodym derivative can be computed using, e.g., Bayes' rule and is given by~\eqref{eqn-circle-avg-rn}.
\end{proof}

\begin{proof}[Proof of Theorem~\ref{thm-ball-infty}]
Fix $Q\in (0,2)$ and $p > 1$. We will show that a.s.\ $\#\mcl B_r^{\mcl S_h^1}(0) \geq r^p$ for large enough $r$. 
\medskip

\noindent\textit{Step 1: Scaling argument.}
We will first use the scaling properties of the GFF to reduce our problem to proving an estimate for balls in $\mcl S_h^\ep$ for a small $\ep  =\ep_r > 0$ instead of for balls in $\mcl S_h^1$. 
Fix a small exponent $\theta   \in (0,1)$, to be chosen later in a manner depending only on $p$ and $Q$. 
Given $r\in \BB N$, let $\delta_r$ be the smallest dyadic integer such that $\delta_r \leq r^{-\theta}$ and let $h^r := h(\delta_r \cdot) - h_{\delta_r}(0)$.
Then $h^r \eqD h$ and for any dyadic square $S\subset \BB C$, 
\eqbn
M_{h^r}(S) = e^{h^r_{|S|/2}(v_S)} |S|^Q = e^{-h_{\delta_r}(0)} e^{h_{|\delta_r S|/2}(v_{\delta_r S})} |S|^Q =  e^{-h_{\delta_r}(0)} \delta_r^{- Q} M_h(\delta_r S) .
\eqen
Consequently, 
\eqb \label{eqn-ball-scale-field}
\mcl S_{h^r}^1 = \delta_r^{-1} \mcl S_h^{\ep_r} \quad \text{for} \quad \ep_r :=  e^{ h_{\delta_r}(0)} \delta_r^{  Q} .
\eqe
We will show that if $\theta$ is chosen appropriately (depending only on $p$ and $Q$), then   
\eqb \label{eqn-ep-ball-bound}
\#\mcl B_r^{\mcl S_h^{\ep_r}}(0) \geq r^p  ,\quad \text{with polynomially high probability as $r\rta \infty$}. 
\eqe 
Combined with~\eqref{eqn-ball-scale-field}, we then get that~\eqref{eqn-ep-ball-bound} holds with 1 in place of $\ep_r$. A union bound over dyadic values of $r$ then concludes the proof of the theorem. Thus we only need to prove~\eqref{eqn-ep-ball-bound}. 

We note that $\ep_r$ from~\eqref{eqn-ball-scale-field} is random. However, $h_{\delta_r}(0)$ is Gaussian with variance $\log(1/\delta_r)$ and $\delta_r \asymp r^{-\theta}$, so for any $\zeta \in (0,1)$,
\eqb \label{eqn-ep-r-compare}
r^{-\theta Q - \zeta} \leq \ep_r \leq r^{-\theta Q  + \zeta} \quad \text{with polynomially high probability as $r \rta\infty$.}
\eqe
\medskip

\noindent\textit{Step 2: Application of Lemma~\ref{lem-ball-poly}. }
It will be convenient to first work with $\wt{\BB S} := [1/2,1]^2$ instead of with $\BB S$ since $\wt{\BB S}$ lies at positive distance from 0, which will allow us to apply Lemma~\ref{lem-circle-avg-rn} when we condition on the value of $\ep_r$ from~\eqref{eqn-ep-ball-bound}. 
All of the arguments of this section work equally well with $\wt{\BB S}$ in place of $\BB S$, and in particular Lemma~\ref{lem-ball-poly} is true with this replacement. 
 
By Lemma~\ref{lem-ball-poly} with $\wt{\BB S}$ in place of $\BB S$, for any $K > 1$, $\zeta\in (0,1)$, and $\alpha \in (Q,2)$, it holds with polynomially high probability as $\ep\rta 0$ that
\eqb \label{eqn-ep-ball} 
\#\mcl B_s^{\mcl S^\ep_h(\wt{\BB S} )}(v_{\wt{\BB S}} ) \geq  \ep^{-(2-\alpha)^2 K/2  + 3\zeta} , \quad \forall s \geq \ep^{ - g(\alpha,K) - \zeta} .
\eqe 
We now want to transfer from a deterministic choice of $\ep$ to the random choice $\ep_r$. 
For this, we observe that $\{\mcl S^\ep_h(\wt{\BB S})\}_{\ep > 0}$ is a.s.\ determined by $h|_{\wt{\BB S}}$, which in turn is a.s.\ determined by $h|_{\BB C\setminus B_{1/2}(0)}$. 
We may therefore apply Lemma~\ref{lem-circle-avg-rn} with $\rho = 1/2$ and $\delta =\delta_r$ to find that for $\frk a \in \BB R$, the Radon-Nikodym derivative of the conditional law of $\{\mcl S^\ep_h(\wt{\BB S})\}_{\ep > 0}$ given $\{h_{\delta_r}(0) = \frk a\}$, or equivalently $\{\ep_r = e^{\frk a} \delta_r^{ Q}\}$, with respect to the marginal law of $\{\mcl S^\ep_h(\wt{\BB S})\}_{\ep > 0}$ is given by~\eqref{eqn-circle-avg-rn}. 
This Radon-Nikodym derivative is bounded above and below by constants depending only on $\theta$ provided $|h_{1/2}(0)| \leq \sqrt{\log r}$ and $|h_{\delta_r}(0)|  \leq   \log r$, which happens with polynomially high probability as $r \rta \infty$. 
Combining this with~\eqref{eqn-ep-r-compare}, we get that~\eqref{eqn-ep-ball} holds with $\ep_r$ in place of $\ep$ with polynomially high probability as $r\rta\infty$.
\medskip

\noindent\textit{Step 3: Transferring from $\mcl S_h^{\ep_r}(\wt{\BB S})$ to $\mcl S_h^{\ep_r}$. } 
We will now argue that an analogue of~\eqref{eqn-ep-ball} holds with high probability with $\ep_r$ in place of $\ep$, $\mcl S_h^{\ep_r}$ in place of $\mcl S_h^\ep(\wt{\BB S})$, and 0 in place of $v_{\wt{\BB S}}$. Here we will deal with the fact that $\ep_r$ is random using~\eqref{eqn-ep-r-compare} and the monotonicity of various quantities in $\ep$. We could not do this above since the quantity $\#\mcl B_r^{\mcl S_h^\ep(U)}(z)$ for fixed $U\subset \BB C$, $z\in U$, and $r\in\BB N$ does not depend monotonically on $\ep$. 

Fix a small exponent $\wt\zeta \in (0,\theta Q)$.
By Lemma~\ref{lem-unit-square-good}, it holds with polynomially high probability as $r\rta\infty$ that each square of $\mcl S_h^{r^{-\theta Q + \wt\zeta}}(\wt{\BB S})$ is also a square of $\mcl S_h^{r^{-\theta Q +\wt\zeta}}$ (equivalently, $\wt{\BB S}$ is not properly contained in a square of $\mcl S_h^{r^{-\theta Q + \wt\zeta}}$). Together with~\eqref{eqn-ep-r-compare} and the monotonicity of $\mcl S_h^\ep$ in $\ep$, this shows that with polynomially high probability as $r\rta\infty$, each square of $\mcl S_h^{\ep_r}(\wt{\BB S})$ is also a square of $\mcl S_h^{\ep_r}$. This implies that~\eqref{eqn-ep-ball} holds with polynomially high probability as $r\rta\infty$ with $\ep_r$ in place of $\ep$ and $\mcl S_h^{\ep_r}$ in place of $\mcl S_h^\ep(\wt{\BB S})$. 

We now transfer from balls centered at $v_{\BB S}$ to balls centered at 0. 
By Proposition~\ref{prop-macro-square-dist} (applied as in the proof of Lemma~\ref{lem-ball-poly}), if we choose $\wt\zeta$ sufficiently small (depending on $Q$, $\theta$, and $\zeta$) the with polynomially high probability as $r\rta\infty$, we have $D_h^{r^{-\theta Q - \wt\zeta}}( 0,  v_{\wt{\BB S} }) \leq r^{\theta Q f(1/Q) + \zeta/2}$, where $f(\cdot)$ is as in~\eqref{eqn-macro-square-function}. 
Again using~\eqref{eqn-ep-r-compare} and monotonicity in $\ep$, we get that with polynomially high probability as $r\rta\infty$, $D_h^{\ep_r}(0,v_{\wt{\BB S}}) \leq \ep_r^{f(1/Q) + \zeta}$.

Combining the conclusions of the two preceding paragraphs with~\eqref{eqn-ep-ball} for $\ep_r$ in place of $\ep$ and using that $g(\alpha,K) \geq f(1/Q)$ by definition shows that with polynomially high probability as $r\rta\infty$,
\eqb \label{eqn-ep-ball-plane} 
\#\mcl B_s^{\mcl S^{\ep_r}_h }( 0 ) \geq  \ep_r^{-(2-\alpha)^2 K/2  + 3\zeta} , \quad \forall s \geq \ep^{ - g(\alpha,K)      - \zeta}  .
\eqe 
\medskip

\noindent\textit{Step 4: Choosing the parameter values. }
For a given $p > 1$, we will now choose the parameters $\theta$, $K$, $\alpha$, and $\zeta$ above in a manner depending on $p$ and $Q$ in order to extract~\eqref{eqn-ep-ball-bound} from~\eqref{eqn-ep-ball-plane}.  
We first set $\alpha = Q  + 1/K$, so that by~\eqref{eqn-singularity-dist-function}, $q := g(Q+1/K ,K) $ depends only on $Q$. 
Then, we choose $\theta$ sufficiently small that $  2 q \theta Q < 1$, so that by~\eqref{eqn-ep-r-compare}, it holds with polynomially high probability as $r\rta\infty$ that $r \geq \ep_r^{-2q}$. 
Finally, we choose $K$ sufficiently large such that $\frac12 (2-\alpha)^2 K  \times \theta Q = \frac12 (2-Q - 1/K)^2 K  \times \theta Q \geq 2p $ and $\zeta$ sufficiently small such that $2p-3\zeta \geq  p$ and $ q+\zeta \leq 2q$. 
Making this choice of parameters and applying~\eqref{eqn-ep-ball-plane} yields that~\eqref{eqn-ep-ball-bound} holds with polynomially high probability as $r\rta\infty$. 
\end{proof}

\section{Conjectures and open questions}
\label{sec-open-problems}
 
One of the most interesting problems to solve concerning the model of this paper would be to prove that $\mcl S_h^\ep$ converges to a metric space in the scaling limit (Conjecture~\ref{conj-metric-lim}). This would allow us to define LQG with matter central charge $\ccM$ as a metric space. 
It is not clear to us, even at a heuristic level, whether it is possible to define LQG for $\ccM \in (1,25)$ as a metric \emph{measure} space. 

\begin{ques}[Measure scaling limit] \label{ques-measure-lim}
For $\ccM \in (1,25)$, do the graphs $\mcl S_h^\ep$, equipped with their graph distance and the counting measure on squares, converge in law in the scaling limit as $\ep \rta 0$ to a non-trivial random metric measure space $(X , \mu_h , \frk d_h)$  with respect to the local Gromov-Hausdorff-Prokhorov topology~\cite{adh-ghp}? 
\end{ques}

Question~\ref{ques-measure-lim} could possibly have an affirmative answer even though the total number of squares is infinite, since the number of squares contained in a graph metric ball is typically finite. On the other hand, we know from Theorem~\ref{thm-ball-infty} that the number of such squares grows superpolynomially in the radius of the ball, so to get a non-trivial scaling limit one would need a superpolynomial scaling factor for the measure as compared to the metric (distance function).  

Another potential way to construct a ``measure" associated to LQG with matter central charge $\ccM \in (1,25)$ is to directly extend the definition of Gaussian multiplicative chaos in the case when $\ccM < 1$ (see also Section~\ref{sec-related}).
\begin{ques}[Complex Gaussian Multiplicative Chaos] \label{ques-complex-measure}
If $\ccM \in (1,25)$, so that $\gamma$ is complex with $|\gamma | =2$, can one make sense of $\exp\left( \gamma h(z) - \frac{\gamma}{2} \op{Var} h(z) \right) d^2 z$ (where $d^2 z$ is Lebesgue measure on $\BB C$) as a complex measure, or at least as a complex distribution? 
\end{ques}

For $\gamma \in (0,2)$ and corresponding $\kappa \in \{\gamma^2,16/\gamma^2\}$, there are a number of important theorems describing various properties of SLE$_\kappa$ curves on an independent $\gamma$-LQG surface: see, e.g.,~\cite{shef-zipper,wedges,sphere-constructions}. The SLE parameter $\kappa$ is related to the matter central charge by
\eqb
\ccM   =\frac{(6-\kappa)(3\kappa-8)}{2\kappa} .
\eqe
It would be of substantial interest to generalize the theory of SLE and its relationship to LQG to the case when $\ccM \in (1,25)$. 

\begin{ques}[SLE for $c>1$] \label{ques-sle}
Is there a natural extension of SLE to the $\ccM \in (1,25)$ regime? If so, are there $\ccM \in (1,25)$-analogs of the relationships between SLE and LQG from~\cite{shef-zipper,wedges,sphere-constructions}? 
\end{ques}

A potential discrete analog of SLE with $\ccM \in (1,25)$ is the so-called \emph{$\lambda$-self avoiding walk} with $\lambda = -\ccM/2$, which was introduced by Kennedy and Lawler in~\cite{kl-lambda-saw}. This model is expected to converge to SLE$_\kappa$ for $\kappa \in (0,4]$ in the case when $\lambda \geq -1/2$, equivalently, $\ccM \leq 1$, but also makes sense for $\ccM  > 1$.    

Let us now discuss some further questions which we expect to be easier.  

Recall from Section~\ref{sec-laplacian} that random planar maps with a fixed number of vertices, weighted by the $(-\ccM/2)$-power of the Laplacian determinant, are expected to behave like continuum random trees (CRTs) when $\ccM>1$. As explained in Section~\ref{sec-laplacian}, we do not believe that these random planar maps describe the behavior of central charge LQG with matter central charge $\ccM >1$. Rather, based on forthcoming work by Ang, Park, Pfeffer, and Sheffield, we believe their behavior corresponds to that of $\mcl S_h^\ep$ conditioned to be large but finite. Therefore, we expect an affirmative answer to the following question:

\begin{ques}[Relationship to the CRT] \label{ques-crt}
Let $h$ be a GFF on a bounded domain $U\subset\BB C$. Fix $\ep>0$. If we condition on $\{\#\mcl S_h^\ep = n\}$ and send $n \rta \infty$, does $\mcl S_h^\ep$ converge in law (say, with respect to the Gromov-Hausdorff distance) to some version of the CRT?
\end{ques}

Our results suggest that when we analytically continue a formula for a dimension associated with LQG from the case when $\ccM \leq 1$ to the case when $\ccM \in (1,25)$ and get a complex answer, the corresponding dimension should be infinite. 

\begin{ques}[Interpretation of complex dimensions] \label{ques-complex-dim}
Does the particular value of the complex number appearing in analytic continuations of dimension formulas have an interpretation?
For example, in the setting of Theorem~\ref{thm-kpz}, when the Euclidean dimension satisfies $x   > Q^2/2$ does the particular complex number $Q - \sqrt{Q^2-2x}$ tell us anything about the behavior of the set of squares which intersect the fractal $X$? 
\end{ques}
 
Another class of open problems concern the simple random walk on $\mcl S_h^\ep$. 

\begin{conj}[Transience of random walk] \label{conj-transient}
For $\ccM \in (1,25)$, the simple random walk on $\mcl S_h^1$ is a.s.\ transient. 
\end{conj}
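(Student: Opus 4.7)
The plan is to establish transience via the standard unit-flow criterion of T.\ Lyons: we construct a unit flow on $\mcl S_h^1$ from a reference vertex (say, the square containing $0$) to infinity whose energy is finite. The natural ``targets at infinity'' are the singularities of $\mcl S_h^1$ from Definition~\ref{def-singularity}: each singularity $z$ is an accumulation point of arbitrarily small squares of $\mcl S_h^1$ and thus corresponds to a distinct graph-theoretic end. As a first step one would select, measurably from $h$, a ``well-behaved'' singularity $z \in \BB S$ --- for instance by sampling from the normalized $\alpha$-LQG measure $\mu_h^\alpha / \mu_h^\alpha(\BB S)$ for some $\alpha \in (Q,2)$ (whose support consists a.s.\ of $\alpha$-thick points), then reweighting by the total mass as in the proof of Lemma~\ref{lem-uniform-pt}.

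The second step is a quantitative analysis of $\mcl S_h^1$ in the dyadic shells $\BB A_{n,n-1}$ around $z$. Using the same circle-average computations that drive Sections~\ref{sec-singularity-dist} and~\ref{sec-small-square-count} (in particular the estimate $h_\delta(w) \approx -\alpha\log|w-z|$ near a typical $\alpha$-thick point), one expects that with high probability every square $S \in \mcl S_h^1$ meeting $\BB A_{n,n-1}$ satisfies $|S| = 2^{-n\alpha/Q + o(n)}$. Consequently any loop in $\mcl S_h^1$ encircling $z$ at Euclidean scale $2^{-n}$ uses at least $N_n := 2^{n(\alpha/Q-1) - o(n)}$ squares, and the shell itself contains $\asymp N_n^2$ squares. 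A radial unit flow from $o$ toward $z$, spreading the current uniformly across the cross-shell cut at scale $2^{-n}$, then assigns $\approx 1/N_n$ flow per cut edge and contributes $\asymp 1/N_n$ to the energy from the cross-shell cuts themselves --- which is summable since $\alpha > Q$.

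The main obstacle is that the \emph{in-shell} routing of this radial flow costs a further energy of order $1$ per scale: the annulus $\BB A_{n,n-1}$ in its induced graph structure looks roughly like an $N_n \times N_n$ piece of $\BB Z^2$ with circumferential wrap-around, whose boundary-to-boundary effective resistance is of order $1$; summing over $n$ gives $+\infty$. Thus a flow aimed at a \emph{single} singularity has infinite energy, mirroring the recurrence of $\BB Z^2$. To circumvent this, one must exploit that $\mcl S_h^1$ has uncountably many singularities (the $\alpha$-thick points form a set of Hausdorff dimension $2 - \alpha^2/2 > 0$) and build a \emph{branching} flow which, at each dyadic scale, subdivides the current among macroscopically many well-separated small squares headed toward distinct thick points. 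The crucial new technical input would be a multi-point refinement of Proposition~\ref{prop-small-square-count}: with high probability, the number of well-separated ``escape targets'' at each scale must grow fast enough that the flow can be split thinly enough to make the per-scale energy contribution summable. Establishing this multi-point estimate, and verifying the corresponding second-moment bounds under the reweighting by $\mu_h^\alpha$, is where we expect the main difficulty to lie. A natural alternative would be to prove an isoperimetric inequality $|\bdy_{\mcl S_h^1} A| \geq c|A|^{1-1/d}$ for some $d>2$, which combined with Theorem~\ref{thm-ball-infty} would yield transience via standard heat-kernel bounds, but this appears no easier.
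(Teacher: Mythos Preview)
The statement you are attempting to prove is Conjecture~\ref{conj-transient}, which the paper explicitly leaves open: it appears in Section~\ref{sec-open-problems} (``Conjectures and open questions'') and the paper offers no proof, only the one-sentence heuristic that the singularities are ``sufficiently tree-like'' that the walk should get stuck near one of them. There is therefore no proof in the paper to compare your proposal against.

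Your proposal is candid about not being a proof either. You correctly identify the natural approach (Lyons' flow criterion, targeting singularities as ends) and you correctly identify why the naive version fails: a flow aimed at a single $\alpha$-thick point incurs order-$1$ energy per dyadic scale, so the total energy diverges. Your suggested fix---branching the flow among many well-separated thick points at each scale---is plausible in spirit and aligns with the paper's intuition that the \emph{abundance} of singularities is what should drive transience. But, as you acknowledge, the key ingredient (a multi-scale, multi-point version of Proposition~\ref{prop-small-square-count} strong enough to make the per-scale energy summable) is not established anywhere in the paper, and proving it would require genuinely new work beyond what the paper provides. The isoperimetric alternative you mention is likewise not available from the paper's results: Theorem~\ref{thm-ball-infty} gives superpolynomial \emph{volume} growth but says nothing about boundary-to-volume ratios, and in fact the presence of long thin ``tentacles'' near singularities makes a uniform isoperimetric inequality doubtful.

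In short: the paper does not prove this, and neither does your outline; the gap you flag (summable per-scale energy via branching) is real and is precisely the missing idea.
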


Intuitively, the reason why we expect Conjecture~\ref{conj-transient} to be true is that the set of singularities of $\mcl S_h^1$ is sufficiently ``tree-like" that the random walk will typically get stuck in a small neighborhood of one of these singularities. 
Note that, in contrast to Conjecture~\ref{conj-transient}, the simple random walk is known to be recurrent on many different random planar maps in the matter central charge $\ccM$ universality class for $\ccM \leq 1$~\cite{gn-recurrence}. 

When the simple random walk is approaching a singularity, it will be traversing arbitrarily small squares. It may therefore be possible to extend the definition of the walk so that it still converges to Brownian motion when parametrized by ``Euclidean" speed rather than by its intrinsic speed.

\begin{ques}[Convergence of random walk to Brownian motion] \label{ques-clt}
For $\ccM \in (1,25)$, is it possible to extend the definition of the re-parametrized simple random walk on $\mcl S_h^1$ in such a way that it spends $|S|^2$ units of time in each square $S\in \mcl S_h^1$ and is defined for all time? If so, does the extended, re-parametrized walk converge in law to Brownian motion?
\end{ques}

We emphasize that the first part of Question~\ref{ques-clt} is non-trivial since we expect (due to Conjecture~\ref{conj-transient}) that the walk parametrized by $|S|^2$ will traverse infinitely many squares in a finite amount of time, so one has to find a way to ``reflect" the walk off of the singularities.  
For $\ccM <1$, we expect that one can show convergence of the random walk on $\mcl S_h^1$ to Brownian motion using the ideas of~\cite{gms-random-walk,gms-tutte} (see Remark~\ref{remark-c<1-ball-growth}).

Recall from the discussion just after Proposition~\ref{prop-ptwise-distance} that $D_{ h}^\ep(z,w)$ for fixed $z,w\in\BB C$ is typically bounded above and below by powers of $\ep$, and that for $\ccM <1$ we have $D_h^\ep(z,w) \approx \ep^{-\gamma/d_{\ccM} + o_\ep(1)}$, where $\gamma$ is as in~\eqref{eqn-Q-c-gamma} and $d_{\ccM}$ is the Hausdorff dimension of the $\gamma$-LQG metric space.
If we plug in Watabiki's prediction~\eqref{eqn-watabiki} for $d_{\ccM}$, and then analytically continue to $\ccM  \in (1,25)$, we find that $\lim_{\ccM\rta 25^-} \gamma/d_{\ccM} = 1$. 
If we instead plug in the alternative guess~\eqref{eqn-dg-guess}, we get $\lim_{\ccM\rta 25^-} \gamma/d_{\ccM} =\sqrt 6$.  

\begin{ques}[Behavior of point-to-point distance as $\ccM\rta 25$]  \label{ques-Q-to-0}
For fixed $z,w\in\BB C$, is the limit
\eqb
\lim_{\ccM \rta 25^-} \lim_{\ep\rta 0} \frac{D_h^\ep(z,w)}{\log \ep^{-1}} 
\eqe
a.s.\ well-defined and finite? If so, what is its value?
\end{ques}

Note that the model of the present paper, as described in Section~\ref{sec-tiling-def}, makes sense for $\ccM  = 25$ ($Q = 0$) and it is easy to see using basic Gaussian estimates that in this case, each fixed point of $\mathbb{C}$ is a.s.\ contained in a square of $\mcl S_h^\ep$ (i.e., the set of singularities has zero Lebesgue measure). We have neglected this case throughout the present paper. The following is a natural first question for $\ccM = 25$. 

\begin{ques}[Behavior of point-to-point distance for $\ccM = 25$] \label{ques-Q=0}
For $\ccM = 25$, is $\mcl S_h^\ep$ a.s.\ connected, i.e., is it a.s.\ possible to get from any square to any other square via a finite path in $\mcl S_h^\ep$? 
If so, does the distance between two fixed points a.s.\ grow at most polynomially in $\ep$? 
\end{ques}

We expect that determining whether the distance in Question~\ref{ques-Q=0} has polynomial growth is closely related to determining whether the limit in Question~\ref{ques-Q-to-0} is finite. 

Our last question concerns the relationship between the model considered in the present paper and certain natural random planar maps for $\ccM \in (1,25)$. 

\begin{ques}[Random planar map connection] \label{ques-discrete}
Is there a natural variant of random planar maps weighted by the $(-\ccM/2)$-th power of the Laplacian determinant for $\ccM \in (1,25)$ (or by some other partition function with similar asymptotic behavior) wherein the random planar maps are a.s.\ infinite, with infinitely many ends? If so, can one prove any rigorous relationship between such random planar maps and the maps $\mcl S_h^\ep$ considered in the present paper?
\end{ques}

\bibliography{cibiblong,cibib,extrabib}

\newcommand{\etalchar}[1]{$^{#1}$}
\def\cprime{$'$}
\begin{thebibliography}{DRSV14b}

\bibitem[AB14]{ambjorn-budd-lqg-geodesic}
J.~Ambj\"{o}rn and T.~G. Budd.
\newblock Geodesic distances in {Liouville} quantum gravity.
\newblock {\em Nuclear Physics B}, 889:676--691, 2014, \arxiv{1405.3424}.

\bibitem[ADF86]{adf-critical-dimensions}
J.~Ambj{\o}rn, B.~Durhuus, and J.~Fr\"{o}hlich.
\newblock The appearance of critical dimensions in regulated string theories.
  {II}.
\newblock {\em Nuclear Phys. B}, 275(2):161--184, 1986. \MR{858659}

\bibitem[ADH13]{adh-ghp}
R.~Abraham, J.-F. Delmas, and P.~Hoscheit.
\newblock A note on the {G}romov-{H}ausdorff-{P}rokhorov distance between
  (locally) compact metric measure spaces.
\newblock {\em Electron. J. Probab.}, 18:no. 14, 21, 2013,
  \arxiv{arXiv:1202.5464}. \MR{3035742}

\bibitem[ADJT93]{adjt-c-ge1}
J.~{Ambj{\o}rn}, B.~{Durhuus}, T.~{J{\'o}nsson}, and G.~{Thorleifsson}.
\newblock {Matter fields with c $>$ 1 coupled to 2d gravity}.
\newblock {\em Nuclear Physics B}, 398:568--592, June 1993,
  \arxiv{hep-th/9208030}.

\bibitem[Ald91a]{aldous-crt1}
D.~Aldous.
\newblock The continuum random tree. {I}.
\newblock {\em Ann. Probab.}, 19(1):1--28, 1991. \MR{1085326 (91i:60024)}

\bibitem[Ald91b]{aldous-crt2}
D.~Aldous.
\newblock The continuum random tree. {II}. {A}n overview.
\newblock In {\em Stochastic analysis ({D}urham, 1990)}, volume 167 of {\em
  London Math. Soc. Lecture Note Ser.}, pages 23--70. Cambridge Univ. Press,
  Cambridge, 1991. \MR{1166406 (93f:60010)}

\bibitem[Ald93]{aldous-crt3}
D.~Aldous.
\newblock The continuum random tree. {III}.
\newblock {\em Ann. Probab.}, 21(1):248--289, 1993. \MR{1207226 (94c:60015)}

\bibitem[Amb94]{ambjorn-remarks}
J.~Ambj{\o}rn.
\newblock Remarks about {$c>1$} and {$D>2$}.
\newblock {\em Teoret. Mat. Fiz.}, 98(3):326--336, 1994. \MR{1304731}

\bibitem[Ang19]{ang-discrete-lfpp}
M.~Ang.
\newblock {Comparison of discrete and continuum Liouville first passage
  percolation}.
\newblock {\em ArXiv e-prints}, Apr 2019, \arxiv{1904.09285}.

\bibitem[Aru15]{aru-kpz}
J.~Aru.
\newblock K{PZ} relation does not hold for the level lines and {SLE$_\kappa$}
  flow lines of the {G}aussian free field.
\newblock {\em Probab. Theory Related Fields}, 163(3-4):465--526, 2015,
  \arxiv{1312.1324}. \MR{3418748}

\bibitem[{Aru}17]{aru-gmc-survey}
J.~{Aru}.
\newblock {Gaussian multiplicative chaos through the lens of the 2D Gaussian
  free field}.
\newblock {\em ArXiv e-prints}, Sep 2017, \arxiv{1709.04355}.

\bibitem[BB19]{bb-lqg-dim}
J.~{Barkley} and T.~{Budd}.
\newblock {Precision measurements of Hausdorff dimensions in two-dimensional
  quantum gravity}.
\newblock {\em ArXiv e-prints}, Aug 2019, \arxiv{1908.09469}.

\bibitem[BD86]{bd-triangulated-surfaces}
A.~Billoire and F.~David.
\newblock Scaling properties of randomly triangulated planar random surfaces: a
  numerical study.
\newblock {\em Nuclear Phys. B}, 275(4):617--640, 1986. \MR{865231}

\bibitem[Bef08]{beffara-dim}
V.~Beffara.
\newblock The dimension of the {SLE} curves.
\newblock {\em Ann. Probab.}, 36(4):1421--1452, 2008, \arxiv{math/0211322}.
  \MR{2435854 (2009e:60026)}

\bibitem[Ber17]{berestycki-gmt-elementary}
N.~Berestycki.
\newblock An elementary approach to {G}aussian multiplicative chaos.
\newblock {\em Electron. Commun. Probab.}, 22:Paper No. 27, 12, 2017,
  \arxiv{1506.09113}. \MR{3652040}

\bibitem[BGRV16]{grv-kpz}
N.~Berestycki, C.~Garban, R.~Rhodes, and V.~Vargas.
\newblock K{PZ} formula derived from {L}iouville heat kernel.
\newblock {\em J. Lond. Math. Soc. (2)}, 94(1):186--208, 2016,
  \arxiv{1406.7280}. \MR{3532169}

\bibitem[BH92]{bh-c-ge1-matrix}
E.~{Br{\'e}zin} and S.~{Hikami}.
\newblock {A naive matrix-model approach to 2D quantum gravity coupled to
  matter of arbitrary central charge}.
\newblock {\em Physics Letters B}, 283:203--208, June 1992,
  \arxiv{hep-th/9204018}.

\bibitem[BJ92]{bj-potts-sim}
C.~F. {Baillie} and D.~A. {Johnston}.
\newblock {A Numerical Test of Kpz Scaling:. Potts Models Coupled to
  Two-Dimensional Quantum Gravity}.
\newblock {\em Modern Physics Letters A}, 7:1519--1533, 1992,
  \arxiv{hep-lat/9204002}.

\bibitem[BJM14]{bjm-uniform}
J.~Bettinelli, E.~Jacob, and G.~Miermont.
\newblock The scaling limit of uniform random plane maps, {\it via} the
  {A}mbj\o rn-{B}udd bijection.
\newblock {\em Electron. J. Probab.}, 19:no. 74, 16, 2014, 1312.5842.
  \MR{3256874}

\bibitem[BJRV13]{bjrv-gmt-duality}
J.~Barral, X.~Jin, R.~Rhodes, and V.~Vargas.
\newblock Gaussian multiplicative chaos and {KPZ} duality.
\newblock {\em Comm. Math. Phys.}, 323(2):451--485, 2013, \arxiv{1202.5296}.
  \MR{3096527}

\bibitem[BKKM86]{bkkm-analytical-study}
D.~V. Boulatov, V.~A. Kazakov, I.~K. Kostov, and A.~A. Migdal.
\newblock Analytical and numerical study of a model of dynamically triangulated
  random surfaces.
\newblock {\em Nuclear Phys. B}, 275(4):641--686, 1986. \MR{865232}

\bibitem[BS09]{benjamini-schramm-cascades}
I.~Benjamini and O.~Schramm.
\newblock K{PZ} in one dimensional random geometry of multiplicative cascades.
\newblock {\em Comm. Math. Phys.}, 289(2):653--662, 2009, \arxiv{0806.1347}.
  \MR{2506765 (2010c:60151)}

\bibitem[{Cat}88]{cates-branched-polymer}
M.~E. {Cates}.
\newblock {The Liouville field theory of random surfaces: when is the bosonic
  string a branched polymer?}
\newblock {\em EPL (Europhysics Letters)}, 7:719, December 1988.

\bibitem[CKR92]{ckr-c-ge1}
S.~{Catterall}, J.~{Kogut}, and R.~{Renken}.
\newblock {Numerical study of c $>$ 1 matter coupled to quantum gravity}.
\newblock {\em Physics Letters B}, 292:277--282, 1992.

\bibitem[Cur16]{curien-peeling-notes}
N.~Curien.
\newblock Peeling random planar maps. {N}otes du cours {P}eccot.
\newblock Available at
  \url{https://www.math.u-psud.fr/~curien/cours/peccot.pdf}, 2016.

\bibitem[Dav88]{david-conformal-gauge}
F.~David.
\newblock Conformal field theories coupled to {2-D} gravity in the conformal
  gauge.
\newblock {\em {M}od. {P}hys. {L}ett. {A}}, (3), 1988.

\bibitem[{Dav}97]{david-c>1-barrier}
F.~{David}.
\newblock {A scenario for the $c > 1$ barrier in non-critical bosonic strings}.
\newblock {\em Nuclear Physics B}, 487:633--649, February 1997,
  \arxiv{hep-th/9610037}.

\bibitem[DD18]{ding-dunlap-lgd}
J.~{Ding} and A.~{Dunlap}.
\newblock {Subsequential scaling limits for Liouville graph distance}.
\newblock {\em ArXiv e-prints}, December 2018, \arxiv{1812.06921}.

\bibitem[DDDF19]{dddf-lfpp}
J.~Ding, J.~Dub{\'e}dat, A.~Dunlap, and H.~Falconet.
\newblock {Tightness of Liouville first passage percolation for $\gamma \in
  (0,2)$}.
\newblock {\em ArXiv e-prints}, Apr 2019, \arxiv{1904.08021}.

\bibitem[DFG{\etalchar{+}}19]{lqg-metric-estimates}
J.~Dub{\'e}dat, H.~Falconet, E.~Gwynne, J.~Pfeffer, and X.~Sun.
\newblock {W}eak {LQG} metrics and {L}iouville first passage percolation.
\newblock {\em ArXiv e-prints}, May 2019, \arxiv{1905.00380}.

\bibitem[DFJ84]{dfj-critical-behavior}
B.~Durhuus, J.~Frohlich, and T.~Jonsson.
\newblock {Critical Behavior in a Model of Planar Random Surfaces}.
\newblock {\em Nucl. Phys.}, B240:453, 1984.
\newblock [Phys. Lett.137B,93(1984)].

\bibitem[DG16]{ding-goswami-watabiki}
J.~{Ding} and S.~{Goswami}.
\newblock {Upper bounds on Liouville first passage percolation and Watabiki's
  prediction}.
\newblock {\em {C}ommunications in {P}ure and {A}pplied {M}athematics}, to
  appear, 2016, \arxiv{1610.09998}.

\bibitem[DG18]{dg-lqg-dim}
J.~{Ding} and E.~{Gwynne}.
\newblock {The fractal dimension of {L}iouville quantum gravity: universality,
  monotonicity, and bounds}.
\newblock {\em {C}ommunications in {M}athematical {P}hysics}, to appear, 2018,
  \arxiv{1807.01072}.

\bibitem[DJKP87]{djkp-critical-exponents}
F.~David, J.~Jurkiewicz, A.~Krzywicki, and B.~Petersson.
\newblock Critical exponents in a model of dynamically triangulated random
  surfaces.
\newblock {\em Nuclear Physics B}, 290:218 -- 230, 1987.

\bibitem[DK89]{dk-qg}
J.~Distler and H.~Kawai.
\newblock Conformal field theory and {2D} quantum gravity.
\newblock {\em {N}ucl.{P}hys. {B}}, (321), 1989.

\bibitem[DKRV16]{dkrv-lqg-sphere}
F.~David, A.~Kupiainen, R.~Rhodes, and V.~Vargas.
\newblock Liouville quantum gravity on the {R}iemann sphere.
\newblock {\em Comm. Math. Phys.}, 342(3):869--907, 2016, \arxiv{1410.7318}.
  \MR{3465434}

\bibitem[DL18]{ding-li-chem-dist}
J.~Ding and L.~Li.
\newblock Chemical distances for percolation of planar {G}aussian free fields
  and critical random walk loop soups.
\newblock {\em Comm. Math. Phys.}, 360(2):523--553, 2018, \arxiv{1605.04449}.
  \MR{3800790}

\bibitem[DMS14]{wedges}
B.~{Duplantier}, J.~{Miller}, and S.~{Sheffield}.
\newblock {Liouville quantum gravity as a mating of trees}.
\newblock {\em ArXiv e-prints}, September 2014, \arxiv{1409.7055}.

\bibitem[DO94]{do-dozz}
H.~{Dorn} and H.-J. {Otto}.
\newblock {Two- and three-point functions in Liouville theory}.
\newblock {\em Nuclear Physics B}, 429:375--388, October 1994,
  \arxiv{hep-th/9403141}.

\bibitem[DP86]{dp-multiloop}
E.~D'Hoker and D.~H. Phong.
\newblock Multiloop amplitudes for the bosonic {P}olyakov string.
\newblock {\em Nuclear Phys. B}, 269(1):205--234, 1986. \MR{838673}

\bibitem[DRSV14a]{shef-deriv-mart}
B.~Duplantier, R.~Rhodes, S.~Sheffield, and V.~Vargas.
\newblock Critical {G}aussian multiplicative chaos: convergence of the
  derivative martingale.
\newblock {\em Ann. Probab.}, 42(5):1769--1808, 2014, \arxiv{1206.1671}.
  \MR{3262492}

\bibitem[DRSV14b]{shef-renormalization}
B.~Duplantier, R.~Rhodes, S.~Sheffield, and V.~Vargas.
\newblock Renormalization of critical {G}aussian multiplicative chaos and {KPZ}
  relation.
\newblock {\em Comm. Math. Phys.}, 330(1):283--330, 2014, \arxiv{1212.0529}.
  \MR{3215583}

\bibitem[DRV16]{drv-torus}
F.~David, R.~Rhodes, and V.~Vargas.
\newblock Liouville quantum gravity on complex tori.
\newblock {\em J. Math. Phys.}, 57(2):022302, 25, 2016, \arxiv{1504.00625}.
  \MR{3450564}

\bibitem[DS11]{shef-kpz}
B.~Duplantier and S.~Sheffield.
\newblock Liouville quantum gravity and {KPZ}.
\newblock {\em Invent. Math.}, 185(2):333--393, 2011, \arxiv{1206.0212}.
  \MR{2819163 (2012f:81251)}

\bibitem[DS19]{ds-ricci-flow}
J.~{Dub{\'e}dat} and H.~{Shen}.
\newblock {Stochastic Ricci Flow on Compact Surfaces}.
\newblock {\em ArXiv e-prints}, Apr 2019, \arxiv{1904.10909}.

\bibitem[Dup10]{dup-dual-lqg}
B.~Duplantier.
\newblock A rigorous perspective on {L}iouville quantum gravity and the {KPZ}
  relation.
\newblock In {\em Exact methods in low-dimensional statistical physics and
  quantum computing}, pages 529--561. Oxford Univ. Press, Oxford, 2010.
  \MR{2668656}

\bibitem[DZZ18]{dzz-heat-kernel}
J.~{Ding}, O.~{Zeitouni}, and F.~{Zhang}.
\newblock {Heat kernel for Liouville Brownian motion and Liouville graph
  distance}.
\newblock {\em {C}ommunications in {M}athematical {P}hysics}, to appear, 2018,
  \arxiv{1807.00422}.

\bibitem[FK02]{fk-c>1-II}
L.~D. {Faddeev} and R.~M. {Kashaev}.
\newblock {Strongly coupled quantum discrete Liouville theory: II. Geometric
  interpretation of the evolution operator}.
\newblock {\em Journal of Physics A Mathematical General}, 35:4043--4048, May
  2002, \arxiv{hep-th/0201049}.

\bibitem[FKV01]{fkv-c>1-I}
L.~D. {Faddeev}, R.~M. {Kashaev}, and A.~Y. {Volkov}.
\newblock {Strongly Coupled Quantum Discrete Liouville Theory.I: Algebraic
  Approach and Duality}.
\newblock {\em Communications in Mathematical Physics}, 219:199--219, 2001,
  \arxiv{hep-th/0006156}.

\bibitem[{Gar}18]{garban-dynamical}
C.~{Garban}.
\newblock {Dynamical Liouville}.
\newblock {\em ArXiv e-prints}, May 2018, \arxiv{1805.04507}.

\bibitem[GGN13]{gn-recurrence}
O.~Gurel-Gurevich and A.~Nachmias.
\newblock Recurrence of planar graph limits.
\newblock {\em Ann. of Math. (2)}, 177(2):761--781, 2013, \arxiv{1206.0707}.
  \MR{3010812}

\bibitem[GHM15]{ghm-kpz}
E.~{Gwynne}, N.~{Holden}, and J.~{Miller}.
\newblock {An almost sure KPZ relation for SLE and Brownian motion}.
\newblock {\em {A}nnals of {P}robability}, to appear, 2015, \arxiv{1512.01223}.

\bibitem[GHS17]{ghs-map-dist}
E.~{Gwynne}, N.~{Holden}, and X.~{Sun}.
\newblock {A mating-of-trees approach for graph distances in random planar
  maps}.
\newblock {\em ArXiv e-prints}, November 2017, \arxiv{1711.00723}.

\bibitem[GHS19]{ghs-mating-survey}
E.~{Gwynne}, N.~{Holden}, and X.~{Sun}.
\newblock {Mating of trees for random planar maps and Liouville quantum
  gravity: a survey}.
\newblock {\em ArXiv e-prints}, Oct 2019, \arxiv{1910.04713}.

\bibitem[GM17]{gwynne-miller-char}
E.~{Gwynne} and J.~{Miller}.
\newblock {Characterizations of SLE$_{\kappa}$ for $\kappa \in (4,8)$ on
  Liouville quantum gravity}.
\newblock {\em ArXiv e-prints}, January 2017, \arxiv{1701.05174}.

\bibitem[GM19a]{gm-coord-change}
E.~Gwynne and J.~Miller.
\newblock Conformal covariance of the {L}iouville quantum gravity metric for
  {$\gamma \in (0,2)$}.
\newblock {\em ArXiv e-prints}, May 2019, \arxiv{1905.00384}.

\bibitem[GM19b]{gm-uniqueness}
E.~Gwynne and J.~Miller.
\newblock Existence and uniqueness of the {L}iouville quantum gravity metric
  for {$\gamma \in (0,2)$}.
\newblock {\em ArXiv e-prints}, May 2019, \arxiv{1905.00383}.

\bibitem[GMS17]{gms-tutte}
E.~{Gwynne}, J.~{Miller}, and S.~{Sheffield}.
\newblock {The Tutte embedding of the mated-CRT map converges to Liouville
  quantum gravity}.
\newblock {\em ArXiv e-prints}, May 2017, \arxiv{1705.11161}.

\bibitem[GMS18]{gms-random-walk}
E.~{Gwynne}, J.~{Miller}, and S.~{Sheffield}.
\newblock {An invariance principle for ergodic scale-free random environments}.
\newblock {\em ArXiv e-prints}, July 2018, \arxiv{1807.07515}.

\bibitem[GP19a]{gp-lfpp-bounds}
E.~{Gwynne} and J.~{Pfeffer}.
\newblock {Bounds for distances and geodesic dimension in Liouville first
  passage percolation}.
\newblock {\em {E}lectronic {C}ommunications in {P}robability}, 24:no. 56, 12,
  2019, \arxiv{1903.09561}.

\bibitem[GP19b]{gp-kpz}
E.~{Gwynne} and J.~{Pfeffer}.
\newblock {KPZ formulas for the Liouville quantum gravity metric}.
\newblock {\em ArXiv e-prints}, May 2019, \arxiv{1905.11790}.

\bibitem[GRV16]{grv-higher-genus}
C.~{Guillarmou}, R.~{Rhodes}, and V.~{Vargas}.
\newblock {Polyakov's formulation of $2d$ bosonic string theory}.
\newblock {\em ArXiv e-prints}, July 2016, \arxiv{1607.08467}.

\bibitem[{Gwy}19]{gwynne-ball-bdy}
E.~{Gwynne}.
\newblock {The dimension of the boundary of a Liouville quantum gravity metric
  ball}.
\newblock {\em arXiv e-prints}, Sep 2019, \arxiv{1909.08588}.

\bibitem[HMP10]{hmp-thick-pts}
X.~Hu, J.~Miller, and Y.~Peres.
\newblock Thick points of the {G}aussian free field.
\newblock {\em Ann. Probab.}, 38(2):896--926, 2010, \arxiv{0902.3842}.
  \MR{2642894 (2011c:60117)}

\bibitem[HRV18]{hrv-disk}
Y.~Huang, R.~Rhodes, and V.~Vargas.
\newblock Liouville quantum gravity on the unit disk.
\newblock {\em Ann. Inst. Henri Poincar\'{e} Probab. Stat.}, 54(3):1694--1730,
  2018, \arxiv{1502.04343}. \MR{3825895}

\bibitem[HS19]{hs-cardy-embedding}
N.~{Holden} and X.~{Sun}.
\newblock {Convergence of uniform triangulations under the Cardy embedding}.
\newblock {\em ArXiv e-prints}, May 2019, \arxiv{1905.13207}.

\bibitem[{Hua}18]{huang-complex-insertion}
Y.~{Huang}.
\newblock {Path integral approach to analytic continuation of Liouville theory:
  the pencil region}.
\newblock {\em ArXiv e-prints}, September 2018, 1809.08650.

\bibitem[IJS16]{ijs16}
Y.~Ikhlef, J.~L. Jacobsen, and H.~Saleur.
\newblock Three-point functions in $c\ensuremath{\le}1$ {L}iouville theory and
  conformal loop ensembles.
\newblock {\em Phys. Rev. Lett.}, 116:130601, Mar 2016.

\bibitem[JSW18a]{jsw-imaginary-gmc}
J.~{Junnila}, E.~{Saksman}, and C.~{Webb}.
\newblock {Imaginary multiplicative chaos: Moments, regularity and connections
  to the Ising model}.
\newblock {\em ArXiv e-prints}, June 2018, \arxiv{1806.02118}.

\bibitem[JSW18b]{jsw-decompositions}
J.~{Junnila}, E.~{Saksman}, and C.~{Webb}.
\newblock {Decompositions of log-correlated fields with applications}.
\newblock {\em ArXiv e-prints}, Aug 2018, \arxiv{1808.06838}.

\bibitem[Kah85]{kahane}
J.-P. Kahane.
\newblock Sur le chaos multiplicatif.
\newblock {\em Ann. Sci. Math. Qu\'ebec}, 9(2):105--150, 1985. \MR{829798
  (88h:60099a)}

\bibitem[KL13]{kl-lambda-saw}
T.~Kennedy and G.~F. Lawler.
\newblock Lattice effects in the scaling limit of the two-dimensional
  self-avoiding walk.
\newblock In {\em Fractal geometry and dynamical systems in pure and applied
  mathematics. {II}. {F}ractals in applied mathematics}, volume 601 of {\em
  Contemp. Math.}, pages 195--210. Amer. Math. Soc., Providence, RI, 2013,
  \arxiv{1109.3091}. \MR{3203863}

\bibitem[{Kle}95]{klebanov-touching}
I.~R. {Klebanov}.
\newblock {Touching random surfaces and Liouville gravity}.
\newblock {\em {P}hys. {R}ev. {D}}, 51:1836--1841, February 1995,
  \arxiv{hep-th/9407167}.

\bibitem[KPZ88]{kpz-scaling}
V.~Knizhnik, A.~Polyakov, and A.~Zamolodchikov.
\newblock {Fractal structure of 2D-quantum gravity}.
\newblock {\em {Modern Phys. Lett A}}, 3(8):819--826, 1988.

\bibitem[KRV15]{krv-local}
A.~{Kupiainen}, R.~{Rhodes}, and V.~{Vargas}.
\newblock {Local Conformal Structure of Liouville Quantum Gravity}.
\newblock {\em ArXiv e-prints}, December 2015, \arxiv{1512.01802}.

\bibitem[KRV17]{krv-dozz}
A.~{Kupiainen}, R.~{Rhodes}, and V.~{Vargas}.
\newblock {Integrability of Liouville theory: proof of the DOZZ Formula}.
\newblock {\em {A}nnals of {M}athematics}, to appear, 2017, \arxiv{1707.08785}.

\bibitem[{Le }13]{legall-uniqueness}
J.-F. {Le Gall}.
\newblock Uniqueness and universality of the {B}rownian map.
\newblock {\em Ann. Probab.}, 41(4):2880--2960, 2013, \arxiv{1105.4842}.
  \MR{3112934}

\bibitem[{Le }14]{legall-sphere-survey}
J.-F. {Le Gall}.
\newblock {Random geometry on the sphere}.
\newblock {\em Proceedings of the {ICM}}, 2014, \arxiv{1403.7943}.

\bibitem[LR15]{lawler-rezai-nat}
G.~F. Lawler and M.~A. Rezaei.
\newblock Minkowski content and natural parameterization for the
  {S}chramm-{L}oewner evolution.
\newblock {\em Ann. Probab.}, 43(3):1082--1120, 2015, \arxiv{1211.4146}.
  \MR{3342659}

\bibitem[LRV13]{rhodes-vargas-complex-gmt}
H.~{Lacoin}, R.~{Rhodes}, and V.~{Vargas}.
\newblock {Complex Gaussian multiplicative chaos}.
\newblock {\em ArXiv e-prints}, July 2013, \arxiv{1307.6117}.

\bibitem[LRV19]{lrv-sine-gordon}
H.~{Lacoin}, R.~{Rhodes}, and V.~{Vargas}.
\newblock {A probabilistic approach of ultraviolet renormalisation in the
  boundary Sine-Gordon model}.
\newblock {\em ArXiv e-prints}, Mar 2019, \arxiv{1903.01394}.

\bibitem[Mie13]{miermont-brownian-map}
G.~Miermont.
\newblock The {B}rownian map is the scaling limit of uniform random plane
  quadrangulations.
\newblock {\em Acta Math.}, 210(2):319--401, 2013, \arxiv{1104.1606}.
  \MR{3070569}

\bibitem[Mie14]{miermont-st-flour}
G.~Miermont.
\newblock {\em Aspects of random maps}.
\newblock 2014.
\newblock St. Flour lecture notes. Available at
  \href{http://perso.ens-lyon.fr/gregory.miermont/coursSaint-Flour.pdf}{http://perso.ens-lyon.fr/gregory.miermont/coursSaint-Flour.pdf}.

\bibitem[MS15]{lqg-tbm1}
J.~{Miller} and S.~{Sheffield}.
\newblock {Liouville quantum gravity and the Brownian map I: The QLE(8/3,0)
  metric}.
\newblock {\em Inventiones Mathematicae}, to appear, 2015, \arxiv{1507.00719}.

\bibitem[MS16a]{lqg-tbm2}
J.~{Miller} and S.~{Sheffield}.
\newblock {Liouville quantum gravity and the Brownian map II: geodesics and
  continuity of the embedding}.
\newblock {\em ArXiv e-prints}, May 2016, \arxiv{1605.03563}.

\bibitem[MS16b]{lqg-tbm3}
J.~{Miller} and S.~{Sheffield}.
\newblock {Liouville quantum gravity and the Brownian map III: the conformal
  structure is determined}.
\newblock {\em ArXiv e-prints}, August 2016, \arxiv{1608.05391}.

\bibitem[MS16c]{ig1}
J.~Miller and S.~Sheffield.
\newblock Imaginary geometry {I}: interacting {SLE}s.
\newblock {\em Probab. Theory Related Fields}, 164(3-4):553--705, 2016,
  \arxiv{1201.1496}. \MR{3477777}

\bibitem[MS17]{ig4}
J.~Miller and S.~Sheffield.
\newblock Imaginary geometry {IV}: interior rays, whole-plane reversibility,
  and space-filling trees.
\newblock {\em Probab. Theory Related Fields}, 169(3-4):729--869, 2017,
  \arxiv{1302.4738}. \MR{3719057}

\bibitem[MS19]{sphere-constructions}
J.~Miller and S.~Sheffield.
\newblock Liouville quantum gravity spheres as matings of finite-diameter
  trees.
\newblock {\em Ann. Inst. Henri Poincar\'{e} Probab. Stat.}, 55(3):1712--1750,
  2019, \arxiv{1506.03804}. \MR{4010949}

\bibitem[Pol81]{polyakov-qg1}
A.~M. Polyakov.
\newblock Quantum geometry of bosonic strings.
\newblock {\em Phys. Lett. B}, 103(3):207--210, 1981. \MR{623209 (84h:81093a)}

\bibitem[Rem18]{remy-annulus}
G.~Remy.
\newblock Liouville quantum gravity on the annulus.
\newblock {\em J. Math. Phys.}, 59(8):082303, 26, 2018, \arxiv{1711.06547}.
  \MR{3843631}

\bibitem[{Rib}14]{ribault-cft}
S.~{Ribault}.
\newblock {Conformal field theory on the plane}.
\newblock {\em ArXiv e-prints}, June 2014, \arxiv{1406.4290}.

\bibitem[Rib18]{ribault2018minimal}
S.~Ribault.
\newblock Minimal lectures on two-dimensional conformal field theory.
\newblock {\em SciPost Physics Lecture Notes}, page 001, 2018.

\bibitem[RS15]{rs15}
S.~Ribault and R.~Santachiara.
\newblock Liouville theory with a central charge less than one.
\newblock {\em Journal of High Energy Physics}, 2015(8):109, 2015.

\bibitem[RV11]{rhodes-vargas-log-kpz}
R.~Rhodes and V.~Vargas.
\newblock K{PZ} formula for log-infinitely divisible multifractal random
  measures.
\newblock {\em ESAIM Probab. Stat.}, 15:358--371, 2011, \arxiv{0807.1036}.
  \MR{2870520}

\bibitem[RV14]{rhodes-vargas-review}
R.~Rhodes and V.~Vargas.
\newblock Gaussian multiplicative chaos and applications: {A} review.
\newblock {\em Probab. Surv.}, 11:315--392, 2014, \arxiv{1305.6221}.
  \MR{3274356}

\bibitem[She07]{shef-gff}
S.~Sheffield.
\newblock Gaussian free fields for mathematicians.
\newblock {\em Probab. Theory Related Fields}, 139(3-4):521--541, 2007,
  \arxiv{math/0312099}. \MR{2322706 (2008d:60120)}

\bibitem[She16]{shef-zipper}
S.~Sheffield.
\newblock Conformal weldings of random surfaces: {SLE} and the quantum gravity
  zipper.
\newblock {\em Ann. Probab.}, 44(5):3474--3545, 2016, \arxiv{1012.4797}.
  \MR{3551203}

\bibitem[SS13]{ss-contour}
O.~Schramm and S.~Sheffield.
\newblock A contour line of the continuum {G}aussian free field.
\newblock {\em Probab. Theory Related Fields}, 157(1-2):47--80, 2013,
  \arxiv{math/0605337}. \MR{3101840}

\bibitem[Suz97]{suzuki1997note}
T.~Suzuki.
\newblock A note on quantum liouville theory via the quantum group an approach
  to strong coupling liouville theory.
\newblock {\em Nuclear Physics B}, 492(3):717--742, 1997.

\bibitem[Tes04]{teschner04}
J.~Teschner.
\newblock A lecture on the {L}iouville vertex operators.
\newblock {\em International Journal of Modern Physics A}, 19(supp02):436--458,
  2004.

\bibitem[Wat93]{watabiki-lqg}
Y.~Watabiki.
\newblock {Analytic study of fractal structure of quantized surface in
  two-dimensional quantum gravity}.
\newblock {\em Progr. Theor. Phys. Suppl.}, (114):1--17, 1993.
\newblock Quantum gravity (Kyoto, 1992).

\bibitem[Zam05]{zam05}
A.~B. Zamolodchikov.
\newblock Three-point function in the minimal {L}iouville gravity.
\newblock {\em Theoretical and mathematical physics}, 142(2):183--196, 2005.

\bibitem[ZZ96]{zz-dozz}
A.~{Zamolodchikov} and A.~{Zamolodchikov}.
\newblock {Conformal bootstrap in Liouville field theory}.
\newblock {\em Nuclear Physics B}, 477:577--605, February 1996,
  \arxiv{hep-th/9506136}.

\end{thebibliography}
\bibliographystyle{hmralphaabbrv}

\end{document}